\newtheorem{theorem}{Theorem}[section] 
\newtheorem{definition}[theorem]{Definition}
\newtheorem{proposition}[theorem]{Proposition}
\newtheorem{lemma}[theorem]{Lemma}
\newtheorem{corollary}[theorem]{Corollary}
\newtheorem{conjecture}[theorem]{Conjecture}
\theoremstyle{definition}
\newtheorem{example}[theorem]{Example}
\def\A{\mathbb{A}}
\def\C{\mathbb{C}}
\def\cf{\mathcal{F}}
\def\ci{\mathcal{I}}
\def\cA{\mathcal{A}}
\def\k{\ensuremath{\mathbf{k}}}
\def\G{\mathbb{G}}
\def\P{\mathbb{P}}
\def\Q{\mathbb{Q}}
\def\R{\mathbb{R}}
\def\U{\mathbb{U}}
\def\Z{\mathbb{Z}}
\def\O{\mathcal{O}}
\def\cB{\mathcal{B}}
\def\cI{\mathcal{I}}
\def\F{\mathbb{F}}
\def\cT{\mathcal{T}}
\def\K{\mathbb{K}}
\def\cR{\mathcal{R}}
\def\N{\mathbb{N}}
\def\B{\mathbb{B}}
\def\<{\ensuremath{\langle}}
\def\>{\ensuremath{\rangle}}
\def\excise#1{}
\DeclareMathOperator{\Hilb}{Hilb} 
\DeclareMathOperator{\Hom}{Hom}
\DeclareMathOperator{\PGL}{PGL}
\DeclareMathOperator{\Spec}{Spec}
\DeclareMathOperator{\Gr}{Gr}
\DeclareMathOperator{\Trunc}{Trunc}
\DeclareMathOperator{\codim}{codim}
\DeclareMathOperator{\Span}{Span}
\DeclareMathOperator{\Var}{Var}
\DeclareMathOperator{\Conv}{Conv}
\DeclareMathOperator{\Mat}{Mat}
\DeclareMathOperator{\charact}{char}
\DeclareMathOperator{\St}{St}
\DeclareMathOperator{\Gl}{Gl}
\DeclareMathOperator{\Cone}{Cone}
\DeclareMathOperator{\Fl}{Fl}
\DeclareMathOperator{\cl}{cl}
\begin{document}

\title{Matroid Theory for Algebraic Geometers}

\author[Katz]{Eric Katz}
\address{Eric Katz: Department of Combinatorics \& Optimization \\ University of Waterloo \\ Waterloo \\ ON \\ Canada N2L 3G1}
\email{eekatz@math.uwaterloo.ca}

\begin{abstract}
This article is a survey of matroid theory aimed at algebraic geometers.  Matroids are combinatorial abstractions of linear subspaces and hyperplane arrangements.  Not all matroids come from linear subspaces; those that do are said to be \emph{representable}.  Still, one may apply linear algebraic constructions to non-representable matroids.  There are a number of different definitions of matroids, a phenomenon known as \emph{cryptomorphism}.  In this survey, we begin by reviewing the classical definitions of matroids, develop operations in matroid theory,  summarize some results in representability, and construct polynomial invariants of matroids.  Afterwards, we focus on matroid polytopes, introduced by Gelfand-Goresky-MacPherson-Serganova, which give a cryptomorphic definition of matroids.  We explain certain locally closed subsets of the Grassmannian, thin Schubert cells, which are labeled by matroids, and which have applications to representability, moduli problems, and invariants of matroids following Fink-Speyer.  We explain how matroids can be thought of as cohomology classes in a particular toric variety, the \emph{permutohedral variety}, by means of Bergman fans, and apply this description to give an exposition of the proof of log-concavity of the characteristic polynomial of representable matroids due to the author with Huh.
\end{abstract}

\maketitle


\section{Introduction}

This survey is an introduction to matroids for algebraic geometry-minded readers.  Matroids are a combinatorial abstraction of linear subspaces of a vector space with distinguished basis or, equivalently, a set of labeled set of vectors in a vector space.  Alternatively, they are a generalization of graphs and are therefore amenable to a structure theory similar to that of graphs.  For algebraic geometers, they are a source of bizarre counterexamples in studying moduli spaces, a combinatorial way of labelling strata of a Grassmannian, and a testing ground for theorems about representability of cohomology classes. 

Matroids were introduced by Whitney \cite{Whitney35} as an an abstraction of linear independence.  If $\k$ is a field, one can study an $(n+1)$-tuple of vectors $(v_0,\dots,v_n)$ of $\k^{d+1}$ by defining a {\em rank function}
\[r:2^{\{0,\dots,n\}}\rightarrow \Z_{\geq 0}\]
by, for $S\subset \{0,\dots,n\}$, 
\[r(S)=\dim(\Span(\{v_i \mid i\in S\})).\]
This rank function satisfies certain natural properties, and one can consider rank functions that satisfy these same properties without necessarily coming from a set of vectors.  This rank function is what Whitney called a matroid.
Whitney noticed that there were matroids that did not come from a set of vectors over a particular field $\k$.  Such matroids are said to be {\em non-representable} over $\k$.  Matroids can also be obtained from graphs as a sort of combinatorial abstraction of the cycle space of a graph.  In fact, it is a piece of folk wisdom that any theorem about graph theory that makes no reference to vertices is a theorem in matroid theory.    The important structure theory of matroids that are representable over particular finite fields (or over all fields) was initiated by Tutte.  In fact, Tutte was able to demarcate the difference between graphs and matroids in a precise way.
 The enumerative theory of matroids as partially ordered sets was initiated by Birkhoff, Whitney, and Tutte and systematized and elaborated by Rota \cite{RotaFoundations}.  From this enumerative theory, there were associated polynomial invariants of matroids, among them the characteristic and Tutte polynomials. The theory of matroids was enlarged and formulated in more categorical terms by a number of researchers including Brylawski, Crapo, Higgs, and Rota \cite{CrapoRota}.  Matroids were found to have applications to combinatorial optimization problems by Edmonds \cite{Edmonds} who introduced a polytope encoding the structure of the matroid.

Throughout the development of the subject, many alternative formulations of matroids were found.  They were combinatorial abstractions of notions like the span of a subset of a set of vectors, independent sets of vectors, vectors forming a basis of the ambient space, or minimally dependent sets of vectors.  Each of these definitions made different structures of matroids more apparent.  The multitude of non-obviously equivalent definitions goes by the name of {\em cryptomorphism}.  Matroid theory is, in fact, sometimes forbidding to beginners because of the frequent switching between definitions.

The point of view of matroids in this survey is one initiated in the work of Gelfand-Goresky-MacPherson-Serganova, which related representable matroids to certain subvarieties of a Grassmannian.   One views the vector configuration spanning $\k^{n+1}$ as a surjective linear map
\[\k^{n+1}\rightarrow \k^{d+1},\ \ (x_0,\dots,x_n)\mapsto x_0v_0+\dots x_nv_n.\]
 By dualizing, one has an injective linear map 
 $(\k^{d+1})^*\rightarrow (\k^{n+1})^*$ where we consider the image of the map as a subspace $V\subseteq (\k^{n+1})^*\cong \k^{n+1}$.  Now, we may scale this subspace by the action of the algebraic torus $(\k^*)^{n+1}$ acting on the coordinates of $\k^{n+1}$.  The closure of the algebraic torus orbit containing $V$defines a subvariety of $\Gr(d+1,n+1)$.  The set of characters of the algebraic torus acting on $V$ leads to the notion of matroid polytopes and a new perspective on matroids.  It is this perspective that we will explore in this survey.  In particular, we will study how the matroid polytope perspective leads to a class of valuative invariants of matroids, how the subvarieties of $\Gr(d+1,n+1)$ that correspond to subspaces $V$ representing a particular matroid are interesting in their own right and give insight to representability, and finally how the study of an object called the Bergman fan parameterizing degenerations of the matroid sheds light on the enumerative theory of matroids.  In fact, we will use the Bergman fan to present a proof of a theorem of Huh and the author \cite{HuhKatz} of a certain set of inequalities among coefficients of the characteristic polynomial of matroids, called log-concavity, addressing part of a conjecture of Rota, Heron, and Welsh \cite{Rota}.

Another theme of this survey is cryptomorphism.  The new ways of thinking about matroids introduced by algebraic geometry have introduced two new definitions of matroids that are quite different from the other, more classical characterizations: matroid polytopes and Bergman fans.  The definition of matroids in terms of matroid polytopes comes out of the work of Gelfand-Goresky-MacPherson-Serganova.  The Bergman fan definition was motivated by valuation theory \cite{Bergman}, rephrased in terms of tropical geometry, and can be described here as studying matroids as  cohomology classes on a particular toric variety called the permutohedral variety.  In this survey, we advocate for the combinatorial study of a slight enlargement of the category of matroids, that of Minkowski weights on permutohedral varieties.  This enlargement allows a new operation called $(r_1,r_2)$-truncation introduced by Huh and the author which is essential to the proof of log-concavity of the characteristic polynomial.

We have picked topics to appeal to algebraic geometers.
We have put some emphasis on representability, which through Mn\"{e}v's theorem and Vakil's work on Murphy's Law plays a central role in constructing pathological examples in algebraic geometry.

This survey's somewhat bizarre approach and assumptions of background reflect how the author learned the subject.  We assume, for example, that the reader is familiar with toric varieties and $K$-theory but provide an introduction to M\"{o}bius inversion.  We include just enough of the highlights of the structure theory of matroids to give readers a sense of what is out there.  The literature on matroids is vast and the author's ignorance keeps him from saying more.  The more purely combinatorial research in matroid theory has a quite different flavor from our survey.  Also, there are a number of topics that would naturally fit into this survey that we had to neglect for lack of expertise.  Such topics include Coxeter matroids \cite{CoxeterMatroids}, oriented matroids \cite{Orientedmatroids}, matroids over a ring as defined by Fink and Moci \cite{FinkMoci}, hyperplane arrangements \cite{OrlikTerao}, and tropical linear subspaces \cite{SpeyerLinear} .
This survey is rather ahistorical.   We neglect nearly all the motivation coming from graph theory.

We make no claims towards originality in this survey.  The presentation of Huh-Katz's proof of log-concavity of the characteristic polynomial differs from that of the published paper \cite{HuhKatz} but is similar to the exposition in Huh's thesis \cite{Huhthesis}.

We would like to acknowledge Matthew Baker, Graham Denham, Michael Falk, Alex Fink, June Huh, Sam Payne, Margaret Readdy, Hal Schenck, and Frank Sottile for valuable conversations.  This survey arose from an expository talk given at the Simons Symposium on Non-Archimedean and Tropical Geometry.  We'd like to thank the organizers of ths symposium for their invitation and encouragement.  

There are a number of references that we can recommend enthusiastically and which were used extensively in the writing of this survey.  Oxley's textbook \cite{Oxley} is invaluable as a guide to the combinatorial theory.  Welsh's textbook \cite{Welshbook} is very broad and geometrically-oriented.  Wilson \cite{WilsonMonthly} gives a nice survey with many  examples.  Reiner's lectures \cite{Reinerlectures} explain the theory of  matroids and oriented matroids in parallel while also providing historical background.  The three Encyclopedia of Mathematics and its Applications volumes, \emph{Combinatorial Geometries} \cite{Combinatorialgeometries}, \emph{Theory of Matroids} \cite{TheoryofMatroids}, and \emph{Matroid Applications} \cite{Matroidapplications} are collections of valuable expository articles.  In particular, we found \cite{BrylawskiBook} very helpful in the writing of this survey.  Denham's survey on hyperplane arrangements \cite{Denham} is a useful reference for more advanced topics.

 \subsection{Notation}
 
 We will study algebraic varieties over a field $\k$.  We will use $\k$ to denote affine space over $\k$, $\A^1_\k$ and $\k^*$ to denote the multiplicative group over $\k$, $(\G_m)_\k$.  For a vector space $V$, $V^*$ will be the dual space.  Consequently, $(\k^n)^*$ will be a vector space and $(\k^*)^n$ will be a multiplicative group.   We will refer to such $(\k^*)^n$ as an algebraic torus.  Our conventions are geared towards working in projective space: matroids will usually be rank $d+1$ on a ground set $E=\{0,1,\dots,n\}$.

This survey is organized largely by the mathematical techniques employed.  The first six sections are largely combinatorial while the next six are increasingly algebraic.  Section 2 provides motivation for the definition of matroids which is given in section 3.  Section 4 provides examples while section 5 explains constructions in matroid theory with an emphasis on what the constructions mean for representable matroids viewed as vector configurations, projective subspaces, or hyperplane arrangements.  Section 6 discusses representability of matroids.  Section 7 introduces polynomial invariants of matroids, in particular the Tutte and characteristic polynomial.  Section 8 reviews the matroid polytope construction of Gelfand-Goresky-MacPherson-Serganova and the valuative invariants that it makes possible.  Section 9 reviews constructions involving the Grassmannian, describing the relationships between Pl\"{u}cker coordinates and the matroid axioms and between the matroid polytope and torus orbits, and then it discusses realization spaces and finally, the $K$-theoretic matroid invariants of Fink and Speyer.  Section 10 is a brief interlude reviewing toric varieties.  Section 11 introduces Bergman fans and shows that they are Minkowski weights.  Section 12 gives a proof of log-concavity  of the characteristic polynomial through intersection theory on toric varieties.  Section 13 points out some future directions.
 
\section{Matroids as Combinatorial Abstractions}

A matroid is a combinatorial object that captures properties of vector configurations or equivalently, hyperplane arrangements.  We will informally discuss different ways of thinking about vector configurations as motivation for the rest of the survey.  This section is provided solely as motivation and will not introduce any definitions needed for the rest of the paper.

Let $\k$ be a field, and let $v_0,v_1,\dots,v_n$ be vectors in $\k^{d+1}$ that span $\k^{d+1}$.  We can study the dimension of the span of a subset of these vectors.  Specifically, for $S\subset\{0,1,\dots,n\}$, we set
\[V_S=\Span(\{v_i\mid i\in S\}),\]
 and we define a \emph{rank function} $r:2^E\rightarrow\Z$, by
for $S\subseteq \{0,1,\dots,n+1\}$,
\[r(S)=\dim(V_S).\]
There are some obvious properties that $r$ satisfies: we must have $0\leq r(S)\leq |S|$; $r$ must be non-decreasing on subsets (so $S\subseteq U$ implies $r(S)\leq r(U)$); and $r(\{0,1,\dots,n\})=\dim(\k^{d+1})=d+1$.  There is a less obvious property: because for $S,U\subseteq \{0,1,\dots,n\}$, $V_{S\cap U}\subseteq V_S\cap V_U$, we must have
\[r(S\cap U)\leq \dim(V_S\cap V_U)=\dim(V_S)+\dim(V_U)-\dim(V_{S\cup U})=r(S)+r(U)-r(S\cup U).\]
We can take this one step further and study all rank functions that satisfy these properties.  Such a rank function, we will call a \emph{matroid}.  Not all matroids will come from vector configurations.  Those that do will be said to be \emph{representable}.  Remarkably, a number of geometric constructions will work for matroids regardless of their representability.

Instead of studying rank functions, we can study certain collections of subsets of $\{0,1,\dots,n\}$ that  capture the same combinatorial data.  We can study \emph{bases} which are $(d+1)$-element subsets of $\{0,1,\dots,n\}$  corresponding to subsets of $\{v_0,v_1,\dots,v_n\}$ which span $\k^{d+1}$.  Or we can study \emph{independent sets} which are subsets of $\{0,1,\dots,n\}$ corresponding to linearly independent sets of vectors.  We can study \emph{circuits} which are minimal linearly dependent sets of vectors.  Or we can study \emph{flats} which correspond to subspaces spanned by some subset $\{v_0,v_1,\dots,v_n\}$.  Each of these collections of subsets can be used to give a definition of a matroid.

Alternatively, we can consider linear subspaces instead of vector configurations.  Let $V\subseteq \k^{n+1}$ be a $(d+1)$-dimensional subspace that is not contained in any coordinate hyperplane. If $e_0,e_1,\dots,e_n$ is the standard basis of $\k^{n+1}$, then the dual basis $e_0^*,e_1^*,\dots,e_n^*$ induce linear forms on $V$.  This gives a vector configuration in $V^*$.  We can projective $V$ to obtain a projective subspace $\P(V)\subset \P^n$.  We can rephrase the data of the rank function in terms of a hyperplane arrangement of $\P(V)$.  Indeed, if the $e_i^*$'s are all non-zero,
each linear form $e_i^*$ vanishes on a hyperplane, $H_i$ on $\P(V)$, giving a hyperplane arrangement.  Note that $H_i$ is the intersection of $\P(V)$ with the coordinate hyperplane in $\P^n$ cut out by $X_i=0$ where $X_i$ is a homogeneous coordinate.  Now, we can define the rank function by, for $S\subset\{0,1,\dots,n\}$,
\[r(S)=\codim\left((\bigcap_{i\in S} H_i)\subset\P(V)\right).\]
There are interpretations of bases, independent sets, circuits, and flats in this language as well.

There are a number of invariants of matroids that correspond to remembering the matroids up to a certain equivalence class, analogous to passing to a Grothendieck ring.  One invariant, the Tutte polynomial can be related to the class of the matroid in a Grothendieck ring whose equivalence relation comes from deletion and contraction operations.  In the hyperplane arrangement language, deletion corresponds to forgetting a hyperplane on the projective subspace, and contraction corresponds to a particular projection onto a lower-dimensional subspace.

There are geometric constructions that can be performed on the linear subspace $V$.  These constructions can be studied not just for linear subspaces but for matroids.   Some of these constructions, we shall see, can be used to give new combinatorial abstractions of linear subspaces and therefore, new definitions of matroids.

One construction involves the Grassmannian.  A linear subspace $V$ corresponds to a point in the Grassmannian $\Gr(d+1,n+1)$ parameterizing $(d+1)$-dimensional subspaces of $\k^{n+1}$.  Certain information about this point is equivalent to the data of the matroid.  To speak of it, we have to study the geometry of the Grassmannian.  The Grassmannian has a Pl\"{u}cker embedding,
\[i:\Gr(d+1,n+1)\hookrightarrow \P^N\]
where $N=\binom{n+1}{d+1}-1$.  The homogeneous coordinates $p_B$ on $\P^N$ are labelled by $B\subset\{0,1,\dots,n\}$ with $|B|=d+1$ and are called Pl\"{u}cker coordinates.  The data of the matroid is captured by which Pl\"{u}cker coordinates are nonzero.  Alternatively, the data can be phrased in terms of a certain group acting on the Grassmannian.  Let $T=(\k^*)^{n+1}$ act on $\k^{n+1}$ by dilating the coordinates.  This induces an action on the Grassmannian by, for $t\in T$, taking $V\in \Gr(d+1,n+1)$ to $t\cdot V=\{tv \mid v\in V\}$.  Note that the diagonal torus of $T$ acts trivially on $\Gr(d+1,n+1)$.  This group action extends to the ambient $\P^N$.  Given $V\in Gr(d+1,n+1)\subset \P^N$, we can lift $V$ to a point $\tilde{V}\in\k^{N+1}$ and ask what are the characters of $T$ of the smallest sub-representation of $T$ containing $\tilde{V}$.  This set of characters captures exactly the data of the matroid.  Alternatively, we can rephrase this data in terms of group orbits.  The closure of the $T$-orbit containing $V$, $\overline{T\cdot V}\subset\P^N$ is a polarized projective toric variety.  By well-known results in toric geometry, this toric variety corresponds to a polytope.  The data of this polytope also corresponds to the matroid.  Moreover, one can study polytopes that arise in this fashion combinatorially and even associate them to non-representable matroids.  These matroids polytopes can be used to produce an interesting class of invariants of matroids, called valuative invariants.  These invariants are those that are well-behaved under subdivision of the matroid polytope into smaller matroid polytopes.  The $K$-theory class of the structure sheaf of the closure of the torus orbit, $\O_{\overline{T\cdot V}}\in K_0(\Gr(d+1,n+1))$ is a valuative invariant of the matroid introduced by Speyer.  By a combinatorial description of $K$-theory of the Grassmannian, the invariant can be extended to describe non-representable matroids. 

Given a matroid $M$, one can study the set of points on the Grassmannian that have $M$ as their matroid.  This locally closed subset of $\Gr(d+1,n+1)$ is called a \emph{thin Schubert cell}.  Such sets have arbitrarily bad singularities (up to an equivalence relation) and can be used to construct other pathological moduli spaces.  The pathologies of thin Schubert cells are responsible for a number of the difficulties in understanding representability of matroids.

Given a subspace $\P(V)\subset\P^n$, one can blow up the ambient $\P^n$ to understand $\P(V)$ as a homology class.  Homogeneous coordinates on $\P^n$ provide a number of distinguished subspaces.  In fact, we can consider all subspaces that occur as $\bigcap_{i\in S} H_i$ for all proper subsets $S\subset \{0,1,\dots,n\}$. If we intersect $n$ distinct coordinate hyperplanes of $\P^n$, we get a point all but one of whose coordinates are $0$.  There are $n+1$ such points.  If we intersect $n-1$ distinct coordinate hyperplanes, we get a coordinate line between two of those points.  If we intersect $n-2$ distinct coordinate hyperplanes, we get a coordinate plane containing three of those points, and so on.  We can produce a new variety, called the permutohedral variety, $X$ by first blowing up the $n+1$ points, then blowing up the proper transform of the coordinate lines, then blowing up the proper transform of the coordinate planes, and so on.  The proper transform $\widetilde{\P(V)}$ of $\P(V)$ is an iterated blow-up of intersections of hyperplanes from the induced arrangement.  The homology class of $\widetilde{\P(V)}$ in $X$ depends only on the matroid of $V$.  It has been studied as the Bergman fan by Sturmfels and Ardila-Klivans.   Moreover, it can be defined for non-representable matroids.  The characteristic polynomial, which is a certain specialization of the Tutte polynomial can be phrased as the answer to an intersection theory problem on the Bergman fan.  In the representable case, one may apply intersection-theoretic inequalities derived from the Hodge index theorem to prove inequalities between the coefficients of the characteristic polynomial, resolving part of the Rota-Heron-Welsh conjecture.

We will discuss all this and more below.

\section{Matroids}

There are many definitions of a matroid.  The equivalence of these definitions go by the buzzword of {\em cryptomorphism}.  They are all structures on a finite set $E$ which will be called the ground set.

The rank formulation of matroids is one that will be most useful to us in the sequel:

\begin{definition} A matroid on $E$ of rank $d+1$ is a function
\[r:2^E\rightarrow\Z\]
satisfying   
\begin{enumerate}
\item $0\leq r(S)\leq |S|$,
\item $S\subseteq U$ implies $r(S)\leq r(U)$,
\item \label{i:subadd} $r(S\cup U)+r(S\cap U)\leq r(S)+r(U)$, and 
\item $r(\{0,\dots,n\})=d+1.$
\end{enumerate}
\end{definition}

\begin{definition} Two matroids $M_1,M_2$ with $M_i$ on $E_i$ with rank function $r_i$ are said to be {\em isomorphic} if there is a bijection $f:E_1\rightarrow E_2$ such that for any $S\subseteq E_1$,$r_2(f(S))=r_1(S)$.
\end{definition}

The definition of matroids makes sense from the point of view of vector configuration in a vector space.  Let $E=\{0,1,\dots,n\}$.
Let $\k$ be a field.  Consider the vector space $\k^{d+1}$ together with $n+1$ vectors $v_0,\dots,v_n\in\k^{d+1}$ spanning $\k^{d+1}$.  For $S\subseteq E$, set
\[r(S)=\dim(\Span(\{v_i \mid i\in S\})).\]  Then $r$ is a matroid.  If we write 
\[V_S=\Span(\{v_i \mid i\in S\}),\]
then item (\ref{i:subadd}) is equivalent to
\[\dim(V_{S\cap U})\leq \dim(V_S\cap V_U).\]
Note that this inequality may be strict because there may be no subset of $E$ exactly spanning $V_S\cap V_U$.
Several of the matroid axioms are simple-minded and obvious while one is non-trivial.   This is very much in keeping with the flavor of the subject.

\begin{definition} A matroid is said to be {\em representable over $\k$} if it is isomorphic to a matroid arising from a vector configuration in a vector space over $\k$.  A matroid is said to be {\em representable} if it is representable over some field.  A matroid is said to be {\em regular} if it is representable over every field.
\end{definition}

Regular matroids are much studied in combinatorics.  They have a characterization due to Tutte (see \cite{Oxley} for details).  Given a representable matroid, we may form the matrix whose columns are the coordinates of the vectors in the vector configuration.  A matrix is said to be totally unimodular if each square submatrix has determinant $0$,$1$, or $-1$.  It is a theorem of Tutte that regular matroids are those representable by totally unimodular matrices with real entries \cite{Oxley}.

Representable matroids are an important class of matroids but are not all of them.  In fact, it is conjectured that they are asymptotically sparse among matroids.  We will discuss non-representable matroids at length in this survey.

Instead of considering a rank function, we may consider instead the set of {\em flats} of the matroids.

\begin{definition} A {\em flat} of $r$ is a subset $S\subseteq E$ such that for any $j\in E$, $j\not\in S$, $r(S\cup \{j\})>r(S)$.
\end{definition}

We think of flats as the linear subspaces of $\k^{n+1}$ spanned by vectors labeled by a subset of $E$.
We may also axiomatize matroids as a set of flats.

\begin{definition} \label{d:flats} A matroid is a collection of subsets $\cf$ of a set $E$ that satisfy the following conditions
\begin{enumerate}
\item $E\in\cf$,
\item if $F_1,F_2\in\cf$ then $F_1\cap F_2\in\cf$, and
\item if $F\in\cf$ and $\{F_1,F_2,\dots,F_k\}$ is the set of minimal members of $\cf$ properly containing $F$ then the sets $F_1\setminus F,F_2\setminus F,\dots, F_k\setminus F$ partition $E\setminus F$.  \label{iflat:3}
\end{enumerate}
\end{definition}

Note that axiom \eqref{iflat:3} implies that for any flat $F$ and $j\not\in F$, there is a unique flat $F'$ containing $F\cup \{j\}$ that does not properly contain any flat containing $F$.  We can also encode the data of the flats in terms of a {\em closure operation} where the closure of a set $S\subseteq E$, $\operatorname{cl}(S)$ is the intersection of the flats containing $S$.  
The set of flats form a lattice which is a poset equipped with operations that abstract intersection and union.  The lattice of flats of $M$ is denoted by $L(M)$.  We will let $\hat{0}$ be the minimal flat.
Given a collection of flats $\cf$, we may recover the rank function of a set $S\subseteq E$ by setting it to be the length of the longest chain of non-trivial flats properly contained in $\operatorname{cl}(S)$.

We can also axiomatize matroids in terms of their bases.  A basis for a vector configuration labeled by $E$ is a subset $B\subseteq E$ such that $\{v_i\mid i\in B\}$ is a basis for $\k^{d+1}$. In terms of the rank function, a basis is a $(d+1)$-element set $B\subseteq E$ with $r(B)=d+1$.
 
\begin{definition} A matroid is a collection $\cB$ of subsets of $E$ such that
\begin{enumerate}
\item $\cB$ is nonempty and
\item If $B_1,B_2\in \cB$ and $i\in B_1\setminus B_2$ then there is an element $j$ of $B_2\setminus B_1$ such that $(B_1\setminus i)\cup \{j\}\in \cB$.
\end{enumerate}
\end{definition}

The second axiom is called {\em basis exchange}.  It is a classical property of pairs of bases of a vector space and is due to Steinitz.  By applying it repeatedly, we may show that all bases have the same number of elements.  This is the rank of the matroid.
It is straightforward to go from a rank function to a collection of bases and vice versa.

Another axiomatization comes from the set of independent subsets which should be thought of subsets of $E$ labelling linearly independent subsets.  These would be subets $I\subset E$ such that $r(I)=|I|$.

\begin{definition} A matroid is a collection of subsets $\cI$ of $E$ such that
\begin{enumerate}
\item $\cI$ is nonempty, 
\item Every subset of a member of $\cI$ is a member of $\cI$, and
\item If $X$ and $Y$ are in $\cI$ and $|X|=|Y|+1$, then there is an element $x\in X\setminus Y$ such that $Y\cup  \{x\}$ is in $\cI$.  
\end{enumerate}
\end{definition}

\begin{definition} A {\em loop} of a matroid is an element $i\in E$ with $r(\{i\})=0$.  A {\em pair of parallel points} $(i,j)$ of a matroid are elements $i,j\in E$ such that $r(\{i\})=r(\{j\})=r(\{i,j\})=1$.  A matroid is said to be {\em simple} if it has neither loops nor parallel points.
\end{definition}

For vector configurations, a loop corresponds to the zero vector while parallel points correspond to a pair of parallel vectors.

\begin{definition} A {\em coloop} of a matroid is an element $i\in E$ that belongs to every basis.    
\end{definition}

In terms of vector configurations, a coloop corresponds to a vector not in the span of the other vectors.

A \emph{circuit} in a matroid is a minimal subset of $E$ that is not contained in a basis.  For a set of vectors, this should be thought of as a subset $C\subseteq E$ such that the vectors labeled by $C$ are linearly dependent but for any $i\in C$, $C\setminus \{i\}$ is linearly independent.  Circuits can be axiomatized to give another definition of matroids.

\section{Examples}

In this section, we explore difference classes of matroids arising in geometry, graph theory, and optimization.

\begin{example}
The uniform matroid $U_{d+1,n+1}$ of rank $d+1$ on $n+1$ elements is defined for $E=\{0,1,\dots,n\}$ by a rank function $r:2^E\rightarrow \Z_{\geq 0}$ given by
\[r(S)=\min(|S|,d+1).\]
It corresponds to a vector configuration $v_0,v_1,\dots,v_n$ given by $n+1$ generically chosen vectors in a $(d+1)$-dimensional vector space.  Any set of $d+1$ vectors is a basis.  
Note that if the field $\k$ does not have enough elements, then the vectors cannot be chosen generically.  For example, because $\F_2^2$ has only three non-zero elements, $U_{2,4}$ does not arise as a vector configuration over $\F_2$.  Equivalently, it is not representable over $\F_2$.  

The matroids on a singleton set will be important below.  The matroid $U_{0,1}$ is represented by a vector configuration consisting of $0\in\k$.  The single element of the ground set of $U_{0,1}$ is a loop.  On the other hand, $U_{1,1}$ is represented by any non-zero vector in $\k$.  The single element of the ground set of $U_{1,1}$ is a coloop.
\end{example}

\begin{example} \label{e:vector} Let $x_0,x_1,\dots,x_n$ be points in $\P_{\k}^d$ not contained in any proper projective subspace.  Pick a vector $v_i\in\k^{n+1}\setminus \{0\}$ on the line described by $x_i$.  Then $v_0,v_1,\dots,v_n$ gives a vector configuration in $\k^{d+1}$ and therefore a matroid on $\{0,1,\dots,n\}$ of rank $d+1$.  Specifically, the rank function, for $S\subset E=\{0,1,\dots,n\}$ is given by
\[r(S)=\dim(\Span(v_i \mid i\in S)).\]
This vector configuration can be thought of as a surjective map:
\begin{eqnarray*}
\k^{n+1}&\to&\k^{d+1}\\
(t_0,\ldots,t_n)&\mapsto&t_0v_0+\ldots+t_nv_n.
\end{eqnarray*}
Alternatively, we can dualize this map to get an injective map $(\k^{d+1})^*\rightarrow (\k^{n+1})^*$ whose image is a subspace $V$.
Note here that every non-empty set has positive rank so the matroid has no loops.  Coloops are elements $i$ such that the minimal projective subspace containing $\{x_0,x_1,\dots,x_n\}\setminus\{x_i\}$ is of positive codimension.  Flats correspond to minimal projective subspaces containing some subset of $\{x_0,x_1,\dots,x_n\}$.  Bases are $(d+1)$-element subsets of $\{x_0,x_1,\dots,x_n\}$ that are not contained a proper projective subspace of $\P^d$.

If $x_0,x_1,\dots,x_n$ are contained in a proper projective subspace, we may replace the ambient subspace by the minimal projective subspace containing $x_0,x_1,\dots,x_n$ in order to define a matroid.
\end{example}

\begin{example} Let $V\subseteq \k^{n+1}$ be an $(d+1)$-dimensional subspace.   Let $e_0,e_1,\dots,e_n$ be a basis for $\k^{n+1}$.  The inclusion $i:V\hookrightarrow \k^{n+1}$ induces a surjection $i^*:(\k^{n+1})^*\rightarrow V^*$.  The image of the dual basis, $\{i^*e_0^*,i^*e_1^*,\dots,i^*e_n^*\}$ gives a vector configuration in $V^*$.  We can take its matroid.

Let us use the basis to put coordinates $(x_0,x_1,\dots,x_n)$ on $\k^{n+1}$.  We can define subspaces of $V$ as follows: for $I\subseteq E$, set
\[V_I=\{x\in V\mid x_i=0\ \text{for all}\ i\in I\}.\]
Then $r(I)=\codim(\dim V_I\subset V)$.
A flat in this case is a subset $F\subset E$ such that for any $G\supset F$, $V_G\subsetneq V_F$.  The lattice of flats is exactly the lattice of subsets of $V$ of the form $V_S$, ordered under reverse inclusion.  An element $j$ is a loop if and only if $V$ is contained in the coordinate hyperplane $x_j=0$.  A basis is a subset $B\subseteq \{0,1,\dots,n\}$ such that $V_B=\{0\}$.
An element $j$ is a coloop if and only if $V$ contains the basis vector $e_j$.

If we replace $V$ and $\k^{n+1}$ by their projectivizations, and $V$ is not contained in any coordinate hyperplane, then the definition still makes sense.  In this case we consider a $r$-dimensional projective subspace $\P(V)$ in $\P^n$.  We consider $\P(V_i)$ as an arrangement of hyperplanes on $\P^n$ \cite{OrlikTerao}.  In this case, the subsets $P(V_F)$ as $F$ ranges over all flats corresponds to the different possible intersections of hyperplanes (including the empty set).
\end{example}

\begin{example} \label{e:quotient} Given $V\subset \k^{n+1}$ as above, we may define a matroid by considering the quotient
\[\pi:\k^{n+1}\rightarrow \k^{n+1}/V.\]  For $I\subset E$, let $\k^I\subset \k^{n+1}$ be the subspace given by
\[\k^I=\Span(\{e_i\mid i\in I\}).\]
We set 
\[r(I)=\dim(\pi(\k^I)).\]
This is the matroid given by the vector configuration $\{\pi(e_0),\pi(e_1),\dots,\pi(e_n)\}$.

This example is related to the previous one by matroid duality which we will investigate in Subsection \ref{ss:duality}.
\end{example}

\begin{figure}
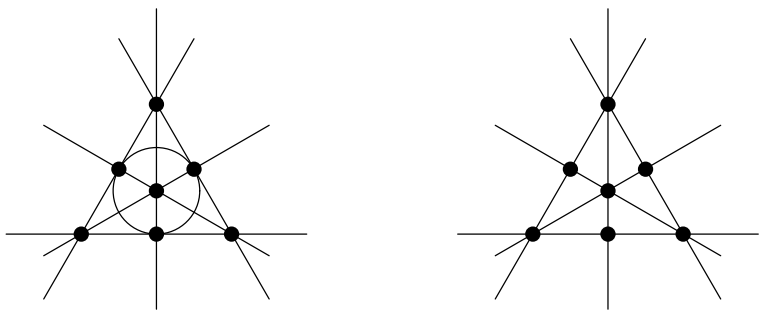
 \label{f:fano}
\begin{center}
\begin{texdraw}
       \drawdim cm  \relunitscale 0.5
       \linewd 0.03
        \move(-1 1) \lvec (7 1)
        \move(3 -1)\lvec(3 7)
        \move(0 -.73)\lvec(4 6.2)
        \move(6 -.73)\lvec(2 6.2)
        \move(0 .422)\lvec(6 3.89)
        \move(6 .422)\lvec(0 3.89)
        \move(3 2.15)  \lcir r:1.15
        \move(1 1) \fcir f:0 r:0.2
       \move(3 1) \fcir f:0 r:0.2
       \move(5 1) \fcir f:0 r:0.2
       \move(3 4.46) \fcir f:0 r:0.2
      \move(4 2.73) \fcir f:0 r:0.2 
      \move(2 2.73) \fcir f:0 r:0.2 
       \move(3 2.15) \fcir f:0 r:0.2

        \move(11 1) \lvec (19 1)
        \move(15 -1)\lvec(15 7)
        \move(12 -.73)\lvec(16 6.2)
        \move(18 -.73)\lvec(14 6.2)
        \move(12 .422)\lvec(18 3.89)
        \move(18 .422)\lvec(12 3.89)
        \move(13 1) \fcir f:0 r:0.2
       \move(15 1) \fcir f:0 r:0.2
       \move(17 1) \fcir f:0 r:0.2
       \move(15 4.46) \fcir f:0 r:0.2
      \move(16 2.73) \fcir f:0 r:0.2 
      \move(14 2.73) \fcir f:0 r:0.2 
       \move(15 2.15) \fcir f:0 r:0.2

\end{texdraw}
\end{center}
\caption{The Fano and non-Fano matroids}
\end{figure}

\begin{example}
One can draw simple matroids as  as point configurations.  We imagine the points as lying in some projective space and if the matroid were representable, they would give a vector configuration as in Example~\ref{e:vector}.  For example, if we have a rank $3$ matroid, we view the points as spanning a projective plane. We specify the rank $2$ flats that contain more than two points by drawing lines containing points.  These lines together with all lines between pairs of points are exactly the rank $2$ flats.  Higher rank matroids can be described by point configurations in higher dimensional spaces where we specify the $k$-flats that contain more points than what is predicted by rank considerations (e.g. for $3$-flats which correspond to planes, we would show the planes that are not merely those containing $3$ non-collinear points or a line and a non-incident point).

The Fano and non-Fano matroids denoted by $F_7$ and $F_7^-$, respectively, are pictured in  Figure 1.  The Fano matroid is a rank $3$ matroid consisting of $7$ points together with $7$ lines passing through particular triples of points.  A line through three of those points is drawn as a circle.  The Fano matroid is representable exactly over fields of characteristic $2$.  In fact, it is the set of all points and lines in the projective plane over $\F_2$, $\P^2_{\F_2}$.
The non-Fano matroid is given by the same configuration but with the center line removed.  We see that as meaning that those three points on that line are no longer collinear.  This matroid is representable exactly over fields whose characteristic is different from $2$.
\end{example}

\begin{figure}
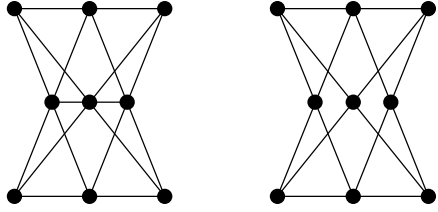
 \label{f:pappus}
\begin{center}
\begin{texdraw}
       \drawdim cm  \relunitscale 0.5
       \linewd 0.03
        \move(1 0) \lvec (5 0)
        \move(1 5)\lvec(5 5)
        \move(1 0)\lvec(3 5)
        \move(1 0)\lvec(5 5)
        \move(3 0)\lvec(5 5)
        \move(3 0)\lvec(1 5)
       \move(5 0)\lvec(1 5)
        \move(5 0)\lvec(3 5)
        \move(2 2.5)\lvec(4 2.5)
        \move(1 0) \fcir f:0 r:0.2 
        \move(3 0) \fcir f:0 r:0.2 
        \move(5 0) \fcir f:0 r:0.2 
        \move(1 5) \fcir f:0 r:0.2 
        \move(3 5) \fcir f:0 r:0.2 
        \move(5 5) \fcir f:0 r:0.2 
        \move(2 2.5) \fcir f:0 r:0.2 
        \move(3 2.5) \fcir f:0 r:0.2 
        \move(4 2.5) \fcir f:0 r:0.2 
        
        \move(8 0) \lvec (12 0)
        \move(8 5)\lvec(12 5)
        \move(8 0)\lvec(10 5)
        \move(8 0)\lvec(12 5)
        \move(10 0)\lvec(12 5)
        \move(10 0)\lvec(8 5)
       \move(12 0)\lvec(8 5)
        \move(12 0)\lvec(10 5)
        \move(8 0) \fcir f:0 r:0.2 
        \move(10 0) \fcir f:0 r:0.2 
        \move(12 0) \fcir f:0 r:0.2 
        \move(8 5) \fcir f:0 r:0.2 
        \move(10 5) \fcir f:0 r:0.2 
        \move(12 5) \fcir f:0 r:0.2 
        \move(9 2.5) \fcir f:0 r:0.2 
        \move(10 2.5) \fcir f:0 r:0.2 
        \move(11 2.5) \fcir f:0 r:0.2 
\end{texdraw}
\end{center}
\caption{The Pappus and non-Pappus matroids}
\end{figure}

\begin{example}
The Pappus and non-Pappus matroids are pictured in Figure 2.  The non-Pappus matroid is obtained from the Pappus matroid by mandating that the three points in the middle not be collinear.  
This is in violation of Pappus's theorem which is a theorem of projective geometry.   Therefore, the non-Pappus matroid is not representable over any field.
\end{example}

\begin{example}
Let $G$ be a graph  with $n+1$ edges labeled by $E=\{0,1,\dots,n\}$.  We can define the graphic matroid $M(G)$ to be the matroid whose set of bases are $G$'s spanning forests.  The flats of this matroid are the set of edges $F$ such that $F$ contains any edge whose endpoints are connected by a path of edges in $F$.  Note that the loops of this matroid are the loops of the graphs while the coloops are the bridges.  In fact, coloops are sometimes called bridges or isthmuses in the literature.

This matroid comes from a vector configuration.  Pick a direction for each edge.  Let $C_1(G,\k)$ be the vector space of simplicial $1$-chains on $G$ considered as a $1$-dimensional simplicial complex.  There is a basis of $C_1(G,\k)$ given by the edges $e_0,\dots,e_n$ with their given orientation.  Let $\partial:C_1(G,\k)\rightarrow C_0(G,\k)$ be the differential.  Then $M(G)$ is given by the vector configuration $\partial(e_0),\dots,\partial(e_n)$.  Here, the rank of the matroid is  
\[\dim(\partial C_1(G,\k))=\dim C_0(G,\k)-\dim H_0(G,\k)=|V(G)|-\kappa(G)\]
where $\kappa(G)$ is the number of connected components of $G$.

By Whitney's $2$-isomorphism theorem \cite[Thm 5.3.1]{Oxley}, one can reconstruct a connected graph $G$ from $M(G)$ up to two moves, vertex cleaving and Whitney twists.  If $G$ is $3$-connected, it can be uniquely reconstructed from $M(G)$.  In fact, matroids can be considered to be generalizations of graphs.  Tutte \cite{TutteKings} stated, ``If a theorem about graphs can be expressed in terms of edges and circuits only it probably exemplifies a more general theorem about matroids.''

As a special case, consider $K_{n+1}$, the complete graph on $n+1$ vertices.  Let us denote its vertices by $w_0,\dots,w_n$.  The edges are denoted by $e_{ij}$ for $0\leq i<j\leq n$.  The differential is given by $\partial e_{ij}=w_i-w_j$.  The associated subspace as in Example~\ref{e:vector} is $dC^0(G,\k)\subset C^1(G,\k)$.  We can put coordinates $y_0,\dots,y_n$ on $C^0(G,\k)$ by taking as a basis the characteristic functions of vertices $\delta_{w_0},\dots,\delta_{w_n}$.  Therefore, 
\[dC_0(G,\k)\cong \k^{n+1}/\k\]
where we quotient by the diagonal line.  The  hyperplane arrangement induced by the coordinate subspaes of $C^1(G,\k)$ is the \emph{braid arrangement}
\[\{y_i-y_j=0\mid 0\leq i<j\leq n\}.\]
\end{example}

\begin{example}
Let $G$ be a graph with $n+1$ edges labeled by $E=\{0,1,\dots,n\}$.  We can also define another matroid, the cographic matroid $M^*(G)$ of $G$.  The bases of $M^*(G)$ are complements of the spanning forests of $G$.

The cographic matroid of a graph also comes from a vector configuration.  We pick a direction for each edge as before.  Let $C^1(G,\k)$ be the $1$-cochains of $G$. It has a basis given by $\delta_{e_0},\delta_{e_1},\dots,\delta_{e_n}$, the characteristic function of each edge with given orientation
 Let $d:C^0(G,\k)\rightarrow C^1(G,\k)$ be the differential.  Let $V=C^1(G,\k)/dC^0(G,\k)$.  The matroid is given by the image of $\delta_{e_0},\delta_{e_1},\dots,\delta_{e_n}$ in $V$.
\end{example}

Matroids that are isomorphic to $M(G)$ for some graph $G$ are said to be graphic.  Cographic matroids are defined analogously.  Observe that because cycle and cocycle spaces can be defined over any field, graphic and cographic matroids are regular.
The uniform matroid $U_{2,4}$, because it is not regular, is neither graphic nor cographic.  There are examples of regular matroids that are neither graphic nor cographic.

\begin{example} Transversal matroids arise in combinatorial optimization.  Let $E=\{0,\dots,n\}$.  Let $A_1,\dots,A_m$ be subsets of $E$.  A {\em partial transversal} is a subset $I\subseteq E$ such that there exists an injective $\phi:I\rightarrow \{1,\dots,m\}$ such that $i\in A_{\phi(i)}$.  A transversal is a partial transerval of size $m$. We can view elements of $E$ as people and elements of $\{1,\dots,m\}$ as jobs where $A_i$ is the set of people qualified to do job $i$.  A partial transversal is a set of people who can each be assigned to a different job.  The transversal matroid is defined to be the matroid on $E$ whose independent sets are the sets of partial transversals.

An important generalization of Hall's theorem is due to Rado.  See \cite[Ch. 7]{Welsh} for details:
\begin{theorem} Let $M$ be a matroid on a set $E$.   A family of subsets $A_1,\dots,A_m\subseteq E$ has a transversal that is an independent set in $M$ if and only if for all $J\subseteq \{1,\dots,m\}$,
\[r(\cup_{j\in J} A_j)\geq |J|.\]
\end{theorem}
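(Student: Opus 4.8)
The statement is Rado's theorem. The plan is to prove the two implications separately: necessity is immediate, and sufficiency goes by induction on $m$, with a case split according to whether some proper subfamily is "tight" for the rank inequality.

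\textbf{Necessity.} Suppose $T=\{t_1,\dots,t_m\}$ is a transversal with $t_j\in A_j$ and $T$ independent in $M$. For any $J\subseteq\{1,\dots,m\}$ the set $\{t_j\mid j\in J\}$ is a subset of $\bigcup_{j\in J}A_j$ of size $|J|$, and it is independent as a subset of an independent set, so $r(\bigcup_{j\in J}A_j)\geq |J|$.

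\textbf{Sufficiency.} Induct on $m$. For $m\leq 1$: the hypothesis $r(A_1)\geq 1$ forces $A_1$ to contain a non-loop (by repeated use of submodularity, if every element of $A_1$ were a loop then $r(A_1)=0$), and a single non-loop is an independent transversal of $(A_1)$. For the inductive step assume the Rado condition holds for $A_1,\dots,A_m$. \emph{Case A:} $r(\bigcup_{j\in J}A_j)\geq |J|+1$ for every nonempty $J\subsetneq\{1,\dots,m\}$. Choose a non-loop $x\in A_m$ (possible since $r(A_m)\geq 1$), pass to the contraction $M/x$ on $E\setminus\{x\}$, whose rank function is $r'(S)=r(S\cup\{x\})-1$, and set $A_j'=A_j\setminus\{x\}$ for $j<m$. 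Since $(\bigcup_{j\in J}A_j')\cup\{x\}\supseteq\bigcup_{j\in J}A_j$, one gets $r'(\bigcup_{j\in J}A_j')\geq r(\bigcup_{j\in J}A_j)-1\geq |J|$ for $J\subseteq\{1,\dots,m-1\}$, so the Rado condition holds for $A_1',\dots,A_{m-1}'$ in $M/x$. By induction there is an independent transversal $\{t_1,\dots,t_{m-1}\}$ in $M/x$, and then $\{t_1,\dots,t_{m-1},x\}$ is an independent transversal of $A_1,\dots,A_m$ in $M$ (independence in $M/x$ of a set lifts to independence of its union with $x$). \emph{Case B:} some nonempty $J_0\subsetneq\{1,\dots,m\}$ has $r(\bigcup_{j\in J_0}A_j)=|J_0|$; reindex so $J_0=\{1,\dots,k\}$ with $1\leq k<m$, and put $B=\bigcup_{j=1}^kA_j$. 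The subfamily $A_1,\dots,A_k$ inherits the Rado condition, so by induction it has an independent transversal $S=\{s_1,\dots,s_k\}\subseteq B$; as $|S|=k=r(B)$, $S$ is a basis of $B$, whence $B\subseteq\cl(S)$. Contract $S$, and for $j>k$ put $A_j''=A_j\setminus B$. The key computation is
\[ r_{M/S}\Big(\bigcup_{j\in J}A_j''\Big)=r\Big(\bigcup_{j\in J}A_j''\cup S\Big)-k=r\Big(\bigcup_{j\in J}A_j''\cup B\Big)-k\geq r\Big(\bigcup_{j\in J\cup\{1,\dots,k\}}A_j\Big)-k\geq |J|, \]
where the middle equality uses $B\subseteq\cl(S)$ and the last inequality uses the Rado condition for the full family. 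By induction there is an independent transversal $\{s_{k+1},\dots,s_m\}$ of $A_{k+1}'',\dots,A_m''$ in $M/S$; these elements lie outside $B\supseteq S$, so $\{s_1,\dots,s_m\}$ consists of $m$ distinct elements with $s_j\in A_j$, and it is independent in $M$ since independence of the second block in $M/S$ lifts to independence of its union with $S$.

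\textbf{Main obstacle.} The nontrivial part is Case B: a tight proper subfamily blocks any greedy, one-element-at-a-time construction, and the remedy is to peel off the entire tight block $\{1,\dots,k\}$, solve it by induction, and then contract by the resulting basis $S$ so that $B$ becomes "invisible" to the remaining family. The two verifications that require care are (i) that the contracted family $A_{k+1}'',\dots,A_m''$ still satisfies the Rado inequality — this is exactly where $B\subseteq\cl(S)$ and tightness of $J_0$ enter — and (ii) that the two transversal blocks are disjoint and their union stays independent in $M$. Everything else uses only the rank axioms together with the standard facts that $r(X\cup Z)=r(X)$ when $Z\subseteq\cl(X)$ and that the contraction of a matroid by an independent set is again a matroid with rank function $r_{M/S}(\,\cdot\,)=r(\,\cdot\,\cup S)-r(S)$.
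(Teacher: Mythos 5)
Your proof is correct. Note that the paper does not actually prove this statement: it records Rado's theorem as background and points to Welsh's book (Ch.~7) for details, so there is no argument in the text to compare yours against. What you have written is the classical proof, the matroid analogue of the tight-set induction for Hall's theorem: necessity is immediate, and for sufficiency you split on whether some proper nonempty subfamily meets the rank bound with equality. Both cases check out. In Case A the strict inequality for proper subfamilies is exactly what survives the loss of one rank unit when you contract the non-loop $x$ (and it also rules out the degenerate situation $A_j=\{x\}$, since that would force equality for $J=\{j\}$); in Case B the chain $r_{M/S}(\bigcup_{j\in J}A_j'')=r(\bigcup_{j\in J}A_j''\cup B)-k\geq |J\cup\{1,\dots,k\}|-k=|J|$ is valid because $S$ is a maximal independent subset of $B$, so $B\subseteq\cl(S)$, and the last step uses the Rado condition for the full family. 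The two blocks of the transversal are disjoint because the second block avoids $B$ while the first lies in $B$, and independence of the union follows from the standard fact that $I$ independent in $M/S$ with $S$ independent in $M$ gives $r(I\cup S)=|I|+|S|$. The only stylistic remark is that your base case could simply be $m=0$ (empty transversal), with $m=1$ absorbed into the induction; as written it is also fine.
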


This has applications to finding a common transversal for two collections of subsets.
\end{example}

\begin{example} Algebraic matroids come from field extensions.  Let $\F$ be a field and let $\K$ be an extension of $\F$ generated by a finite subset $E\subset \K$.  We define a rank function as follows: for $S\subseteq E$,
\[r(S)=\operatorname{tr. deg}(\F(S)/\F),\]
the transcendence degree of $\F(S)$ over $\F$.  It turns out that $r$ defines a matroid.  It can be shown that every matroid representable over some field is an algebraic matroid over that same field \cite[Prop 6.7.11]{Oxley}.  There are examples of non-algebraic matroids and of algebraic, non-representable matroids.

Algebraic matroids can be interpreted geometrically.  Let $V\subset \A_{\F}^{n+1}$ be an algebraic variety.  Set $\K=\K(V)$, the function field of $V$.  Let $E=\{x_0,x_1,\dots,x_n\}$ be the coordinate functions on $V$.   The rank of the algebraic matroid given by $\K$ is the transcendence degree of $\K$ over $\F$, which is equal to $\dim V$.  A subset $S\subseteq E$ is a basis when $\operatorname{tr. deg}(\K/\F(S))=0$ which happens when the projection onto the coordinate space $\pi_S:V\rightarrow \A_{\F}^S$ is generically finite.
\end{example}

\begin{example} An important class of matroids are the paving matroids.  Conjecturally, they form almost all matroids \cite{MNWW}. A {\em paving} matroid of rank $r+1$ is a matroid such that any set $I\subset E$ with $|I|\leq r$ is independent.  Paving matroids can be specified in terms of their {\em hyperplanes}, that is, their rank $r$ flats as there is a cryptomorphic axiomatization of matroids in terms of their hyperplanes.  We use the following proposition where an $r$-partition of a set $E$ is a collection of subsets $\cT=\{T_1,T_2,\dots,T_k\}$ where for all $i$, $|T_i|\geq r$ and  each $r$ element subset of $E$ is a subset of a unique $T_i$:
\begin{proposition} \cite[Prop 2.1.24]{Oxley} If $\mathcal{T}=\{T_1,T_2,\dots,T_k\}$ is an $r$-partition of a set $E$, then $\cT$ is the set of hyperplanes of a rank $r+1$ matroid on $E$.  Moreover, for $r\geq 1$, the set of hyperplanes of every rank $r+1$ paving matroid on $E$ is an $r$-partition of $E$.
\end{proposition}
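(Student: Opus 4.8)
The statement splits into two implications, and I would handle them separately.

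\emph{Building a matroid from an $r$-partition.} Given an $r$-partition $\cT=\{T_1,\dots,T_k\}$ of $E$ --- where, to dispose of a degenerate case, I assume $E$ is not itself a block, so that each $T_i$ is proper and $|E|\ge r+1$ --- I would exhibit the desired matroid by writing down its rank function outright:
\[
  \rho(S)=\begin{cases}
    |S| & \text{if } |S|\le r,\\
    r & \text{if } |S|>r \text{ and } S\subseteq T_i \text{ for some } i,\\
    r+1 & \text{otherwise,}
  \end{cases}
\]
and then verifying the four defining properties of a rank function. Boundedness, monotonicity, and $\rho(E)=r+1$ (here is where $E\notin\cT$ is used) are immediate; the substance is the submodular inequality~(\ref{i:subadd}). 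The only combinatorial fact the verification needs is that \emph{two distinct blocks meet in at most $r-1$ elements} and, a fortiori, that \emph{no block is contained in another}; both follow instantly from the uniqueness clause in the definition of an $r$-partition, since an $r$-element subset of $T_i\cap T_j$ would lie in two blocks. Granting this, $\rho(A\cup B)+\rho(A\cap B)\le\rho(A)+\rho(B)$ reduces to a bounded case analysis according to the sizes of $A$, $B$, $A\cup B$ relative to $r$ and to which of these sets lie inside a common block.

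Having built the matroid, I would check that its hyperplanes --- the maximal sets of rank $r$, equivalently the rank-$r$ flats --- are exactly $T_1,\dots,T_k$: each $T_i$ has rank $r$ and is maximal with that property precisely because no block contains another and distinct blocks meet in fewer than $r$ elements, while conversely any rank-$r$ flat of size at least $r$ is forced by these same two facts to equal some $T_i$. (Alternatively one could take $\cf=\{S:|S|<r\}\cup\cT\cup\{E\}$ as the set of flats and verify Definition~\ref{d:flats} directly; the work in its third axiom is comparable.)

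\emph{Recovering an $r$-partition from a paving matroid.} Let $M$ be a paving matroid of rank $r+1$ on $E$ with $r\ge 1$, and let $\mathcal H$ be its set of hyperplanes. A hyperplane $H$ is a flat of rank $r$, hence contains an independent set of size $r$, so $|H|\ge r$. Now fix $R\subseteq E$ with $|R|=r$. Since $M$ is paving, $R$ is independent, so $\rho(R)=r$, and therefore $\cl(R)$ is a flat of rank $r$, i.e.\ a hyperplane containing $R$. It is the \emph{only} one: if $H\in\mathcal H$ and $R\subseteq H$, then $\cl(R)\subseteq H$, and two flats with one contained in the other cannot have equal rank unless they coincide (a point of $H\setminus\cl(R)$ would raise the rank, since $\cl(R)$ is a flat). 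Hence every $r$-subset lies in a unique hyperplane, so $\mathcal H$ is an $r$-partition; the hypothesis $r\ge 1$ only serves to rule out a triviality.

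The conceptual skeleton is short, and I expect the genuine work to be the case-checking in the submodularity verification (equivalently, in the third flat axiom), where one must invoke the uniqueness property of the $r$-partition at exactly the right moments and treat the boundary sizes $|S|=r$ and $|S|=r+1$ with care. The only other point to watch is remembering to exclude the degenerate configuration in which a block equals $E$ --- ruled out by hand in the first half, and the reason one restricts to $r\ge 1$ in the second.
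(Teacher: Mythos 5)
The paper does not actually prove this proposition --- it is quoted from Oxley with a citation --- so there is no in-paper argument to compare yours against; judged on its own, your proof is correct. The forward direction via the explicit rank function ($|S|$ for $|S|\le r$, then $r$ for larger sets inside a block, $r+1$ otherwise) is the standard construction, and the one combinatorial fact you isolate --- distinct blocks meet in at most $r-1$ elements, hence no block contains another --- is exactly what is needed: the deferred submodularity case analysis does close (the only cases invoking the block-intersection bound are when $A$ and $B$ both have rank $r$ inside necessarily distinct blocks, or when a small set is not contained in the other set's block), and the same two facts do force the rank-$r$ flats to be precisely the $T_i$. The converse is also right: pavingness makes $\cl(R)$ a hyperplane for each $r$-set $R$, and your flatness argument shows any hyperplane containing $R$ equals $\cl(R)$, giving uniqueness. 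Your caveat about the degenerate case is well taken: as paraphrased in the survey, the definition of an $r$-partition admits $\mathcal{T}=\{E\}$, for which the first assertion is false; Oxley's actual definition carries a nondegeneracy condition (at least two blocks), which is also the real reason the converse needs $r\ge 1$, since a rank-one matroid has only one hyperplane. Excluding that configuration by hand, as you do, is the right repair, and the rest of the argument is sound.
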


One example of a paving matroid is the V\'{a}mos matroid.  Here we follow the definition of \cite{Oxley}.  We set $E=\{1,2,3,4,1',2',3',4'\}$.  We set 
\[\cT_1=\{\{1,2,1',2'\},\{1,3,1',3'\},\{1,4,1',4'\},\{2,3,2',3'\},\{2,4,2',4'\}\]
 and 
\[\cT=\cT_1\cup\{T\subset E|\  |T|=3\text{\ and $T$ is not contained in any element of}\ \cT_1\}\]
Then $\cT$ is a $3$-partition and the set of hyperplanes of a rank $4$ matroid $V_8$. This matroid is not representable over any field.  One can view $V_8$ as the set of vertices of a cube whose bottom and top faces are labeled $\{1,3,2,4\}$ and $\{1',3',2',4'\}$ where we mandate that the points $\{3,4,3',4'\}$ are not coplanar.  This matroid turns out to be non-algebraic \cite{IngletonMain}.

For details on paving matroids, see \cite[Sec 2.1]{Oxley}.
\end{example}

\begin{example}  {\em Schubert matroids} are the matroids whose linear subspaces correspond to the generic point of a particular Schubert cell.    They were introduced by Crapo \cite{CrapoSingleelement}.  See \cite{ArdilaFinkRincon} for more details.

Schubert cells form an open stratification of the Grassmannian $\Gr(d+1,n+1)$ of $(d+1)$-dimensional subspaces of $\k^{n+1}$.  The Schubert cells consist of all the subspaces that intersect a flag of subspaces in particular dimensions.  Specifically, we have a flag of subspaces
\[\{0\}\subsetneq W_1\subsetneq W_2\subsetneq\ldots\subsetneq W_n\subsetneq W_{n+1}=\k^{n+1}\]
with $\dim W_i=i$.  For a subspace $V\in \Gr(d+1,n+1)$, this flag induces a nested sequence of subspaces
\[\{0\}\subseteq V\cap W_1\subseteq V\cap W_2\subseteq\ldots\subseteq V\cap W_n\subseteq V\cap W_{n+1}=V\]
The dimensions of these subspaces increase by $0$ or $1$ with each inclusion, so we can mandate where the jump occurs.  In the most generic situation, the sequence of dimensions would be $0,\ldots,0,1,2,\ldots,d,d+1$, that is, $\dim(V\cap W_{n-d+i})=i$.  We consider cases where where the sequence the jumps differs from that situation.   Specifically, we let $a_1,\dots,a_{d+1}$ be a non-increasing sequence of integers with $a_1\leq n-d$.  The Schubert cell is cut out by the open conditions that $\dim(V\cap W_{n-d+i-a_i})=i$.  Its closure, the corresponding Schubert variety is cut out by replacing the equality by ``$\geq$''.

To put Schubert cells into a matroid context, we must pick a flag.  Let $W_i=\Span(e_0,\ldots,e_{i-1})$.  In other words, $W_i$ is cut out by the system $x_i=\ldots=x_{n}=0$.  Therefore, we expect to have $\{n-a_{d+1},n-1-a_d,\ldots,n-d-a_1\}$ as a basis.  However, we need to impose more conditions to specify a matroid.  We will suppose that $V$ is generic with respect to the flag apart from these conditions.  We declare the bases of the matroid to be exactly the subsets $\{s_0,s_1,\ldots,s_d\}\subseteq \{0,\ldots,n\}$ such that $s_i\leq (n-i)-a_{d+1-i}$.  A point of view that will be taken up later is that matroids allow one to specify points in Schubert cells more precisely.
\end{example}

\section{Operations on Matroids} \label{s:operations}

In this section, we survey some of the operations for constructing and relating matroids.  Our emphasis is on explaining these constructions in the representable case.  For a more complete reference we recommend \cite{BrylawskiBook,Oxley}.

\subsection{Deletion and Contraction}

Let $M$ be a matroid of rank $d+1$ on a finite set $E$.  For $X\subset E$, we we may define the {\em deletion}  $M\setminus X$.  The ground set of $M\setminus X$ is $E\setminus X$ with rank function given as follows: for $S\subseteq E\setminus X$,
\[r_{M\setminus X}(S)=r_M(S).\]
If $M$ is represented by a vector configuration $v_0,\dots,v_n$, $M\setminus X$ is represented by $\{v_i\mid i\not\in X\}$.  Similarly, if $M$ is represented by a vector space $V\subseteq \k^{n+1}$, $M\setminus i$ is represented by $\pi(V)\subseteq \k^n$ where $\pi:\k^{n+1}\rightarrow \k^{n+1-|X|}$ is given by projecting out the coordinates corresponding to elements of $X$.  
Deletion on graphic matroids corresponds to deleting an edge from the graph.

For $T\subseteq E$, we define the \emph{restriction} by
\[M|_T=M\setminus(E\setminus T).\]
 If $F$ is a flat of $M$, then it is easy to see that the lattice of flats $L(M|_F)$ is the interval $[\hat{0},F]$ in $L(M)$.

A special case which will be of interest below is the deletion of a single element.  Let $i\in E$.  If $i$ is not a coloop, then there is a basis $B$ of $E$ not containing $i$.  This is rank $d+1$ subset of $M\setminus i$, and so $M\setminus i$ is a matroid of rank $d+1$.  The bases of $M\setminus i$ are the bases of $M$ that do not contain $i$.  By considering the matroid as a linear subspace, we see that  $\dim \pi(V)=\dim V$.  

There is a dual operation to deletion called \emph{contraction}.  Let $X\subset E$.  We define the contraction $M/X$ to be the matroid on ground set $E\setminus X$ given by for $S\subseteq E\setminus X$,
\[r(S)=r(S\cup X)-r(X).\]
Consequently, $M/X$ is  of rank $n+1-r(X)$.  
If $F$ is a flat of $M$, then $L(M/F)$ is the interval $[F,E]$ in $L(M)$.
If $M$ is represented by a vector configuration, $v_0,\dots,v_n$ in $\k^{d+1}$, $M/X$ is represented by $\{\pi(v_i)\mid i\not\in X\}$ where $\pi:\k^{d+1}\rightarrow \k^{d+1}/W$ is the projection and
\[W=\Span(\{v_i\mid i\in X\}).\]
Likewise if $M$ is represented by a subspace $V\subseteq \k^{n+1}$, $M/X$ is given by $V\cap L_X\subseteq L_X$ where $L_X$ is the coordinate subspace given by 
\[L_X=\{x\mid x_i=0 \text{\ if\ }i\in X\}.\]

If $i\in E$ that is not a loop, then the rank of $M/i$ is $d$.  In this case, if $M$ is represented by a subspace $V$ then $V$ intersects $L_i$ transversely.  If $i$ is a loop, the rank of $M/i$ is $d+1$.  In this case, $V$ is contained in the subspace $L_i$.
If $i$ is not a loop, the bases of $M/i$ are
\[\cB(M/i)=\{B\setminus\{i\}\mid i\in B,\ B\ \text{is a basis for\ } M\}.\] 

\begin{definition} A matroid $M'$ is said to be a {\em minor} of $M$ if it is obtained by deleting and contracting elements of the ground set of $M$.
\end{definition}
Note that a minor of a representable matroid is representable because we have a geometric interpretation of deletion and contraction on a vector arrangement or linear subspace.

\subsection{Direct sums of matroids}

Given two matroids $M_1,M_2$ on disjoint sets $E_1,E_2$, we may produce a direct sum matroid $M_1\oplus M_2$ on $E_1\sqcup E_2$.  Specifically, for $S_1\subseteq E_1,S_2\subseteq E_2$, we define
\[r(S_1\sqcup S_2)=r_1(S_1)+r_2(S_2).\]
The bases of $M$ are of the form $B_1\sqcup B_2$ for $B_1\in\cB_1,B_2\in\cB_2$. The circuits of $M_1\oplus M_2$ are the circuits of $M_1$ together with the circuits of $M_2$.
The lattice of flats obeys
\[L(M_1\oplus M_2)=L(M_1)\times L(M_2)\]
where the underlying set is the Cartesian product and $(F_1,F_2)\leq (F'_1,F'_2)$ if and only if $F_1\leq F'_1$ and $F_2\leq F'_2$.
 If $M_1$ is represented by vectors $v_0,\dots,v_m\in\k^{d_1+1}$ and $M_2$ is represented by vectors $w_0,\dots,w_n\in\k^{d_2+1}$, then $M_1\oplus M_2$ is represented by
\[(v_0,0),\ldots,(v_m,0),(0,w_0),\ldots(0,w_n)\in \k^{d_1+1}\oplus \k^{d_2+1}.\]
If $M_1,M_2$ are represented by subspace $V_1\subseteq \k^{n_1+1}, V_2\subseteq \k^{n_2+1}$, then $M_1\oplus M_2$ is represented by 
\[V_1\oplus V_2\subseteq \k^{n_1+1}\oplus \k^{n_2+1}.\]
For graphic matroids, direct sum corresponds to disjoint union of graphs.  

Every matroid has a decomposition analogous to the decomposition of a graph into connected components.  

\begin{definition} A matroid is {\em connected} if for every $i,j\in E$, there exists a circuit containing $i$ and $j$.
\end{definition}

We can define connected components of the matroid by saying that two elements $i,j$ are in the same connected component if and only if there exists a circuit containing $i$ and $j$. 
This is an equivalence relation.  It can be stated in terms of bases in the following form which will be important when we study matroid polytopes: two elements $i$,$j$ are in the same connected component if and only if there exists bases $B_1$ and $B_2$ such that $B_2=(B_1\setminus\{i\})\cup\{j\}.$  If $T_1,\dots,T_\kappa$ are the connected components of $M$, then we have a direct sum decomposition
\[M\cong M|_{T_1}\oplus\dots \oplus M|_{T_\kappa}.\]
Connected matroids are indecomposable under direct sum, and matroids have a unique direct sum decomposition into connected matroids.

Loops and coloops play a particular role in direct-sum decompositions.
Because the only circuit in which a loop $i$ occurs is  $\{i\}$, $\{i\}$ is a connected component.  Similarly, because a coloop $j$ does not occur in any circuit, $\{j\}$ is a connected component.  Therefore, loops and coloops may be split off from the matroid as in the following proposition:

\begin{proposition} Any matroid $M$ can be written as a direct sum
\[M\cong M'\oplus (U_{0,1})^{\oplus l}\oplus (U_{1,1})^{\oplus c}\]
where $M'$ has neither loops nor coloops.
\end{proposition}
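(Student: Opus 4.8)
The plan is to build the decomposition directly from the connected component decomposition already established in the excerpt. Let $M$ be a matroid on $E$ of rank $d+1$. We know that $M$ decomposes uniquely as a direct sum $M \cong M|_{T_1} \oplus \dots \oplus M|_{T_\kappa}$ over its connected components $T_1, \dots, T_\kappa$. The strategy is to separate the components into three types: singletons $\{i\}$ where $i$ is a loop, singletons $\{j\}$ where $j$ is a coloop, and everything else. Grouping the first type gives a factor $(U_{0,1})^{\oplus l}$, the second gives $(U_{1,1})^{\oplus c}$, and the direct sum of the remaining components is the matroid $M'$; the only thing left to check is that $M'$ has neither loops nor coloops.

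First I would recall from the excerpt that a loop $i$ satisfies $r(\{i\}) = 0$, and that $\{i\}$ is then its own connected component (since the only circuit containing a loop is $\{i\}$ itself). The restriction $M|_{\{i\}}$ is a rank-$0$ matroid on one element, which is exactly $U_{0,1}$. Dually, a coloop $j$ lies in no circuit, so $\{j\}$ is its own connected component, and $M|_{\{j\}}$ is a rank-$1$ matroid on one element, namely $U_{1,1}$. Thus each loop and each coloop appears as one of the $T_a$'s, contributing a copy of $U_{0,1}$ or $U_{1,1}$ respectively. Let $l$ be the number of loops, $c$ the number of coloops, and let $T_{a_1}, \dots, T_{a_m}$ be the remaining components. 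Set $M' = M|_{T_{a_1}} \oplus \dots \oplus M|_{T_{a_m}}$. Reordering the factors in the direct sum decomposition of $M$ (which is harmless, since direct sum is commutative and associative up to isomorphism) yields $M \cong M' \oplus (U_{0,1})^{\oplus l} \oplus (U_{1,1})^{\oplus c}$.

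It remains to verify that $M'$ has neither loops nor coloops. Suppose $M'$ had a loop $i$; then $\{i\}$ would be a connected component of $M'$. Since $M'$ is a direct sum of the $M|_{T_{a_k}}$ and connected components of a direct sum are the (disjoint) unions of connected components of the summands, $\{i\}$ would have to be a connected component of some $M|_{T_{a_k}}$, forcing $T_{a_k} = \{i\}$ to be a loop-component of $M$ — but we excluded those. An identical argument, using that a coloop of $M'$ is a single-element connected component with rank $1$, rules out coloops in $M'$. The main (and only mildly delicate) point is this compatibility between connected components of a direct sum and those of its summands, together with the observation that the \emph{property} of being a loop or coloop is intrinsic to a component and unchanged by the direct sum decomposition; once that is in hand, the proposition follows by bookkeeping. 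I would state that compatibility explicitly — a loop (resp.\ coloop) of $M|_{T}$ is a loop (resp.\ coloop) of $M$ and vice versa, since ranks of subsets of $T$ agree in $M$ and $M|_T$, and since a circuit of $M$ contained in $T$ is a circuit of $M|_T$ — and then conclude.
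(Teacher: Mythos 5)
Your argument is correct and matches the paper's own justification: the paper also observes that a loop lies only in the circuit consisting of itself and a coloop lies in no circuit, so each forms a singleton connected component, and then splits these off via the (unique) direct sum decomposition into connected components, leaving a summand $M'$ with neither loops nor coloops. Your extra verification that $M'$ inherits no loops or coloops is the same bookkeeping the paper leaves implicit.
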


\subsection{Duality} \label{ss:duality}
Duality is a a natural operation on matroids that generalizes duality of planar graphs.  

\begin{definition} The dual of a matroid $M$ on $E$ with rank function $r$ is defined to be the matroid on $E$ with rank function given by
\[r^*(S)=r(E\setminus S)+|S|-r(E).\]
\end{definition}

This rank function satisfies the axioms of a matroid.  If $M$ is rank $d+1$ on $E=\{0,1,\dots,n\}$, then $M^*$ is rank $n-d$.  
The bases of $M^*$ can be seen to be the set
\[\cB^*=\{E\setminus B\mid B\in \cB\}.\]
Duality interchanges loops and coloops, commutes with direct sum, and takes deletion to contraction:
$(M\setminus i)^*=M^*/i.$
As an example, we have $(U_{r+1,n+1})^*=U_{n-r,n+1}$.

The dual of a representable matroid is representable.  Let $M$ be represented by a vector configuration $v_0,v_1,\dots,v_n$ spanning $\k^{d+1}$.  This configuration can be thought of as a surjection $p:\k^{n+1}\rightarrow \k^{d+1}$ that fits into an exact sequence as
\[\xymatrix{0\ar[r]&\ker(p)\ar[r]^j&\k^{n+1}\ar[r]^p&\k^{d+1}\ar[r]&0}.\]
 We can take duals to get an injection
$p^*:(\k^{d+1})^*\hookrightarrow (\k^{n+1})^*.$
The dual of $M$ is given by the vector configuration corresponding to the quotient 
\[(\k^{n+1})^*\rightarrow (\k^{n+1})^*/p^*((\k^{d+1})^*)=\ker(p)^*.\]

Indeed, let $w_0,w_1,\dots,w_n$ be the dual basis of $(\k^{n+1})^*$.  For $S\subseteq \{0,\ldots,n\}$, the span of $\{w_i\mid i\in S\}$ induces a linear projection $\pi_S:\k^{n+1}\rightarrow \k^S.$  Write $\k^{E\setminus S}\subset\k^{n+1}$ for the kernel of that projection.
The rank of $S$ in the vector configuration induced by $\{w_0,\ldots,w_n\}$ is 
\begin{eqnarray*}
r^*(S)&=&\dim(j^*\pi_S^*((\k^S)^*))\\
&=&\dim(\pi_S(\ker(p)))\\
&=&\dim((\ker(p)+\k^{E\setminus S})/\k^{E\setminus S})\\
&=&\dim(\k^{E\setminus S}+\ker(p)/\ker(p))+\dim(\ker(p))-\dim(\k^{E\setminus S})\\
&=&\dim(p(\k^{E\setminus S}))+\dim(\ker(p))-\dim(\k^{E\setminus S})\\
&=&r(E\setminus S)+(n-d)-((n+1)-|S|)\\
&=&r(E\setminus S)+|S|-r(E).
\end{eqnarray*}
If a matroid is represented by a subspace $V\subset\k^{n+1}$, its dual is represented by $V^\perp\subset(\k^{n+1})^*$ where $V^\perp=\ker((\k^{n+1})^*\rightarrow V^*)$.

By interpreting of $M(G)$ and $M^*(G)$ as vector configurations given by chain and cochain groups, we see that these are dual matroids.   If $G$ is a planar graph, it turns out that $M^*(G)=M(G^*)$ where $G^*$ is the planar dual of $G$.  Because one can take the dual of any graphic matroids, the theory of matroids allows one to take the dual of a non-planar graph.

\subsection{Extensions} \label{ss:extensions}

Single-element extension is an operation on matroids inverse to single-element deletion.  Its properties were worked out by Crapo.  Given a matroid $M$ on a ground set $E$, it produces a new matroid $M'$ on a ground set $E'=E\sqcup \{p\}$ such that $M=M'\setminus p$.  Here we will follow the exposition of \cite{BrylawskiConstructions}.  We can partition the flats of $M$ into three sets based on how they change under extension:
\begin{eqnarray*}
\mathcal{K}_1&=&\{F\in \cf\mid\text{$F$ and $F\cup \{p\}$ are both flats of $M'$}\}\\
\mathcal{K}_2&=&\{F\in \cf\mid\text{$F$ is a flat of $M'$ but $F\cup\{p\}$ is not}\}\\
\mathcal{K}_3&=&\{F\in \cf\mid F\cup\{p\}\ \text{is a flat of $M'$ but $F$ is not}\}.
\end{eqnarray*}

This partition can be understood in the case where $M'$ is given by a vector configuration $\{v_0,\dots,v_n,v_p\}\subseteq \k^{d+1}$ and $M=M'\setminus p$.  The flats in $\mathcal{K}_3$ correspond to subspaces that contain $v_p$ and are spanned by a subset of $\{v_0,\dots,v_n\}$.  A flat $F$ of $M$ is in $\mathcal{K}_1$ when $v_p\not\in\Span(F)$ and $\Span(F\cup v_p)$ is not among the linear subspaces corresponding to flats of $M$.  Finally, $F\in \mathcal{K}_2$ when $v_p\not\in\Span(F)$ but $\Span(F\cup v_p)$ corresponds to some flat of $M$.  This flat must be an element of $\mathcal{K}_3$.  
If one knows $\mathcal{K}_3$, one can determine $\mathcal{K}_2$ and therefore $\mathcal{K}_1$: elements of $\mathcal{K}_2$ are exactly those that are contained in an element of $\mathcal{K}_3$.  Now, let us determine what properties that $\mathcal{K}_1$ and $\mathcal{K}_3$ should have.   If $F_1\subseteq F_2$ and $F_2\in\mathcal{K}_1$, then $F_1\in\mathcal{K}_1$.
Also, if $F_1\subseteq F_2$ and $F_1\in\mathcal{K}_3$, then $F_2\in\mathcal{K}_3$.  A more subtle property can be seen by considering intersections: if $F_1,F_2\in\mathcal{K}_3$, then $v_p\in\Span(F_1)\cap\Span(F_2)$;  therefore, if $\Span(F_1\cap F_2)=\Span(F_1)\cap \Span(F_2)$, then $F_1\cap F_2\in\mathcal{K}_3$.  These properties can all be established in the abstract combinatorial setting.
Translated into the matroid axioms, these properties say that $\mathcal{K}_3$ is a modular cut:

\begin{definition} Let $M$ be a matroid.  A subset $\mathcal{M}\subseteq \cf$ is said to be a {\em modular cut} if
\begin{enumerate}
\item If $F_1\in \mathcal{M}$ and $F_2\in\cf$ with $F_1\subseteq F_2$, then $F_2\in\mathcal{M}$ and
\item if $F_1,F_2\in\mathcal{M}$ satisfy
\[r(F_1)+r(F_2)=r(F_1\cap F_2)+r(F_1\cup F_2)\]
then $F_1\cap F_2\in\mathcal{M}$ \label{i:mc2}
\end{enumerate}
\end{definition}

Note that the condition on ranks in \eqref{i:mc2} above says that $\Span(F_1\cap F_2)=\Span(F_1)\cap \Span(F_2)$.

This characterization of $\mathcal{K}_3$ holds for all extensions of matroids and is fact a sufficient condition for an extension to exist:

\begin{proposition} For any single-element extension $M'$ of $M$, the set $\mathcal{K}_3$ is a modular cut.  Moreover, given any modular cut $\mathcal{M}$ of a matroid $M$, there is a unique single element extension $M'$, denoted by $M+_{\mathcal{M}} p$ such that $\mathcal{K}_3=\mathcal{M}$.
\end{proposition}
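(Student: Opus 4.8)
The plan is to prove the two directions separately. For the forward direction (every $\mathcal{K}_3$ arising from an extension is a modular cut), suppose $M' = M'$ on $E' = E \sqcup \{p\}$ with $M = M' \setminus p$. I would verify the two modular-cut axioms directly from the rank function $r'$ of $M'$. The upward-closure axiom follows because if $F_1 \in \mathcal{K}_3$ (so $F_1 \cup \{p\}$ is a flat of $M'$ but $F_1$ is not, i.e. $r'(F_1 \cup \{p\}) = r'(F_1)$) and $F_1 \subseteq F_2$ with $F_2$ a flat of $M$, then submodularity of $r'$ together with monotonicity forces $r'(F_2 \cup \{p\}) = r'(F_2)$; since $F_2$ is not a flat of $M'$ either (adding $p$ doesn't raise the rank) but $F_2 \cup \{p\}$ is — this last point needs the observation that $r_M = r_{M'}|_{2^E}$ so $F_2$ being a flat of $M$ means no $j \in E \setminus F_2$ fails to raise rank, and $F_2 \cup \{p\}$ is then forced to be closed in $M'$. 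For the intersection axiom, if $F_1, F_2 \in \mathcal{K}_3$ and $r(F_1) + r(F_2) = r(F_1 \cap F_2) + r(F_1 \cup F_2)$, I would compute $r'(F_1 \cap F_2 \cup \{p\})$: using $r'(F_i \cup \{p\}) = r'(F_i) = r(F_i)$ and submodularity applied to $F_1 \cup \{p\}$ and $F_2 \cup \{p\}$, one gets $r'((F_1 \cap F_2) \cup \{p\}) \leq r(F_1 \cap F_2)$, hence equality, so $(F_1 \cap F_2) \cup \{p\}$ is a flat, and $F_1 \cap F_2 \notin \mathcal{K}_3$ would force it to already be closed in $M'$, contradicting $r'((F_1\cap F_2)\cup\{p\}) = r'(F_1 \cap F_2)$ unless $F_1 \cap F_2 \notin \mathcal{K}_1 \cup \mathcal{K}_2$; careful bookkeeping of the three-way partition finishes this.

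For the reverse direction — given a modular cut $\mathcal{M}$, construct $M' = M +_{\mathcal{M}} p$ — I would define $M'$ via its rank function on $2^{E'}$. Set $r'(S) = r(S)$ for $S \subseteq E$, and for $S \cup \{p\}$ with $S \subseteq E$, set
\[
r'(S \cup \{p\}) = \begin{cases} r(S) & \text{if } \operatorname{cl}(S) \in \mathcal{M},\\ r(S) + 1 & \text{if } \operatorname{cl}(S) \notin \mathcal{M}.\end{cases}
\]
Then I must check the four matroid rank axioms for $r'$. Boundedness and the normalization on $E'$ are immediate; monotonicity requires checking that enlarging $S$ cannot decrease $r'$, which uses that $\mathcal{M}$ is upward closed (so once $\operatorname{cl}(S)$ enters $\mathcal{M}$, closures of larger sets stay in $\mathcal{M}$) combined with the fact that adding an element to $S$ raises $r(S)$ by $0$ or $1$. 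The real content is submodularity for triples involving $p$: I would reduce to the case of sets $S_1 \cup \{p\}$ and $S_2 \cup \{p\}$, where the inequality
\[
r'(S_1 \cup S_2 \cup \{p\}) + r'(S_1 \cap S_2 \cup \{p\}) \leq r'(S_1 \cup \{p\}) + r'(S_2 \cup \{p\})
\]
must be established by case analysis on which of $\operatorname{cl}(S_1)$, $\operatorname{cl}(S_2)$, $\operatorname{cl}(S_1 \cup S_2)$, $\operatorname{cl}(S_1 \cap S_2)$ lie in $\mathcal{M}$. The one genuinely delicate case is when $\operatorname{cl}(S_1), \operatorname{cl}(S_2) \notin \mathcal{M}$ but $\operatorname{cl}(S_1 \cap S_2) \in \mathcal{M}$: here I would need $\operatorname{cl}(S_1 \cup S_2) \in \mathcal{M}$ as well, or else derive a contradiction using axiom \eqref{i:mc2} — this is precisely where the modular-cut intersection axiom is used, applied to $\operatorname{cl}(S_1)$ and $\operatorname{cl}(S_2)$ after arranging their intersection to be modular.

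Once $r'$ is verified to be a matroid rank function, $M = M' \setminus p$ holds by construction since $r_{M' \setminus p}(S) = r'(S) = r(S)$, and one checks $\mathcal{K}_3(M') = \mathcal{M}$ directly from the definition of $r'$: $F \cup \{p\}$ is a flat of $M'$ with $F$ not a flat of $M'$ exactly when $r'(F \cup \{p\}) = r'(F) = r(F)$ and $F$ is $M$-closed, i.e. $F = \operatorname{cl}(F) \in \mathcal{M}$. Uniqueness follows because any extension $M'$ with $\mathcal{K}_3 = \mathcal{M}$ has its rank function on sets containing $p$ determined by $r'(S \cup \{p\}) = \min\bigl(r(S) + 1,\ \min\{r(F) : F \in \mathcal{M},\ F \supseteq S\}\bigr)$, which matches the formula above — and this forced-formula argument is really the same content as the forward direction reapplied. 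I expect the main obstacle to be the submodularity verification in the reverse direction: marshalling the case analysis cleanly so that axiom \eqref{i:mc2} is invoked at exactly the right moment, rather than getting lost in the eight-or-so sign cases. An alternative that sidesteps some of this is to build $M'$ from its flats using Definition~\ref{d:flats} — the flats of $M'$ being $\mathcal{K}_1 \cup \{F \cup \{p\} : F \in \mathcal{M}\}$ together with the $F \cup \{p\}$ for $F$ spanning into a $\mathcal{K}_3$ element — but checking the partition axiom \eqref{iflat:3} there is comparably fiddly, so I would stick with the rank-function approach.
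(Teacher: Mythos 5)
The paper itself states this proposition without proof (it is Crapo's theorem, quoted from Brylawski's exposition), so your plan has to stand on its own; it does follow the standard route — define $r'$ on $2^{E\cup\{p\}}$ by $r'(S)=r(S)$ and $r'(S\cup\{p\})=r(S)$ or $r(S)+1$ according to whether $\cl(S)\in\mathcal{M}$ — but the one place you flag as the crux of the submodularity check is misidentified, and the case that genuinely needs the modular-cut axiom is left unhandled. The case you call ``genuinely delicate'' ($\cl(S_1),\cl(S_2)\notin\mathcal{M}$ but $\cl(S_1\cap S_2)\in\mathcal{M}$) is vacuous: $\cl(S_1\cap S_2)\subseteq\cl(S_1)$, so upward closure of $\mathcal{M}$ would force $\cl(S_1)\in\mathcal{M}$. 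Moreover, when $\cl(S_1),\cl(S_2)\notin\mathcal{M}$ both terms on the right of your displayed inequality pick up a $+1$, so ordinary submodularity of $r$ already suffices; and your proposed application of the intersection axiom ``to $\cl(S_1)$ and $\cl(S_2)$'' is not even licit in that case, since that axiom only applies to flats lying in $\mathcal{M}$.

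The configuration that actually threatens submodularity is the opposite one: $\cl(S_1),\cl(S_2)\in\mathcal{M}$, $\cl(S_1\cap S_2)\notin\mathcal{M}$, and $r(S_1)+r(S_2)=r(S_1\cup S_2)+r(S_1\cap S_2)$. There the left-hand side becomes $r(S_1\cup S_2)+r(S_1\cap S_2)+1$, which exceeds the right-hand side $r(S_1)+r(S_2)$, so you must show this configuration cannot occur: from the modular equality one gets $r(\cl(S_1)\cap\cl(S_2))\le r(S_1)+r(S_2)-r(S_1\cup S_2)=r(S_1\cap S_2)$, while monotonicity gives the reverse inequality, so $\cl(S_1)$ and $\cl(S_2)$ are a modular pair of flats in $\mathcal{M}$; the intersection axiom then puts $\cl(S_1)\cap\cl(S_2)$ in $\mathcal{M}$, and since nested flats of equal rank coincide, $\cl(S_1\cap S_2)=\cl(S_1)\cap\cl(S_2)\in\mathcal{M}$, a contradiction. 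Until this case is argued, the submodularity verification — which you correctly identify as the heart of the reverse direction — is missing its essential step. The remainder of the outline (forward direction via submodularity of $r'$, checking $\mathcal{K}_3=\mathcal{M}$ for the constructed extension, and uniqueness via the forced formula, which also quietly uses $\cl_M(S)=\cl_{M'}(S)\cap E$ for the deletion) is sound and agrees with the standard argument.
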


An extension is a composition of single element extensions.
Single-element extensions may leave the class of representable matroids: the matroid $M$ may be representable but $M'$ may not be.  Indeed, given a matroid $M$, there may not exist a set of vectors $\{v_0,\ldots,v_n\}\subseteq \k^{d+1}$ representing $M$ such that there is a vector $v_p$ contained in the subspaces corresponding to flats in the modular cut.  In fact, one can produce such an $M$ by beginning with a non-representable matroid and deleting elements until it becomes representable.

Single-element extensions have a geometric interpretation when one considers matroids represented by a subspace $V\subseteq \k^{n+1}$.  Specifically, one looks for a subspace $V'\subset\k^{n+2}$ such that $\pi(V')=V$ where $\pi:\k^{n+2}\rightarrow\k^{n+1}$ is projection onto the first $n+1$ factors.  We can interpret such a $V'$ as the graph of a linear function $l:V\rightarrow\k$.  Then $\mathcal{K}_3$ can be interpreted as the flats contained in $l^{-1}(0)$.

A special case of single element extension is that of a {\em principal extension}.  Specifically, one takes the modular cut to be all flats containing a given flat $F$.   In terms of vector configuration, this corresponds to adjoining a generic vector in the subspace corresponding to $F$.  The extension is denoted by $M+_F p$.
In the case where $F=E$, this is called the {\em free extension} and it corresponds to extending by a generic vector.

There is an operation inverse to contraction, called coextension.  In other words, given a matroid $M$, one produces $M'$ such that $M=M'/p$. Note that the rank of $M'$ will be one greater than the rank of $M$.  Because deletion is dual to contraction, coextension can be defined in terms of extension.  Specifically, let $\mathcal{M}$ be a modular cut in $M^*$.  Then the coextension associated to $\mathcal{M}$ is
\[M'=(M^*+_{\mathcal{M}} p)^*.\]
One can similarly define principal and free coextension.  

\subsection{Quotients and Lifts}

There is a natural quotient operation on matroids.  Specifically, one extends a matroid by an element and then one contracts the element.  The quotient given by a modular cut $\mathcal{M}$ is defined to 
$M+_{\mathcal{M}}p/p$.  In the case of vector configuration, this corresponds to taking a quotient of the ambient subspace: if $M'=M+_{\mathcal{M}}p$ is represented by $\{v_0,\ldots,v_n,v_p\}$ in $\k^{n+1}$, then $M'/p$ is represented by the image of $\{v_0,\ldots,v_n\}$ in $\k^{n+1}/\k v_p$.  One can define principal quotients by taking modular cuts associated to a flat.  If $\mathcal{M}=\{E\}$, then the quotient is given by a free extension by $v_p$ followed by a contraction by $v_p$.  This 
is called the {\em truncation} $\Trunc^d(M)$ of $M$.  It corresponds to taking the quotient of the ambient  space by a generic vector.   In general, for $0\leq k\leq d+1$, we define the $k$-truncation as the matroid on $E$ with rank function given by
\[r_{\Trunc^k(M)}(S)=\max(r(S),k).\]
If we view the matroid as a linear subspace $V\subset\k^{n+1}$, then truncation has a geometric interpretation as follows:

\begin{lemma} \label{l:trunc} Let $M$ be a rank $d+1$ matroid on $\{0,1,\dots,n\}$ represented by a $(d+1)$-dimensional subspace $V\subseteq\k^{n+1}$.  Let $H$ be a hyperplane in $\k^{n+1}$ that intersects $V_I$ transversely for all $I\subset\{0,1,\dots,n\}$.
Then $V\cap H$ represents $\Trunc^d(M)$.
\end{lemma}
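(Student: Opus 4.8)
The plan is to compute the rank function of the matroid represented by $V' = V \cap H$ directly and show it equals $r_{\Trunc^d(M)}(S) = \min(r(S), d)$ for every $S \subseteq \{0,1,\dots,n\}$. Recall from the earlier example that for a subspace $W \subseteq \k^{n+1}$, the rank of a set $S$ in the associated matroid is $\codim(W_S \subseteq W)$, where $W_S = \{x \in W \mid x_i = 0 \text{ for all } i \in S\}$. So I need to compare $\codim(V'_S \subseteq V')$ with $\codim(V_S \subseteq V)$. Since $H$ is a hyperplane, $\dim V' = \dim V - 1 = d$ provided $V \not\subseteq H$, which holds because $H$ meets $V = V_\emptyset$ transversely. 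The key observation is that $V'_S = V' \cap V_S = V_S \cap H$, and by the transversality hypothesis $H$ meets $V_S$ transversely, so $\dim(V_S \cap H) = \max(\dim V_S - 1, 0)$ — it drops by exactly one unless $V_S = \{0\}$ already.

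First I would dispose of the case $r(S) < d+1$, i.e. $\dim V_S = (d+1) - r(S) \geq 1$. Here transversality gives $\dim V'_S = \dim V_S - 1$, hence $r_{V'}(S) = \dim V' - \dim V'_S = (d - 1) - (\dim V_S - 1) = d - \dim V_S = (d+1) - \dim V_S - 1 + \dots$; carefully, $r_{V'}(S) = d - (\dim V_S - 1) = (d+1) - \dim V_S = r(S)$. So on all sets of rank $\le d$ the truncated matroid agrees with $M$, as required since $\min(r(S),d) = r(S)$ there. Next I would handle $r(S) = d+1$, i.e. $V_S = \{0\}$: then $V'_S \subseteq V_S = \{0\}$, so $\dim V'_S = 0$ and $r_{V'}(S) = \dim V' = d = \min(r(S), d)$. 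Combining the two cases, $r_{V'}(S) = \min(r(S), d) = r_{\Trunc^d(M)}(S)$ for all $S$, so $V' = V \cap H$ represents $\Trunc^d(M)$.

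I would also verify briefly that $\Trunc^d(M)$ as defined via $r(S) \mapsto \max(r(S), k)$ with $k = d$ — wait, the paper's displayed formula reads $r_{\Trunc^k(M)}(S) = \max(r(S), k)$, but that cannot be right for a \emph{quotient}; truncation \emph{lowers} rank, so the intended formula is $\min(r(S), k)$, and indeed $\Trunc^d$ should have rank $d$. I would note this and use $\min$, consistent with "$\Trunc^d(M)$ is represented by the quotient of the ambient space by a generic vector," which drops dimension by one.

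The main obstacle is purely bookkeeping: making sure the transversality hypothesis is invoked correctly for every $V_I$ (including $V_\emptyset = V$ itself, to get $\dim V' = d$, and the case $V_S = \{0\}$ where "transverse intersection" is vacuous), and keeping the $\codim$-versus-$\dim$ translations straight. There is no genuine difficulty — the content is entirely in the transversality assumption, which guarantees each $\dim V_S$ drops by exactly the right amount.
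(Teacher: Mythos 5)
Your proof is correct and follows essentially the same route as the paper's one-line argument: compute $r_{V\cap H}(S)=\codim(V_S\cap H\subset V\cap H)$ and use transversality to see it equals $\min(r(S),d)$, with your two cases just spelling out the paper's single displayed identity. You are also right that the displayed formula $r_{\Trunc^k(M)}(S)=\max(r(S),k)$ is a typo for $\min(r(S),k)$ (the paper's own proof repeats the same slip), and your use of $\min$ is the intended reading.
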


\begin{proof}
We see that the rank function associated to $V\cap H$ obeys
\[r_{V\cap H}(S)=\codim(\dim (V_S\cap H)\subset (V\cap H))=\max(\codim(V_S\subset V)),d)=r_{\Trunc^d(M)}(S).\]
\end{proof}

Consequently, the $k$-truncation corresponds to intersecting with a generic $n+1-(d+1-k)$-dimensional subspace.

This will be important in the sequel.  There is also a dual notion to quotient, that of {\em lifts.} 

\subsection{Maps} There are several rival notions of morphisms between matroids.  We briefly review the notion of {\em strong maps} following Kung \cite{KungStrongmaps} noting that there is also a notion of weak maps.

\begin{definition} Let $M_1$ and $M_2$ be matroids on sets $E_1$ and $E_2$, respectively.  Let the direct sums $M_i\oplus U_{0,1}$ be given by extending by a loop $o_i$.  A strong map $\sigma$ from $M_1$ to $M_2$ is a function $\sigma:E_1\cup\{o_1\}\rightarrow E_2\cup \{o_2\}$ taking $o_1$ to $o_2$ such that the preimage of any flat of $M_2\oplus U_{0,1}$ is a flat in $M_1\oplus U_{0,1}$.
\end{definition}

A strong map turns out to be the composition of an extension and a contraction.  

\begin{definition} An \emph{embedding} of a matroid $M$ on $E$ into a matroid $M^+$ on $E^+$ is an inclusion $i:E\hookrightarrow E^+$ such that $M^+|_{i(E)}=M$ where we identify elements of $E$ with their images under $i$.  
\end{definition}

Embeddings are strong maps.  Contractions are strong maps as well.  If we contract by a set $U\subseteq E$, then the map is given by $\sigma:E\cup \{o_1\}\rightarrow (E\setminus U)\cup \{o_2\}$ where $\sigma$ is the identity on $E\setminus U$ and takes $U\cup \{o_1\}$ to $o_2$.  We see that we need to add a loop to define strong maps because we will need a zero element as a target for elements.
We have the following factorization theorem:
\begin{theorem} Let $\sigma:M_1\rightarrow M_2$ be a strong map.  Then there is a matroid $M^+$ such that $\sigma$ can be factored as
\[\xymatrix{M_1\ar[r]^i&M^+\ar[r]^p&M_2}\]
where $i$ is an embedding and $p$ is a contraction.
\end{theorem}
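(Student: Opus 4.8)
This is the classical Higgs factorization theorem for strong maps, and the plan is to build $M^+$ by stacking single-element extensions by modular cuts, reusing the machinery of Subsection~\ref{ss:extensions}. The first move is a reduction to the essential case, in which $\sigma$ is a \emph{quotient}: a strong map $\tau\colon N\to M_2$ whose ground-set map is the identity on a common ground set $E$, with $r(M_2)\le r(N)$. Using the flat-pullback axiom one checks that each fibre of $\sigma$ over a point of $E_2$ is a parallel class of $M_1$ together with some loops, that $\sigma^{-1}(o_2)$ consists of loops of $M_1$ and of $o_1$, and that points of $E_2$ outside the image of $\sigma$ may be assumed absent after adjoining loops to $M_2$; the loop/parallel redundancy is split off and re-attached at the very end as direct summands $U_{0,1}$ and $U_{1,1}$, and the elements of $E_1$ sent to $o_2$ enter $M^+$ as a direct summand (which is also what lets $r(M^+)$ exceed $r(M_1)$ when $r(M_2)>r(M_1)$). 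This reduction is routine bookkeeping; the content is in factoring a quotient.

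So let $\tau\colon N\to M_2$ be a quotient and put $k=r(N)-r(M_2)$. I would first factor $\tau$ into $k$ elementary quotients $N=Q_0\to Q_1\to\cdots\to Q_k=M_2$ with $r(Q_i)=r(N)-i$, each elementary step being, by the correspondence of Subsection~\ref{ss:extensions}, of the form $Q_i=Q_{i-1}+_{\mathcal M_i}p_i/p_i$ for a modular cut $\mathcal M_i$ of $Q_{i-1}$ --- the natural choice for $\mathcal M_1$ being the family of flats of $N$ that get ``merged'' in $M_2$, and then inducting on $k$. Next I would bundle these into a single matroid: set $M^{(0)}=N$, and, having produced $M^{(i-1)}$ on $E\sqcup\{p_1,\dots,p_{i-1}\}$ with $M^{(i-1)}\setminus\{p_1,\dots,p_{i-1}\}=N$ and $M^{(i-1)}/\{p_1,\dots,p_{i-1}\}=Q_{i-1}$, lift the modular cut $\mathcal M_i$ of $Q_{i-1}=M^{(i-1)}/\{p_1,\dots,p_{i-1}\}$ along the identification of its lattice of flats with an interval in $L(M^{(i-1)})$ to a modular cut $\widetilde{\mathcal M}_i$ of $M^{(i-1)}$, and set $M^{(i)}=M^{(i-1)}+_{\widetilde{\mathcal M}_i}p_i$. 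Because single-element deletions commute and deleting the freshly added $p_i$ undoes the last extension, $M^{(i)}\setminus\{p_1,\dots,p_i\}=N$; and because the lift was chosen to commute with contraction, $M^{(i)}/\{p_1,\dots,p_i\}=Q_{i-1}+_{\mathcal M_i}p_i/p_i=Q_i$. After $k$ steps, $M^+:=M^{(k)}$ on $E^+=E\sqcup E'$ with $E'=\{p_1,\dots,p_k\}$ satisfies $M^+|_E=N$ and $M^+/E'=M_2$.

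The factorization then reads off immediately: the inclusion $i\colon E\hookrightarrow E^+$ is an embedding of $N$ into $M^+$ since $M^+|_{i(E)}=N$, and contraction by $E'$ is a contraction $p\colon M^+\to M_2$ whose ground-set map is the identity on $E=E^+\setminus E'$ and collapses the rest onto $o_2$, so $p\circ i=\mathrm{id}_E=\tau$; undoing the initial reduction converts this into the required factorization of $\sigma$. The main obstacle is exactly the two claims in the middle paragraph: that a quotient factors into elementary quotients, and, more delicately, that the modular cuts can be taken compatibly so that the extensions assemble into one matroid with the stated total deletion and total contraction. Concretely this comes down to verifying the modular-cut axiom \eqref{i:mc2} for the ``merging'' family of flats and for the pullback of a modular cut along a single-element contraction, and checking that such a pullback extension commutes with contraction --- this submodularity bookkeeping is where the real work sits. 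An alternative that sidesteps the iteration is to write $M^+$ down in one shot by imitating the linear-algebra model, where $N$ is represented by $V\subseteq\k^E$, $M_2$ by a subspace $V_2\subseteq V$, and $M^+$ by the graph $\{(v,\phi(v)):v\in V\}\subseteq\k^E\oplus\k^{E'}$ of any surjection $\phi\colon V\to\k^{E'}$ with $\ker\phi=V_2$; one then transcribes the resulting rank function of $M^+$ into combinatorial terms and checks directly the matroid axioms together with the two minor identities $M^+\setminus E'=N$ and $M^+/E'=M_2$.
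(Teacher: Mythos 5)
The paper states this theorem (Higgs' factorization theorem, following Kung's exposition \cite{KungStrongmaps}) without giving any proof, so there is no in-paper argument to compare against and your proposal has to stand on its own. As an outline it identifies the classical route correctly — reduce to a quotient, factor into elementary quotients, realize each elementary quotient as a single-element extension by a modular cut followed by a contraction, and assemble the extensions into one matroid $M^+$ — but the two statements you yourself flag as ``where the real work sits'' are precisely the content of the theorem, and they are not proved: (a) that a quotient of corank $k$ factors into $k$ elementary quotients (the standard route is via the Higgs lifts, with rank functions $A\mapsto\min(r_N(A),\,r_{M_2}(A)+i)$, and showing these are matroids forming a chain of elementary quotients is genuine work, while your ``flats of $N$ that get merged'' is not shown to be a modular cut); and (b) that a modular cut of $M^{(i-1)}/\{p_1,\dots,p_{i-1}\}$ lifts to a modular cut of $M^{(i-1)}$ for which extension commutes with contraction. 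The one-shot alternative you mention is the Higgs major, whose rank function $r^+(A\cup S)=\min\bigl(r_{M_1}(A)+|S|,\ r_{M_2}(A)+k\bigr)$ for $A\subseteq E$, $S\subseteq E'$ can indeed be written down directly, but the verification of submodularity and of $M^+|_E=M_1$, $M^+/E'=M_2$ is again deferred. So what you have is a correct plan with the central lemmas missing.

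There is also a concrete error in the reduction step: the fibre of a strong map over a point of $E_2$ need \emph{not} be a parallel class of $M_1$ together with loops. Take $M_1=U_{2,4}$, $M_2=U_{1,1}$ and let $\sigma$ send every element (and $o_1$) to the single point (resp.\ $o_2$): the only flats of $U_{1,1}\oplus U_{0,1}$ are $\{o_2\}$ and the whole set, whose preimages $\{o_1\}$ and $E_1\cup\{o_1\}$ are flats, so $\sigma$ is a strong map under the paper's definition, yet its unique fibre is all of $U_{2,4}$. Fibres are parallel classes only in the pullback matroid $\sigma^{*}(M_2)$ on $E_1$ (rank $A\mapsto r_{M_2}(\sigma(A))$), not in $M_1$, so the ``split off the loop/parallel redundancy and re-attach it'' bookkeeping does not go through as described. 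The standard fix is to factor $\sigma$ as the quotient $M_1\to\sigma^{*}(M_2)$ (identity on $E_1$) followed by a rank-preserving map, and to handle the latter separately; note that with the paper's literal definitions the composite of an embedding and a contraction is injective over $E_2$, so the non-injective part of $\sigma$ genuinely has to be absorbed in this rank-preserving step rather than dismissed as routine.
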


\subsection{Relaxation}

New matroids can be obtained from old by the technique of relaxation.  Recall that a circuit of a matroid is a minimal dependent set.  Matroids can be axiomatized in terms of circuits.  Let $X\subseteq E$ be a circuit that is also a hyperplane.  We call such a set a \emph{circuit-hyperplane}.  We construct a new matroid by mandating that $X$ be a basis.

\begin{proposition} \cite[Prop 1.5.14]{Oxley} Let $X$ be a circuit-hyperplane of a matroid $M$ on $E$.  Let $\cB$ be the set of bases of $M$ and set $\cB'=\cB\cup \{X\}$.  Then $\cB'$ is the set of bases for a matroid $M'$
\end{proposition}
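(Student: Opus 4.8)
The plan is to verify directly that $\cB' = \cB \cup \{X\}$ satisfies the two basis axioms. The first axiom (nonemptiness) is immediate, since $\cB \subseteq \cB'$ and $M$ has at least one basis. Before addressing basis exchange, I would record the key numerical fact: as $X$ is a hyperplane it has rank $d$, and as $X$ is a circuit every proper subset is independent, so $r(X) = |X| - 1$; hence $|X| = d+1$. Thus $X$ has the same size as every member of $\cB$, and moreover $X \setminus \{x\}$ is an independent set of rank $d$ for every $x \in X$.

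For basis exchange, take $B_1, B_2 \in \cB'$ and $i \in B_1 \setminus B_2$, and split into cases according to whether each of $B_1, B_2$ lies in $\cB$ or equals $X$. If both lie in $\cB$, the exchange axiom for $M$ itself produces $j \in B_2 \setminus B_1$ with $(B_1 \setminus i) \cup \{j\} \in \cB \subseteq \cB'$. If $B_1 = X$ and $B_2 \in \cB$, then $i \in X$, so $X \setminus i$ is independent of size $d$; applying the independent-set augmentation axiom for $M$ to $X \setminus i$ and $B_2$ gives $j \in B_2 \setminus (X \setminus i)$ with $(X \setminus i) \cup \{j\}$ independent, hence a basis of $M$, and since $i \notin B_2$ we have $j \neq i$, so $j \in B_2 \setminus X = B_2 \setminus B_1$, as required. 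The case $B_1 = B_2 = X$ is vacuous.

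The remaining case, $B_1 \in \cB$ and $B_2 = X$, is the main obstacle, because the set produced by the exchange may have to be $X$ itself rather than a basis of $M$. Here $i \in B_1 \setminus X$, so $i \notin X$, and $B_1 \setminus i$ is independent of rank $d$; set $H = \cl(B_1 \setminus i)$, which is a hyperplane of $M$. If $H = X$, then $B_1 \setminus i \subseteq X$, and the unique element $j$ of $X \setminus (B_1 \setminus i)$ satisfies $(B_1 \setminus i) \cup \{j\} = X \in \cB'$; since $i \notin X$ we get $j \neq i$, so $j \in X \setminus B_1$. If $H \neq X$, then since two distinct flats of the same rank cannot contain one another, $X \not\subseteq H$, so there is $j \in X \setminus H$; then $r((B_1 \setminus i) \cup \{j\}) = d+1$, so this set is a basis of $M$, and again $j \neq i$ together with $j \notin B_1 \setminus i$ gives $j \in X \setminus B_1$. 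In every case the exchange axiom holds, so $\cB'$ is the family of bases of a matroid $M'$. The only slightly delicate points are the cardinality computation $|X| = d+1$ at the outset and the dichotomy $H = X$ versus $H \neq X$ in the last case; everything else is a routine invocation of the rank, independent-set, and basis axioms of $M$ already recorded above.
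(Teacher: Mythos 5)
The paper offers no argument of its own here; it simply cites Oxley's Proposition 1.5.14, so your proof is judged against the standard relaxation argument. Your verification is correct and complete: the opening computation that $r(X)=|X|-1=d$, hence $|X|=d+1$, the augmentation argument in the case $B_1=X$, $B_2\in\cB$ (where $i\notin B_2$ forces $j\neq i$ and thus $j\in B_2\setminus X$), and the dichotomy $\cl(B_1\setminus i)=X$ versus $\cl(B_1\setminus i)\neq X$ in the case $B_2=X$ (using that two rank-$d$ flats cannot be properly nested) are all sound, and together they establish the exchange axiom for $\cB'=\cB\cup\{X\}$ under the paper's basis axiomatization. This is essentially the same case analysis found in Oxley, so nothing further is needed.
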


By relaxing a representable matroid, one may obtain a non-representable matroid.
In fact, the non-Pappus matroid is obtained from the Pappus matroid by relaxing one line through three points.  The non-Fano matroid is similarly a relaxion of the Fano matroid.  The fields over which a matroid is representable may change by relaxation as the non-Fano matroid is a relaxation of the Fano matroid.

\section{Representability and Excluded Minor Characterizations}

\subsection{Introduction to Representability}
Representability is a central part of the combinatorial study of matroids.  Here, one would like a combinatorial description of representable matroids.  This is probably too much to ask.  However, one can study matroids that are representable over a fixed field.  Here, we need to discuss which classes of matroids might have a good structure theory.

The analogy with graph theory is particularly strong here.  The prototypical structural result that one would like to generalize is  Wagner's characterization of planar graphs, which states that a graph is planar if and only if it does not contain a $K_{3,3}$ or $K_5$ minor.   Note that the class of planar graphs is minor closed, that is, any minor of a graph in this class is also in this class.  Wagner's theorem produces a finite list of forbidden minors for this class of graphs.  A far-reaching generalization of this theorem is the Robertson-Seymour graph minors theorem which states that any minor-closed class of graphs has a finite list of excluded minors.  Examples of minor-closed classes of graphs include trees, linklessly embeddable graphs, and graphs with embedding genus at most $g$ for a fixed $g$.  The theorem gives a structural decomposition of an arbitrary minor-closed class of graphs and takes up more than $500$ journal pages.  While it shows that the number of forbidden minors is finite, it does not construct an explicit list.  See \cite{Diestel} for a nice summary.

Just as one might study particular classes of graphs like planar graphs, one can study particular classes of matroids.  The most natural classes of matroids would be those that are closed under taking minors, i.e. deleting and contracting elements.  One would hope for forbidden minor characterizations of these classes.

There are a number of natural minor-closed classes of matroids.  For a fixed field $\k$, the class of matroids that are representable over $\k$ is minor-closed.  This follows from the explicit construction of deletion and contraction on matroids represented by, say, a vector configuration.  Other minor-closed classes of graphs are graphic matroids (those of the form $M(G)$ for some graph $G$), cographic matroids ($M^*(G)$) which are minor-closed because deletion and contraction correspond to operations on the graphs.  Also, regular matroids, that is, matroids that are represented over every field, form a minor closed class.

There are all sorts of interesting containments between these classes of matroids.  All graphic matroids are regular, but the converse is not true.  For example, $M^*(K_5)$, being cographic, is regular, but it is not graphic while by duality $M(K_5)$ is not cographic.  The direct sum, $M(K_5)\oplus M^*(K_5)$ is regular but is neither graphic nor cographic since it contains $M(K_5)$ and $M^*(K_5)$ as minors.   Not all matroids are representable over $\F_2$.  For example,
the uniform matroid $U_{2,4}$ is not representable over $\F_2$ because there do not exist four vectors in general position in $\F_2^2$.  Moreover, not all $\F_2$-representable matroids are regular.  For example, the Fano matroid, $F_7$ is representable over $\F_2$ without being regular.  Indeed, it is representable over a field $\k$ if and only if its characteristic is $2$.  The non-Fano matroid, on the other hand, turns out to representable over ever field of characteristic not equal to $2$.

There are well-known forbidden minor characterizations of certain classes of matroids, a line of inquiry initiated by Tutte \cite{TutteHomotopy}.  Matroids representable over $\F_2$ are characterized by not having a $U_{2,4}$-minor by a theorem of Tutte.  Graphic matroids are exactly the matroids that do not have a $U_{2,4}, M^*(K_{3,3}),$ or  $M^*(K_5)$ minor.  This fact is a closely related to Wagner's theorem: because the dual of a graphic matroid is the graphic matroid of the dual graph, if it exists, it turns out that $M^*(K_{3,3})$ and $M^*(K_5)$ are not graphic.  One can even phrase Wagner's theorem in the language of matroids: a matroid is the graphic matroid of a planar graph if and only if it is does not contain any of the following matroids as minors: $U_{2,4}, M(K_{3,3}), M^*(K_{3,3}),M(K_5), M^*(K_5)$.  Matroids representable over $\F_3$ are those without minors isomorphic to  $U_{2,5},U_{3,5}, F_7,$ or $F_7^*$ by a theorem due independently to Bixby \cite{Bixby79} and Seymour \cite{Seymour79}.    By \cite{TutteHomotopy}, regular matroids are exactly the matroids not containing $U_{2,4}, F_7,$ or $F_7^*$.

These forbidden minor theorems can be proved by looking at possible matrices whose columns are the vectors of a representation.  One figures out the pattern of zero and non-zero entries in the matrix by considering the {\em fundamental circuits} of the matroid.  Specifically, one fixes a basis $B$, say $B=\{0,1,\dots,d\}$ and supposes that in the representation, the elements of the basis are given by the standard  basis vectors.  Then given $i\not\in B$, one can look at the circuit containing $i$ and some elements of $B$.  The vector representing $i$ must be a linear combination of the standard basis vectors in the circuit.  One considers different possibilities for the non-zero entries.  If one begins with a matroid that is minor-minimal among non-representable matroids, then the matrix turns out not to represent the matroid.  By applying matrix operations organized by combinatorial operations on the matroid, one can put the matrix and matroid in a standard form.  For example, because $\F_2$ has only one non-zero element, if a matroid is $F_2$-representable, once one fixes a basis, only one matrix needs to be considered.  Many of these representability theorems were originally proved using Tutte's Homotopy Theorem \cite{TutteHomotopy}, a deep and difficult theorem about the structure of a particular polyhedral complex on the flats of low corank.  Current research in representability relies on quite difficult theorems on decomposing matroids.  See \cite[Chapter 14]{Oxley} for a survey.

These excluded minor characterizations are over a fixed finite field, not over a field of fixed characteristic.  In particular, one is not allowed to take algebraic extensions of a given field.  This in essence creates two ways in which a matroid may fail to be representable: that there are not enough elements; or there is a contradiction in the linear conditions.  For example, the uniform matroid $U_{2,4}$ is not representable over $\F_2$ because there are no generic planes in $\F_2^4$.  The second problem is much more serious as can be seen in the matroids built by applying Mn\"{e}v's theorem as we will discuss in Subsection \ref{ss:mnev}.
  
The question of representability over infinite fields is much more difficult and is connected to Hilbert's Tenth problem, the question of algorithmic decidability of a system of polynomial equations.  We will discuss this connection below.  It turns out that there are infinitely many excluded minors for real representability.    
The hopelessness of the situation can be distilled into the following theorem of Mayhew, Newman, and Whittle \cite{MNWReal} which gives an explicit construction: 
\begin{theorem} Let $\K$ be an infinite field.  Let $N$ be a matroid that is representable over $\K$.  Then there exists a matroid $M$ that is an excluded minor for $\K$-representability, is not representable over any field, and has $N$ as a minor.
\end{theorem}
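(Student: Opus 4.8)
The plan is to realize $M$ as an excluded minor for $\K$-representability by encoding an \emph{inconsistent} system of polynomial equations --- one with no solution over any field --- into a matroid, arranged delicately so that the inconsistency is a genuinely global feature: every single deletion or contraction must remove one of the equations and leave a system that is solvable over $\K$. The matroid $N$ is carried along by using part of the coordinate data of $M$ to record a $\K$-representation of $N$.

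First I would observe that it suffices to produce a matroid $M$ with (i) $N$ a minor of $M$, (ii) $M$ representable over no field, and (iii) $M\setminus e$ and $M/e$ representable over $\K$ for every $e\in E(M)$; since $\K$-representability is minor-closed (as noted earlier in the paper) and every proper minor of $M$ deletes or contracts at least one element, (iii) already forces every proper minor of $M$ to be $\K$-representable, so $M$ is then an excluded minor for $\K$-representability of the desired kind. The role of $\K$ being infinite is that free and principal extensions and coextensions preserve $\K$-representability over an infinite field --- one can always choose a sufficiently generic vector --- which is exactly what lets the reduced systems be realized.

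To build $M$, I would use a von Staudt-style encoding (in the spirit of Mn\"{e}v's theorem, cf.\ Subsection~\ref{ss:mnev}): take a sufficiently generic $\K$-representable expansion $N^{+}$ of $N$ --- many principal extensions and coextensions, so that $N = N^{+}/C\setminus D$ and $N^{+}$ carries enough points in general position to host further structure --- and then adjoin a gadget region whose matroid relations force, through a long relay of cross-ratio identities threading \emph{every} element of $M$, a contradiction such as ``a certain coordinate is simultaneously $0$ and a unit'' or ``$1+1=0$ and $1+1\neq 0$.'' The gadget is tuned so that the relay survives in all of $M$ --- whence $M$ is representable over no field, and $N$ is still a minor, living untouched inside the $N^{+}$-part --- but breaking any one link of the relay, which is what any single deletion or contraction does, yields a system that, using the generic points stockpiled in $N^{+}$ and the infinitude of $\K$, is $\K$-realizable; thus $M\setminus e$ and $M/e$ are $\K$-representable.

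The main obstacle is exactly this combination of rigidity and fragility: one must design a single gadget whose inconsistency is robust enough to persist over every field when all of $M$ is present, yet fragile enough that each of the $2|E(M)|$ single-element minors of $M$ becomes $\K$-representable, and one must do this while keeping $N$ intact as a minor. Carrying out the von Staudt arithmetic so that it genuinely fails over all fields at once, arranging the relay so that it passes through every element --- so that even deletions and contractions deep inside $N$ destroy it --- and then the bookkeeping over all the resulting minors, each time exhibiting an explicit $\K$-representation, is where essentially all the work lies.
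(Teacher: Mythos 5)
There is a genuine gap: what you have written is a strategy statement, not a proof, and the crucial object whose existence is asserted --- a gadget whose inconsistency is ``robust enough to persist over every field when all of $M$ is present, yet fragile enough that each of the $2|E(M)|$ single-element minors becomes $\K$-representable, while keeping $N$ intact as a minor'' --- is never constructed. Producing exactly such a matroid \emph{is} the content of the theorem (this is the explicit construction of Mayhew--Newman--Whittle in the cited reference; the survey itself only quotes the result), so as it stands the proposal restates the problem rather than solving it. Your preliminary reduction is fine: since $\K$-representability is minor-closed, it suffices to have $M$ non-representable over every field with $M\setminus e$ and $M/e$ $\K$-representable for all $e$, and genericity over an infinite $\K$ is indeed what makes free/principal extensions and coextensions harmless. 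But everything after that is deferred.

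Concretely, the deferred steps are where the argument can fail. First, you require the inconsistency relay to thread \emph{every} element of $M$, including the elements of the copy of $N$ (otherwise deleting or contracting an element off the relay leaves a non-representable minor, and $M$ is not an excluded minor); this is in tension with ``$N$ living untouched inside the $N^{+}$-part,'' and you give no mechanism by which an arbitrary prescribed minor $N$ can be made to carry von Staudt constraints without altering it. Second, von Staudt encodings are naturally rank-$3$ phenomena, while $M$ must have rank at least $r(N)$; single-element contractions change rank and collapse genericity among the gadget points in ways that are not controlled by anything you say, and it is precisely these $2|E(M)|$ verifications --- each requiring an explicit $\K$-representation, not merely representability over some field --- that constitute the proof. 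Third, ``$1+1=0$ and $1+1\neq 0$''-type inconsistencies are encodable (dependencies force equations, bases force inequations), but you must show the reduced system obtained after removing \emph{any one} link is solvable over $\K$ specifically; no argument is given that the gadget can be wired so that every link is individually indispensable. Until a concrete gadget with these properties is exhibited and the minor-by-minor realizability is checked, the proposal does not establish the theorem.
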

There has been research on the question of whether it is possible at all to give a finite, combinatorial condition for a matroid to be representable.  This is the so-called missing axiom of matroid theory.  A paper of V\'{a}mos \cite{VamosMissing} from 1978 asserted that the ``missing axiom of matroid theory is lost forever.''  In other words, there is no way of axiomatizing representability in second-order logic.  More recently, however, a paper of Mayhew, Newman, and Whittle \cite{MNWMissing} criticized V\'{a}mos's paper by stating that the matroids in it differed from matroids as commonly studied and asked again ``Is the missing axiom of matroid theory lost forever?'' conjecturing that representability cannot be axiomatized in a specific second-order logical language.

The general situation of representability over finite fields is the subject of Rota's conjecture that given any finite field $\F_q$, there are finitely many forbidden minors for $\F_q$-representability \cite{Rota}.  A solution has been announced by Geelen, Gerards, and Whittle \cite{GGWNotices}.  In addition, they have announced an excluded minor theorem for classes of $\F_q$-representable matroids:
\begin{theorem} Any minor-closed class of $\F_q$-representable matroids has a finite set of excluded $\F_q$-representable minors.  
\end{theorem}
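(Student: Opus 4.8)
The plan is to deduce the statement from the stronger assertion that the class of $\F_q$-representable matroids is \emph{well-quasi-ordered} under the minor relation, i.e. that every infinite sequence $M_1,M_2,\dots$ of $\F_q$-representable matroids contains indices $i<j$ with $M_i$ isomorphic to a minor of $M_j$. Granting this, for any minor-closed subclass $\mathcal{M}$ the $\F_q$-representable matroids that are minor-minimal outside $\mathcal{M}$ form an antichain in a well-quasi-order, hence a finite set, which is exactly the conclusion; conversely an infinite antichain of $\F_q$-representable matroids $A$ would make $\{M : M\ \F_q\text{-representable and } M \text{ has no minor in } A\}$ a minor-closed class whose $\F_q$-representable excluded minors include $A$, so the two statements are equivalent. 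Thus it suffices to prove this well-quasi-ordering result.

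First I would handle the bounded branch-width regime: fix $k$ and show that $\F_q$-representable matroids of branch-width at most $k$ are well-quasi-ordered. One encodes a width-$k$ branch-decomposition as a cubic tree whose displayed separations all have order at most $k$; because the matroid is representable over the \emph{finite} field $\F_q$, the linear interaction across such a separation is given by one of boundedly many configurations (a pair of subspaces of bounded dimension, up to the action of $\mathrm{GL}$ over $\F_q$), so the decomposition becomes a finitely-labelled tree together with bounded-size gluing data. One then runs the Robertson--Seymour template for bounded tree-width, namely a Kruskal/Nash--Williams tree theorem, in its \emph{better}-quasi-ordering strengthening, so that the infinitary iteration inherent in comparing labelled subtrees is legitimate; minor-monotonicity holds because deleting or contracting within a part, and pruning a subtree, are both visible on the decomposition.

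For the unbounded branch-width regime I would invoke the matroid Grid Theorem together with the structure theory behind it: a member of a \emph{proper} minor-closed class $\mathcal{M}$ of $\F_q$-representable matroids that has very large branch-width cannot contain arbitrarily large grids $M(G_k)$, large cographic minors $M^*(G_k)$, large projective or affine geometries over a subfield, or large spike- or swirl-type minors, so its highly connected ``core'' is severely constrained. Via tangles one extracts a canonical tree-decomposition of each $M\in\mathcal{M}$ whose parts are, up to a bounded perturbation (bounded rank and corank corrections and a bounded number of elements placed inside a flat), representable over a \emph{proper} subfield of $\F_q$, or nearly frame or co-frame over such a subfield (hence nearly graphic or cographic). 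One now argues by induction on the field: the building blocks are well-quasi-ordered by the bounded-branch-width case, the induction hypothesis over smaller fields, and, for the (co)graphic pieces, the graph minors theorem of Robertson--Seymour; then one transfers well-quasi-ordering from the parts up to $M$ along the canonical tree-decomposition, using the same tree-theorem machinery as before, with canonicity of the tangle-tree ensuring the transfer is monotone under minors.

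The main obstacle — and the substance of the several-hundred-page Geelen--Gerards--Whittle program, not something one can fill in by routine work — is precisely the structure theorem in the unbounded-branch-width regime: establishing the Grid Theorem for $\F_q$-representable matroids, classifying the highly connected members of a proper minor-closed class (the matroid analogue of ``bounded-genus embedding plus vortices plus apex elements''), and proving that the resulting tree-decomposition is canonical and monotone enough to carry the well-quasi-ordering across it. A secondary but genuine difficulty is that the inductive transfer must be run at the level of \emph{better}-quasi-ordering rather than mere well-quasi-ordering, since the latter is not preserved under the infinitary tree and labelling constructions the argument requires.
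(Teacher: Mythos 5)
You should first be aware that the paper does not prove this statement at all: it is quoted as an announced result of Geelen, Gerards, and Whittle, with references to their Notices article and structure-theory survey, and the author explicitly says the proof rests on a deep structure theory generalizing Robertson--Seymour. So there is no proof in the paper to compare yours against, and no proof at the level of detail of this survey could exist.

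Your reduction is the right one and matches the actual strategy: finiteness of excluded minors for every minor-closed class of $\F_q$-representable matroids is equivalent to well-quasi-ordering of $\F_q$-representable matroids under minors (the minor order has no infinite descending chains, so WQO is exactly the absence of infinite antichains), and your two-regime plan --- WQO for bounded branch-width via finitely many boundary configurations over a finite field plus a labelled-tree theorem, and a structure theorem via the matroid grid theorem and tangle-based canonical decompositions in the unbounded regime --- is a faithful description of the Geelen--Gerards--Whittle program; indeed the bounded branch-width case and the grid theorem are published, so those invocations are legitimate citations rather than gaps. But as a proof the proposal has a gap coextensive with the program itself, and you say so: the structure theorem for highly connected members of a proper minor-closed class, the canonicity and minor-monotonicity of the decomposition, and the transfer of (better-)quasi-ordering across it are precisely the announced, unpublished content. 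What you have written is therefore an accurate roadmap with correct reductions at the ends, not a proof, and it could not be turned into one by routine elaboration; in the context of this survey the honest treatment is the paper's own, namely to cite the announcement.
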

Their work relies on a deep structure theory of matroids which generalizes the work of Robertson and Seymour.  See the survey \cite{GGWStructure} for an earlier account of their progress on Rota's conjecture.

\subsection{Inequivalent representations}

A matroid may have several representations that are essentially different.  Here, we follow the exposition of Oxley \cite{Oxley}.  Suppose a matroid is represented by two vector configurations $\{v_0,v_1,\dots,v_n\}$ and $\{v'_0,v'_1,\dots,v'_n\}$ in $\k^{n+1}$.  The two representations are said to be inequivalent if there does not exist an element of $\Gl_{n+1}(\k)$ taking one vector configuration to the other.  We may also consider representations of simple matroids as point configurations in $\P^n$.  Then we say two representations are projectively inequivalent if they are not related by an element of $\PGL_{n+1}(\k)$.  

It is not too hard to find matroids with inequivalent representations.  For example, $U_{3,5}$ has projectively inequivalent representations over $\F_5$.  Perhaps surprisingly, there are cases where matroids have projectively unique representations.  There is this theorem of Brylawski and Lucas:
\begin{theorem}\label{t:binaryunique}  Let $M$ be a simple matroid that is representable over $\F_2$.  Then for any field $\k$, any two representations of $M$ over $\k$ are projectively equivalent. 
\end{theorem}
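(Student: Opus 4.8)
The plan is to fix one basis of $M$, put every representation into a normal form relative to it, and show the normal form is forced by $M$. Fix a basis $B$ of $M$. In any representation of $M$ over $\k$ by vectors $v_0,\dots,v_n\in\k^{d+1}$, the vectors $\{v_i\mid i\in B\}$ form a basis of $\k^{d+1}$, so after acting by $\Gl_{d+1}(\k)$ I may take them to be the standard basis vectors; the representation is then recorded by a matrix $[\,I\mid D\,]$ whose column indexed by $e\notin B$ has $i$-th entry $D_{ie}$. The zero pattern of $D$ is forced by $M$: $D_{ie}\neq 0$ exactly when $(B\setminus\{i\})\cup\{e\}$ is again a basis, that is, when $i$ lies in the fundamental circuit $C(e,B)$. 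Two representations put into this form are projectively equivalent precisely when $[\,I\mid D\,]$ and $[\,I\mid D'\,]$ differ by a left $\Gl_{d+1}(\k)$-action together with a rescaling of columns; keeping the identity block fixed, the only residual moves rescale the rows of $D$ (together with the corresponding basis vectors) and the columns of $D$. So the problem reduces to producing scalars $\mu_i\in\k^{*}$ $(i\in B)$ and $\nu_e\in\k^{*}$ $(e\notin B)$ with $D'_{ie}=\mu_i\nu_e\,D_{ie}$ for all $i\in B$, $e\notin B$.

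Next I would pass to the bipartite \emph{fundamental graph} $G$ on the vertex set $B\sqcup(E\setminus B)$, with an edge $ie$ whenever $D_{ie}\neq 0$. By the standard fact that the obstruction to writing a nowhere-zero edge function on a graph as a product of vertex potentials lies in the cycle space, the scalars $\mu_i,\nu_e$ exist if and only if, for every cycle $Z=(i_1,e_1,i_2,e_2,\dots,i_m,e_m)$ of $G$ (indices cyclic), the alternating product $\Pi_D(Z)=(\prod_k D_{i_k e_k})(\prod_k D_{i_{k+1}e_k})^{-1}$ coincides with $\Pi_{D'}(Z)$. Hence it suffices to show $\Pi_D(Z)$ depends only on $M$. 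Since the cycle space of a graph is generated by its chordless (induced) cycles --- split a cycle along a chord into two shorter cycles and induct --- I only need to treat an induced cycle $Z$, for which $e_k$ is joined in $G$ to exactly $i_k$ and $i_{k+1}$.

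The crux, and the one place $\F_2$-representability of $M$ is genuinely used, is to show $\Pi_D(Z)=(-1)^m$ for such a $Z$. Put $I=\{i_1,\dots,i_m\}$, $J=\{e_1,\dots,e_m\}$, and pass to the minor $N=M/(B\setminus I)$; then $I$ is a basis of $N$, and because $Z$ is induced the image of $v_{e_k}$ in $N$ is supported on $\{i_k,i_{k+1}\}$, with entries $D_{i_k e_k}$ and $D_{i_{k+1}e_k}$. I claim $J$ is a circuit of $N$. Every proper subset of $J$ is readily seen to be independent in $N$ (its image is the incidence pattern of a disjoint union of paths), so the claim reduces to showing $J\cup(B\setminus I)$ is not a basis of $M$. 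Here binariness enters: since $M$ is $\F_2$-representable, its bases coincide with those of the $\F_2$-matrix having the same support as $[\,I\mid D\,]$, so it is enough to see that this $\F_2$-matrix is singular on the columns $J\cup(B\setminus I)$; clearing with the unit columns indexed by $B\setminus I$ leaves the $\F_2$ incidence matrix of the cycle on $i_1,\dots,i_m$ with edges $e_1,\dots,e_m$, each column of which has exactly two $1$'s, so its columns sum to zero and it is singular. With $J$ a circuit of $N$, the vectors $v_{e_1},\dots,v_{e_m}$ satisfy an essentially unique relation $\sum_k\lambda_k v_{e_k}=0$ with every $\lambda_k\neq 0$; equating the $i_k$-coordinate to zero gives $\lambda_{k-1}D_{i_k e_{k-1}}+\lambda_k D_{i_k e_k}=0$, and multiplying these identities around the cycle collapses the $\lambda$'s and yields $\Pi_D(Z)=(-1)^m$. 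This value is independent of $\k$ and of the chosen representation, so $\Pi_D$ and $\Pi_{D'}$ agree on all cycles, the scalars $\mu_i,\nu_e$ exist, and the two representations are projectively equivalent.

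I expect the decisive difficulty to be exactly that middle step: showing that contracting $B\setminus I$ converts the purely combinatorial cycle $Z$ into an honest matroid circuit, so that there is a genuine linear dependence to exploit. This is what fails for non-binary matroids --- for $U_{2,4}$ it would demand a relation among two vectors that is simply not there --- and making it precise rests on the characterization of binariness as ``the $\F_2$-matrix with the prescribed support represents $M$.'' The remaining ingredients --- the normal form, the bookkeeping with the torus of row and column rescalings, and the reduction to induced cycles --- are routine.
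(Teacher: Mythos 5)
Your argument is correct. Note that the paper does not actually prove this statement; it only quotes the Brylawski--Lucas theorem with a reference, so there is no in-paper proof to compare against. What you have written is a complete, self-contained proof along the classical lines: normal form $[\,I\mid D\,]$ relative to a fixed basis $B$, reduction of projective equivalence to the existence of row and column scalings, the observation that the support of $D$ is the fundamental-circuit pattern forced by $M$, and the verification that the alternating product around any chordless cycle of the fundamental bipartite graph equals $(-1)^m$, hence is representation-independent. Each step checks out: the potential-function criterion on the fundamental graph, the reduction to induced cycles by splitting along chords, and the computation with the essentially unique circuit relation in $N=M/(B\setminus I)$ are all sound. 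The one step worth stating explicitly rather than as a slogan is the binary input: the $0$--$1$ matrix over $\F_2$ with the same support as $[\,I\mid D\,]$ really does represent $M$, because row-reducing any $\F_2$-representation of $M$ with respect to $B$ yields a matrix $[\,I\mid D_2\,]$ whose entry in position $(i,e)$ is nonzero exactly when $(B\setminus\{i\})\cup\{e\}$ is a basis, and over $\F_2$ ``nonzero'' forces the entry to be $1$; with that in place your singularity computation for the cycle columns, and hence the fact that $J$ is a circuit of $N$, is immediate. Two small remarks: the simplicity hypothesis in the statement is not needed for your argument (it is there because the paper phrases projective equivalence for point configurations), and in characteristic $2$ the value $(-1)^m=1$ causes no difficulty since all that is used is that the cycle product depends only on $M$.
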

Similarly, matroids have at most one representations over $\F_3$ up to projective equivalence. There is much research in trying to bound the number of inequivalent representations of certain classes of matroids. 

The existence of inequivalent representations is important for questions of representability: given a matroid $M$ that we wish to represent, we may delete different elements, try to find representations of various deletions, and then glue the representations of the deletions together; the existence of inequivalent representations is an obstruction to gluing the smaller representations together.  This phenomenon seems, to this author at least, reminiscent of the issues with automorphisms in moduli theory.

\subsection{Ingleton's criterion}

By linear algebraic arguments, Ingleton provided a necessary condition for a matroid to be representable over some field \cite{Ingleton}.

\begin{theorem} Let $M$ be a representable matroid on $E$.  Then for any subsets $X_1,X_2,X_3,X_4\subseteq E$, the rank function obeys the following inequality:
\begin{eqnarray*}
&&r(X_1)+r(X_2)+r(X_1\cup X_2\cup X_3)+r(X_1\cup X_2\cup X_4)+r(X_3\cup X_4)\\
&\leq&r(X_1\cup X_2)+r(X_1\cup X_3)+r(X_1\cup X_4)+r(X_2\cup X_3)+r(X_2\cup X_4).
\end{eqnarray*}
\end{theorem}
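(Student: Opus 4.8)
The plan is to pass from the matroid to one of its linear representations and then reduce the asserted inequality, by a purely formal manipulation built on the modular law $\dim(U+W)+\dim(U\cap W)=\dim U+\dim W$, to a single containment statement about subspaces.

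First I would fix a vector configuration $v_0,\dots,v_n$ in some $\k^N$ representing $M$ and set $W_i=\Span(\{v_j\mid j\in X_i\})$ for $i=1,2,3,4$, so that $r(X_i)=\dim W_i$ and, crucially, $r(X_i\cup X_j)=\dim(W_i+W_j)$ and $r(X_1\cup X_2\cup X_k)=\dim(W_1+W_2+W_k)$. Ingleton's inequality then becomes an inequality among dimensions of sums of the $W_i$, and I would prove it by showing that the right-hand side minus the left-hand side is nonnegative. The term $\dim(W_1+W_2)-\dim W_1-\dim W_2$ is $-\dim(W_1\cap W_2)$; and, applying the modular law to the pair $W_1+W_3$, $W_2+W_3$ (whose sum is $W_1+W_2+W_3$), the combination $\dim(W_1+W_3)+\dim(W_2+W_3)-\dim(W_1+W_2+W_3)$ equals $\dim\big((W_1+W_3)\cap(W_2+W_3)\big)$, and similarly with $W_4$ in place of $W_3$. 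Collecting terms, the inequality to prove is exactly
\[\dim\big((W_1+W_3)\cap(W_2+W_3)\big)+\dim\big((W_1+W_4)\cap(W_2+W_4)\big)\ \ge\ \dim(W_1\cap W_2)+\dim(W_3+W_4).\]

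To finish, I set $A=(W_1+W_3)\cap(W_2+W_3)$ and $B=(W_1+W_4)\cap(W_2+W_4)$. One checks at once that $W_3\subseteq A$ and $W_4\subseteq B$, hence $W_3+W_4\subseteq A+B$; and that $W_1\cap W_2$ lies in both $A$ and $B$ (since $W_1\cap W_2\subseteq W_1\subseteq W_1+W_3$ and $W_1\cap W_2\subseteq W_2\subseteq W_2+W_3$, and likewise with $W_4$), hence $W_1\cap W_2\subseteq A\cap B$. Then the modular law applied to $A$ and $B$ gives
\[\dim A+\dim B=\dim(A+B)+\dim(A\cap B)\ \ge\ \dim(W_3+W_4)+\dim(W_1\cap W_2),\]
which is precisely the required inequality.

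I do not expect a deep obstacle: once the auxiliary subspaces $A$ and $B$ are introduced, the argument is nothing more than the modular law together with two transparent inclusions. The only place demanding care is the bookkeeping in the algebraic reduction — verifying that the five ``$+$'' terms and five ``$-$'' terms on the two sides of Ingleton's inequality really do reorganize into the single intersection inequality above — and, conceptually, noting that it is exactly the failure of the identity $\dim(U+W)+\dim(U\cap W)=\dim U+\dim W$ to persist at the level of abstract matroid rank (axiom (3) of a matroid gives only the inequality) that makes the representability hypothesis indispensable here.
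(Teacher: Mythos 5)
Your proposal is correct: the reduction of Ingleton's inequality to $\dim A+\dim B\ge\dim(W_1\cap W_2)+\dim(W_3+W_4)$ via the modular law, and the two containments $W_3+W_4\subseteq A+B$, $W_1\cap W_2\subseteq A\cap B$, check out, and this is essentially the standard linear-algebraic argument behind the result. The paper itself gives no proof (it only cites Ingleton's ``linear algebraic arguments''), so your write-up supplies a complete and correct proof in exactly that spirit, and your closing remark correctly identifies why representability is essential: the argument uses the modular equality and auxiliary subspaces such as $(W_1+W_3)\cap(W_2+W_3)$, which have no counterpart at the level of abstract matroid rank, where only submodularity holds.
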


More recently, Kinser discovered an infinite family of independent inequalities whose leading member is Ingleton's \cite{Kinser}.

\begin{theorem} Let $k\geq4$ and let $M$ be a representable matroid on $E$.  Let $X_1,X_2,\dots,X_k\subseteq E$ be subsets.  Then the rank function obeys the following:
\begin{eqnarray*}
&&r(X_1\cup X_2)+r(X_1\cup X_3\cup X_k)+r(X_3)+ \sum_{i=4}^{k} \left(r(X_i)+r(X_2\cup X_{i-1}\cup X_i)\right)\\
&\leq&
r(X_1\cup X_3)+r(X_1\cup X_n)+r (X_2\cup X_3)+\sum_{i=4}^{k} \left(r(X_2\cup X_i)+r(X_{i-1}\cup X_i)\right).
\end{eqnarray*}
\end{theorem}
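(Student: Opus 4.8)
The plan is to reduce the statement to a purely linear-algebraic inequality about subspaces and then to induct on $k$, with Ingleton's inequality (the preceding theorem) as the base case. Since $M$ is representable, fix a field $\k$ and vectors realizing $M$, so that $r(S)=\dim U_S$ for every $S\subseteq E$, where $U_S$ denotes the span of the vectors indexed by $S$; write $U_i=U_{X_i}$ and note $U_{X_i\cup X_j}=U_i+U_j$, and so on. The inequality then becomes a relation among dimensions of sums of the subspaces $U_1,\dots,U_k$. The essential point is that this is \emph{not} a Shannon-type inequality: it cannot be derived from submodularity (axiom (3)) alone — witness the failure of such inequalities for non-representable matroids such as the V\'{a}mos matroid — so the argument must exploit the modular law $\dim(A+B)+\dim(A\cap B)=\dim A+\dim B$, which is exactly what separates subspaces from abstract matroids and drives Ingleton's inequality.

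For the base case $k=4$ the sum over $i$ has only the term $i=4$, and a direct comparison shows that after swapping the pair $(X_1,X_2)$ with the pair $(X_3,X_4)$ — i.e.\ the relabeling $(X_1,X_2,X_3,X_4)\mapsto(X_3,X_4,X_1,X_2)$ — the five terms on each side coincide with those of Ingleton's inequality. So the base case is immediate from the previous theorem.

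For the inductive step I would deduce the $k$-variable inequality from the $(k-1)$-variable one by peeling off the last link of the chain: the terms $r(X_i)$, $r(X_2\cup X_{i-1}\cup X_i)$ on the left and $r(X_2\cup X_i)$, $r(X_{i-1}\cup X_i)$ on the right are arranged along the path $X_3 - X_4 - \cdots - X_k$ with $X_2$ as a fixed companion, so the index $k$ should be removable. Concretely, one applies the $(k-1)$-variable hypothesis to a merged or contracted family — the natural candidates are to replace the pair $\{X_{k-1},X_k\}$ by the single subset $X_{k-1}\cup X_k$, or to pass to the contraction $M/X_k$ (again representable) on $X_1,\dots,X_{k-1}$ — and then reconciles the outcome with the target by bounding the few mismatched terms via submodularity and the modular law, together with one further application of Ingleton's inequality to a four-tuple built from $X_2$, $X_{k-1}$, $X_k$ and a composite of the remaining subspaces.

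The hard part will be the bookkeeping in this inductive step. A naive attempt — subtracting the $(k-1)$-variable inequality applied verbatim to $X_1,\dots,X_{k-1}$ from the target — leaves a residual inequality a couple of whose terms run the wrong way: for instance diminishing returns gives $r(X_1\cup X_3\cup X_k)-r(X_1\cup X_3\cup X_{k-1}\cup X_k)\ge r(X_1\cup X_k)-r(X_1\cup X_{k-1}\cup X_k)$, the opposite of what the cancellation needs, so the merged/contracted family must be chosen with care so that every leftover term is controlled and the stray terms produced by the extra Ingleton application cancel. Equivalently, one can bypass the induction and establish the inequality directly by summing Ingleton's inequality over suitable four-tuples indexed by $i=4,\dots,k$ and checking that the spurious contributions telescope; pinning down that cancellation pattern is the crux in either formulation, and Kinser's claimed independence of these inequalities confirms that the modular law — via Ingleton, not merely submodularity — must genuinely enter at each stage.
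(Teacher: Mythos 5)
The paper itself does not prove this theorem; it only states it with a citation to Kinser, so there is no internal proof to compare against, and your proposal has to stand on its own. Two parts of it do: the reduction to dimensions of sums of subspaces is the right setting, and your base case is correct — for $k=4$ the inequality is exactly Ingleton's after the relabeling $(X_1,X_2,X_3,X_4)\mapsto(X_3,X_4,X_1,X_2)$, as one checks term by term.

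The genuine gap is that the inductive step, which is the entire content of the theorem, is never carried out: you name candidate reductions (merge $X_{k-1}$ and $X_k$, contract by $X_k$, or telescope sums of Ingleton instances) and then concede that each leaves residual terms running the wrong way, without ever specifying a family for which the bookkeeping closes. Worse, the proposed bypass of ``summing Ingleton's inequality over suitable four-tuples'' cannot work if those four-tuples are built from unions of the $X_i$: such a derivation would exhibit the $k$-th inequality as a nonnegative combination of Ingleton instances and submodularity, valid for every polymatroid satisfying Ingleton, which is precisely what the independence statement you invoke rules out. So independence is not moral support for your plan; it is an obstruction to it, and it tells you that any successful argument must introduce auxiliary subspaces that have no expression in the matroid language of the $X_i$ — for instance intersections such as $V_{k-1}\cap(V_2+V_k)$, whose dimension is computed by the modular law $\dim(A\cap B)=\dim A+\dim B-\dim(A+B)$ — and must then verify a precise term-by-term comparison against the induction hypothesis. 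Identifying that auxiliary subspace and doing that verification is the actual proof (it is what Kinser's argument supplies); as written, your proposal is a plausible plan with the crucial step missing.
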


The Vam\'{o}s matroid, $V_8$ violates Ingleton's criterion with the choice of $X_i=\{i,i'\}$ and is therefore not representable over any field \cite[Exercise 6.1.7]{Oxley}.  The Vam\'{o}s matroid turns out to be a non-representable matroid on the minimum number of elements.

\subsection{Are most matroids  not representable?}

In \cite{BrylawskiKelly}, Brylawski and Kelly claimed 
\begin{quote}
``It is an exercise in random matroids to show that most
matroids are not coordinatizable [representable] over any field (or even any
division ring).''
\end{quote}
Unfortunately, to this date no one has been able to complete this exercise.  However, one can study matroid properties asymptotically: one examines the proportion of matroids on an $n$ element ground set that have a given property and take the limit as $n$ goes to infinity.   It is suspected that asymptotically most matroids are non-representable paving matroids.  A good reference for what is known is  \cite{MNWW}.   It is mentioned there that by counting arguments, for a fixed finite field $\F$, asymptotically, most matroids are not representable over $\F$.

\section{Polynomial invariants of matroids}

\subsection{The Characteristic and Tutte polynomials}

The characteristic polynomial is an invariant of matroids that generalizes the chromatic polynomial of graphs as introduced by Birkhoff \cite{Birkhoff}.  Let $G$ be a  graph.  The characteristic polynomial of $G$ is the function $\chi_G$ given by setting $\chi_G(q)$ to be the number of proper colorings of $G$ with $q$ colors for $q\in\Z_{\geq 1}$.  A proper coloring with $q$ colors is a function $c:V(G)\rightarrow \{1,2,\ldots,q\}$ such that adjacent vertices are assigned different values.  The function $\chi_G(q)$ can easily be shown to be a polynomial.  Note that a graph with a loop has no colorings.  Recall that for $e=v_1v_2$, an edge of $G$, the deletion $G\setminus e$ is the graph $G$ with the edge $e$ removed while the contraction $G/e$ is the graph $G$ with  $e,v_1,v_2$ contracted to a single vertex.  The chromatic polynomial obeys the deletion/contraction relation
\[\chi_G(q)=\chi_{G\setminus e}(q)-\chi_{G/e}(q).\]
This can be seen by interpreting $\chi_{G\setminus e}(q)$ as the count of colorings where we do not impose the condition that the colors of the end-points of $e$ are different and interpreting $\chi_{G/e}(q)$ as colorings where the colors of the end-points of $e$ are mandated to be the same.  One can show that $\chi_G(q)$ is a polynomial by applying deletion/contraction repeatedly and then noting that the chromatic polynomial is multiplicative over disjoint union of graphs and that if $G$ consists of a single vertex, then $\chi_G(q)=q$.

The chromatic polynomial can be extended to matroids in a straightforward fashion.  One looks for a polynomial $\chi_M(q)$, called \emph{the characteristic polynomial} for a matroid $M$ which satisfies
\begin{enumerate}
\item (loop property) if $M$ has a loop, then $\chi_M(q)=0$,
\item (normalization) the characteristic polynomial of the uniform matroid $U_{1,1}$ satisfies
\[\chi_{U_{1,1}}(q)=q-1.\]
\item (direct sum) If $M=M_1\oplus M_2$ then
\[\chi_M(q)=\chi_{M_1}(q)\chi_{M_2}(q),\]
\item (deletion/contraction) if $i$ is not a loop or coloop of $M$ then 
\[\chi_M(q)=\chi_{M\setminus i}(q)-\chi_{M/i}(q),\]
\end{enumerate}
By inducting on the size of the ground set, we easily see that an invariant satisfying these properties is unique.  Note that loops and coloops split off from a matroid in a direct sum decomposition.
It turns out that $\chi_M(q)$ is well-defined.  In fact, we will construct it explicitly below by using M\"{o}bius functions.

The characteristic polynomial is a generalization of the chromatic polynomial in the following sense:
\begin{proposition} If $G$ is a graph and $M(G)$ is its matroid, then
\[\chi_G(q)=q^\kappa\cdot\chi_{M(G)}(q)\]
where $\kappa$ is the number of components of $G$. 
\end{proposition}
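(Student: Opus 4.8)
The plan is to induct on the number of edges of $G$, playing the deletion/contraction recursion $\chi_G(q)=\chi_{G\setminus e}(q)-\chi_{G/e}(q)$ for the chromatic polynomial against the four defining properties of $\chi_M$, and using the standard identifications of graphic minors $M(G\setminus e)=M(G)\setminus e$ and $M(G/e)=M(G)/e$. The bookkeeping facts that drive the argument are that the loops of $M(G)$ are exactly the loop-edges of $G$ and the coloops of $M(G)$ are exactly the bridges of $G$, and that for a bridge $e$ one has $\kappa(G\setminus e)=\kappa(G)+1$ while $\kappa(G/e)=\kappa(G)$, whereas for every edge $e$ that is not a bridge both $G\setminus e$ and $G/e$ have the same number of components as $G$.

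For the base case, if $G$ has no edges then $M(G)$ is the empty matroid, so $\chi_{M(G)}(q)=1$ (immediate from the direct-sum and normalization properties, e.g.\ by writing $U_{1,1}=U_{1,1}\oplus M(G)$), while $\chi_G(q)=q^{|V(G)|}$ and $\kappa(G)=|V(G)|$; both sides of the claimed identity equal $q^{|V(G)|}$. For the inductive step, choose an edge $e$ and distinguish three cases, reflecting the fact that the matroid deletion/contraction relation $\chi_M(q)=\chi_{M\setminus i}(q)-\chi_{M/i}(q)$ is available only when $i$ is neither a loop nor a coloop. If $e$ is a loop of $G$, then $\chi_G(q)=0$ since $G$ has no proper colorings, and $\chi_{M(G)}(q)=0$ by the loop property since $e$ is a loop of $M(G)$, so both sides vanish. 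If $e$ is neither a loop nor a bridge, then $e$ is neither a loop nor a coloop of $M(G)$ and $\kappa$ is unchanged by deletion and contraction, so applying the inductive hypothesis to $G\setminus e$ and $G/e$ turns the chromatic recursion into $\chi_G(q)=q^{\kappa}\bigl(\chi_{M(G)\setminus e}(q)-\chi_{M(G)/e}(q)\bigr)=q^{\kappa}\chi_{M(G)}(q)$, using the matroid deletion/contraction property in the last step.

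The case demanding the most care is that of a bridge $e$. Here $e$ is a coloop of $M(G)$, so it splits off as a direct summand $U_{1,1}$, giving $M(G)=(M(G)\setminus e)\oplus U_{1,1}$ and hence $\chi_{M(G)}(q)=(q-1)\,\chi_{M(G)\setminus e}(q)$ by the direct-sum and normalization properties; moreover $M(G)/e=M(G)\setminus e$, since contracting a coloop coincides with deleting it. Applying the inductive hypothesis to $G\setminus e$ (which has $\kappa+1$ components) and to $G/e$ (which has $\kappa$ components) yields $\chi_{G\setminus e}(q)=q^{\kappa+1}\chi_{M(G)\setminus e}(q)$ and $\chi_{G/e}(q)=q^{\kappa}\chi_{M(G)\setminus e}(q)$, so $\chi_G(q)=\chi_{G\setminus e}(q)-\chi_{G/e}(q)=q^{\kappa}(q-1)\chi_{M(G)\setminus e}(q)=q^{\kappa}\chi_{M(G)}(q)$, completing the induction. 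I expect no deep obstacle here: the only genuinely delicate points are keeping the three cases apart and tracking how $\kappa$ changes under deletion and contraction of a bridge, and once the bridge case is handled via the direct-sum splitting, the rest is a routine matching of the graph-level recursion (valid for every edge) with the matroid-level recursion (valid only away from loops and coloops).
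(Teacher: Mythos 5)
Your proof is correct and follows essentially the same route as the paper, which establishes the identity by showing both sides satisfy the deletion/contraction recursion and checking a seed case. Your version merely fleshes out what the paper leaves implicit — the explicit induction on edges, the base case of edgeless graphs rather than a single-edge seed, and the careful handling of loops and bridges (the latter via the coloop/direct-sum splitting $\chi_{M(G)}(q)=(q-1)\chi_{M(G)\setminus e}(q)$ and the bookkeeping of $\kappa$) — all of which is accurate.
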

This theorem is proved by showing that both sides obey the same deletion-contraction relation and then checking the equation on the seed value of a graph with two vertices and a single edge between them whose matroid is $U_{1,1}$.

One can generalize the characteristic polynomial further by  removing the restriction that it vanishes on loops.  Tutte \cite{TutteRing} introduced his eponymous polynomial for graphs by studying all possible topological invariants of graphs that were well-behaved under deletion and contraction.  The definition was extended to matroids by Crapo \cite{CrapoTutte}.
We consider all invariants that are well-behaved under deletion/restriction and multiplicative under direct sum.  Let $\Mat$ be the class of all matroids.  We define the Tutte-Grothendieck ring $K_0(\Mat)$ as the commutative ring given by formal sums of isomorphism classes of matroids subject to the relations
\begin{enumerate}
\item if $i$ is neither a loop nor a coloop of a matroid $M$, then
$[M]=[M\setminus i]+[M/i],$
\item $[M_1\oplus M_2]=[M_1][M_2],$
\end{enumerate}
This Tutte-Grothendieck group was used implicitly by Tutte for graphs before Grothendieck's introduction of the Grothendieck group.  Because deletion and contraction commute, a matroid can be written uniquely in $K_0(\Mat)$ as a polynomial in the rank $1$ matroids.  Consequently, the Grothendieck group $K_0(\Mat)$ is the free polynomial ring over $\Z$ generated by $U_{0,1}$ (a loop) and $U_{1,1}$ (a coloop).

\begin{definition} Let $R$ be a commutative ring.  A Tutte-Grothendieck invariant valued in $R$ is a homomorphism 
\[h:K_0(\Mat)\rightarrow R.\]
\end{definition}

\begin{definition} The {\em rank generating} polynomial of a matroid $M$ is defined by
\[R_M(u,v)=\sum_{I\subseteq E} u^{r(M)-r(I)}v^{|I|-r(I)}.\]
The {\em Tutte polynomial of $M$} is defined by
\[T_M(x,y)=R_M(x-1,y-1).\]
\end{definition}

The definition is justified by the following:

\begin{proposition}
The Tutte polynomial is the unique Tutte-Grothendieck invariant $T:K_0(\Mat)\rightarrow\Z[x,y]$ satisfying
$T_{U_{1,1}}(x,y)=x$ and $T_{U_{0,1}}(x,y)=y$.
\end{proposition}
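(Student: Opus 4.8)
The plan is to separate the claim into uniqueness, which is formal, and existence, which carries all the content. For uniqueness, recall from the discussion preceding the proposition that, because deletion and contraction commute, $K_0(\Mat)$ is the polynomial ring $\Z[U_{0,1},U_{1,1}]$ on the classes of the two one-element matroids. A ring homomorphism out of a polynomial ring is determined by its values on the polynomial generators, so there is at most one Tutte--Grothendieck invariant $T\colon K_0(\Mat)\to\Z[x,y]$ with $T_{U_{1,1}}=x$ and $T_{U_{0,1}}=y$.

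For existence I would exhibit such an invariant via the explicit formula, i.e.\ show that $M\mapsto T_M(x,y):=R_M(x-1,y-1)$ descends to a ring homomorphism $K_0(\Mat)\to\Z[x,y]$. Since $R_M$ is defined by a sum over subsets of the ground set involving only the rank function, $M\mapsto T_M$ depends only on the isomorphism class of $M$ and lands in genuine polynomials, so it defines a function on isomorphism classes. It then suffices to check: (i) $T_M=T_{M\setminus i}+T_{M/i}$ whenever $i$ is neither a loop nor a coloop; (ii) $T_{M_1\oplus M_2}=T_{M_1}T_{M_2}$; and (iii) $T_{U_{1,1}}=x$, $T_{U_{0,1}}=y$. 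Given (i) and (ii), the function kills both families of defining relations of $K_0(\Mat)$ and is additive and multiplicative on classes, so it extends by bilinearity to a ring homomorphism — with the empty matroid, the multiplicative unit, going to $1$ — and (iii) then identifies it as the asserted invariant.

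Statements (ii) and (iii) are quick. For (ii), a subset $I\subseteq E_1\sqcup E_2$ splits uniquely as $I_1\sqcup I_2$ with $r(I)=r_1(I_1)+r_2(I_2)$, $|I|=|I_1|+|I_2|$, and $r(M_1\oplus M_2)=r(M_1)+r(M_2)$, so the sum defining $R_{M_1\oplus M_2}$ factors as a product of the two sums, and the substitution $u=x-1$, $v=y-1$ respects the factorization. For (iii) one evaluates directly: for $U_{1,1}$ one has $r(\emptyset)=0$ and $r(\{0\})=1=r(M)$, giving $R_{U_{1,1}}(u,v)=u+1$ hence $T_{U_{1,1}}=x$; for $U_{0,1}$ every rank is $0$, giving $R_{U_{0,1}}(u,v)=1+v$ hence $T_{U_{0,1}}=y$.

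The one step with real content — and the main obstacle — is (i), which I would handle at the level of $R_M$ and then transport along $u=x-1$, $v=y-1$. Split the defining sum into the terms with $i\notin I$ and those with $i\in I$. For $i\notin I$: by definition of deletion $r_M(I)=r_{M\setminus i}(I)$, and since $i$ is not a coloop $r(M)=r(M\setminus i)$, so these terms assemble into $R_{M\setminus i}(u,v)$. For $i\in I$, write $I=J\cup\{i\}$ with $J\subseteq E\setminus i$; since $i$ is not a loop, $r_M(\{i\})=1$, whence $r_{M/i}(J)=r_M(I)-1$ and $r(M/i)=r(M)-1$, so the two exponents satisfy $r(M)-r_M(I)=r(M/i)-r_{M/i}(J)$ and $|I|-r_M(I)=|J|-r_{M/i}(J)$, and these terms assemble into $R_{M/i}(u,v)$. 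Thus $R_M=R_{M\setminus i}+R_{M/i}$, which gives (i). The only delicacy is the bookkeeping with the two hypotheses — ``not a coloop'' keeps the rank of the ground set stable under deletion, ``not a loop'' makes the rank drop by exactly one under contraction — but this is elementary.
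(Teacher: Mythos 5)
Your proof is correct, and it is essentially the argument the paper relies on implicitly: the proposition is stated there without proof, with uniqueness resting on the assertion in the preceding paragraph that $K_0(\Mat)$ is the free polynomial ring $\Z[U_{0,1},U_{1,1}]$ (exactly your uniqueness step), and existence resting on the classical verification that the rank generating polynomial $R_M$ satisfies the deletion/contraction and direct-sum relations, which you carry out correctly (the rank bookkeeping $r(M\setminus i)=r(M)$ for $i$ not a coloop and $r_{M/i}(J)=r_M(J\cup\{i\})-1$ for $i$ not a loop matches the paper's definitions). The only cosmetic point is the phrase ``extends by bilinearity'': more precisely, your checks (i)--(iii) show the assignment $M\mapsto T_M$ respects the defining relations of $K_0(\Mat)$ and hence descends to a ring homomorphism, which is what you clearly intend.
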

This result naturally generalizes a formula for the characteristic polynomial that we will explore later.
 
This theorem shows that $U_{0,1}$ and $U_{1,1}$ are algebraically independent in $K_0(\Mat)$.
The Tutte polynomial specializes to the characteristic polynomial:
\[\chi_{M}(q)=(-1)^{r(M)}T_M(1-q,0).\]
This can be seen by observing that the characteristic polynomial has the following properties: it takes the value $q-1$ on coloops; it vanishes on loops; its deletion/restriction relation has a sign, which is accounted for by the fact that $r(M/i)=r(M)-1$ for a non-coloop $i$.

The Tutte polynomial is well-behaved under duality: $T_M(x,y)=T_{M^*}(y,x)$.

\subsection{Motivic definition of characteristic polynomial}

Consider the Grothendieck ring of varieties over $\k$, $K_0(\Var_\k)$ where $\k=\C$ or $\k=\F_{p^r}$.  This is the ring of formal sums of varieties where addition is disjoint union and multiplication is Cartesian product subject to the scissors relation: for $Z\subset X$,
\[[X]=[Z]+[X\setminus Z].\]

One can show that there is a homomorphism
\[e:K_0(\Var_k)\rightarrow \Z[q]\]
such that 
\[e(\k)=q.\]

\begin{definition} For $V^{d+1}\subseteq \k^{n+1}$, a $(d+1)$-dimensional linear subspace, the {\em characteristic polynomial} of $V$ is 
\[\chi_V(q)\equiv e([V\cap (\k^*)^{n+1}]).\]
\end{definition}

\begin{example}
By inclusion/exclusion for a generic subspace, we have
\[\chi_V(q)=q^{d+1}-\binom{d+1}{1}q^r+\binom{d+1}{2}q^{d-1}-\dots+(-1)^{d+1}\binom{d+1}{0}.\]

Here, we begin with $V$ remove the $d+1$ coordinate hyperplanes, add back in $\binom{d+1}{2}$ codimension $2$ coordinate flats, and so on.
\end{example}

\begin{theorem} \label{t:motivic} Let $V\subseteq\k^{n+1}$ be a linear subspace with matroid $M$.  Then
\[\chi_V(q)=\chi_M(q).\]
\end{theorem}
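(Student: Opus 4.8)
The plan is to show that the motivically-defined invariant $\chi_V(q)$ satisfies the four characterizing properties of the combinatorial characteristic polynomial $\chi_M(q)$ listed in Section 7.1 (loop, normalization, direct sum, deletion/contraction), and then invoke the uniqueness statement proved there by induction on $|E|$. All the work happens on the variety side, using only that $e\colon K_0(\Var_\k)\to\Z[q]$ is a ring homomorphism with $e(\k)=q$ (hence $e(\k^*)=q-1$ via the scissors relation $[\k]=[\{0\}]+[\k^*]$).

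First I would dispose of the two easy axioms. If $i$ is a loop of $M$, then $V$ lies in the coordinate hyperplane $x_i=0$, so $V\cap(\k^*)^{n+1}=\varnothing$ and $\chi_V(q)=e(\varnothing)=0$. For normalization, $U_{1,1}$ is represented by $V=\k\subseteq\k^1$ (any nonzero vector), and $V\cap(\k^*)^1=\k^*$, so $\chi_V(q)=e(\k^*)=q-1$. For the direct sum axiom, if $V=V_1\oplus V_2\subseteq\k^{n_1+1}\oplus\k^{n_2+1}$ represents $M_1\oplus M_2$ (as in Section 5.2), then $V\cap(\k^*)^{n_1+n_2+2}=(V_1\cap(\k^*)^{n_1+1})\times(V_2\cap(\k^*)^{n_2+1})$, and since $e$ is multiplicative we get $\chi_V(q)=\chi_{V_1}(q)\chi_{V_2}(q)$.

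The substantive step is deletion/contraction. Suppose $i\in E$ is neither a loop nor a coloop of $M$; I want $\chi_V(q)=\chi_{M\setminus i}(q)-\chi_{M/i}(q)$. Realize $M\setminus i$ by $\pi(V)\subseteq\k^n$ where $\pi$ forgets the $i$-th coordinate, and $M/i$ by $V\cap L_i\subseteq L_i\cong\k^n$ where $L_i=\{x_i=0\}$, exactly as in Section 5.1. Now stratify $V\cap(\k^*)^n$-type loci by the value of the $i$-th coordinate. Concretely, consider the locus $V\cap(\k^*)^{n+1}$ inside $V\cap\{x_j\neq 0\ \forall j\neq i\}$; the latter is $(\pi(V)\cap(\k^*)^n)$ fibered over $V$, and the complement of the former inside it is cut out by $x_i=0$, i.e. is $(V\cap L_i)\cap(\k^*)^n = (V\cap L_i)\cap(\k^*)^{E\setminus i}$, which represents $M/i$. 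The key point is that since $i$ is not a coloop, $\pi|_V$ is injective (Section 5.1: $\dim\pi(V)=\dim V$), so $V\cap\{x_j\neq0\ \forall j\neq i\}\cong\pi(V)\cap(\k^*)^n$; and since $i$ is not a loop, $V\cap L_i$ has dimension $d$, so $M/i$ has the expected rank. Applying the scissors relation and $e$ gives $\chi_{M\setminus i}(q)=\chi_V(q)+\chi_{M/i}(q)$, which rearranges to the desired identity. I expect this bookkeeping — matching the three strata (coordinate nonvanishing vs.\ vanishing at $i$) to $M$, $M\setminus i$, $M/i$ and making sure the isomorphism $V\cong\pi(V)$ is used correctly — to be the main obstacle; everything else is formal. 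Once all four properties hold, the uniqueness of $\chi_M$ established in Section 7.1 forces $\chi_V(q)=\chi_M(q)$, completing the proof.
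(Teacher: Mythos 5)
Your proposal is correct and follows essentially the same route as the paper's proof: verify the loop, normalization, direct sum, and deletion/contraction axioms for $\chi_V$, with the key step being the scissors relation on the locus $A_i=\{x_j\neq 0\ \text{for}\ j\neq i\}$ together with the injectivity of the coordinate projection when $i$ is not a coloop and the identification of $V\cap L_i$ with a representation of $M/i$. The only point worth stating explicitly (as the paper does) is that deletion, contraction, and direct summands of representable matroids are again representable, so the uniqueness-by-induction argument stays within the representable class.
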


\begin{proof}
We verify that $\chi_V$ obeys the axioms for the characteristic polynomial.  We note that the direct sum decomposition, deletion, and contraction of a representable matroid only involve representable matroids, so a function obeying the axioms on representable matroids must agree with the characteristic polynomial.

If $M$ has a loop then $V$ is contained in a coordinate subspace, and each side of the equation is $0$.  Therefore, we may suppose that $E$ is loopless.

If $M=U_{1,1}$ then $V=\k\subseteq\k$.  Therefore, $V\cap(\k^*)=[\k^1]-[\operatorname{pt}]$ and $\chi_V(q)=q-1$.

If $M=M_1\oplus M_2$, then $V=V_1\oplus V_2\subseteq \k^{n_1}\oplus \k^{n_2}$.  Consequently,
\[[V\cap (\k^*)^{n+1}]=[V_1\cap (\k^*)^{n_1}][V_2\cap (\k^*)^{n_2}].\] 
Applying $e$, we get the direct sum relation.

Let $i\in E$ be neither a loop nor a coloop. Let $H_i\subset \k^{n+1}$ be the hyperplane cut out by $x_i=0$.  Let $A_i$ be the set given by 
\[A_i=\{(x_0,x_1,\dots,x_n)\mid x_j\neq 0 \text{\ if \ } i\neq j\}.\]
Then, we have the following motivic equation
\[[V\cap (\k^*)^{n+1}]=[V\cap A_i]-[V\cap A_i\cap H_i].\]
Since $i$ is not a coloop, the coordinate projection $\pi:\k^{n+1}\rightarrow \k^n$ forgetting the $i$th component is injective on $V$.  Moreover, $\pi$ takes $V\cap A_i$ bijectively to $\pi(V)\cap (\k^*)^n$.   Now, $\pi(V)\subseteq \k^n$ is the subspace representing $M\setminus i$.  On the other hand $V\cap A_i\cap H_i$ is  the intersection of $V\cap H_i$ with the algebraic torus in $H_i$.  Now, $V\cap H_i$ is the subspace representing $V/i$, and the deletion/contraction relation follows.
\end{proof}

Note that $\chi_M(q)$ has degree $d+1$ with the leading coefficient equal to $1$ as the only $d+1$-dimensional variety that contributes to the motivic expression is $V$ itself. 

We now relate the characteristic polynomial to geometric invariants of $V\cap (\k^*)^{n+1}$.  If $V\subset\k^{n+1}$ is a subspace, the motivic class of $[V\cap(\k^*)^{n+1}]$ lies in the commutative subring with unity $\<[\k]\>\subseteq K_0(\Var_\k)$ generated by $[\k]$.   This can be shown by
expressing $V\cap (\k^*)^{n+1}$ motivically in terms of $V$ and its intersections with coordinate flats.  Consequently, for any ring $R$, if $e':K_0(\Var_\k)\rightarrow R$ is any homomorphism, $e'([V\cap(\k^*)^{n+1}])$ is determined by $e'([\k])\in R$.  It follows that 
\[e'([V\cap (\k^*)^{n+1})=\chi_V(e'[\k]).\]

If $\k=\C$, we can choose the homomorphism $\chi^c_y:K_0(\Var_\k)\rightarrow \Z[u]$ to be the compactly-supported $\chi_y$-characteristic given on varieties by  
\[\chi_y^c(X)=\sum_{p,q} \sum_m (-1)^m h^{p,q}(H^m(X)) u^p\]
where the Hodge numbers are taken with respect to Deligne's mixed Hodge structure \cite{PetersSteenbrink}.  Since $\chi_y^c([\k])=u-1$, we derive the following result of Orlik-Solomon \cite{OrlikSolomon}  as proved in \cite{Aluffi,KatzStapledon}

\begin{theorem}
Let $V\subseteq \C^{n+1}$ be a subspace, then we have
\[\chi_y^c([V\cap (\C^*)^{n+1}])=\chi_V(u).\]
\end{theorem}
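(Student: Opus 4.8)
The plan is to deduce the statement from the ring structure of $K_0(\Var_\C)$, exactly along the lines indicated just before the theorem. The crucial observation is that $[V\cap(\C^*)^{n+1}]$ is a \emph{polynomial in the class $[\C]$ of the affine line}; once this is in hand, the image of $[V\cap(\C^*)^{n+1}]$ under any ring homomorphism out of $K_0(\Var_\C)$ is obtained from $\chi_V$ by substituting the image of $[\C]$ for the variable, and it only remains to identify $\chi_y^c([\C])$.

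First I would make the containment $[V\cap(\C^*)^{n+1}]\in\langle[\C]\rangle$ explicit. With $V_S=\{x\in V\mid x_i=0\ \text{for all}\ i\in S\}$ as before, inclusion--exclusion (the scissors relations in $K_0(\Var_\C)$) gives
\[[V\cap(\C^*)^{n+1}]=\sum_{S\subseteq\{0,\dots,n\}}(-1)^{|S|}[V_S].\]
Each $V_S$ is a linear subspace of $V$, hence isomorphic to $\A^{(d+1)-r(S)}_\C$, so $[V_S]=[\C]^{(d+1)-r(S)}$ and
\[[V\cap(\C^*)^{n+1}]=P([\C]),\qquad P(t):=\sum_{S\subseteq\{0,\dots,n\}}(-1)^{|S|}t^{(d+1)-r(S)}\in\Z[t].\]
Applying the homomorphism $e:K_0(\Var_\C)\to\Z[q]$ with $e([\C])=q$ and comparing with the definition $\chi_V(q)=e([V\cap(\C^*)^{n+1}])$ shows that $\chi_V=P$ as polynomials (this also reproves the closed formula for $\chi_V$ noted earlier, which by Theorem~\ref{t:motivic} equals $\chi_M$).

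The second step is to repeat this with $\chi_y^c$ in place of $e$. Since $\chi_y^c$ is a ring homomorphism, $\chi_y^c([V\cap(\C^*)^{n+1}])=\chi_y^c(P([\C]))=P(\chi_y^c([\C]))$, so everything reduces to evaluating $\chi_y^c$ on the affine line: $H^m_c(\A^1_\C)$ vanishes except for $H^2_c(\A^1_\C)=\Q(-1)$, which is one-dimensional of Hodge--Deligne type $(1,1)$; hence $\chi_y^c([\C])=u$, and therefore $\chi_y^c([V\cap(\C^*)^{n+1}])=P(u)=\chi_V(u)$. I do not expect a real obstacle here: the substantive input --- that $\chi_y^c:K_0(\Var_\C)\to\Z[u]$ is a well-defined ring homomorphism, which is where Deligne's mixed Hodge theory (and, implicitly, the purity of the mixed Hodge structure on the arrangement complement $V\cap(\C^*)^{n+1}$) enters --- has been granted. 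The only points requiring care are the elementary bookkeeping that a polynomial identity in $[\C]$ transports verbatim across any ring homomorphism, and fixing the normalization $\chi_y^c([\C])=u$ so that the answer emerges as $\chi_V(u)$ rather than as $\chi_V$ evaluated at some affine shift.
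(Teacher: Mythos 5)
Your proposal is correct and takes essentially the same route as the paper: the discussion immediately preceding the theorem observes that $[V\cap(\C^*)^{n+1}]$ lies in the subring of $K_0(\Var_\C)$ generated by $[\C]$ (precisely via the inclusion--exclusion over the coordinate flats $V_S$ that you spell out), so that any ring homomorphism sends this class to $\chi_V$ evaluated at the image of $[\C]$, and one then specializes to $e'=\chi_y^c$. Your normalization $\chi_y^c([\C])=u$, computed from $H^2_c(\A^1_\C)=\Q(-1)$ of type $(1,1)$, is exactly what the stated conclusion $\chi_V(u)$ requires; the value $u-1$ quoted in the paper's text is $\chi_y^c([\C^*])$, so your bookkeeping in fact corrects a small slip there.
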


Similarly, if $\k=\F_{p^r}$, counting the number of $\F_{p^r}$-points gives a homomorphism $K_0(\Var_k)\rightarrow \Z$.  Since $|\F_p^r|=p^r$, we have the following theorem of Athanasiadis \cite{AthanasiadisAdvances}:

\begin{theorem}
Let $V\subseteq \F_{p^r}^{n+1}$ be a subspace, then we have
\[|V\cap (\F_{p^r}^*)^{n+1}|=\chi_V(p^r).\]
\end{theorem}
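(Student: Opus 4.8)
The plan is to deduce this directly from the formalism already assembled. First I would observe that sending a variety $X$ over $\F_{p^r}$ to the number $|X(\F_{p^r})|$ of its $\F_{p^r}$-rational points descends to a ring homomorphism $N\colon K_0(\Var_{\F_{p^r}})\to\Z$: the rational points of a disjoint union are the union of the rational points of the pieces, the rational points of a product are the product of the rational points of the factors, and for a closed subvariety $Z\subset X$ defined over $\F_{p^r}$ the set $X(\F_{p^r})$ is the disjoint union of $Z(\F_{p^r})$ and $(X\setminus Z)(\F_{p^r})$, so the scissors relation $[X]=[Z]+[X\setminus Z]$ is respected. By construction $N([\k])=|\A^1(\F_{p^r})|=p^r$.

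Next I would invoke the two facts established in the discussion above: that $[V\cap(\k^*)^{n+1}]$ lies in the subring $\<[\k]\>\subseteq K_0(\Var_\k)$, and that for every ring homomorphism $e'\colon K_0(\Var_\k)\to R$ one has $e'([V\cap(\k^*)^{n+1}])=\chi_V(e'[\k])$, where $\chi_V$ has integer coefficients so the right-hand side makes sense. Applying this with $e'=N$, and noting that $N$ applied to the class of the subvariety $V\cap\G_m^{n+1}$ is exactly the cardinality $|V\cap(\F_{p^r}^*)^{n+1}|$, we obtain
\[|V\cap(\F_{p^r}^*)^{n+1}|=\chi_V(N[\k])=\chi_V(p^r),\]
which is the assertion.

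The only step that needs any care is the well-definedness of $N$, i.e.\ the compatibility of ``passage to $\F_{p^r}$-rational points'' with the scissors relation; this is precisely why one must count rational rather than geometric points, since a closed immersion defined over $\F_{p^r}$ induces an injection on $\F_{p^r}$-points whose complement is exactly the rational points of the open complement. An alternative, self-contained route would avoid $K_0(\Var)$ altogether and mirror the proof of Theorem~\ref{t:motivic}: one checks that the quantity $|V\cap(\F_{p^r}^*)^{n+1}|$ obeys loop vanishing, the normalization giving $p^r-1$ on $U_{1,1}$, multiplicativity under direct sum, and the deletion/contraction recursion with $q=p^r$, and then concludes by the uniqueness of the characteristic polynomial together with Theorem~\ref{t:motivic}; the argument above is just the efficient packaging of this.
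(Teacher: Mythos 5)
Your proposal is correct and follows the paper's own argument: the paper likewise notes that counting $\F_{p^r}$-points defines a ring homomorphism $K_0(\Var_\k)\rightarrow\Z$ sending $[\k]$ to $p^r$, and then applies the general fact that any such homomorphism $e'$ satisfies $e'([V\cap(\k^*)^{n+1}])=\chi_V(e'[\k])$. Your verification of the scissors-relation compatibility simply makes explicit a step the paper takes for granted.
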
 

\subsection{Mobius inversion and the characteristic polynomial}

The characteristic polynomial has a description given by inclusion/exclusion along the poset of flats.  This is phrased in the language of M\"{o}bius inversion which is a combinatorial abstraction of the above motivic arguments.  We will only make use of the M\"{o}bius function evaluated from the minimal flat.  Let $L(M)$ be the poset of flats of $M$ ordered by inclusion.  We define $\mu(E,F)$ for every pair of flats $E\subseteq F$ recursively by
\begin{enumerate}
\item $\mu(E,E)=1,$
\item $\mu(E,F)=-\sum_{E\subseteq F'\subset F} \mu(E,F')$.
\end{enumerate}

We will suppose that $M$ is loopless and concentrate on $\mu(\emptyset,F)$.    For $I\subseteq E=\{0,1,\dots,n\}$, let $L_I$ be the coordinate flat 
\[L_I=\{x\mid x_i=0 \text{\ if\ }i\in I\}.\]
Note that $\codim(V\cap L_I)=r(I)$ and $V\cap L_I=V\cap L_F$ where $F$ is the smallest flat containing $I$. Let
\[L^*_I=\{x\mid x_i=0 \text{\ if and only if } i\in I\}.\]
The following lemma, which is a special case of M\"{o}bius inversion, is the motivation for the above definition.
\begin{lemma} \label{l:mobius}
\[[V \cap (\k^*)^{n+1}]=\sum_{F\in L(M)} \mu(\emptyset,F)[V\cap L_F].\]
\end{lemma}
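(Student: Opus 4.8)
The plan is to decompose the torus $(\k^*)^{n+1}$ according to which coordinates vanish and then regroup. First I would observe that the ambient affine space $\k^{n+1}$ is the disjoint union, over all subsets $I \subseteq E$, of the locally closed sets $L^*_I = \{x \mid x_i = 0 \text{ if and only if } i \in I\}$. Intersecting with $V$ and passing to motivic classes via the scissors relation gives
\[
[V \cap L_J] = \sum_{I \supseteq J} [V \cap L^*_I]
\]
for every $J$, since $L_J$ is the disjoint union of the $L^*_I$ with $I \supseteq J$. In particular, taking $J = \emptyset$ (and noting $L^*_\emptyset = (\k^*)^{n+1}$), the term we want is $[V \cap (\k^*)^{n+1}] = [V \cap L^*_\emptyset]$.

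Next I would use the observation, already recorded in the excerpt, that $V \cap L_I = V \cap L_F$ where $F = \operatorname{cl}(I)$ is the smallest flat containing $I$, and likewise that $V \cap L^*_I$ depends only on $\operatorname{cl}(I)$; more precisely, for a flat $F$ one has $V \cap L_F = \bigsqcup_{I : \operatorname{cl}(I) = F} (V \cap L^*_I)$, and these pieces for different flats are disjoint. Grouping the sum $\sum_{I \supseteq \emptyset}$ by the flat $F = \operatorname{cl}(I)$ turns the identity $[V \cap L_\emptyset] = \sum_I [V \cap L^*_I]$ into a sum indexed by flats, but with multiplicities. The clean way to handle this is to first establish, for every flat $G \in L(M)$, the "downward" identity
\[
[V \cap L_G] = \sum_{F \in L(M),\ F \supseteq G} c_{G,F}\,[V \cap L^*_F]'
\]
where $[V \cap L^*_F]' := \sum_{I : \operatorname{cl}(I) = F} [V \cap L^*_I]$, and where $c_{G,F} = 1$ whenever $F \supseteq G$ (since $\operatorname{cl}(I) \supseteq G$ iff $I \supseteq G$... one must be slightly careful here, as $\operatorname{cl}(I) \supseteq G$ does not force $I \supseteq G$; the correct statement is that $V \cap L_G = \bigsqcup_{F \supseteq G}\, \bigsqcup_{I : \operatorname{cl}(I) = F,\ I \subseteq G \text{ as a set in } L_G} \cdots$). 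Rather than fuss over this, the cleanest route is: the function $F \mapsto [V \cap L_F]$ on the lattice $L(M)$ satisfies $[V \cap L_G] = \sum_{F \supseteq G} g(F)$ where $g(F) = \sum_{I : \operatorname{cl}(I)=F}[V\cap L^*_I]$, because $L_G = \bigsqcup_{I \supseteq G} L^*_I$ and every such $I$ has $\operatorname{cl}(I) \supseteq G$, while conversely every $I$ with $\operatorname{cl}(I) = F \supseteq G$ satisfies $I \supseteq G$ precisely when $I \cup G$... — this is exactly the relation that makes M\"obius inversion over $L(M)$ applicable.

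Then I would invoke M\"obius inversion on the poset $L(M)$: from $[V \cap L_G] = \sum_{F \geq G} g(F)$ for all flats $G$, one recovers $g(\emptyset) = \sum_{F \in L(M)} \mu(\emptyset, F)\, [V \cap L_F]$, using the defining recursion for $\mu$ exactly as stated in items (1)–(2) preceding the lemma. Since $g(\emptyset) = \sum_{I : \operatorname{cl}(I) = \emptyset}[V \cap L^*_I] = [V \cap L^*_\emptyset] = [V \cap (\k^*)^{n+1}]$ (as $\operatorname{cl}(I) = \emptyset$ forces $I = \emptyset$ when $M$ is loopless), this is precisely the claimed formula.

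The main obstacle — really the only subtle point — is getting the bookkeeping of the disjoint-union decomposition right so that the coefficients in $[V \cap L_G] = \sum_{F \supseteq G}(\cdots)$ are genuinely $1$ for every flat $F \supseteq G$ and $0$ otherwise, i.e., verifying that $\{I : \operatorname{cl}(I) = F\}$ for $F$ ranging over flats $\supseteq G$ partitions $\{I : I \supseteq G\}$. This hinges on the fact that $I \supseteq G$ implies $\operatorname{cl}(I) \supseteq G$ and that $\operatorname{cl}$ is a closure operator whose fixed points are exactly the flats; once that is spelled out, everything reduces to the standard M\"obius-inversion identity and the scissors relations in $K_0(\Var_\k)$, both of which are formal. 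I would also remark that this argument is the combinatorial shadow of the motivic computation in the proof of Theorem~\ref{t:motivic}, and indeed applying $e$ to this identity and using $\codim(V \cap L_F) = r(F)$ will immediately yield the classical M\"obius formula $\chi_M(q) = \sum_{F} \mu(\emptyset, F) q^{d+1 - r(F)}$ for the characteristic polynomial.
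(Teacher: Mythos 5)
Your overall architecture is the same as the paper's: decompose into the locally closed pieces $L^*$, regroup along the lattice of flats, and then apply the defining recursion for $\mu(\emptyset,\cdot)$ (your ``M\"obius inversion at the bottom element'' is exactly the paper's interchange of the order of summation, using $\sum_{\emptyset\subseteq F\subseteq F'}\mu(\emptyset,F)=0$ for $F'\neq\emptyset$). The problem is the regrouping step, which you correctly flag as the only subtle point but then resolve incorrectly. The statement you propose to verify --- that the classes $\{I : \cl(I)=F\}$, as $F$ ranges over the flats containing $G$, partition $\{I: I\supseteq G\}$ --- is false: if $0$ and $1$ are parallel and $G=\cl(\{0\})=\{0,1\}$, then $I=\{0\}$ satisfies $\cl(I)=G\supseteq G$ but $I\not\supseteq G$, so these classes are not even subsets of $\{I : I\supseteq G\}$. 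Consequently the identity $[V\cap L_G]=\sum_{F\supseteq G}g(F)$ with $g(F)=\sum_{I:\,\cl(I)=F}[V\cap L^*_I]$, which you need for every flat $G$ before inverting, cannot be obtained by the purely combinatorial bookkeeping you describe, and your attempted justification indeed trails off at exactly this point.

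What rescues the argument is a geometric fact you never state: the exact vanishing set $\{i : x_i=0\}$ of any point $x\in V$ is a flat of $M$. (This follows from the observation, recorded just before the lemma, that $V\cap L_I=V\cap L_{\cl(I)}$: a point of $V$ vanishing on $I$ automatically vanishes on $\cl(I)$, so its exact vanishing set is closed.) Hence $V\cap L^*_I=\emptyset$ whenever $I$ is not a flat, so your $g(F)$ collapses to $[V\cap L^*_F]$, the spurious index sets contribute nothing, and for every flat $G$ one has the clean decomposition $V\cap L_G=\bigsqcup_{F\in L(M),\,F\supseteq G}\bigl(V\cap L^*_F\bigr)$ --- which is precisely the identity the paper's proof takes as its starting point. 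With this observation added, your inversion step finishes the proof exactly as in the paper; without it, the regrouping as written is unjustified, and the combinatorial claim you offer in its place is false.
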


\begin{proof}
We note that for any flat $F$, 
\[[V\cap L_F]=\sum_{\substack{F'\in L(M)\\F'\supseteq F}} [V\cap L^*_{F'}].\]
Consequently,
\begin{eqnarray*}
\sum_{F\in L(M)} \mu(\emptyset,F)[V\cap L_F]&=&\sum_{\substack{F,F'\in L(M)\\F\subseteq F'}} \mu(\emptyset,F)[V\cap L^*_{F'}]\\
&=&\sum_{F'\in L(M)} \left(\sum_{\substack{F\in L(M)\\F\subseteq F'}} \mu(\emptyset,F)\right)[V\cap L^*_{F'}]\\
&=&[V\cap L^*_{\emptyset}].
\end{eqnarray*}
\end{proof}

As a consequence of Weisner's theorem \cite[Section 3.9]{StanleyBook}, \cite[Theorem 3.10]{StanleyNotes} for loopless matroids, we have the following result which will be important in the sequel:

\begin{lemma} \label{l:weisner}
Let $F$ be a flat of a loopless matroid $M$.  
For any $a \in F$,
\[\mu(\emptyset,F)=-\sum_{a\notin F' \lessdot F} \mu(\emptyset, F')\]
where $F'\lessdot F$ means that $F'\subset F$ and $r(F')=r(F)-1$. 
\end{lemma}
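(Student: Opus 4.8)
The plan is to obtain this as a direct consequence of Weisner's theorem, exactly as the statement indicates. First I would record two easy reductions. Since $M$ is loopless, its minimal flat $\hat{0}$ is $\emptyset$, so it is legitimate to write $\mu(\emptyset,\cdot)$ for $\mu(\hat{0},\cdot)$. Moreover, it suffices to argue inside the interval $[\hat{0},F]$ of $L(M)$: by the remark following the definition of restriction this interval is the lattice of flats $L(M|_F)$, a geometric lattice whose join is $\operatorname{cl}(\,\cdot\cup\cdot\,)$ and whose meet is $\cap$, and the values $\mu(\emptyset,x)$ and $r(x)$ for $x\le F$ are the same whether computed in $L(M)$ or in $[\hat{0},F]$. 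Since $a\in F$ and $M|_F$ is loopless, $\alpha:=\operatorname{cl}(\{a\})$ is an atom of $[\hat{0},F]$; in particular $\alpha>\hat{0}$ and $F>\hat{0}$, so Weisner's theorem will be applicable to this lattice.

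Next I would apply Weisner's theorem in the form: in a finite lattice with $\hat{0}<\hat{1}$, for any element $\alpha>\hat{0}$ one has $\sum_{x\,:\,x\vee\alpha=\hat{1}}\mu(\hat{0},x)=0$. Applied to $[\hat{0},F]$ with $\hat{1}=F$ and the atom $\alpha$, this reads
\[\sum_{\substack{x\in[\hat{0},F]\\ x\vee\alpha=F}}\mu(\emptyset,x)=0.\]
I would then identify the index set. The term $x=F$ always occurs. For $x<F$, I claim $x\vee\alpha=F$ holds if and only if $x\lessdot F$ and $a\notin x$. Indeed, if $x\vee\alpha=F$ then $\alpha\not\le x$ (otherwise $x\vee\alpha=x$), i.e. $a\notin x$; and since $\alpha$ is an atom with $\alpha\not\le x$ we have $x\wedge\alpha=\hat{0}$, so the submodular rank axiom (\ref{i:subadd}) gives $r(x\vee\alpha)\le r(x)+1$, while $x< x\vee\alpha$ forces $r(x\vee\alpha)\ge r(x)+1$; hence $x\vee\alpha$ covers $x$, so $x\lessdot x\vee\alpha=F$. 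Conversely, if $x\lessdot F$ with $a\notin x$, then $\alpha\not\le x$, so $x< x\vee\alpha\le F$, and since $x$ is covered by $F$ this forces $x\vee\alpha=F$.

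Combining the two observations, the displayed equation becomes $\mu(\emptyset,F)+\sum_{a\notin F'\lessdot F}\mu(\emptyset,F')=0$, which is the asserted identity after rearrangement. The only step with genuine content is the covering claim $x\lessdot x\vee\alpha$ for an atom $\alpha\not\le x$, and I expect that to be the main point to handle carefully; it rests on the semimodularity of the lattice of flats, which is itself the lattice translation of the submodular rank axiom. If one prefers not to invoke Weisner's theorem as a black box, one can instead reproduce its short proof in this setting: for $y$ with $\alpha\le y\le F$ set $f(y)=\sum_{x\vee\alpha=y}\mu(\emptyset,x)$; then $\sum_{\alpha\le z\le y}f(z)=\sum_{x\le y}\mu(\emptyset,x)=0$ because $y\ge\alpha>\hat{0}$, so induction up from $\alpha$ yields $f\equiv0$ on $[\alpha,F]$, and in particular $f(F)=0$, which is again the displayed equation.
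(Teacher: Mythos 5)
Your proposal is correct and follows exactly the route the paper intends: the paper gives no written proof, simply asserting the lemma as a consequence of Weisner's theorem (with a citation to Stanley), and your argument is the standard deduction from Weisner applied to the geometric lattice $[\hat{0},F]=L(M|_F)$ with the atom $\operatorname{cl}(\{a\})$, the key covering claim being handled correctly via semimodularity. The optional self-contained proof of Weisner's theorem you sketch is also sound, so nothing is missing.
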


We have the following description of the characteristic polynomial:
\begin{theorem} \label{t:charpoly}
The characteristic polynomial for a loopless matroid $M$ of rank $d+1$ is given by 
\[\chi_M(q)=\sum_{F\in L(M)} \mu(\emptyset,F)q^{d+1-r(F)}.\]
If $M$ has loops, then $\chi_M(q)=0$.
\end{theorem}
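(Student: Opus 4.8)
The plan is to verify that the right-hand side, call it $\tilde\chi_M(q) := \sum_{F \in L(M)} \mu(\emptyset,F)\, q^{d+1-r(F)}$, satisfies the four defining properties of the characteristic polynomial (loop property, normalization, direct sum, deletion/contraction), since we established earlier that those properties pin $\chi_M$ down uniquely by induction on $|E|$. For matroids with loops there is nothing to prove, since both $\chi_M$ and (by convention, or by the degenerate lattice of flats) the stated formula vanish; so I work throughout with loopless $M$. The normalization $\tilde\chi_{U_{1,1}}(q) = q - 1$ is immediate: $L(U_{1,1})$ has exactly the flats $\emptyset$ and $E$, with $\mu(\emptyset,\emptyset)=1$ and $\mu(\emptyset,E) = -1$, giving $q^1 - q^0$. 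The direct sum property follows from the product structure $L(M_1 \oplus M_2) = L(M_1) \times L(M_2)$ recorded in Section~\ref{s:operations}, together with multiplicativity of the M\"obius function over products of posets, $\mu(\emptyset, (F_1,F_2)) = \mu(\emptyset,F_1)\mu(\emptyset,F_2)$, and additivity of rank, $r(F_1,F_2) = r(F_1) + r(F_2)$; expanding the double sum factors as $\tilde\chi_{M_1}(q)\,\tilde\chi_{M_2}(q)$.

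The substantive step is deletion/contraction. Fix $i \in E$ that is neither a loop nor a coloop, and compare $\tilde\chi_M(q)$ with $\tilde\chi_{M\setminus i}(q) - \tilde\chi_{M/i}(q)$. The flats of $M/i$ are exactly the flats of $M$ containing $i$, and for such a flat $F$ one has $r_{M/i}(F\setminus\{i\}) = r_M(F) - 1$ while the ambient rank drops by one, so the exponent $d+1-r(F)$ is preserved; moreover the interval $[\{i\},F]$ in $L(M)$ is isomorphic to $[\emptyset, F\setminus\{i\}]$ in $L(M/i)$, which matches up the M\"obius values. The flats of $M\setminus i$ are obtained from flats $F$ of $M$ not containing $i$ by the closure map, and the key classical fact is that $F \mapsto \operatorname{cl}_M(F)$, restricted to flats of $M\setminus i$, together with its behavior on M\"obius values, lets one rewrite $\tilde\chi_{M\setminus i}(q)$ as a sum over flats of $M$ not containing $i$. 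The plan is to split $\tilde\chi_M(q)$ as a sum over flats containing $i$ plus a sum over flats not containing $i$, identify the first piece with $-\tilde\chi_{M/i}(q)$ (up to the rank bookkeeping), and show the second piece equals $\tilde\chi_{M\setminus i}(q)$; the cross-terms are controlled precisely by Weisner's theorem, Lemma~\ref{l:weisner}, applied with $a = i$, which expresses $\mu(\emptyset, F)$ for a flat $F \ni i$ in terms of $\mu(\emptyset, F')$ over covers $F' \lessdot F$ with $i \notin F'$. This is where the M\"obius-function identity earns its keep.

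The main obstacle I anticipate is the careful matching of flats and M\"obius values across the three lattices $L(M)$, $L(M\setminus i)$, $L(M/i)$ — in particular, the fact that distinct flats of $M\setminus i$ may have the same M\"obius-relevant data but arise from flats of $M$ in a two-to-one or one-to-one fashion depending on whether adding $i$ produces a new flat or not. An alternative, possibly cleaner, route avoids deletion/contraction entirely: use the motivic identity of Theorem~\ref{t:motivic} in the representable case, namely that $\chi_M(q) = e([V \cap (\k^*)^{n+1}])$, combined with the M\"obius inversion of Lemma~\ref{l:mobius}, which directly gives $[V \cap (\k^*)^{n+1}] = \sum_F \mu(\emptyset,F)[V \cap L_F]$; applying $e$ and noting $e([V \cap L_F]) = q^{\,d+1-r(F)}$ (since $V \cap L_F$ is an affine space of dimension $d+1-r(F)$) yields the formula for representable $M$. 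One then observes that both $\chi_M$ and $\tilde\chi_M$ are defined by purely combinatorial recursions valid for all matroids, and that the class of representable matroids, while not all matroids, is large enough that — no, this last move fails in general, so for a complete proof one genuinely needs the deletion/contraction verification above; the motivic argument is best presented as motivation and as the representable-case sanity check, with the honest proof being the inductive one driven by Lemma~\ref{l:weisner}.
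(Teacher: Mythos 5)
Your global strategy --- verify that the right-hand side satisfies the loop, normalization, direct-sum and deletion/contraction axioms, which determine $\chi_M$ uniquely --- is exactly the paper's, and your treatment of the first three axioms coincides with the paper's (product structure of $L(M_1\oplus M_2)$, multiplicativity of $\mu$, additivity of rank). You are also right that the motivic route via Theorem~\ref{t:motivic} and Lemma~\ref{l:mobius} only covers representable matroids and cannot serve as the proof. The genuine gap is in the one step that carries all the weight, deletion/contraction, where your proposed identifications are not correct as stated. The sum over flats $F\ni i$ in $\tilde\chi_M$ carries the coefficients $\mu_M(\emptyset,F)$, whereas $\tilde\chi_{M/i}$ is a sum over those same flats with coefficients $\mu_M(\cl_M(\{i\}),F)$ (via the isomorphism $[\cl_M(\{i\}),F]\cong[\hat 0,F\setminus\{i\}]$; note $\{i\}$ itself need not be a flat of $M$), so ``the first piece is $-\tilde\chi_{M/i}$ up to rank bookkeeping'' is false. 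Likewise $\tilde\chi_{M\setminus i}$ contains, besides the flats $G$ of $M$ with $i\notin G$ (where indeed $\mu_{M\setminus i}(\emptyset,G)=\mu_M(\emptyset,G)$, since lower intervals are lattices of restrictions), the extra flats $G=F\setminus\{i\}$ with $\cl_M(G)=F\ni i$, whose M\"obius values you never compute. What you actually need is the rank-by-rank identity $\sum_{F\ni i}\mu_M(\emptyset,F)=\sum_{\text{extra }G}\mu_{M\setminus i}(\emptyset,G)-\sum_{F\ni i}\mu_M(\cl_M(\{i\}),F)$ (with the last sum replaced by $0$ when $i$ has a parallel element, since then $M/i$ has loops and $\tilde\chi_{M/i}$ must be taken to be $0$), and you offer no argument for it beyond invoking an unnamed ``classical fact'' and Weisner's theorem. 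In short, the plan stops precisely at the obstacle you yourself flag.

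The paper's way around this is a small lemma your plan lacks: for every flat $F$, the signed count $\sum_{S\subseteq E,\ \cl(S)=F}(-1)^{|S|}$ equals $\mu(\emptyset,F)$ when $M$ is loopless and $0$ when $M$ has loops. With it, $\tilde\chi_M(q)=\sum_{S\subseteq E}(-1)^{|S|}q^{\,d+1-r(S)}$ (Whitney's Boolean expansion), and deletion/contraction becomes a one-line split of the sum according to whether $i\in S$, using $r_{M/i}(S)=r_M(S\cup\{i\})-1$; the case where $M/i$ has loops is absorbed automatically, because the lemma makes the flat-indexed sum vanish for matroids with loops. If you prefer your lattice-theoretic route, the displayed identity is true and can be extracted from Lemma~\ref{l:weisner} together with an analysis of which sets $F\setminus\{i\}$ are flats of $M\setminus i$, but that amounts to re-deriving the Boolean-expansion lemma in disguise; as submitted, the central step of the proof is missing. (Incidentally, the paper uses Weisner's theorem not here but later, for the coefficients of the reduced characteristic polynomial.)
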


This theorem can be proven for representable matroids by combining Lemma \ref{l:mobius} and Theorem \ref{t:motivic}.
Before we give the proof of this theorem, we will need the following lemma:

\begin{lemma} Let $F$ be a flat of a matroid $M$.  Then
\[U_F:=\sum_{\substack{S\subset E\\ \cl(S)=F}} (-1)^{|S|}=
\begin{cases}
\mu(\emptyset,F)& \text{if $M$ is loopless}\\
0   &\text{otherwise.}
\end{cases}\]
\end{lemma}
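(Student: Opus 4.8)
The plan is to show that the numbers $U_F$ satisfy the same recursion over the lattice of flats as the numbers $\mu(\emptyset,F)$ (when $M$ is loopless), or vanish identically (when $M$ has loops), and then conclude by induction on $r(F)$. The key preliminary observation is that for a flat $F$ and any $S\subseteq E$ one has $\cl(S)\subseteq F$ if and only if $S\subseteq F$: if $S\subseteq F$ then $\cl(S)\subseteq\cl(F)=F$ by monotonicity of the closure, and conversely $S\subseteq\cl(S)$. Partitioning the subsets of $F$ according to the value of their closure (which is always a flat contained in $F$) therefore gives
\[
\sum_{\substack{G\in L(M)\\ G\subseteq F}} U_G \;=\; \sum_{\substack{S\subseteq E\\ \cl(S)\subseteq F}} (-1)^{|S|} \;=\; \sum_{S\subseteq F}(-1)^{|S|},
\]
and the right-hand side equals $1$ if $F=\emptyset$ and $0$ otherwise.

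Next I would split into the two cases. If $M$ is loopless then $\emptyset$ is a flat, indeed the minimal flat $\hat{0}$, so the identity above states exactly that $\sum_{G\subseteq F}U_G=1$ when $F=\hat{0}$ and $=0$ for every other flat $F$; this is precisely the defining property of $\mu(\emptyset,\cdot)$, namely that $\sum_{\emptyset\subseteq G\subseteq F}\mu(\emptyset,G)$ is $1$ for $F=\emptyset$ and $0$ otherwise. An induction on $r(F)$ then yields $U_F=\mu(\emptyset,F)$: the base case $F=\hat{0}=\emptyset$ is $U_\emptyset=(-1)^{0}=1=\mu(\emptyset,\emptyset)$, since $\emptyset$ is the only set with closure $\emptyset$; and for $r(F)\geq 1$ one solves $U_F=-\sum_{G\subsetneq F}U_G=-\sum_{G\subsetneq F}\mu(\emptyset,G)=\mu(\emptyset,F)$, using the inductive hypothesis on the flats $G\subsetneq F$ of strictly smaller rank together with the M\"{o}bius recursion. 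If instead $M$ has loops, then $\cl(\emptyset)$ is the nonempty set of loops, so $\emptyset$ is not a flat and every flat $F$ is nonempty; the displayed identity then reads $\sum_{G\subseteq F}U_G=0$ for all flats $F$, and the same induction on $r(F)$ --- whose base case is the minimal flat $\hat{0}$, for which the sum collapses to the single term $U_{\hat{0}}$ and hence forces $U_{\hat{0}}=0$ --- gives $U_F=0$ for every flat $F$.

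This is a routine instance of M\"{o}bius inversion, so I do not expect a genuine obstacle; the only points needing a little care are the set-theoretic equivalence $\cl(S)\subseteq F\iff S\subseteq F$ that makes the closure-partition legitimate, and, in the looped case, keeping track of the fact that the minimal flat is the set of loops rather than $\emptyset$, so that the base case of the induction produces $U_{\hat{0}}=0$ rather than $1$.
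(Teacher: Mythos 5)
Your proof is correct and follows essentially the same route as the paper: both arguments partition the subsets of a flat $F$ according to their closure, use $\sum_{S\subseteq F}(-1)^{|S|}=0$ for $F\neq\emptyset$ to show the $U_F$ satisfy the M\"{o}bius recursion (or vanish, in the looped case where the minimal flat is the nonempty set of loops), and conclude by induction over the lattice of flats. No gaps.
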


\begin{proof}
First, suppose $M$ is loopless.  We will show that $U_F$ agrees with $\mu(\emptyset,F)$ inductively.  It is clear that $U_\emptyset=1$ and so it suffices to show (where $\leq$ means containment of flags)
\[U_F=-\sum_{\emptyset\leq F< F'} U_{F'}.\]
Now,
\[\sum_{\emptyset\leq F'\leq F} U_{F'}=\sum_{\emptyset\leq F'\leq F} \sum_{\substack{S\subset E\\ \cl(S)=F'}} (-1)^{|S|}=\sum_{S\subseteq F} (-1)^{|S|}=(1-1)^{|S|}=0.
\]
Suppose that $M$ has loops.  Let $F\subseteq E$ be the minimal flat of $M$, which consists of all the loops.  Then
\[U_F=\sum_{S\subseteq F} (-1)^{|S|}=(1-1)^{|S|}=0.\]
The same inductive argument shows that $U_F=0$ for all flats.
\end{proof}

Note that in the notation of the above proof, Theorem~\ref{t:charpoly} is equivalent to 
\[\chi_M(q)=\sum_{F\in L(M)} U_F q^{d+1-r(F)}.\]
Now we continue with the proof of Theorem~\ref{t:charpoly}.

\begin{proof}
We verify the axioms for the characteristic polynomial.  

By definition, the formula is true if $M$ has a loop.

If $M=U_{1,1}$, it has two flats.  We have $\mu(\emptyset,\emptyset)=1$, $\mu(\emptyset,\{0\})=-1$.  Consequently, our formula gives $q-1$.

Suppose $M=M_1\oplus M_2$.  If either $M_i$ has a loop, then so does $M$ and the formula is verified.
Otherwise, the lattice of flats obeys $L(M)=L(M_1)\times L(M_2)$.  It is easily seen that for $F_1\in L(M_1),F_2\in L(M_2)$, the M\"{o}bius function obeys
\[\mu_M(\emptyset,F_1\times F_2)=\mu_{M_1}(\emptyset,F_1)\mu_{M_2}(\emptyset,F_2).\]
Therefore, $\chi_M(q)=\chi_{M_1}(q)\chi_{M_2}(q).$

Now, we verify the deletion/contraction relation using an approach due to Whitney.  Let $i\in E$ be neither a loop nor coloop.    Then we have
\begin{eqnarray*}
\sum_{F\in L(M)} U_F q^{d+1-r(F)}&=&\sum_{F\in L(M)}\left(\sum_{\substack{S\subseteq F\\ \cl(S)=F}} (-1)^{|S|}\right)q^{d+1-r(F)}\\
&=&\sum_{S\subseteq E} (-1)^{|S|} q^{d+1-r(S)}\\
&=&\sum_{S\subseteq E\setminus\{i\}} (-1)^{|S|} q^{d+1-r(S)}
+\sum_{S\subseteq E\setminus\{i\}} (-1)^{|S\cup\{i\}|} q^{d+1-r(S\cup\{i\})}\\
&=&\sum_{S\subseteq E\setminus\{i\}} (-1)^{|S|} q^{d+1-r_{M\setminus i}(S)}
-\sum_{S\subseteq E\setminus\{i\}} (-1)^{|S|} q^{d-r_{M/i}(S)}\\
&=&\sum_{F\in L(M\setminus  i)} (U_{M\setminus i})_F q^{d+1-r_{M\setminus i}(F)}
-\sum_{F\in L(M/i)}(U_{M/i})_F q^{d-r_{M/i}(S)}\\
&=&\chi_{M\setminus i}(q)-\chi_{M/i}(q)
\end{eqnarray*}
where we have used the fact that because $i$ is neither a loop nor a coloop,
\[r(M\setminus i)=d+1,\ r(M/i)=d.\]
\end{proof}

We see from Lemma~\ref{l:weisner} that the coefficients of the characteristic polynomial alternate and the degree of the characteristic polynomial is equal to the rank of the matroid.

\subsection{The reduced characteristic polynomial}

In this section, we introduce the reduced characteristic polynomial which will be important for the proof of the log-concavity of the characteristic polynomial of representable matroids.  We will only consider loopless matroids.
We begin by noting that $\chi_M(q)$ is divisible by $q-1$.  This can be proved combinatorially.  In the representable case, we interpret $\chi_M(q)$ as the image of the motivic class of $V\cap (\k^*)^{n+1}$.  Because $\k^*$ acts freely on $V\cap(\k^*)^{n+1}$ we have
\[e([V\cap (\k^*)^{n+1}])=e(([V\cap (\k^*)^{n+1})/\k^*])e([k^*])\]
with $e([k^*])=q-1$.

\begin{definition} The reduced characteristic polynomial is
\[\overline{\chi}_M(q)=\frac{\chi_M(q)}{q-1}.\]
Define the numbers $\mu^0,\mu^1,\dots,\mu^d$ by
\[\overline{\chi}_M(q)=\sum_{i=0}^d (-1)^i\mu^iq^{d-i}.\]
\end{definition}

Note that $\mu^0=1$.  We follow the convention that $\mu^{-1}=0$.

\begin{lemma} \label{l:reducedcoeff} Let $a\in E$.  The coefficients of $\overline{\chi}_M(q)$ are given by
\[\mu^k=(-1)^k\sum_{\substack{F\in L(M)_k\\a\not\in F}} \mu(\emptyset,F)=(-1)^{k+1}\sum_{\substack{F\in L(M)_{k+1}\\ a\in F}} \mu(\emptyset,F)\]
where the sum over the rank $k$ flats not containing $a$.
\end{lemma}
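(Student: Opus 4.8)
The plan is to combine the factorization $\chi_M(q)=(q-1)\overline{\chi}_M(q)$ with Weisner's theorem (Lemma~\ref{l:weisner}) and the covering structure of the lattice of flats. Throughout I write $L(M)_k$ for the set of rank-$k$ flats.

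First I would extract from the factorization a recursion for the $\mu^k$. By Theorem~\ref{t:charpoly} we have $\chi_M(q)=\sum_{k=0}^{d+1}\bigl(\sum_{F\in L(M)_k}\mu(\emptyset,F)\bigr)q^{d+1-k}$, while expanding $(q-1)\sum_{i=0}^{d}(-1)^i\mu^iq^{d-i}$ and comparing the coefficient of $q^{d+1-k}$ gives
\[\mu^k+\mu^{k-1}=(-1)^k\sum_{F\in L(M)_k}\mu(\emptyset,F),\qquad k\ge 0,\ \mu^{-1}=0;\]
for $k=0$ this reads $\mu^0=\mu(\emptyset,\emptyset)=1$, which holds.

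The combinatorial core of the argument is the identity
\[\sum_{\substack{F\in L(M)_k\\ a\in F}}\mu(\emptyset,F)=-\sum_{\substack{F'\in L(M)_{k-1}\\ a\notin F'}}\mu(\emptyset,F'),\qquad k\ge 1,\]
which I would establish as follows. Apply Lemma~\ref{l:weisner} to each rank-$k$ flat $F$ containing $a$ to get $\mu(\emptyset,F)=-\sum_{a\notin F'\lessdot F}\mu(\emptyset,F')$, and sum over all such $F$. The resulting double sum runs over pairs $(F,F')$ with $F$ of rank $k$, $a\in F$, $F'\lessdot F$ and $a\notin F'$; in particular each such $F'$ has rank $k-1$ and misses $a$. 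By the partition property of the flats covering a fixed flat recorded after Definition~\ref{d:flats}, for every flat $F'$ with $a\notin F'$ there is a unique flat covering $F'$ that contains $a$, namely $\cl(F'\cup\{a\})$, and it has rank $r(F')+1$. Hence the map $(F,F')\mapsto F'$ is a bijection between the index set of the double sum and $\{F'\in L(M)_{k-1}:a\notin F'\}$, so the double sum equals $\sum_{F'\in L(M)_{k-1},\,a\notin F'}\mu(\emptyset,F')$, which is the claimed identity.

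Finally I would assemble the pieces. Put $m_k=(-1)^k\sum_{F\in L(M)_k,\,a\notin F}\mu(\emptyset,F)$, with $m_{-1}=0$; splitting the rank-$k$ sum by whether $a\in F$ and invoking the core identity yields $\sum_{F\in L(M)_k}\mu(\emptyset,F)=(-1)^k(m_k+m_{k-1})$, so $m_k$ satisfies the very recursion and initial value derived for $\mu^k$ above. By induction on $k$, $\mu^k=m_k$, which is the first asserted formula. (Equivalently, this last step can be phrased without induction: $(q-1)\sum_k(-1)^km_kq^{d-k}$ has the same coefficients as $\chi_M(q)=(q-1)\overline{\chi}_M(q)$, and one cancels $q-1$.) The second formula then follows by applying the core identity with $k+1$ in place of $k$ and multiplying by $(-1)^{k+1}$:
\[(-1)^{k+1}\sum_{\substack{F\in L(M)_{k+1}\\ a\in F}}\mu(\emptyset,F)=(-1)^k\sum_{\substack{F'\in L(M)_k\\ a\notin F'}}\mu(\emptyset,F')=m_k=\mu^k.\]
The only step that is more than bookkeeping is the bijection in the third paragraph — recognizing that summing the Weisner relations over rank-$k$ flats through $a$ reassembles exactly into a sum over rank-$(k-1)$ flats avoiding $a$; everything else is a comparison of coefficients against the already-established formula for $\chi_M$.
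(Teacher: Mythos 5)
Your proof is correct and follows essentially the same route as the paper: the exchange of summation via Weisner's theorem together with the unique flat covering $F'$ that contains $a$ is exactly the paper's proof of the second equality, and your recursion $\mu^k+\mu^{k-1}=(-1)^k\sum_{F\in L(M)_k}\mu(\emptyset,F)$ with induction from $\mu^0=1$ is the paper's concluding step, merely packaged through the auxiliary sequence $m_k$. No gaps; the argument matches the published one in substance.
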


\begin{proof} We begin by proving the second equality by applying Lemma~\ref{l:weisner}
\begin{eqnarray*}
\sum_{\substack{F\in L(M)_{k+1}\\ a\in F}} \mu(\emptyset,F)&=&
-\sum_{\substack{F\in L(M)_{k+1}\\ a\in F}} \sum_{\substack{F'\lessdot F\\a\not\in F'}} \mu(\emptyset,F')\\
&=&-\sum_{\substack{F'\in L(M)_{k}\\ a\not\in F}} \sum_{\substack{F\gtrdot F'\\a\in F}} \mu(\emptyset,F')\\
&=&-\sum_{\substack{F'\in L(M)_{k}\\ a\not\in F}} \mu(\emptyset,F')
\end{eqnarray*}
where the last equality follows from the fact that for $F'$ not containing $a$, there is a unique flat $F$ with $r(F)=r(F')+1$ and $a\in F$. 
If we write 
\[\chi_M(q)=\mu_0q^{r+1}-\mu_1q^r+\dots+(-1)^{d+1}\mu_{r+1}\]
then
\[\mu_k=\sum_{F\in L(M)_k} \mu(\emptyset,F)\]
and by equating coefficients of $\overline{\chi}_M(q)$ and $\frac{\chi_M(q)}{q-1}$,
\[\mu_k=\mu^k+\mu^{k-1}.\]
The theorem is true for $k=0$.  It now follows by induction using the above formula.
\end{proof}

\subsection{Log-concavity of Whitney numbers}

We now discuss some conjectured inequalities for characteristic polynomials.

\begin{definition} A polynomial 
\[\chi(q)=\mu_0q^{r+1}+\mu_1q^r+\dots+\mu_{r+1}\]
is said to be {\em unimodal} if the coefficients are unimodal in absolute value, i.e. there is a $j$ such that
\[|\mu_0|\leq |\mu_1|\leq\dots\leq|\mu_j|\geq|\mu_{j+1}|\geq\dots\geq|\mu_{r+1}|.\]
\end{definition}

The chromatic polynomial of a graph was conjectured to be unimodal by Read \cite{Read}.  This conjecture was proved by Huh in 2010 \cite{Huh}.  In fact, Huh did more.  He proved the characteristic polynomial of a matroid representable over of a field of characteristic $0$ is log-concave.

\begin{definition} The polynomial $\chi(q)$ is said to be {\em log-concave} if
for all $i$, 
\[|\mu_{i-1}\mu_{i+1}|\leq\mu_i^2.\]
\end{definition}

If the inequality is strict then the polynomial is said to be strictly log-concave.
Log-concavity means that the logarithms of the absolute values of the (alternating) coefficients form a concave sequence.  Log-concavity implies unmodality.  Below we will give a streamlined version of the proof of Huh and the author \cite{HuhKatz} that the characteristic polynomial of any representable matroid is log-concave.  This proof is very similar to Huh's original proof except that it replaces singularity theory with intersection theory on toric varieties. This theorem is a special case of the still-open Rota-Heron-Welsh conjecture:

\begin{conjecture} For any matroid, $\chi_M(q)$ is log-concave.
\end{conjecture}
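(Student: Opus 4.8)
The plan is to express the coefficients of the characteristic polynomial as intersection numbers on the Bergman fan and then extract the inequalities from a Hodge-theoretic positivity statement. By Theorem~\ref{t:charpoly} we may assume $M$ is loopless, and since $\chi_M$ is multiplicative under direct sums and log-concavity of polynomials with strictly alternating, non-vanishing coefficients is preserved under multiplication (and the $\mu(\emptyset,F)$ never vanish by Lemma~\ref{l:weisner}), it suffices to treat connected $M$; passing to $\overline{\chi}_M(q)=\sum_{k=0}^d(-1)^k\mu^k q^{d-k}$, the identity $\mu_k=\mu^k+\mu^{k-1}$ shows that log-concavity of the sequence $(\mu^k)$ forces log-concavity of $\chi_M$. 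First I would attach to $M$ its Bergman fan $\Delta_M$, regarded as a $d$-dimensional Minkowski weight on the permutohedral variety $X$, and single out two nef divisor classes: the class $\alpha$ pulled back from $\P^n$ under one of the two natural blow-down maps $X\to\P^n$, and the class $\beta$ pulled back under the other (the two maps differing by the standard Cremona transformation). The crucial computation is then $\mu^k=\deg\big(\alpha^{d-k}\cdot\beta^{k}\cdot[\Delta_M]\big)$, which I would verify by induction along deletion and contraction, matching the recursion of Theorem~\ref{t:charpoly} against the projection formula; it is precisely at the contraction step that one is forced into the enlarged category, since it naturally produces an $(r_1,r_2)$-truncation rather than an honest matroid.

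Granting this, log-concavity $\mu^{k-1}\mu^{k+1}\le(\mu^k)^2$ becomes the Khovanskii--Teissier inequality for the nef pair $(\alpha,\beta)$ against the cycle $[\Delta_M]$: capping with $\alpha^{d-k-1}\beta^{k-1}$ reduces the claim to the Hodge index theorem on a ``surface'', namely that the symmetric bilinear form $(\xi,\eta)\mapsto\deg(\xi\cdot\eta\cdot\alpha^{d-k-1}\cdot\beta^{k-1}\cdot[\Delta_M])$ on the span of $\alpha$ and $\beta$ has at most one positive eigenvalue. When $M$ is representable, say by $V\subseteq\k^{n+1}$, the weight $[\Delta_M]$ is literally the fundamental class of the iterated blow-up $\widetilde{\P(V)}\subseteq X$, which is a smooth projective variety, so the classical Hodge index theorem applies verbatim; this is the Huh-Katz argument recounted later in the survey, and combined with Theorem~\ref{t:motivic} it already settles the conjecture for every representable matroid.

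The main obstacle, and essentially the entire remaining content of the conjecture, is that for a general matroid $[\Delta_M]$ is only a balanced weight and need not be represented by any subvariety, so there is no ambient mixed Hodge structure to borrow and the Hodge index theorem is not available off the shelf. To finish, one would have to build a self-contained combinatorial intersection theory on $\Delta_M$ (equivalently, work with the Chow ring $A^\bullet(M)$ of the matroid) and prove for it a combinatorial analogue of the classical package: the Hard Lefschetz theorem and the Hodge--Riemann bilinear relations with respect to the ample classes interpolating between $\alpha$ and $\beta$, from which the required signature statement follows. I expect this to be the hard part. One cannot invoke a Hodge decomposition, Kodaira-type vanishing, or the decomposition theorem; the only visible route is a combinatorial induction that moves between matroids by operations staying inside the enlarged category — deletion together with a chain of $(r_1,r_2)$-truncation-style ``matroidal flips'' connecting $\Delta_M$ to a model where Hodge--Riemann is transparent — and that shows at each flip the signature of the pairing is unchanged, via a limiting argument controlling how the Hodge--Riemann form degenerates across the flip. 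Establishing this \emph{local} Hodge--Riemann property at each flip is where I expect essentially all of the difficulty to lie; once it is in hand, the passage back to log-concavity is bookkeeping on the intersection-number formula of the first step.
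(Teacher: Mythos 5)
You have not proved the statement, and indeed could not be expected to: what you are attacking is exactly the Rota--Heron--Welsh conjecture, which this survey records as open. The paper proves only the representable case (the Huh--Katz theorem of Section 12), and your proposal in substance does the same. The parts you do carry out are sound and coincide with the survey's treatment: the reduction to the reduced characteristic polynomial, the identity $\mu^k=\deg(\alpha^{d-k}\beta^k\cup\Delta_M)$ (Corollary~\ref{c:reddeg}, which the survey obtains from the direct Minkowski-weight computations of Proposition~\ref{p:alphabeta} and Weisner's theorem, not by the deletion/contraction induction you suggest --- that inductive route would additionally require pushforward formulas relating $\Delta_M$, $\Delta_{M\setminus i}$ and $\Delta_{M/i}$, which live on permutohedral varieties of different dimensions and are not set up here), and, for representable $M$, the identification of $\Delta_M$ with the class of the iterated blow-up $\widetilde{\P(V)}$ followed by the Khovanskii--Teissier inequality (Theorem~\ref{t:kt}), which is precisely Proposition~\ref{p:intersection} and the argument of Section 12.

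The genuine gap is everything after that. For a non-representable matroid, $\Delta_M$ is merely a balanced weight; no subvariety (nor, a priori, any positive integer multiple represented by an irreducible subvariety) is available, so the Hodge index theorem cannot be invoked, and your proposed substitute --- a combinatorial intersection theory on $\Delta_M$ with Hard Lefschetz and Hodge--Riemann relations, transported along signature-preserving ``matroidal flips'' --- is a program rather than an argument: you neither define the relevant pairing, nor exhibit the chain of flips, nor prove the local Hodge--Riemann statement at a flip, and you explicitly flag that last step as where all the difficulty lies. That step is the entire content of the conjecture beyond the representable case. The survey's Section 13 describes exactly these two escape routes (a combinatorial Hodge index theorem for $\Delta_{M[r,r+1]}$ via the tropical Laplacian, at the time only verified experimentally for matroids on at most nine elements, or realizing a positive multiple of $\Delta_M$ geometrically) as open future directions, so your outline is a reasonable roadmap but not a proof of the stated conjecture.
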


The characteristic polynomial of a matroid representable over a field of characteristic $0$ was proved to be strictly log-concave by Huh in \cite{HuhLogconcave} by studying the variety of critical points that arises in maximum likelihood estimation.

A closely related conjecture is Mason's conjecture.  Let $M$ be a matroid of rank $d+1$.  Let $\cI$ be the set of independent sets of $M$.  We define the $f$-vector by setting $f_i$ to be the number of independent sets of cardinality $i$.  it was conjectured by Mason the $f$-vector was log-concave, that is the polynomial
\[f(q)=\sum_{i=0} f_iq^{r+1-i}\]
is log-concave. 
In fact, Mason strengthened his conjecture to these statement for every $k$ to the following:
\begin{enumerate}
\item $f_k^2\geq \frac{k+1}{k}f_{k-1}f_{k+1}$
\item $f_k^2\geq \frac{k+1}{k}\frac{f_1-k+1}{f_1-k} f_{k-1}f_{k+1}$
\end{enumerate}

Mason's original log-concavity conjecture was proved in the representable case by Lenz:

\begin{theorem} \cite{Lenz} The $f$-vector of a realizable matroid is strictly log concave.
\end{theorem}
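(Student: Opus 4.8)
The plan is to derive the statement from the results of Section~12 on the log-concavity of the (reduced) characteristic polynomial of a realizable matroid, by exhibiting the $f$-vector of $M$ as the sequence of absolute values of the coefficients of $\overline{\chi}_N$ for a suitable realizable matroid $N$. First I would reduce to the case that $M$ is loopless and coloopless. Deleting loops alters neither the $f$-vector nor realizability; and if $M=M_0\oplus U_{1,1}$, then $f_M(q)=(q+1)f_{M_0}(q)$ because the independence generating polynomial $f_M(q)=\sum_i f_i\,q^{r(M)-i}$ is multiplicative over direct sums, while a product of polynomials with nonnegative coefficients and no internal zeros is (strictly) log-concave provided each factor is, and $q+1$ is. So it suffices to treat a loopless, coloopless, realizable matroid $M$ of rank $r$ on $E=\{0,\dots,n\}$; note $r<n+1$, since $M$ is not a free matroid. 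I also record the elementary identity $f_M(q)=R_M(q,0)=T_M(q+1,1)$, immediate from the definition of the rank generating polynomial $R_M$, since the exponent $|I|-r(I)$ forces a subset $I$ to contribute to $R_M(q,0)$ precisely when it is independent.

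The auxiliary matroid is the free coextension $N=M^{\mathrm c}$ of $M$, in the sense of Subsection~\ref{ss:extensions}: dualize $M$, form the free single-element extension $M^{*}+_{E}p$, and dualize back, $M^{\mathrm c}=(M^{*}+_{E}p)^{*}$. This is a coextension of $M$ of rank $r+1$ with $M^{\mathrm c}/p=M$, and it is again loopless and coloopless --- $M^{*}$ is loopless and coloopless (duality interchanges loops and coloops) and hence of rank $\geq 1$, adjoining a generic element to a spanning configuration creates neither a loop nor a coloop, and a further dualization preserves all of this. The combinatorial heart of the argument is the identity
\[
\overline{\chi}_{M^{\mathrm c}}(q)\;=\;(-1)^{r}\,f_M(-q)\;=\;\sum_{i=0}^{r}(-1)^{i}f_i\,q^{\,r-i},
\]
equivalently $\mu^{i}=f_i$ for all $i$. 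Using $f_M(q)=T_M(q+1,1)$, the duality $T_{M^{\mathrm c}}(x,y)=T_{M^{*}+_{E}p}(y,x)$, and $\chi_N(q)=(-1)^{r(N)}T_N(1-q,0)$, this reduces to the single Tutte-polynomial identity $T_{M^{*}+_{E}p}(0,y)=y\,T_M(y,1)$ for the free extension, which I would establish either from Brylawski's formula for the Tutte polynomial of a free single-element extension or directly from the corank--nullity (subset) expansion, by determining which subsets of $E\cup\{p\}$ survive the specialization $x=0$. Finally, $M^{\mathrm c}$ is realizable: $M^{*}$ is realizable over the same field by the dual construction of Subsection~\ref{ss:duality}, its free extension is realizable over a sufficiently large extension field (where a vector generic with respect to the finitely many proper flats exists), and dualizing once more preserves realizability.

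Granting these, the conclusion is immediate. By Section~12, the reduced characteristic polynomial $\overline{\chi}_{M^{\mathrm c}}(q)$ of the realizable matroid $M^{\mathrm c}$ is log-concave, $(\mu^{i})^{2}\geq\mu^{i-1}\mu^{i+1}$ for all $i$; by the displayed identity $\mu^{i}=f_i$, whence $f_i^{2}\geq f_{i-1}f_{i+1}$. Re-multiplying by the factors $q+1$ stripped off in the first paragraph preserves log-concavity and gives the statement for the original $M$; the change of variable $q\mapsto-q$ and the overall sign are irrelevant, log-concavity being a statement about absolute values of coefficients. The main obstacle, I expect, is the identity $\overline{\chi}_{M^{\mathrm c}}(q)=(-1)^{r}f_M(-q)$: pushing the free-extension operation through two dualizations and the corank--nullity sum while correctly tracking loops, coloops, and the adjoined element $p$ is the delicate point, and obtaining the strict inequalities asserted in the theorem requires in addition the strict form of the Hodge-index estimate underlying Section~12, which I would check persists in this non-degenerate setting. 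The realizability of $M^{\mathrm c}$ and the direct-sum reductions, by contrast, are routine.
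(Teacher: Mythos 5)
Your proposal is essentially the paper's (i.e., Lenz's) argument: form the free coextension $(M^*+_E p)^*$, identify its reduced characteristic polynomial with the $f$-polynomial of $M$ via specializations of the rank-generating/Tutte polynomial (your identity $\overline{\chi}_{(M^*+_E p)^*}(q)=(-1)^r f_M(-q)$ is equivalent to the paper's displayed formula $(-1)^d\chi_{M\times e}(-q)=(1+q)f_M(q)$), note that representability passes to the free coextension after a field extension, and invoke the Section 12 log-concavity of the reduced characteristic polynomial. The extra loop/coloop reductions you perform are harmless but not needed, and, like the paper's sketch, your write-up leaves the strictness of the inequalities as the one point requiring an additional argument beyond the non-strict Khovanskii--Teissier bound.
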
 

The $h$-polynomial is defined by 
\[h(q)=f(q-1).\]
Dawson \cite{Daw84} conjecture this to be log-concave.  Huh \cite{HuhLogconcave} proved that $h$ is log-concave and without internal zeroes if $M$ is representable over a field of characteristic $0$.

A much less tractable sequence of numbers conjectured to be log-concave are the Whitney numbers of the second kind.  They are $W_k$,  the number of flats of $M$ of rank $k$.   There are strengthenings of the log-concavity conjecture for $W_k$ by Mason analogous to those of the $f$-vector.   A special case is the Points-Lines-Planes conjecture:

\begin{conjecture} If $M$ is a rank four matroid, then
\[W_2^2\geq \frac{3}{2}W_1W_3.\]
\end{conjecture}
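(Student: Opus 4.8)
The conjecture strengthens the (second-kind) log-concavity inequality $W_2^2 \ge W_1 W_3$ by the factor $\tfrac{3}{2}$, so the plan is to establish log-concavity by the Hodge-theoretic method of this survey and then to locate, and quantify, the slack in that inequality for rank-four matroids using elementary incidence geometry. First I would reduce to the case that $M$ is simple: passing to the simplification leaves every rank-$k$ flat with $k\ge 1$ untouched, hence changes none of $W_1,W_2,W_3$, so there is no loss. I would also dispose of the disconnected case separately, where $M$ is a direct sum and the Whitney numbers of the factors multiply in a controlled way, reducing to connected simple rank-four matroids.

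\textbf{The Hodge-theoretic bound.} For $W_2^2 \ge W_1 W_3$ I would use intersection theory as in Sections~11--12, but applied to the augmented Bergman fan (equivalently the wonderful model of an arrangement in the representable case) rather than to the Bergman fan of $M$ itself: it produces a graded Poincar\'e-duality module whose graded dimensions are the Whitney numbers of the second kind and which satisfies hard Lefschetz and the Hodge--Riemann relations (unconditionally by Braden--Huh--Matherne--Proudfoot--Wang, and in the representable case directly from a smooth projective model, exactly in the spirit of Section~12). Hard Lefschetz then gives $W_k W_{k+2} \le W_{k+1}^2$. From the Hodge--Riemann form in degree one I would extract the stronger quantitative statement: equality in $W_2^2 = W_1 W_3$ would force a specific primitive class to be isotropic, hence zero, which a short argument should rule out except for Boolean matroids, which are excluded in rank four. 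The output of this step is a strict inequality whose defect is a sum of squares of intersection numbers indexed by the proper flats of $M$.

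\textbf{Quantifying the slack by incidence counts.} To upgrade strictness to the factor $\tfrac{3}{2}$ I would double count flags. Write $N_{12},N_{13},N_{23}$ for the numbers of incident (point, line), (point, plane), (line, plane) pairs. Two matroidal de~Bruijn--Erdos inequalities are available: every rank-three flat $F$ spans a simple rank-three matroid with at least as many rank-two flats as points, so $N_{23}=\sum_F W_2(M|_F)\ge \sum_F W_1(M|_F)=N_{13}$; and for every point $p$ the simplification of $M/p$ is simple of rank three, so the number of lines through $p$ is at most the number of planes through $p$, giving $N_{12}\le N_{13}$. Combined with the trivial bounds $N_{12}\ge 2W_2$, $N_{13}\ge 3W_3$, and $N_{12},N_{13}\ge 3W_1$ (a simple rank-three matroid has at least three points), these are the raw material from which one hopes to bound the Hodge--Riemann defect below by $\tfrac{1}{2}W_1 W_3$. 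I would also run in parallel a purely combinatorial Cauchy--Schwarz attack on the bipartite line--plane incidence, which already yields partial results toward the conjecture and might suffice in this low rank with no Hodge theory at all.

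\textbf{Main obstacle.} The genuinely hard step is the last one: hard Lefschetz and Hodge--Riemann only give log-concavity with constant $1$, and there is no formal reason the defect should be as large as $\tfrac{1}{2}W_1W_3$. Bounding that defect uniformly over \emph{all} rank-four matroids --- including sparse paving matroids, where the geometric model degenerates and carries little information, and highly non-representable ones, where one has only the abstract Chow ring --- is where the difficulty concentrates, and it is precisely there that the incidence inequalities above must be married to the Hodge-theoretic sum-of-squares expression without losing the constant.
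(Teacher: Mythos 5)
There is a fundamental mismatch here: the statement you were asked about is not a theorem of the paper at all. It is the Points--Lines--Planes conjecture, recorded in the survey explicitly as a \emph{conjecture} (a special case of Mason-type strengthenings for the Whitney numbers of the second kind), with the paper noting only Seymour's partial result for matroids in which no line has more than four points. So there is no proof in the paper to compare against, and your proposal does not supply one either: you yourself identify the decisive gap, namely that nothing in the outline produces the constant $\tfrac{3}{2}$ rather than $1$, and no mechanism is offered for bounding the ``Hodge--Riemann defect'' below by $\tfrac{1}{2}W_1W_3$. A plan whose final step is acknowledged to have ``no formal reason'' to work is a research program, not a proof.

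Beyond that, the Hodge-theoretic step is already shakier than you present it. The paper's intersection-theoretic method (Khovanskii--Teissier applied to $\alpha,\beta$ on the Bergman fan) controls the coefficients $\mu^k$ of the (reduced) characteristic polynomial, i.e.\ Whitney numbers of the \emph{first} kind; it says nothing about the numbers $W_k$ of flats. The graded module whose graded dimensions are the $W_k$ (the graded M\"obius algebra in the Braden--Huh--Matherne--Proudfoot--Wang framework) does not satisfy Poincar\'e duality --- the sequence $W_k$ is not even symmetric, which is why what one extracts there is top-heaviness and injectivity of multiplication, not log-concavity --- so your assertion that hard Lefschetz ``gives $W_kW_{k+2}\le W_{k+1}^2$'' is unjustified; even the constant-$1$ inequality $W_2^2\ge W_1W_3$ for arbitrary matroids is, as the survey itself stresses, ``much less tractable'' and not a known consequence of the Lefschetz package. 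The de Bruijn--Erd\H{o}s-style incidence counts in your last step are correct individually, but no argument is given (and none is known) that marries them to a sum-of-squares expression to yield the factor $\tfrac{3}{2}$. In short: the statement is open, and the proposal leaves open exactly the part that makes it a conjecture.
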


In the representable case, we image having $W_1$ points in $3$-space determining $W_2$ lines and $W_3$ planes.  A slightly strengthening of this conjecture was proved by Seymour for all matroids in which no line has more than four points \cite{Seymour82}.  See \cite{Aigner} for a survey.

\section{Matroid polytopes}

\subsection{Definition of matroid polytope}

We first give a historical introduction to matroid polytopes using the independence polytope of Edmonds \cite{Edmonds}, which he introduced as a tool in combinatorial optimization.  Specifically, Edmonds was interested in maximizing a weight function along independent subsets.  Let $c:E\rightarrow \R$ be a function on the ground set.  For $I\subseteq E$, an independent subset, set the weight function on independent sets to be 
\[c(I)=\sum_{i\in I} c(i).\]
Now, a natural problem is to find a basis for the matroid of maximum weight.  Matroids have the property that this problem is solvable by a greedy algorithm:
\begin{enumerate}
\item Set $B:=\emptyset$,
\item While $B$ is not a basis, pick $i\not\in B$ such that $B\cup\{i\}$ is independent and $i$ is of maximum weight and replace $B$ by $B\cup\{i\}$.
\end{enumerate}
The resulting set $B$ is a basis of maximum weight.  Moreover, matroids can be characterized as a structure for which the greedy algorithm succeeds.

The basis optimization problem can also be phrased in terms of maximizing a linear functional on the independence polytope $P(\cI(M))\subset \R^E$ given by
\[P(\cI(M))=\Conv(\{e_I \mid I\in \cI\})\]
where $e_I$ is the characteristic vector of an independent set $I$,
\[e_I=\sum_{i\in I} e_i.\]
We can define a {\em weight vector} $w\in \R^n$ by
\[w=\sum c(i)e_i.\]
The set of points $x\in P$ that maximize the function $\<x,w\>$ is a face of the independence polytope.  The vertices of that faces are the independent sets of maximum weight.

Edmonds studied the structure of independence polytope and was able to prove the following important theorem in combinatorial optimization: 
\begin{theorem} Let $M_1,M_2$ be two matroids on the same ground set where intersection is given by intersecting the collection of independent subsets. Then the independence polytope is well-behaved under intersections in the following sense:
\[P(\ci(M_1)\cap \ci(M_2))=P(\ci(M_1))\cap P(\ci(M_2))\]
\end{theorem}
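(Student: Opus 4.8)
We sketch a proof. The plan is to establish the two inclusions separately; the easy one is
$P(\ci(M_1)\cap\ci(M_2))\subseteq P(\ci(M_1))\cap P(\ci(M_2))$, and the real content is the reverse inclusion. For the easy direction: if $I\in\ci(M_1)\cap\ci(M_2)$ then $e_I$ is a vertex of $P(\ci(M_1))$ and of $P(\ci(M_2))$, hence lies in their intersection; since $P(\ci(M_1)\cap\ci(M_2))$ is the convex hull of these $e_I$ and an intersection of polytopes is convex, the inclusion follows.

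For the reverse inclusion I would first invoke Edmonds' polyhedral description of a single independence polytope: for a matroid $M$ on $E$ with rank function $r$,
\[
P(\ci(M))=\{x\in\R^E \mid x\geq 0,\ \textstyle\sum_{i\in S}x_i\leq r(S)\ \text{for all}\ S\subseteq E\}.
\]
The inclusion $\subseteq$ is immediate ($\sum_{i\in S}(e_I)_i=|I\cap S|=r(I\cap S)\leq r(S)$ for independent $I$), and the reverse inclusion is the substance of Edmonds' theorem: the success of the greedy algorithm for weight maximization over $\ci(M)$ exhibits, for every linear objective, an optimal vertex of the right-hand polyhedron of the form $e_I$, so that polyhedron is $\Conv\{e_I\}=P(\ci(M))$. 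Granting this, $Q:=P(\ci(M_1))\cap P(\ci(M_2))$ is the polytope cut out by $x\geq 0$ together with $x(S)\leq r_1(S)$ and $x(S)\leq r_2(S)$ for all $S$, where $x(S):=\sum_{i\in S}x_i$; it is bounded, as $Q\subseteq[0,1]^E$, so $Q=\Conv(\text{vertices of }Q)$.

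It then suffices to prove every vertex of $Q$ is integral: an integral $x\in Q$ satisfies $0\leq x_i\leq r_1(\{i\})\leq 1$, so $x=e_I$ with $I=\{i:x_i=1\}$, and taking $S=I$ gives $|I|\leq r_j(I)\leq|I|$, whence $I\in\ci(M_1)\cap\ci(M_2)$ and $e_I\in P(\ci(M_1)\cap\ci(M_2))$. To show a vertex $x^\ast$ is integral, let $\mathcal{F}_j=\{S\subseteq E:x^\ast(S)=r_j(S)\}$ be the tight rank constraints and $Z=\{i:x^\ast_i=0\}$ the tight coordinate constraints; these determine $x^\ast$. Submodularity of $r_j$ gives uncrossing: for $A,B\in\mathcal{F}_j$ one has $r_j(A)+r_j(B)\geq r_j(A\cap B)+r_j(A\cup B)\geq x^\ast(A\cap B)+x^\ast(A\cup B)=x^\ast(A)+x^\ast(B)$, so equality holds throughout and $A\cap B,A\cup B\in\mathcal{F}_j$; since moreover $\mathbf{1}_{A\cap B}+\mathbf{1}_{A\cup B}=\mathbf{1}_A+\mathbf{1}_B$, one may replace $\mathcal{F}_j$ by a chain $\mathcal{C}_j$ of tight sets spanning the same constraints at $x^\ast$. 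Each chain is laminar, so the $0/1$ incidence matrix of $\mathcal{C}_1\cup\mathcal{C}_2$ is a network matrix, hence totally unimodular, and adjoining the unit rows indexed by $Z$ preserves this; as $x^\ast$ is the unique solution of an integral totally unimodular system, it is integral.

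The main obstacle is precisely this integrality of $Q$ — in effect the matroid intersection theorem — and within it the uncrossing step: one must verify both that passing from $\mathcal{F}_j$ to the chain $\mathcal{C}_j$ still pins down $x^\ast$, and that the union of two laminar families (plus coordinate constraints) is totally unimodular. These are classical but require care. Should the bookkeeping become unwieldy, an alternative is to prove the min-max identity $\max\{|I|:I\in\ci(M_1)\cap\ci(M_2)\}=\min_{S\subseteq E}\big(r_1(S)+r_2(E\setminus S)\big)$ directly by induction on $|E|$ and deduce the integrality of $Q$ from it via linear programming duality. Everything else — the easy inclusion, Edmonds' single-matroid description, and the deduction from vertex integrality — is routine.
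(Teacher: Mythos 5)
The paper offers no proof of this statement at all: it is quoted as Edmonds' theorem, with \cite{BixbyCunningham} given as a reference, so there is nothing internal to compare your argument against. What you have written is the classical Edmonds--Schrijver proof of the matroid intersection polytope theorem, and as a sketch it is sound. The easy inclusion is fine (note only that $\ci(M_1)\cap\ci(M_2)$ is in general not the independent-set family of a matroid, so $P(\ci(M_1)\cap\ci(M_2))$ must be read simply as the convex hull of the common characteristic vectors, which is how you use it). Your main route --- reduce to Edmonds' inequality description of a single independence polytope via the greedy algorithm, then prove integrality of the vertices of $Q$ by uncrossing the tight rank constraints of each matroid to chains $\mathcal{C}_1,\mathcal{C}_2$ and invoking total unimodularity of the incidence matrix of two laminar families together with unit rows --- is exactly the standard argument, and the two steps you flag as delicate (that a maximal chain in the lattice of tight sets spans the same constraint space, and the TU statement for the union of two laminar families) are genuine classical lemmas with known proofs, so the skeleton is correct.

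One caveat: your proposed fallback, deducing integrality of $Q$ from the cardinality min--max formula $\max|I|=\min_{S}\bigl(r_1(S)+r_2(E\setminus S)\bigr)$ ``via linear programming duality,'' is not a routine deduction. The unweighted min--max only controls the objective $\mathbf{1}\cdot x$; integrality of all vertices of $Q$ requires the weighted statement (equivalently, total dual integrality of the system), whose usual proofs go back through exactly the uncrossing machinery you are trying to avoid, or through the primal--dual weighted matroid intersection algorithm. So treat the TU/uncrossing route as the proof and drop, or substantially expand, that alternative. A second, smaller point: you also lean on the single-matroid description $P(\ci(M))=\{x\ge 0,\ x(S)\le r(S)\}$, which the paper likewise does not prove; your greedy justification is the right one, but in a self-contained write-up the separation argument (every linear functional is maximized at some $e_I$, hence the polyhedron equals $\Conv\{e_I\}$) should be spelled out.
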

This theorem has relevance to finding common transversals.  
We recommend \cite{BixbyCunningham} as a survey on matroid polytopes in optimization.

Matroid polytopes received renewed attention due to work of Gelfand-Goresky-MacPherson-Serganova \cite{GGMS}.  In this case, they investigated the convex hull of the characteristic vectors of the bases of a matroid.

\begin{definition} The {\em matroid polytope} of a matroid $M$ is given as the following convex hull:
\[P(M)=\Conv(\{e_B \mid B\in \cB\}).\]
\end{definition}

\begin{lemma} The vertices of the matroid polytope $P(M)$ are exactly the characteristic vectors of the bases of $M$.
\end{lemma}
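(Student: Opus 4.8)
The plan is to exhibit, for each basis $B$, a linear functional that is uniquely minimized over $P(M)$ at the point $e_B$, which certifies $e_B$ as a vertex; the reverse inclusion is then the elementary fact that extreme points of the convex hull of a finite set lie in that set.

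First I would record that all bases of $M$ have the same cardinality $d+1$ (this follows from the rank axioms, or equivalently from repeated application of basis exchange, as noted above). Then for a fixed $B\in\cB$ consider the linear functional
\[\ell_B(x)=\sum_{i\in E\setminus B} x_i .\]
For any $B'\in\cB$ we have $\ell_B(e_{B'})=|B'\setminus B|\ge 0$, and since $|B'|=|B|=d+1$, equality holds precisely when $B'=B$. Writing an arbitrary point of $P(M)=\Conv(\{e_{B'}\mid B'\in\cB\})$ as $x=\sum_{B'}\lambda_{B'}e_{B'}$ with $\lambda_{B'}\ge 0$ and $\sum_{B'}\lambda_{B'}=1$, one gets $\ell_B(x)=\sum_{B'\neq B}\lambda_{B'}|B'\setminus B|\ge 1-\lambda_B\ge 0$, since $|B'\setminus B|\ge 1$ whenever $B'\neq B$. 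Thus $\ell_B(x)=0$ if and only if $\lambda_B=1$, i.e. $x=e_B$. Hence $\{x\in P(M)\mid \ell_B(x)=0\}=\{e_B\}$ is a zero-dimensional face of $P(M)$, so $e_B$ is a vertex.

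For the converse, every vertex of $P(M)$ is an extreme point, and an extreme point of the convex hull of a finite set $S$ must belong to $S$ (a point of $\Conv(S)$ not in $S$ is a convex combination of the remaining points, hence not extreme); here $S=\{e_B\mid B\in\cB\}$, so every vertex of $P(M)$ equals some $e_B$. Since distinct bases have distinct characteristic vectors, the vertex set of $P(M)$ is exactly $\{e_B\mid B\in\cB\}$ and is in bijection with $\cB$. I do not anticipate a genuine obstacle here: the only ingredients are the equicardinality of bases and elementary convex geometry, and in particular the substantive matroid axiom (basis exchange) plays no role in this lemma.
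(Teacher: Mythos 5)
Your proof is correct. The paper argues differently in mechanism though not in substance: it shows directly that $e_B$ admits no nontrivial convex decomposition, by extracting the $e_i$-coordinate for each $i\in B$ from a putative relation $e_B=\sum_j\lambda_je_{B_j}$ with all $\lambda_j>0$, concluding $B\subseteq B_j$ for every $j$ and hence (by equicardinality of bases) $B_j=B$; the converse inclusion is left implicit. You instead certify each $e_B$ as a vertex by the explicit supporting functional $\ell_B(x)=\sum_{i\in E\setminus B}x_i$, which is minimized on $P(M)$ exactly at $e_B$, and you spell out the converse via the standard fact that extreme points of a finite convex hull lie in the generating set. Both arguments rest on the same single matroid input, namely that all bases have cardinality $d+1$, and neither uses basis exchange, as you correctly observe. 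The paper's route is marginally shorter; yours has the small added benefit of exhibiting $\{e_B\}$ as the face $P(M)_w$ for the weight $w=e_{E\setminus B}$, which dovetails with the later description of faces of the matroid polytope in terms of minimum-weight bases $M_w$.
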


\begin{proof} If $e_B$ was a convex combination of $\{e_{B_1},\dots,e_{B_k}\}$ then we could write 
\[e_B=\sum_j \lambda_j e_{B_j}\]
for $0<\lambda_j$, $\sum \lambda_j=1$.  For each $i\in B$, by extracting the component of $e_i$ from the above equation, we would have $i\in B_j$ for all $j$.  Therefore, we conclude $B_1=\dots=B_k=B$.  Consequently, $e_B$ cannot be written as a non-trivial convex combination.
\end{proof}

\begin{example} The matroid polytope of the uniform matroid $U_{d+1,n+1}$ is $\Delta(d+1,n+1)$, the $(d+1,n+1)$-hypersimplex which is defined to be the convex hull of the $\binom{n+1}{d+1}$ vectors $e_B$ as $B$ ranges over all $(d+1)$-element subsets of $E=\{0,1,\dots,n\}$. 
\end{example}

\begin{example} The matroid polytope of $U_{2,4}$ is an octahedron in $\R^4$ whose vertices are 
\[\{(1,1,0,0),(1,0,0,1),(1,0,1,0),(0,1,1,0),(0,1,0,1),(0,0,1,1)\}\]
where there is an edge between a pair of vertices if and only if they differ in exactly two coordinates.
\end{example}

Remarkably, matroid polytopes can be abstractly characterized leading to a cryptomorphic definition of matroids by the work of Gelfand-Goresky-MacPherson-Serganova:

\begin{definition} A {\em matroid polytope} $P$ is a convex lattice polytope in $\R^{n+1}$ which is contained in $\Delta(d+1,n+1)$ for some $d$ such that all vertices of $P$ are vertices of $\Delta(d+1,n+1)$ and each edge is a translate of $e_i-e_j$ for $i\neq j$.
\end{definition}

\begin{proposition} Let $M$ be a rank $d+1$ matroid on $E=\{0,1,\dots,n\}$.  Then $P(M)$ is a matroid polytope.
\end{proposition}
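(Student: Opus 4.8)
The plan is to check the three conditions in the abstract definition of a matroid polytope directly against $P(M)=\Conv(\{e_B\mid B\in\cB\})$; the first two are essentially immediate, and all the content is in the third. Since $P(M)$ is the convex hull of the finite set $\{e_B\}\subseteq\Z^{n+1}$, it is a convex lattice polytope. Since $M$ has rank $d+1$, every basis $B$ has exactly $d+1$ elements (as noted in Section~3, this follows from iterating basis exchange), so each $e_B$ is a vertex of the hypersimplex $\Delta(d+1,n+1)$; hence $P(M)\subseteq\Delta(d+1,n+1)$, and by the lemma just proved the vertices of $P(M)$ are precisely the points $e_B$, all of which are vertices of $\Delta(d+1,n+1)$. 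So it remains to show that every edge of $P(M)$ is a translate of $e_i-e_j$ for some $i\neq j$. An edge has endpoints $e_{B_1},e_{B_2}$ for distinct bases $B_1,B_2$, and $e_{B_1}-e_{B_2}=\sum_{i\in B_1\setminus B_2}e_i-\sum_{j\in B_2\setminus B_1}e_j$; since $|B_1|=|B_2|=d+1$ we have $|B_1\setminus B_2|=|B_2\setminus B_1|$, and this vector is a translate of some $e_i-e_j$ exactly when $|B_1\setminus B_2|=1$. So it suffices to rule out edges with $|B_1\setminus B_2|\geq 2$.

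I would argue this by a standard midpoint device. Suppose $|B_1\setminus B_2|\geq 2$ and pick $i\in B_1\setminus B_2$. By the symmetric exchange property of bases, there is $j\in B_2\setminus B_1$ such that both $B_1':=(B_1\setminus\{i\})\cup\{j\}$ and $B_2':=(B_2\setminus\{j\})\cup\{i\}$ are bases of $M$. Swapping $i$ and $j$ between the two sets leaves the (multiset) union unchanged, so $e_{B_1'}+e_{B_2'}=e_{B_1}+e_{B_2}$; equivalently the midpoint $m=\tfrac12(e_{B_1}+e_{B_2})$ of the segment $[e_{B_1},e_{B_2}]$ also equals $\tfrac12(e_{B_1'}+e_{B_2'})$. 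Because $|B_1\setminus B_2|\geq 2$, neither $B_1'$ nor $B_2'$ can equal $B_1$ or $B_2$ (such an equality would force $i=j$ or $|B_1\setminus B_2|=1$), so $e_{B_1'}$ and $e_{B_2'}$ are vertices of $P(M)$ distinct from the two endpoints of the edge. Then $m$ lies in the relative interior of the edge while being a positive convex combination of vertices of $P(M)$ not on that edge, which is impossible since a point in the relative interior of a face of a polytope is a convex combination only of vertices of that face. This contradiction gives $|B_1\setminus B_2|=1$, finishing the proof.

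The only ingredient not developed in the excerpt is Brualdi's symmetric exchange theorem, which I would simply cite; alternatively, to stay self-contained, this step can be replaced by a weight-vector argument using only the basis exchange axiom and the fact (Subsection~\ref{ss:duality}) that $M^*$ is a matroid: choose a linear functional $\langle w,\cdot\rangle$ maximized over $P(M)$ exactly on the given edge, so $w(B_1)=w(B_2)=:W$ and $w(B)<W$ for every other basis $B$; applying basis exchange to $(B_1,B_2)$ and the dual exchange (basis exchange for $E\setminus B_1,E\setminus B_2$) and comparing weights of the resulting bases, each of which is neither $B_1$ nor $B_2$ when $|B_1\setminus B_2|\geq 2$ and hence has weight $<W$, forces both $\max_{i\in B_1\setminus B_2}w_i<\max_{j\in B_2\setminus B_1}w_j$ and the reverse inequality, a contradiction. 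In either route the one substantive point — the main obstacle — is the bookkeeping that guarantees the exchanged basis is genuinely new, i.e.\ distinct from both $B_1$ and $B_2$; this is exactly where $|B_1\setminus B_2|\geq 2$ is used, and everything else is routine polytope geometry.
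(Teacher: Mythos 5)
Your proof is correct and takes essentially the same route as the paper's (sketched) argument: the containment and vertex conditions are immediate, and a segment between $e_{B_1}$ and $e_{B_2}$ with $|B_1\triangle B_2|>2$ is ruled out as an edge by showing its midpoint $\tfrac12(e_{B_1}+e_{B_2})$ lies in the convex hull of other vertices of $P(M)$. The paper only says this follows from ``repeated use of the basis exchange axiom,'' and your symmetric-exchange (Brualdi) argument, or the self-contained weight-vector variant, is a valid way of making that sketch precise.
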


\begin{proof}
We sketch the proof. Clearly every vertex of $P(M)$ is a vertex of $\Delta(d+1,n+1)$.  Let $L$ be the line segment between $e_{B_1}$ and $e_{B_2}$.  If we have $|B_1\triangle B_2|\neq 2$, we must show that $L$ is not an edge of $P(M)$.
By the repeated use of the basis exchange axiom, one can show that the midpoint of the edge $\frac{e_{B_1}+e_{B_2}}{2}$ lies in the convex hull of other vertices of the matroid polytope.
\end{proof}

We give the proof of the converse following \cite{GGMS}.

\begin{theorem} \label{t:matroidpolytope} Let $P$ be a matroid polytope.  Then there exists a unique matroid $M$ such that $P=P(M)$.
\end{theorem}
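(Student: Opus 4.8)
The plan is to read the bases of the sought matroid $M$ straight off the vertices of $P$ and then verify the basis-exchange axiom, the edge condition doing all the work. Uniqueness is immediate: if $P=P(M)$, then by the Lemma above the vertices of $P$ are exactly the vectors $e_B$ with $B\in\cB(M)$, and a matroid is determined by its set of bases, so $M$ is recovered from $P$.

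For existence, set $\cB=\{B\subseteq E:\ e_B\text{ is a vertex of }P\}$. Since $P\subseteq\Delta(d+1,n+1)$ and every vertex of $P$ is a vertex of $\Delta(d+1,n+1)$, each $B\in\cB$ has $|B|=d+1$, and $\cB\neq\emptyset$. I would first record the structural consequence of the edge hypothesis: if $e_B$ and $e_{B'}$ are the endpoints of an edge of $P$, then $e_{B'}-e_B$ is a translate of $e_a-e_b$, and since both endpoints are $0/1$ vectors of coordinate-sum $d+1$, this forces $B'=(B\setminus\{b\})\cup\{a\}$ with $b\in B$, $a\notin B$. Thus the edges of $P$ are exactly the single-element exchanges available inside $\cB$, and in particular the bases indexed by neighbours of $e_{B}$ in the $1$-skeleton are precisely those obtained from $B$ by one exchange.

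Now take $B_1,B_2\in\cB$ and $i\in B_1\setminus B_2$; I must produce $j\in B_2\setminus B_1$ with $(B_1\setminus\{i\})\cup\{j\}\in\cB$. The key preliminary move is to reduce to the case $B_1\cup B_2=E$. The linear functional $x\mapsto\sum_{k\in B_1\cup B_2}x_k$ attains its maximum $d+1$ over $\Delta(d+1,n+1)$ exactly at the $e_B$ with $B\subseteq B_1\cup B_2$, so the face
\[F:=P\cap\Big\{\textstyle\sum_{k\in B_1\cup B_2}x_k=d+1\Big\}\]
contains $e_{B_1}$ and $e_{B_2}$, lies inside the sub-hypersimplex $\Delta(d+1,n+1)\cap\{x_k=0:\ k\notin B_1\cup B_2\}$, is again a matroid polytope in the axiomatic sense (its vertices are vertices of $P$, its edges are edges of $P$), and has vertex set $\cB_F\subseteq\cB$. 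Replacing $P,E$ by $F,B_1\cup B_2$ is harmless, and its point is that now $B_2\setminus B_1=E\setminus B_1$, so \emph{any} single-element exchange out of $B_1$ removing $i$ automatically adds an element of $B_2\setminus B_1$. It remains to find an edge of $P$ at $e_{B_1}$ along which the $i$-th coordinate strictly decreases. For this I would invoke a standard polytope fact: if a vertex $v$ of a polytope minimizes a linear functional $\ell$ that is non-constant on the polytope, then some edge at $v$ strictly increases $\ell$ — otherwise every edge at $v$ lies in the minimal face of $\ell$, forcing the vertex cone at $v$ to coincide with that of the minimal face, hence that face to be full-dimensional, hence equal to the whole polytope. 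Applying this with $v=e_{B_1}$ and $\ell=-x_i$ (non-constant because $e_{B_2}$ has $x_i=0$ while $e_{B_1}$ has $x_i=1$) yields an edge at $e_{B_1}$ along which $x_i$ strictly drops; by the structural remark this edge joins $e_{B_1}$ to $e_{(B_1\setminus\{i\})\cup\{j\}}$ for some $j\notin B_1$, and by the reduction $j\in B_2\setminus B_1$. Finally $P(M)=P$ since both equal $\Conv(\{e_B:B\in\cB\})$.

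The main obstacle is precisely this last passage: extracting the full exchange axiom from the bare hypothesis ``every edge is a translate of $e_a-e_b$''. The reduction to $B_1\cup B_2=E$ is what turns ``some exchange removing $i$'' into ``the exchange we want'', and the routine-but-essential check that the face $F$ is again an axiomatic matroid polytope is where all three defining conditions (lattice polytope inside a hypersimplex, vertices among vertices of $\Delta$, edges translates of $e_a-e_b$) get used; the vertex-cone lemma is the one genuinely geometric input.
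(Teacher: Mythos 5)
Your proof is correct, and it rests on the same geometric input as the paper's argument --- a polytope lies in the translated cone at any vertex spanned by its edge directions, and the hypothesis forces every edge at $e_{B_1}$ to be a single exchange --- but the bookkeeping is organized differently. The paper works in $P$ itself: it writes $\tfrac{1}{2}(e_{B_2}-e_{B_1})$ as a nonnegative combination of the edge directions at $e_{B_1}$, extracts the $e_i$-coefficient to find an edge removing $i$ with strictly positive weight, and then concludes $j\in B_2\setminus B_1$ from positivity of the $e_j$-coefficient (using that no edge direction at $e_{B_1}$ has negative $e_j$-component since $j\notin B_1$). You instead first pass to the face of $P$ on which $\sum_{k\in B_1\cup B_2}x_k$ is maximal, so every vertex in sight has support inside $B_1\cup B_2$ and any exchange removing $i$ automatically adds an element of $B_2\setminus B_1$; you then need only the general fact that some edge at a vertex strictly improves a linear functional that is non-constant on the polytope (applied to $-x_i$, non-constant because $e_{B_2}$ lies in the face), which is again the vertex-cone fact in disguise. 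Your route costs the routine check that the face is itself an axiomatic matroid polytope whose edges are edges of $P$, but it buys a cleaner endgame by eliminating the paper's second coefficient extraction; your uniqueness remark is the same observation the paper leaves implicit.
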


\begin{proof}
Let $\cB$ be the set of $(d+1)$-element subsets of $E=\{0,1,\dots,n\}$ such that $\{e_B\mid B\in \cB\}$ is the set of vertices of $P$.   We will show that $\cB$ is the collection of bases for a matroid by verifying the basis exchange axiom.

Let $B_1, B_2\in \cB$.  Let $i\in B_1\setminus B_2$.  We must find $j\in B_2\setminus B_1$ such that $(B_1\setminus i)\cup {j}\in \cB$.  Let $B'_1,\dots,B'_k$ be the bases such that $\{e_{B_1}e_{B'_l}\}_l$ are the edges of $P$ at $e_{B_1}$.  If $B_2$ is among the $B'_l$'s, then we are done because then $|B_1\triangle B_2|=2$.
Otherwise, because the midpoint between $e_{B_1}$ and $e_{B_2}$ is contained in $P$, it is contained in the convex cone at $e_{B_1}$ spanned by the edges containing $e_{B_1}$.  Therefore, we may write
\begin{eqnarray*}
\frac{e_{B_1}+e_{B_2}}{2}&=&e_{B_1}+\sum_l \lambda_l(e_{B'_l}-e_{B_1})
\end{eqnarray*}
or
\begin{eqnarray} \label{e:exchangecone}
\frac{e_{B_2}-e_{B_1}}{2}&=&\sum_l \lambda_l(e_{B'_l}-e_{B_1})
\end{eqnarray}
for $\lambda_l\geq 0$.  Extracting the coefficient of $e_i$, we get
\[-\frac{1}{2}=-\sum_{l \mid i\not\in B'_l} \lambda_l.\]
 Let $l'$ be such that $i\not\in B'_{l'}$ and $\lambda_{l'}>0$.  Let $j$ be the unique element of $B_{l'}\setminus B_1$.   
 Because the coefficient of $e_j$ on the right side of \eqref{e:exchangecone} is positive, we have that $j\in B_2\setminus B_1$.
\end{proof}

It turns out that the dimension of $P(M)$ is $n+1-\kappa(M)$ where $\kappa(M)$ is the number of connected components of $M$.  In fact, if $M$ is connected, then the dimension of $P(M)$ is $n$ for the following reasons: all vertices of the matroid polytope lie in the hyperplane described by $x_0+x_1+\dots+x_n=d+1$ corresponding to the fact that all of the bases have the same number of elements; and for all pairs $i\neq j$, $e_i-e_j$ is parallel to an edge of $P(M)$.  If $E$ is partitioned as $E=E_1\sqcup\ldots\sqcup E_\kappa$ where $M=M_1\oplus\ldots\oplus M_\kappa$ and $M_i$ is a matroid on $E_i$, then $P(M)=P(M_1)\oplus\ldots\oplus P(M_\kappa)\subset\R^{E_1}\oplus\ldots\oplus\R^{E_\kappa}$.

\subsection{Faces of the matroid polytope}

In this section, we study faces of the matroid polytope following \cite{AKBergman}.  They correspond to bases maximizing a certain weight vector.  Further details can be found in \cite{FeichtnerS}.  We first review faces of polytopes. 

\begin{definition}
Let $P$ be a convex polytope in $\R^{n+1}$.  For $w\in \R^{n+1}$, let $P_w$ be the set of points $x\in P$ that minimize the function $\<x,w\>$.  
\end{definition}

The set $P_w$ is a face of the polytope $P$. 
 
 Observe that any face of the matroid polytope is itself a matroid polytope.  We will identify its matroid.
  
\begin{definition} Let $M$ be a matroid on $E=\{0,\ldots,n\}$ and let $w\in\R^E$.  For a basis $B\in\cB(M)$, let the $w$-weight of $B$ be
\[w_B=\sum_{i\in B} w_i.\]
Let $M_w$ be the matroid whose bases are the bases of $M$ of minimal  $w$-weight.
\end{definition}

We know that $M_w$ is a matroid for the following reason: the convex hull of the characteristic polynomial of its bases is the matroid polytope $P(M)_w$ allowing us to apply Theorem \ref{t:matroidpolytope}.

We give a description of the matroid $M_w$ following \cite{AKBergman}.  Let $S_\bullet$ be the flag of subsets
\[S_\bullet=\{\emptyset=S_0\subsetneq S_1\subsetneq \ldots \subsetneq S_k\subseteq F_{k+1}=E\},\]
where $w$ is constant on $S_i\setminus S_{i-1}$ and $w|_{S_i\setminus S_{i-1}}$ is strictly increasing with $i$.
\begin{lemma} We have the following equality:
\[M_w=\bigoplus_{i=1}^{k+1} (M|_{S_i})/S_{i-1}.\]
\end{lemma}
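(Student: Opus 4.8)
The plan is to identify the face $P(M)_w$ of the matroid polytope directly as a product of hypersimplex-type pieces, and then appeal to Theorem~\ref{t:matroidpolytope} to translate that polytopal description back into the matroid language. First I would observe that for a basis $B$, the weight $w_B = \sum_{i\in B} w_i$ can be rewritten using the flag $S_\bullet$: since $w$ is constant on each layer $S_i\setminus S_{i-1}$, say with value $c_i$, we have $w_B = \sum_i c_i\,|B\cap(S_i\setminus S_{i-1})|$. The numbers $|B\cap(S_i\setminus S_{i-1})|$ are constrained: they are nonnegative, they sum to $d+1$, and by the submodularity/rank inequalities for matroids, $|B\cap S_i|\le r(S_i)$ for every $i$, while $|B\cap S_i|$ can be no smaller than $r(E)-r(E\setminus S_i)$ type bounds coming from the complementary restriction. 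The key combinatorial fact I would establish is that since the $c_i$ are strictly increasing, minimizing $w_B$ forces $|B\cap S_i|$ to be as large as possible for each $i$, simultaneously — i.e.\ $|B\cap S_i| = r(S_i)$ for all $i$. (Strict monotonicity is exactly what makes the greedy "pack the low-weight layers first" argument work without ties.)

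Next I would check that the bases $B$ with $|B\cap S_i| = r(S_i)$ for all $i$ are precisely the bases of $\bigoplus_{i=1}^{k+1}(M|_{S_i})/S_{i-1}$. Unwinding the definitions: a basis of $(M|_{S_i})/S_{i-1}$ is a subset of $S_i\setminus S_{i-1}$ of size $r(S_i)-r(S_{i-1})$ that, together with a basis of $M|_{S_{i-1}}$ pushed up, has full rank in $M|_{S_i}$. Assembling these across $i$ and using the direct-sum description of bases ($\cB(M_1\oplus M_2) = \{B_1\sqcup B_2\}$ from Section~\ref{s:operations}), one gets exactly the sets $B$ with $|B\cap S_i| = r(S_i)$ for all $i$; the telescoping $\sum_i (r(S_i)-r(S_{i-1})) = r(E) = d+1$ confirms these are genuine bases of $M$. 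Conversely any such $B$ restricts to a basis in each layer by the rank constraints. This is mostly a bookkeeping argument using the rank-function definitions of restriction and contraction given in Section~\ref{s:operations}.

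The main obstacle, and the step deserving the most care, is the forward direction of the first part: showing that $w$-minimality forces $|B\cap S_i| = r(S_i)$ for \emph{every} $i$ simultaneously. The subtlety is that increasing $|B\cap S_1|$ might, a priori, be incompatible with increasing $|B\cap S_2|$ for a given matroid. I would handle this by an exchange argument: start with a $w$-minimal basis $B$, suppose $|B\cap S_i| < r(S_i)$ for the smallest such $i$, and use the submodular inequality $r(S_i) + r((B\cap(E\setminus S_i))\cup S_{i-1}) \ge \ldots$ — or more cleanly, the fact that $r(B\cap S_i) = |B\cap S_i| < r(S_i)$ means $B\cap S_i$ is not spanning in $M|_{S_i}$, so some element $e\in S_i\setminus B$ has $r((B\cap S_i)\cup\{e\}) = |B\cap S_i|+1$; then basis exchange lets us swap $e$ into $B$ in place of some $f\in B\setminus S_i$, strictly decreasing $w_B$ (since $w_e = c_j$ for $j\le i$ while $w_f = c_{j'}$ for $j' > i$, and $c$ is strictly increasing), contradicting minimality. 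Once the polytopal face is pinned down as the convex hull of exactly these $e_B$, Theorem~\ref{t:matroidpolytope} gives that it is the matroid polytope of the unique matroid with that basis set, namely $\bigoplus_i (M|_{S_i})/S_{i-1}$, and since $P(M_w)$ was defined to be that same face, we conclude $M_w = \bigoplus_i (M|_{S_i})/S_{i-1}$.
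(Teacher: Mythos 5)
Your proof is correct and follows essentially the same route as the paper: both identify the $w$-minimal bases as exactly those bases $B$ with $|B\cap S_i|=r(S_i)$ for all $i$, and then match the layer pieces $B\cap(S_i\setminus S_{i-1})$ with bases of $(M|_{S_i})/S_{i-1}$. The only difference is cosmetic: where the paper invokes the greedy algorithm to see that minimal-weight bases fill each layer up to its rank, you prove that fact directly by a fundamental-circuit exchange argument (and your appeal to Theorem~\ref{t:matroidpolytope} at the end is harmless but not needed, since $M_w$ is defined by its set of bases).
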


\begin{proof}
Every basis of minimal $w$ weight can be constructed by the greedy algorithm.  In others words, it will start with the empty set, add elements of $S_1$ as long as the result is independent, move on to $S_2$, and so on.
Consequently, a basis of $M_w$ consists of $r(S_i)-r(S_{i-1})$ elements of $S_i\setminus S_{i-1}$ for all $i$.   Such a set of $r(S_i)-r(S_{i-1})$ elements is exactly a basis of $(M|_{S_i})/S_{i-1}$.
\end{proof}

We will make use of this description in Section \ref{s:bergman} when we discuss the Bergman fan of a matroid.

\subsection{Valuative invariants}

The polytope definition of matroids allows one to think about invariants of matroids in a different light.  One can study invariants of matroids that are well-behaved under subdivisions of the matroid polytope. This is the approach taken by Fink and Derksen \cite{Derksen,DerksenFink}. 

\begin{definition} A {\em matroidal subdivision} of a matroid polytope $P(M)\subset\R^{n+1}$ is a polyhedral complex $\mathcal{D}$ whose cells are polytopes in $\R^{n+1}$ such that
\begin{enumerate}
\item Each cell of $\mathcal{D}$ is a matroid polytope, and 
\item $\bigcup_{P\in \mathcal{D}} P=P(M)$
\end{enumerate}
\end{definition}

\begin{example} As an example of a matroid subdivision, consider $U_{2,4}$ whose matroid polytope is an octahedron.  It has a matroid subdivision into two pyramids that meet along the square whose vertices are $\{(1,0,0,1),(1,0,1,0),(0,1,1,0),(0,1,0,1)\}$ \cite{SpeyerLinear}.  In fact, let $M_{\operatorname{top}}$ be the rank $2$ matroid on $E=\{0,1,2,3\}$ where $0$ and $1$ are taken to be parallel but all other two element subsets are bases.  Its matroid polytope is the top pyramid whose vertices are $\{(0,0,1,1),(1,0,0,1),(1,0,1,0),(0,1,1,0),(0,1,0,1)\}$.  Similarly, the bottom pyramid is the matroid polytope corresponding to $M_{\operatorname{bot}}$ where $2$ and $3$ are mandated to be parallel.  The middle square is the matroid polytope of $M_{\operatorname{mid}}$ where $0$ and $1$ are mandated to be parallel as are $2$ and $3$ while the bases are $\{0,2\},\{1,2\},\{0,3\},\{1,3\}$.
\end{example}

Write $P_1,\dots,P_k$  for the top-dimensional cells of $\mathcal{D}$.  For $J\subseteq \{1,\dots,k\}$, let $P_J=\bigcap_{j\in J} P_j$.  Let $\mathcal{M}$ be the set of all matroid polytopes in $\R^{n+1}$ and let $A$ be an abelian group.

\begin{definition}  A function $f:\mathcal{M}\rightarrow A$ is a {\em matroid valuation} if for all matroidal subdivisions of a matroid polytope $P(M)$,
\[f(P(M))=\sum_{\emptyset\neq J\subseteq\{1,\dots,k\}} (-1)^{|J|-1}f(P_J).\]
\end{definition}

In other words, $f$ obeys an inclusion/exclusion relation for matroidal subdivisions.  Consequently,  valuative invariant of lattice polytopes like the Ehrhart polynomial immediately give valuative invariants of matroids.
When the subdivision is regular, the matroidal subdivision can be interpreted as a tropical linear subspace \cite{SpeyerLinear} which is a combinatorial abstraction of a  family of linear subspaces defined over a disk that degenerates into a union of linear subspaces.

The Tutte polynomial along with a quasi-symmetric function-valued invariant introduced by Billera-Jia-Reiner \cite{BilleraJiaReiner} are valuative.  Derksen \cite{Derksen} introduced a valuative invariant which was proved to be universal among valuative invariants \cite{DerksenFink}. It takes values in the ring of quasi-symmetric functions, whose underlying vector space has a basis indexed by compositions $\underline{\alpha}=(\alpha_1,\alpha_2,\dots,\alpha_l)$.  Let $U_{\underline{\alpha}}$ be some basis for the ring of quasi-symmetric functions over $\Z$.  Derksen's invariant $\mathcal{G}$ is defined as
\[\mathcal{G}=\sum_{\underline{E}} U_{r(\underline{E})}\]
where $\underline{E}=\{\emptyset=E_0\subset E_1\subset E_2\subset\dots\subset E_{n+1}=E\}$
ranges over all $(n+1)!$ maximal chains of subsets of $E$ and 
\[r(\underline{E})=(r(E_1)-r(E_0)+1,r(E_2)-r(E_1)+1,\dots,r(E_{n+1})-r(E_n)+1).\]
This invariant specializes to any valuative invariant under homomorphisms from quasi-symmetric functions to an abelian group \cite{DerksenFink}.  This is proved by showing that Schubert matroids (after allowing permutations of the ground set) are a basis for the dual space to valuative invariants.

\section{Grassmannians and Matroid Polytopes}
\subsection{Pl\"{u}cker coordinates and basis exchange}

The Grassmannian $\Gr(d+1,n+1)$ is an algebraic variety whose points correspond to the $(d+1)$-dimensional linear subspaces of $\k^{n+1}$.  Equivalently, it is the set of all $d$-dimensional projective subspaces of $\P^n$.  It generalizes projective space in the sense that $\Gr(1,n+1)$ is isomoprhic to $\P^n$.

We present the Grassmannian as a quotient of the Stiefel variety following \cite{GKZ}.  The Stiefel variety $\St(d+1,n+1)$ parameterizes  linearly independent $(d+1)$-tuples of vector in $\k^{n+1}$.  We denote their span as the $(d+1)$-dimensional subspace $P\subset \k^{n+1}$ and write the $(d+1)$-tuple $(v_1,\dots,v_n)$ as a $(d+1)\times(n+1)$-matrix $X$.  Now, $\Gl_{d+1}$ acts on the $(d+1)$-tuple without changing $P$.  This is the same thing as the left-action of $\Gl_{d+1}$ on the matrix.  Because the action is free and acts transitively on bases of $P$, the Grassmannian is the quotient
\[\Gr(d+1,n+1)=\St(d+1,n+1)/\Gl_{d+1}.\]
There are homogeneous coordinates, the Pl\"{u}cker coordinates that embed this quotient into projective space: let $B$ be a $(d+1)$-tuple of distinct  elements of $\{1,\dots,n+1\}$, the Pl\"{u}cker coordinate $p_B$ is the determinant of the $(d+1)\times(d+1)$-matrix formed by the columns indexed by $B$.  The Pl\"{u}cker coordinates are indeed coordinates on the Grassmannian.  Indeed, because $X$ is rank $d+1$, we may permute the columns of $M$ so that the first $d+1$ columns form a non-singular matrix.  By applying an element of $\Gl(d+1)$, we may suppose $X$ is the form 
\[X=\left[I_{d+1}|A\right]\]
where $A$ is a $(d+1)\times (n-d)$-matrix.  Let $B$ consist of $d$ elements of $\{1,\dots,d+1\}$ together with an element $i$ of $\{d+2,\dots,n+1\}$.  Then $p_B$ is equal (up to sign) to an entry of the $i$th column of $X$.  Therefore, from the Pl\"{u}cker coordinates, we can recover $X$.  

Now, because if $B$ and $B'$ are related by a permutation, $p_B$ and $p_{B'}$ are equal up to a sign, it suffices to consider only the Pl\"{u}cker coordinates corresponding to $B=(i_1,\dots,i_{d+1})$ with $i_1<i_2<\dots<i_{d+1}$.  Therefore, there are $\binom{n+1}{d+1}$ Pl\"{u}cker coordinates.
The Pl\"{u}cker coordinates give an embedding
\[\Gr(d+1,n+1)\hookrightarrow \P^{\binom{n+1}{d+1}-1}.\]
The Pl\"{u}cker coordinates obey natural quadratic relations, called the Pl\"{u}cker relations: for $1\leq i_1<\dots<i_d\leq n+1$, $1\leq j_1<\dots< j_{d+2} \leq n+1$, we have, for every point of the Grassmannian,
\[\sum_{a=1}^{d+1} (-1)^a p_{i_1\dots i_d j_a} p_{j_1\dots \hat{j}_a\dots j_{d+2}}=0\]
where $\hat{j}_a$ means $j_a$ is deleted.  Moreover, these relations generate the ideal defining $\Gr(d+1,n+1)$.

Given a point of the Grassmannian whose Pl\"{u}cker coordinates are $(p_B)$, we can define a matroid by setting the bases to be
\[\cB=\left\{B\in \binom{\{1,\dots,n+1\}}{d+1}\ \Bigg |\ p_B\neq 0\right\}.\]
\begin{lemma} \label{l:grassmatroid} The set $\cB$ defined above is the set of bases for a matroid.
\end{lemma}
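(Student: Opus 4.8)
The plan is to verify the basis exchange axiom directly from the Plücker relations; this is the most transparent route and it displays the promised dictionary between the quadratic relations cutting out $\Gr(d+1,n+1)$ and the matroid axioms. As a preliminary aside I would note the shortcut: choosing a rank-$(d+1)$ matrix $X$ representing the point (possible since $\Gr(d+1,n+1)=\St(d+1,n+1)/\Gl_{d+1}$), the coordinate $p_B$ is the maximal minor of $X$ on the columns indexed by $B$, so $p_B\neq 0$ exactly when those columns are linearly independent; hence $\cB$ is the basis collection of the vector-configuration matroid of the columns of $X$, which is a matroid by Section~3. But I would give the intrinsic argument in full.

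First I would record that $\cB\neq\emptyset$, since $X$ has rank $d+1$ and so some maximal minor is nonzero. Then fix $B_1,B_2\in\cB$ and $i\in B_1\setminus B_2$; we must produce $j\in B_2\setminus B_1$ with $(B_1\setminus\{i\})\cup\{j\}\in\cB$. Since $i\notin B_2$, the set $\{i\}\cup B_2$ has $d+2$ elements, so we may apply the Plücker relation whose $d$-element index tuple is the increasing rearrangement of $B_1\setminus\{i\}$ and whose $(d+2)$-element index tuple $j_1<\dots<j_{d+2}$ is the increasing rearrangement of $\{i\}\cup B_2$:
\[
\sum_{a=1}^{d+2}(-1)^a\, p_{(B_1\setminus\{i\})\cup\{j_a\}}\, p_{(\{i\}\cup B_2)\setminus\{j_a\}}=0,
\]
where an index set of size $\neq d+1$ or one with a repeated entry contributes a zero Plücker coordinate, and reordering indices only changes a coordinate by a sign, which is harmless since only vanishing versus non-vanishing matters below.

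Next I would isolate the term $a$ with $j_a=i$: there $p_{(B_1\setminus\{i\})\cup\{i\}}=p_{B_1}\neq 0$ and $p_{(\{i\}\cup B_2)\setminus\{i\}}=p_{B_2}\neq 0$, so that term is nonzero, whence some other term must be nonzero too. For $j_a\in B_1\cap B_2$ the factor $p_{(B_1\setminus\{i\})\cup\{j_a\}}$ has a repeated index and vanishes, so a surviving nonzero term has $j_a\in B_2\setminus B_1$; for such $a$ the set $(B_1\setminus\{i\})\cup\{j_a\}$ is a genuine $(d+1)$-subset with $p_{(B_1\setminus\{i\})\cup\{j_a\}}\neq 0$, i.e. it lies in $\cB$. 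Taking $j=j_a$ verifies basis exchange, so $\cB$ is the set of bases of a matroid.

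The only delicate point, and it is really just bookkeeping, is selecting the right Plücker relation and observing that its ``diagonal'' term $\pm p_{B_1}p_{B_2}$ is precisely the one forced to be nonzero; everything else (the sizes of the index sets, irrelevance of signs, automatic vanishing of degenerate terms) is routine.
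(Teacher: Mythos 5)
Your proposal is correct and is essentially the paper's own argument: apply the Pl\"{u}cker relation indexed by $B_1\setminus\{i\}$ and $B_2\cup\{i\}$, observe that the term $\pm p_{B_1}p_{B_2}$ is nonzero, and conclude some other term $p_{(B_1\setminus\{i\})\cup\{j\}}p_{(B_2\setminus\{j\})\cup\{i\}}$ with $j\in B_2\setminus B_1$ is nonzero. Your explicit remark that terms with $j_a\in B_1\cap B_2$ vanish because of a repeated index is a detail the paper leaves implicit, but the route is the same.
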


\begin{proof}
Because the matrix $X$ is of rank $d+1$, the set $\cB$ is non-empty.  We need only show that it satisfies the basis exchange axiom.  Let $B_1,B_2\in \cB$, $i\in B_1\setminus B_2$.  We must show that there is an element $j\in B_2\setminus B_1$ such that $(B_1\setminus i)\cup \{j\}\in\cB$.  
Write $B_1\setminus\{i\}=\{i_1,\dots,i_d\}$ and $B_2\cup \{i\}=\{j_1,\dots,j_{d+2}\}$.  The left side of the Pl\"{u}cker relation for this choice of $i$'s and $j$'s has $p_{B_1}p_{B_2}$ as a summand.  Because this monomial is nonzero, there must be another nonzero monomial of the form $p_{(B_1\setminus\{i\})\cup \{j\}}p_{(B_2\setminus\{j\})\cup\{i\}}$.
\end{proof}

We can view this matroid as coming from a vector configuration.  Specifically, pick a matrix $X\in \St(d+1,n+1)$ representing that linear subspace.  Take the vectors to be the columns of $X$.  The set of $d+1$ vectors is a basis for the matroid if and only if the corresponding determinant is nonzero.  This occurs exactly when these vectors give a basis of $\k^{d+1} $. 

We note that $\Gr(d+1,n+1)$ is isomorphic to $\Gr(n-d,n+1)$.  Specifically if $V\subset \k^{n+1}$ is a $d$-dimensional subspace, there is a map dual to inclusion $i^*:(\k^{n+1})^*\rightarrow V^*$.  The map is surjective so its kernel is $(n-d)$-dimensional.  This isomorphism is analogous to matroid duality.

\subsection{Projective toric varieties}

We give a review of not-necessarily-normal (or Sturmfeldian) projective toric varieties.  They are important for relating the matroid polytope to the Grassmannian  For more details, see \cite{CLS}.  Let $\cA\subset \Z^{n+1}$ be a finite subset.  Write $\cA=\{\omega_0,\omega_1,\dots,\omega_N\}$.  We can define a morphism $i:(\k^*)^{n+1}\rightarrow \P^N$ by
\begin{eqnarray*}
x &\mapsto &[x^{\omega_0}:\ldots:x^{\omega_N}]
\end{eqnarray*}
where if $x=(x_0,\dots,x_n)$ and $\omega_i=(\omega_{i0},\dots,\omega_{in})$,
\[x^{\omega_i}=x_0^{\omega_{i0}}\dots x_{n}^{\omega_{in}}.\]
Let $A$ be the $(n+1)\times (N+1)$-matrix whose columns are the $\omega$'s.  If $A$ is unimodular and full rank, then $i$ is an inclusion.  
The toric variety $\P_\cA$ associated to $\cA$ is the closure $\overline{i((\k^*)^{n+1})}$.  The weight polytope of $X_\cA$ is the convex hull of $\cA$.  The torus orbits in $\P_\cA$ are in bijective correspondence with the faces of the weight polytope.

This definition can be extended to include closures of torus orbits in projective spaces.  Suppose that $\k$ is an algebraically closed field.  Let $H=(\k^*)^{n+1}$ act on a vector space $V$.  There is a character decomposition 
\[V=\bigoplus_\chi V_\chi\]
where for a character $\chi$ of $H$,
\[V_\chi=\{v\in V \mid t\cdot v=\chi(t)v \text{\ for all\ } t\in H\}.\]
Let $v\in \P(V)$.  We will study the $H$-orbit closure $\overline{H\cdot v}\subseteq \P(V)$.  Lift $v$ to an element of $\tilde{v}\in V$, and write 
\[\tilde{v}=\sum_\chi \tilde{v}_\chi\]
for $\tilde{v}_\chi\in V_\chi$.  Let $\cA(v)=\{\chi\mid \tilde{v}_\chi\neq 0\}$.  By unpacking definitions, we have $\overline{H\cdot v}\cong X_{\cA(v)}$.  The \emph{weight polytope} of $v$ is the convex hull of $\cA(v)$ in $H^\wedge\otimes \R$ where $H^\wedge$ is the character lattice of $H$.  If the image of $v$ in $\P(V)$ is stabilized by a subtorus $H'\subseteq H$, there is an induced linear map $H^\wedge\rightarrow (H')^\wedge$ and the weight polytope lies in a translate of $\ker(H^\wedge\rightarrow (H')^\wedge)$.

\subsection{Torus orbits and matroid polytopes}

In this section, we consider a torus acting on the Grassmannian.  Note that the algebraic torus  $(\k^*)^{n+1}$ acts on $\P^n$ by dilating the homogeneous coordinates.  This induces an action on $\Gr(d+1,n+1)$: for $t\in (\k^*)^{n+1}$, $P\subset \P^n$, a $d$-dimensional subspace, $t\cdot P$ is also a $d$-dimensional subspace.  We can see this action in terms of the Stiefel variety:  $(\k^*)^{n+1}$ dilates the columns of the matrix $X$.  Note that the diagonal $\k^*$ acts trivially on the Grassmanian since it preserves any  subspace $P$.

For $P\in \Gr(n-d,n)$, we may consider the closure of the $(\k^*)^{n+1}$-orbit of $P$, 
\[\P_P=\overline{(\k^*)^{n+1}\cdot P}.\]
As observed in \cite[Prop~12.1]{SpeyerMatroid}, this orbit closure is a normal toric variety by arguments of White \cite{White77}.    Now, by Lemma \ref{l:grassmatroid}, we may produce a rank $d+1$ matroid $M$ on $\{0,1,\dots,n\}$ from $P$.

We have the following combinatorial description of the toric variety by Gelfand-Goresky-MacPherson-Serganova \cite{GGMS}:

\begin{theorem} The weight polytope of $\P_P$ is the matroid polytope $P(M)$.
\end{theorem}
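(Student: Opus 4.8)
The plan is to compute the set of $T$-characters appearing in a lift of the point $P \in \Gr(d+1,n+1) \subset \P^N$ under the Plücker embedding, and identify this character set with $\{e_B : B \in \cB(M)\}$, so that the weight polytope of $\P_P$ — which by the discussion of projective toric varieties in Subsection~9.2 is the convex hull of this character set — coincides with $P(M) = \Conv(\{e_B : B \in \cB\})$. The essential point is that the homogeneous Plücker coordinate $p_B$ is, up to scalar, the $(d+1)\times(d+1)$ minor of a matrix $X$ representing $P$ on columns indexed by $B$, and under the action of $t = (t_0,\dots,t_n) \in T = (\k^*)^{n+1}$ dilating the columns of $X$, this minor scales by $\prod_{i \in B} t_i = t^{e_B}$. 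Hence the coordinate $p_B$ of $\P^N$ spans the weight space $V_{e_B}$ for the induced $T$-action on $\k^{N+1}$.

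First I would set up the action explicitly: $T$ acts on $\k^{n+1}$ by scaling coordinates, inducing an action on the Stiefel variety by scaling columns of the representing matrix $X = [v_1 | \cdots | v_{n+1}]$ (abusing notation slightly, $i$th column scaled by $t_i$), and this descends through the Plücker embedding to an action on $\P^N$ which on the coordinate $p_B$ is multiplication by the character $\chi_B := \sum_{i \in B} e_i^* \in T^\wedge$. Thus the character decomposition of $\k^{N+1}$ is $\bigoplus_{|B| = d+1} (\k^{N+1})_{\chi_B}$, with each summand one-dimensional spanned by the $p_B$-axis. Next, lift $P$ to $\tilde{P} = (p_B(P))_{|B|=d+1} \in \k^{N+1}$; then in the notation of Subsection~9.2, $\cA(P) = \{\chi_B : p_B(P) \neq 0\}$. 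By definition of the matroid $M$ associated to $P$ via Lemma~\ref{l:grassmatroid}, $p_B(P) \neq 0$ if and only if $B \in \cB(M)$. Identifying the character lattice $T^\wedge$ with $\Z^{n+1}$ via $e_i^* \leftrightarrow e_i$, we get $\chi_B \leftrightarrow e_B$, so $\cA(P)$ corresponds exactly to $\{e_B : B \in \cB(M)\}$, and its convex hull is $P(M)$ by definition. Finally, by the general description of weight polytopes of torus orbit closures in Subsection~9.2, the weight polytope of $\P_P = \overline{T \cdot P}$ is $\Conv(\cA(P)) = P(M)$.

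One bookkeeping point deserves care: the diagonal $\k^*$ acts trivially on $\Gr(d+1,n+1)$ but not on $\k^{N+1}$ — it scales $\tilde{P}$ by $t^{d+1}$, i.e. acts through the character $\sum_B (d+1)$-th power uniformly. This is reflected in the fact that all of $\cA(P)$ lies in the affine hyperplane $\{x_0 + \cdots + x_n = d+1\}$, consistent with the remark following Subsection~9.2 that when the image is stabilized by a subtorus $H' \subseteq H$ the weight polytope lies in a translate of $\ker(H^\wedge \to (H')^\wedge)$; here $H'$ is the diagonal $\k^*$ and the translate is the hypersimplex-containing hyperplane. I would note this to confirm consistency but it does not affect the identification of the polytope.

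**The main obstacle**, such as it is, is purely notational rather than mathematical: keeping the duality conventions straight. The paper's Example~\ref{e:vector} passes from a vector configuration to $P$ via dualizing, and one must check that the matroid read off from nonvanishing Plücker coordinates of $P$ (as in Lemma~\ref{l:grassmatroid}) is the \emph{same} matroid $M$ whose polytope $P(M)$ we claim, not its dual. The cleanest way to avoid confusion is to work directly with a representing matrix $X \in \St(d+1,n+1)$ whose columns give the vector configuration for $M$ (this is exactly the identification made in the paragraph after Lemma~\ref{l:grassmatroid}), and never leave the Stiefel/Plücker picture; then the column-scaling computation of $\chi_B$ is immediate and no dualization is needed. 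Beyond that, everything is a direct unwinding of the definition of weight polytope from Subsection~9.2 together with Lemma~\ref{l:grassmatroid}.
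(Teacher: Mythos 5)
Your proposal is correct and follows essentially the same route as the paper: you compute that $t\cdot p_B=\bigl(\prod_{i\in B}t_i\bigr)p_B$, so each Pl\"{u}cker coordinate spans the weight space of character $e_B$, and the weight polytope of $\overline{T\cdot P}$ is therefore the convex hull of the $e_B$ with $p_B(P)\neq 0$, i.e.\ $P(M)$ by Lemma~\ref{l:grassmatroid}. Your additional remarks on the diagonal subtorus and the hyperplane $x_0+\cdots+x_n=d+1$ simply make explicit the observation the paper records after its proof.
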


\begin{proof}
The $(\k^*)^{n+1}$-action extends to the ambient space of the Pl\"{u}cker embedding $\P^{\binom{n+1}{d+1}-1}$.  In fact, by the matrix description of the group action, if $t\in (\k^*)^{n+1}$ is given by $(t_0,\dots,t_{n})$ then
\[t\cdot p_B=\left(\prod_{i\in B} t_i\right) p_B\]
Consequently, $p_B$ lies in the character space $e_B$ given by the characteristic vector of $B$.     The weight polytope is the convex hull of $e_B$ for $B\in\cB$.  
\end{proof}
Note that the weight polytope lies in an affine hyperplane corresponding to the fact that all bases have $d+1$ elements.  This reflects the fact that the diagonal torus of $(\k^*)^{n+1}$-stabilizes $P\in\Gr(d+1,n+1)$.

\subsection{Thin Schubert cells}

We can use matroids to define thin Schubert cells, locally closed subsets of the Grassmannian that refine Schubert cells.   A thin Schubert cell consists of all subspaces with a given matroid.   Thin Schubert cells were introduced in \cite{GGMS} by Gelfand-Goresky-MacPherson-Serganova.  Prior to their introduction in the algebraic geometric context, White \cite{WhiteBracket} studied their coordinate rings.
Following the notation of \cite{GGMS}, we will think of matroids as vector configuration given by the  projection of the coordinate vectors.

The closure of the usual Schubert cells, called Schubert varieties, can be described by the vanishing of some Pl\"{u}cker coordinates.  The data of the matroid corresponds to imposing exactly which Pl\"{u}cker coordinates vanish.  We will use this data to define a thin Schubert cell.

\begin{definition} Let $M$ be a rank $d+1$ matroid on $E=\{0,\dots,n\}$.  The \emph{thin Schubert cell} $\Gr_{M}$ in $\Gr(d+1,n+1)$ is the locally closed subset given by 
\[\Gr_M=\{P\in \Gr(d+1,n+1) \mid p_B(P)\neq 0\ \text{if and only if $B$ is a basis of $M$}\}.\]
\end{definition}

Another convention is to consider the thin Schubert cell associated to $M$ in $\Gr(n-d,n+1)=\Gr(d+1,n+1)$ as follows: for an $(n-d)$-dimensional subspace $P\in \Gr(n-d,n+1)$, there is a projection $\pi_P:\k^{n+1}\rightarrow\k^{n+1}/P$.  For $S \subset\{0,\dots,n\}$, 
let $\k^S$ be the linear space spanned by $e_i$ for $i\in S$.  
We may consider the matroid given by the vector configuration $\pi_P(e_0)\dots\pi_P(e_n)$ which has the rank function $r(I)=\dim(\pi_P(\k^I))$.  

 We can describe the stabilizer of $\Gr_M$ under the torus action in terms of $M$.  Let $M=M_1\sqcup\ldots\sqcup M_\kappa$ be a connected component decomposition of $M$ where where each $M_i$ is a matroid on $F_i$ for a partition $E=F_1\sqcup \ldots \sqcup F_\kappa$.  Then we have a decomposition of $P$,
\[\k^{n+1}=\k^{F_1}\oplus\ldots\oplus \k^{F_\kappa}\] 
given by
\[P=P_1\oplus\ldots\oplus P_{\kappa}\]
for $P_i\subseteq \k^{E_i}$.  We have an inclusion $(\k^*)^{F_i}\hookrightarrow \k^{n+1}$. 
The image of the diagonal torus in $(\k^*)^{F_i}$ stabilizes $P_i$.  One can show that the stabilizer of $P$ is $(\k^*)^\kappa$ corresponding to the direct product of these diagonal tori.

The thin Schubert cells do not provide a stratification of the Grassmannian.  In fact, the closure of a thin Schubert cell is not always a finite union of thin Schubert cells. 
However, thin Schubert cells do occur as the intersection of finitely many Schubert cells with respect to different flags.

We have the following proposition which describes how the thin Schubert cels behave under direct sums of matroids:

\begin{proposition}  If the matroid $M$ has a unique decomposition into connected components 
\[
M = M|_{F_0} \oplus \cdots \oplus M|_{F_c},
\]
for flats, $F_i$, then  there is a natural isomorphism
\[\Gr_M\cong\Gr_{M|_{F_0}}\times\dots\times\Gr_{M|_{F_c}}\]
\end{proposition}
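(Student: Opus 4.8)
The plan is to exhibit mutually inverse morphisms of varieties between $\Gr_M$ and $\prod_{i=0}^c \Gr_{M|_{F_i}}$, using the coordinate decomposition $\k^{n+1}=\k^{F_0}\oplus\cdots\oplus\k^{F_c}$ induced by the partition $E=F_0\sqcup\cdots\sqcup F_c$. Write $r_i$ for the rank of $M|_{F_i}$, so $\sum_i r_i=d+1$ and $\sum_i|F_i|=n+1$, and view a point of $\Gr_{M|_{F_i}}\subseteq\Gr(r_i,|F_i|)$ as an $r_i$-dimensional subspace $P_i\subseteq\k^{F_i}$ whose matroid of coordinate projections is $M|_{F_i}$. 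The only input I need beyond Lemma~\ref{l:grassmatroid} is the standard dictionary it encodes: for a subspace $P$ in the Grassmannian, the matroid it defines has rank function $r_M(S)=\dim\pi_S(P)$, where $\pi_S$ denotes projection to the coordinates indexed by $S$; equivalently, by rank--nullity applied to $\pi_{E\setminus S}|_P$, one has $\dim(P\cap\k^S)=\dim P-r_M(E\setminus S)$. I also use that the bases of $M=\bigoplus_i M|_{F_i}$ are exactly the disjoint unions $B_0\sqcup\cdots\sqcup B_c$ of bases $B_i$ of $M|_{F_i}$, so every basis of $M$ meets $F_i$ in exactly $r_i$ elements, and that rank is additive on direct sums, $r_M(S)=\sum_i r_{M|_{F_i}}(S\cap F_i)$.

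Next I would set up the two maps. Let $\Phi\colon\prod_i\Gr_{M|_{F_i}}\to\Gr(d+1,n+1)$ send $(P_i)_i$ to $\bigoplus_i P_i\subseteq\k^{n+1}$, and $\Psi\colon\Gr_M\to\prod_i\Gr(r_i,|F_i|)$ send $P$ to $(P\cap\k^{F_i})_i$. That $\Phi$ lands in $\Gr_M$ is immediate: $\pi_S(\bigoplus_i P_i)=\bigoplus_i\pi_{S\cap F_i}(P_i)$, so $\dim\pi_S(\bigoplus_i P_i)=\sum_i r_{M|_{F_i}}(S\cap F_i)=r_M(S)$, whence $\bigoplus_i P_i\in\Gr_M$. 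The core point is that $\Psi$ is well defined, i.e.\ that for $P\in\Gr_M$ one has $P=\bigoplus_i(P\cap\k^{F_i})$ with each $P\cap\k^{F_i}\in\Gr_{M|_{F_i}}$. For the splitting, compute $\dim(P\cap\k^{F_i})=(d+1)-r_M(E\setminus F_i)=(d+1)-\sum_{j\ne i}r_j=r_i$; since the subspaces $P\cap\k^{F_i}$ sit in pairwise disjoint coordinate subspaces, their sum inside $P$ is direct, of dimension $\sum_i r_i=d+1=\dim P$, forcing equality. For the matroid statement, $\pi_S(P\cap\k^{F_i})=\pi_S(P)$ when $S\subseteq F_i$ (the other summands are annihilated), so the matroid of $P\cap\k^{F_i}$ has rank function $S\mapsto r_M(S)=r_{M|_{F_i}}(S)$, i.e.\ $P\cap\k^{F_i}\in\Gr_{M|_{F_i}}$. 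At this point $\Phi$ and $\Psi$ are visibly mutually inverse as maps of sets.

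It then remains to check that both maps are morphisms of varieties, which I would verify in Pl\"ucker coordinates. If each $P_i$ is represented by an $r_i\times|F_i|$ matrix, then $\bigoplus_i P_i$ is represented by the block-diagonal matrix, so $p_B(\bigoplus_i P_i)=\pm\prod_i p_{B\cap F_i}(P_i)$ when $|B\cap F_i|=r_i$ for all $i$, and $0$ otherwise; the sign depends only on $B$ and the fixed orderings, so these monomials define a morphism $\prod_i\P^{N_i}\to\P^{\binom{n+1}{d+1}-1}$ (no common zero, as each factor has a nonvanishing Pl\"ucker coordinate) which lands in the Grassmannian and restricts to $\Phi\colon\prod_i\Gr_{M|_{F_i}}\to\Gr_M$. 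For $\Psi$, fix a basis $B_j^0$ of each $M|_{F_j}$; then on $\Gr_M$ the rule sending $P$ to the point whose $i$-th Pl\"ucker vector has $B_i$-coordinate $p_{B_i\sqcup\bigsqcup_{j\ne i}B_j^0}(P)$ is linear in the Pl\"ucker coordinates of $P$, has no common zero on $\Gr_M$ (the coordinate indexed by the basis $\bigsqcup_j B_j^0$ is nonvanishing there), and by the block-diagonal decomposition above it reproduces $(P\cap\k^{F_i})_i$ up to the harmless common scalar $\prod_{j\ne i}p_{B_j^0}(P\cap\k^{F_j})$. Hence $\Psi$ is a morphism, and $\Phi,\Psi$ are inverse isomorphisms of varieties.

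The step I expect to be the crux is the core claim that an arbitrary $P\in\Gr_M$ is \emph{forced} to split as $\bigoplus_i(P\cap\k^{F_i})$; everything downstream is bookkeeping. Once the identity $\dim\pi_S(P)=r_M(S)$ is in hand this is a one-line dimension count, so the real ``obstacle'' is organizational: keeping the variety structure (not merely the set-theoretic bijection) honest, for which the Pl\"ucker-coordinate descriptions above are the natural tool. An alternative to the dimension count, if one prefers to stay concrete, is a basis-exchange argument on a representing matrix row-reduced so that the columns of a fixed basis $B=\bigsqcup_i B_i$ of $M$ form the identity: if a column indexed by $k\in F_i$ had a nonzero entry in a row of $B_j$ with $j\ne i$, the swap $(B\setminus\{b\})\cup\{k\}$ would be a basis of $M$ meeting $F_i$ in $r_i+1$ elements, a contradiction, so the matrix is block-diagonal. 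Either route yields the claim.
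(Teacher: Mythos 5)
Your argument is correct, and since the paper states this proposition without giving a proof, there is nothing in-text to compare it against; your route — forcing the splitting $P=\bigoplus_i\bigl(P\cap\k^{F_i}\bigr)$ via the dimension count $\dim\bigl(P\cap\k^{F_i}\bigr)=(d+1)-r_M(E\setminus F_i)=r_i$, identifying the matroid of each factor, and then checking both directions are morphisms in Pl\"ucker coordinates — is the natural and complete one. The only detail to tidy is the sign in your description of $\Psi$: in the identity $p_B(P)=\pm\prod_j p_{B\cap F_j}\bigl(P\cap\k^{F_j}\bigr)$ the sign depends on how $B\cap F_i$ interleaves with the fixed sets $B_j^0$ in the ordering of $E$, so it varies with $B_i$ and is not absorbed by the common scalar $\prod_{j\ne i}p_{B_j^0}\bigl(P\cap\k^{F_j}\bigr)$; either build that sign into the linear rule (it is still linear with no common zero on $\Gr_M$), or reduce to the case where each $F_j$ is an interval of $E$ by permuting coordinates, after which the sign is $+1$ and your conclusion that the coordinates are proportional to the Pl\"ucker vector of $P\cap\k^{F_i}$ holds verbatim.
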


\subsection{Realization spaces}

Given a simple matroid $M$ of rank $d+1$ on $\{0,1,\dots,n\}$, 
we can define \emph{realization spaces}, space whose points correspond to representations of $M$.  These spaces exist as moduli spaces, but  we will focus our attention on their $\k$-points and treat them as parameter spaces.  The results of this subsection, however, can be stated in moduli-theoretic language.

Thin Schubert cells can be related to a universal family of representations.  Here, we say that  a $d$-dimensional subspace $V\subset\P^n$ represents $M$ if the matroid associated to $V$ is $M$.
Recall that the Grassmannian $\Gr(d+1,n+1)$ has a universal family of subspaces over it: there is a subvariety $\mathcal{V}\subset \P^n_{\Gr(d+1,n+1)}$ that fits in a commutative diagram
\[\xymatrix{
\mathcal{V}\ar[d]\ar@{^{(}->}[r]&\P^n_{\Gr(d+1,n+1)}\ar[dl]\\
\Gr(d+1,n+1)}\]
such that $\mathcal{V}$ is flat over $\Gr(d+1,n+1)$ and the fiber $\mathcal{V}_V$ for $V\in \Gr(d+1,n+1)$ is the subspace of $\P^n$ corresponding to $V$.
 
 \begin{definition} A family of representations of $M$ over a scheme $X$ is a flat family $V\rightarrow X$ of $d$-dimensional subspaces in $\P^n_X$ such that for every closed point $x$, the fiber $V_x$ is a representation of $M$.
\end{definition}

The base-change $\mathcal{V}_{\Gr^M}$ over the thin Schubert cell $\Gr^M$ is a family of realizations.  Its $\k$-points, $\Gr^M(\k)$ are exactly the representations of $M$ over $\k$. 

One can also consider the space of representations of $M$ as a hyperplane arrangement.  Recall that if $V\subset\P^n$ is a representation of $M$ as a subspace, intersecting $V$ with each coordinate hyperplane gives an arrangement of $n+1$ hyperplanes on $\P^d\cong V$ representing $M$.  We can alternatively consider an arrangement of $n+1$ hyperplanes on $\P^d$ by considering an $(n+1)$-tuple of non-zero linear forms on $\P^d$ up to constant multiples.  Let $\Gamma=\A^{d+1}\setminus\{0\}$.  Therefore, an $n+1$-tuple of non-zero linear forms $(s_0,s_1,\ldots,s_n)$ is described by a point of $\Gamma^{n+1}$.  

\begin{definition} The \emph{linear form realization space} for $M$ is the subscheme $\Gamma_M$ of $\Gamma^{n+1}$ consisting of hyperplane arrangements realizing $M$.
\end{definition}

Two tuples of sections cut out the same family of hyperplane arrangements if and only if they differ by an element of $(\k^*)^{n+1}$ scaling the sections $(s_0,s_1,\ldots,s_n)$ and two families of hyperplane arrangements are isomorphic, by definition, if they differ by an element of $\PGL_{d+1}$.
These actions commute, and the product $\PGL_{d+1} \times (\k^*)^{n+1}$ acts freely on $\Gamma_M$. 

The quotient
\[
\mathbf R_M = \Gamma_M / (\PGL_{d+1} \times (\k^*)^{n+1})
\]
is called the \emph{realization space} of the matroid $M$.
Points of this realization space correspond to isomorphism class of representations of $M$ as hyperplane arrangements.

The thin Schubert cells are naturally torus torsors over realization spaces of hyperplane arrangements which we will now describe using the subtori $(\k^*)^{F_i}$ from the above subsection  following \cite{KPReal}, which summarizes the discussion in \cite[Sec 1.6]{Lafforgue03}.

\begin{proposition} \cite[1.6]{Lafforgue03} 
If $M$ is connected then $\mathbf{R}_M$ is the quotient of $\Gr_M$ by the free action of $(\k^*)^{n+1}/\k^*$.  If $M$ is not connected, there is a natural isomorphism
\begin{eqnarray*}
\mathbf{R}_M&\cong&\mathbf{R}_{M|_{F_0}}\times\dots\times \mathbf R_{M|_{F_\kappa}}
\end{eqnarray*}
In this case, the $(\k^*)^{n+1}/(\k^*)$-action naturally factors through the projection
\[(\k^*)^{n+1}/\k^*\rightarrow (\k^*)^{F_0}/\k^*\times\dots\times(\k^*)^{F_\kappa}/\k^*.\]
\end{proposition}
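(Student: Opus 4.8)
The plan is to build an explicit bridge between the linear form realization space $\Gamma_M$ and the thin Schubert cell $\Gr_M$, and then divide out by the residual torus. A point of $\Gamma^{n+1}$ is a tuple $(s_0,\dots,s_n)$ of nonzero linear forms on $\P^d$, and these forms assemble into a linear map $\k^{d+1}\to\k^{n+1}$, $v\mapsto(s_0(v),\dots,s_n(v))$. When $(s_i)\in\Gamma_M$ the forms span $(\k^{d+1})^*$ (since $r(E)=d+1$), so this map is injective and its image is a point $V\in\Gr(d+1,n+1)$; under the identification $V\cong\k^{d+1}$ it induces, the coordinate form $e_i^*|_V$ is carried to $s_i$, so the matroid attached to $V$ is $M$. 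This defines a morphism $\Gamma_M\to\Gr_M$. I would first record its equivariance: precomposing the $s_i$ with $g\in\Gl_{d+1}$ does not change the image $V$, so the morphism is $\Gl_{d+1}$-invariant; rescaling $s_i$ by $t_i$ rescales the $i$th coordinate of $\k^{n+1}$, so the morphism intertwines the $(\k^*)^{n+1}$-actions on source and target. Next I would check that it is bijective on orbits: every $V$ with matroid $M$ is the image of some tuple, obtained by pulling the $e_i^*$ back along any isomorphism $\k^{d+1}\xrightarrow{\sim}V$, and two tuples with the same image differ by the unique $g\in\Gl_{d+1}$ comparing the two induced isomorphisms. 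Since $\Gl_{d+1}$ moreover acts freely on $\Gamma_M$, this exhibits $\Gr_M$ as $\Gamma_M/\Gl_{d+1}$, compatibly with the $(\k^*)^{n+1}$-action.

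Granting this, the first assertion is a rearrangement of quotients. Quotienting $\Gamma_M$ by $\Gl_{d+1}$ and then by $(\k^*)^{n+1}$ is the same as quotienting by $\PGL_{d+1}\times(\k^*)^{n+1}$, because the scalars of $\Gl_{d+1}$ act on $\Gamma_M$ exactly as the diagonal $\k^*\subset(\k^*)^{n+1}$ does — this is precisely the $-1$ in $\dim\PGL_{d+1}$ being offset by the diagonal $\k^*$ that acts trivially on the Grassmannian anyway. Hence $\mathbf{R}_M=\bigl(\Gamma_M/\Gl_{d+1}\bigr)/(\k^*)^{n+1}=\Gr_M/(\k^*)^{n+1}$, and since the diagonal $\k^*\subset(\k^*)^{n+1}$ fixes every point of the Grassmannian this equals $\Gr_M/\big((\k^*)^{n+1}/\k^*\big)$. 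When $M$ is connected, the stabilizer computation of the preceding subsection says that the $(\k^*)^{n+1}$-stabilizer of any $P\in\Gr_M$ is exactly the diagonal $\k^*$, so $(\k^*)^{n+1}/\k^*$ acts freely on $\Gr_M$ and $\mathbf{R}_M$ is this free quotient.

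For the disconnected case, write $M=M|_{F_0}\oplus\cdots\oplus M|_{F_\kappa}$ and apply the earlier proposition to get $\Gr_M\cong\Gr_{M|_{F_0}}\times\cdots\times\Gr_{M|_{F_\kappa}}$. Under $(\k^*)^{n+1}=\prod_i(\k^*)^{F_i}$ this identification is equivariant, each $(\k^*)^{F_i}$ acting only on $\Gr_{M|_{F_i}}$, and inside each factor the diagonal $\k^*\subset(\k^*)^{F_i}$ acts trivially. Therefore the $(\k^*)^{n+1}/\k^*$-action on $\Gr_M$ factors through the projection $(\k^*)^{n+1}/\k^*\to\prod_i\big((\k^*)^{F_i}/\k^*\big)$ — whose kernel is exactly the image in $(\k^*)^{n+1}/\k^*$ of the stabilizer of a point of $\Gr_M$, namely the product of the diagonals of the $(\k^*)^{F_i}$ — and the induced action of $\prod_i\big((\k^*)^{F_i}/\k^*\big)$ is free with quotient $\prod_i\big(\Gr_{M|_{F_i}}/\big((\k^*)^{F_i}/\k^*\big)\big)$. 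Since each $M|_{F_i}$ is connected, the first part identifies this with $\mathbf{R}_{M|_{F_0}}\times\cdots\times\mathbf{R}_{M|_{F_\kappa}}$, and because $\mathbf{R}_M=\Gr_M/\big((\k^*)^{n+1}/\k^*\big)$ we obtain the asserted isomorphism together with the asserted factorization of the torus action.

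The step I expect to be the real obstacle is making all of this rigorous at the scheme level rather than merely on $\k$-points: one needs the maps $\Gamma_M\to\Gr_M$, $\Gr_M\to\Gr_M/((\k^*)^{n+1}/\k^*)$, and the analogous maps in the product decomposition to be geometric quotients by free actions admitting local sections, and one must track the several overlapping copies of $\k^*$ — the scalars of $\Gl_{d+1}$, the diagonal of $(\k^*)^{n+1}$, and the diagonals of the $(\k^*)^{F_i}$ — without losing or double-counting. This is exactly the bookkeeping carried out in Lafforgue's treatment, and it rests on the freeness facts recalled in the previous subsection: the $(\k^*)^{n+1}$-stabilizer of a point of $\Gr_M$ is the product of the diagonals of the $(\k^*)^{F_i}$, collapsing to a single $\k^*$ precisely when $M$ is connected.
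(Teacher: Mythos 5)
Your argument is sound: identifying $\Gr_M$ as the free quotient $\Gamma_M/\Gl_{d+1}$ via $(s_0,\dots,s_n)\mapsto \mathrm{im}\bigl(v\mapsto(s_0(v),\dots,s_n(v))\bigr)$, absorbing the scalars of $\Gl_{d+1}$ into the diagonal of $(\k^*)^{n+1}$, and then invoking the stabilizer computation $(\k^*)^\kappa$ from the preceding subsection is exactly the torsor/quotient-rearrangement argument underlying the cited sources. The paper itself gives no proof — it quotes the result from Lafforgue (as summarized in Katz–Payne) — so the only caveat is the one you already flag: to upgrade the orbit-space identifications to the scheme level one needs $\Gamma_M\to\Gr_M$ and $\Gr_M\to\mathbf{R}_M$ to be genuine geometric quotients by free actions, which is the content of Lafforgue's treatment.
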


\subsection{Matroid representability and universality} \label{ss:mnev}

In a certain sense, the question of the representability of matroids is maximally complicated as a consequence of  Mn\"{e}v's universality theorem which states that any possible singularity defined over $\Z$ occurs on some thin Schubert cell (up to a natural equivalence).   This has applications to moduli problems and logic.  Here we state Mn\"{e}v's theorem in the form applied by Lafforgue \cite{Lafforgue03}.

\begin{theorem}
Let $X$ be an affine scheme of finite type over $\Spec \Z$.  Then there exists a connected rank $3$ matroid $M$, an integer $N\geq 0$ and an open set $U\subseteq X\times \A^N$ projecting onto $X$ such that $U$ is isomorphic to the realization space $\mathbf R_M$.
\end{theorem}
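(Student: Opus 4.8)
The plan is to prove this \emph{universality} statement by Mn\"{e}v's method, realizing $X$ (up to stable equivalence) as the realization space of a point-line configuration in $\P^2$ built from the classical von Staudt ``algebra of throws,'' following the packaging of Lafforgue \cite{Lafforgue03}, in which ``stable equivalence'' is recorded precisely as ``open subscheme of $X\times\A^N$ surjecting onto $X$.'' First I would embed $X$ as a closed subscheme $V(f_1,\dots,f_k)\subseteq\A^m_{\Z}$ with $f_j\in\Z[x_1,\dots,x_m]$; it then suffices to produce, from this polynomial system, a connected simple rank $3$ matroid with the stated realization space. Since $\mathbf R_M$ is by definition the quotient of the linear-form realization space by $\PGL_3\times(\k^*)^{n+1}$, it is convenient to build a \emph{frame} into $M$: four distinguished points required to be in general position (no three of the four collinear), together with a distinguished line $\ell$ through two of them. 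In any realization, $\PGL_3$ carries the frame to the standard one with trivial residual stabilizer, so the points on $\ell$ acquire well-defined affine coordinates with $0$ and $1$ marked by the frame, and one may speak of ``the point of $\ell$ with coordinate $t$.''

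The technical core is the von Staudt construction of arithmetic gadgets. Given two points of $\ell$ of coordinates $a$ and $b$ (and the frame), one adjoins a bounded number of auxiliary points and lines with prescribed incidences so that a further point of $\ell$ is forced to have coordinate $a+b$; a similar multiplication gadget forces coordinate $ab$. The incidences are chosen so that, once the inputs and the auxiliary points lie in sufficiently general position (distinct, no two auxiliary lines equal), the ``output'' collinearity holds if and only if the output coordinate equals $a+b$ (resp.\ $ab$); this non-degeneracy is a dense, nonempty open condition on the auxiliary coordinates, and it is precisely the source of the open set $U$. Now write each $f_j$ as a straight-line program in $+$ and $\times$ on the variables $x_1,\dots,x_m$ and the constants $0,1$: allocate one free point of $\ell$ for each $x_i$ and one auxiliary point of $\ell$ for each intermediate value, and string the gadgets along the program. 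For each $j$ introduce an extra point $y_j$ of $\ell$ whose sub-program imposes $y_j=f_j(x)$, and impose in $M$ the collinearity forcing $y_j$ onto the second frame line through the marked point $0$; since that line meets $\ell$ only at $0$, this forces the coordinate of $y_j$ to vanish, i.e.\ $f_j(x)=0$. Collecting all points and prescribed collinearities gives a rank $3$ matroid (rank $3$ because the frame spans $\P^2$); passing to its simplification and, if necessary, adjoining a few generic points to merge connected components --- harmless up to the allowed equivalence, since generic points contribute only affine-space factors --- yields a connected simple rank $3$ matroid $M$.

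It remains to identify $\mathbf R_M$. After rigidifying by the frame, a realization of $M$ is determined by the coordinates of the free and auxiliary points, that is, by a point of $X\times\A^N$ (the equations $f_j=0$ being already built into the rank function of $M$), subject exactly to the non-degeneracy open conditions produced by the gadgets; hence $\mathbf R_M$ is isomorphic to the corresponding open subscheme $U\subseteq X\times\A^N$, and $U\to X$ is surjective because the auxiliary coordinates can always be chosen generically over a given point of $X$. The hard part is making the von Staudt gadgets genuinely combinatorial: one must say exactly which collinearities are \emph{prescribed} in the rank function of $M$, which are then automatic, and which must be explicitly \emph{forbidden}, so that $\mathbf R_M$ is cut out by the $f_j$ with no spurious components and no lost solutions; and one must design the degeneracy loci of the gadgets so that the resulting $U$ still surjects onto \emph{all} of $X$ --- in particular over the loci where some $x_i$ or some intermediate value equals $0$ or $1$, where naive gadgets degenerate --- all while keeping $M$ of rank exactly $3$, simple, and connected.
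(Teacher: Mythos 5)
Your outline follows exactly the route the paper indicates: the paper does not prove the theorem itself but cites Lafforgue and Lee--Vakil and describes Shor's strategy of breaking the defining equations into elementary additions and multiplications encoded by von Staudt incidence gadgets, so at the level of strategy you and the paper agree. But what you have written is not yet a proof, and the step you defer in your final sentence is not a routine verification --- it is the actual content of Mn\"{e}v universality. As described, the construction does not give an open set $U\subseteq X\times\A^N$ \emph{projecting onto} $X$: over a point of $X$ at which some variable $x_i$ or some intermediate value of the straight-line program equals $0$, $1$, or coincides with another value already placed on the line $\ell$, every configuration satisfying the prescribed incidences acquires \emph{extra} coincidences and collinearities, hence realizes a matroid different from $M$; such points are then not in the image of $\mathbf R_M\to X$, and surjectivity fails. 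Relatedly, you never actually pin down $M$: declaring ``these collinearities and no others'' defines a matroid only if some configuration has exactly those dependencies, so one cannot simply take ``the matroid of the construction'' uniformly over $X$.

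The standard repair --- and the reason the theorem is stated up to the equivalence ``open subscheme of $X\times\A^N$ surjecting onto $X$'' rather than as an isomorphism with $X$ --- is Shor's preprocessing, used in the sources the paper cites: first rewrite the presentation of $X$ by introducing new variables (for instance shifting and splitting so that every quantity appearing in the computation may be assumed pairwise distinct and bounded away from the degenerate values $0,1,\infty$, the extra choices contributing the $\A^N$ factor and an open condition), and break every equation into elementary ones $z=x+y$ or $z=xy$ whose three participating coordinates are forced to be distinct in \emph{every} realization over \emph{every} point of $X$. Only after this normalization is the incidence pattern of the von Staudt configuration independent of the chosen point of $X$, so that a single matroid $M$ (with dependencies exactly the prescribed collinearities) satisfies $\mathbf R_M\cong U$ with $U\to X$ surjective. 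Without it, your $U$ maps onto only a proper open subscheme of $X$ in general, which is strictly weaker than the statement. The frame rigidification, the accounting for $\PGL_{3}\times(\k^*)^{n+1}$, and connectivity of $M$ are unproblematic; the degeneracy normalization is the genuine gap.
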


A very brief proof is given in \cite{Lafforgue03}.  More details in a slightly different context can be found in \cite{LeeVakil}.  An accessible treatment that only considers the set-theoretic case is \cite{RichterGebertUniversality}.  The central idea of most proofs of this theorem, following Shor \cite{Shor}, is to add auxiliary variables and present $X$ as being cut out by a very large system of equations each involving three variables and a single arithmetic operation.  This new system is encoded as a line arrangement by using the von Staudt constructions (see \cite{RichterGebertUniversality} for an illustration).

This theorem quickly gives interesting counterexamples.  If $X$ is disconnected, then the realization space $\mathbf R_M$ will be disconnected and the matroid will have representations (as hyperplane arrangements) that cannot be continuously deformed into each other.  One can also control the fields over which $X$ has a representation:  a matroid $M$ is representable over $\k$ if and only $\mathbf R_M$ has a $\k$-point.  If $X=\Spec \F_2$ and $\k$ is some infinite field, then $X$ and consequently $\mathbf{R}_M$ only has $\k$-points if and only if $\charact \k=2$.  Similarly, if $X=\Spec \Z[x]/(2x-1)$, then $\mathbf{R}_M$ only has $\k$-points if and only if $\charact \k\neq 2$.   

As observed by Sturmfels \cite{SturmfelsDecidability}, a decision procedure to determine if a matroid is representable over $\Q$ is equivalent to a decision procedure to determine whether a system of polynomial equations has a solution in $\Q$.  The existence of the decision procedure is a famous unsolved problem, Hilbert's Tenth Problem over $\Q$.  See Poonen's survey \cite{PoonenSurvey} for more details about this problem. 

For finite fields $\k$, the situation is a bit different.  Because $\mathbf R_M$ is isomorphic to an open set $U\subseteq X\times\A^N$, and  it is possible for some $\k$-points to be missing from $U\subseteq X\times\A^N$, it is not necessarily true that the existence of $\k$-points in $X$ is equivalent to the representability of $M$ over $\k$.  Moreover, the number of points of $X(\k)$ and $\mathbf R_M(\k)$ may be drastically different.  For this reason, it's possible to have unique representability theorems similar to Theorem \ref{t:binaryunique} for finite fields. 

Mn\"{e}v's thorem is the primordial example of Murphy's Law in algebraic geometry.  Murphy's Law, due to Vakil \cite{Vakil06}, is a theorem that states that certain classes of moduli spaces contain every singularity type over $\Z$ up to a particular natural equivalence.  In fact, Vakil was able to prove that a large number of moduli spaces obey Murphy's Law by relating them to  realization spaces of matroids.

\subsection{Matroid representability and semifields}

One can view matroids as points of thin Schubert cells over field-like objects like blueprints or partial fields.  This is the point of view taken in Dress's matroids over fuzzy rings \cite{Dress} as studied by Dress with Wenzel \cite{DressWenzel} .  Here, we use the notation of Lorscheid's blueprints and blue schemes \cite{Lorscheid} for the following highly speculative subsection.

\begin{definition} A {\em blueprint} B is a multiplicatively-written monoid $A$ with neutral element $1$ and absorbing element $0$ together with an equivalence relation $\cR$ (given by $\equiv$) on the semiring $\N[A]$ of finite formal sums of elements of $A$ such that
\begin{enumerate}
\item The relation $\cR$ is closed under addition  and multiplication,
\item The absorbing element is equivalent to the empty sum, and
\item For $a,b\in A$ with $a\equiv b$, then $a=b$.
\end{enumerate}
\end{definition}

 For example, one can define an object called the field of one element $\F_1$ to be the blueprint with $A=\{0,1\}$ and $\cR$ to be the empty relation.  Moreover, given any semiring $R$, one can produce a blueprint by setting $A$ to be $R^\times$, where $R^\times$ is $R$, considered as a multiplicative monoid, and setting 
 \[\cR=\left\{\sum a_i \equiv \sum b_i\mid \sum a_i=\sum b_i \text{\ in\ } R\right\}.\]
 In particular every field and idempotent semiring can be interpreted as a blueprint.  An important blueprint is the none-some semifield $\B_1$ where we  set $A=\{0,1\}$ and $\cR=\{1+1\equiv 1\}$.  Here we imagine a primitive culture with addition and multiplication but only numbers for $0$ and more than $0$.  This is isomorphic to the min-plus semifield $\{0,\infty\}$ by taking $-\log$.   

For a blueprint $B$, we can view a linear subspace defined over $B$ as a $B$-point of $\Gr(d+1,n+1)$ provided that we can treat $\Gr(d+1,n+1)$ in its Pl\"{u}cker embedding as a model of $\Gr(d+1,n+1)$ over $B$.   While there are various rival notions for the definition of a $B$-point, one notion is an assignment of the Pl\"{u}cker coordinates to elements of $A$ such that Pl\"ucker relations are satisfied.   Under that definition, a matroid is a $\B_1$-point.  The value of a Pl\"ucker coordinate in $\B_1$ determines whether or not it is zero.  The Pl\"{u}cker relations in $\B_1$ become the basis exchange axiom.  In the case that $B$ is induced from a field $\k$, then a $B$-point of $\Gr(d+1,n+1)$ is the same thing as a $\k$-point.

Regular matroids should perhaps be thought of as $\F_{1^2}$-points of $\Gr(d+1,n+1)$.  Here $\F_{1^2}$, the degree $2$ cyclotomic extension of $\F_1$, is the blueprint generated by $A=\{0,1,-1\}$ subject to the relation $1+(-1)\equiv0$.  This is because regular matroids are those representable by totally unimodular matrices and therefore,  all the Pl\"{u}cker coordinates are $0,1,$ or $-1$.  
An important idea in the study of the field of one element is the numerical prediction that the number $\F_1$-points is the limit of the number of $\F_q$-points as $q\to 1$.  Is there a similar  prediction for the number of regular matroids?

Excluded minor characterizations of matroids can be thought of as matroid lifting results.  Specifically, we have a morphism of blueprints $B'\to B$, and a $B$-point of the Grassmannian, and we ask when this $B$-point is in the image of the induced map $\Gr(d+1,n+1)(B')\rightarrow \Gr(d+1,n+1)(B)$.
For example, there is a natural blueprint morphism $\F_{1^2}\rightarrow \F_2$.
If $M$ is a matroid that is representable over $\F_2$, the question of whether it is is regular corresponds to whether it lifts from a $\F_2$-point to a $\F_{1^2}$-point.  This is the point of view taken in the work of Pendavingh and van Zwam \cite{PVLifting} phrased in the language of partial fields which include the usual fields and $\F_{1^2}$. They are able to give unified proofs of a number of representability results including the following theorem of Tutte:
\begin{theorem} A matroid is regular if and only if it is representable over both $\F_2$ and $\F_3$.
\end{theorem}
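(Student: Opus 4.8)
The plan is to treat the two directions asymmetrically. The forward implication needs no work: ``regular'' is \emph{defined} as representable over every field, so a regular matroid is in particular binary and ternary. For the converse I would argue purely at the level of excluded minors, using the three characterizations already quoted above: a matroid is binary if and only if it has no $U_{2,4}$-minor; it is ternary if and only if it has no minor among $U_{2,5},U_{3,5},F_7,F_7^*$; and it is regular if and only if it has no minor among $U_{2,4},F_7,F_7^*$.

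So let $M$ be both binary and ternary. First I would note that $U_{2,4}$ is a minor of $U_{2,5}$ (delete one point) and of $U_{3,5}$ (contract one point), so the hypothesis ``no $U_{2,4}$-minor'' coming from binarity already forces ``no $U_{2,5}$- and no $U_{3,5}$-minor''; thus for $M$ the ternary exclusion list collapses to $\{F_7,F_7^*\}$, and ternarity tells us $M$ has no $F_7$- and no $F_7^*$-minor. Combining the two hypotheses, $M$ has none of $U_{2,4},F_7,F_7^*$ as a minor, which is precisely the excluded-minor characterization of regularity. Hence $M$ is regular. That is the entire argument; there is no genuine obstacle, only the fact that it is a corollary of three deep theorems (Tutte's characterizations of binary and of regular matroids, and the Bixby--Seymour characterization of ternary matroids), each proved by the fundamental-circuit and row-reduction bookkeeping indicated earlier and ultimately resting on Tutte's Homotopy Theorem.

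Because this subsection is about lifting, I would also want to sketch the more constructive argument in the Pendavingh--van Zwam style, which does not quote the excluded-minor list wholesale. After deleting loops and collapsing parallel classes one may fix a basis $B$ of $M$ and realize the $\F_2$-representation as $[\,I\mid A_2\,]$, and the $\F_3$-representation, for the same $B$, as $[\,I\mid A_3\,]$. The matrices $A_2$ and $A_3$ have the same zero pattern, since the nonzero entries of column $i$ in either one record exactly the fundamental circuit $C(i,B)$, a matroid invariant (binary matroids being rigid enough, by Theorem~\ref{t:binaryunique}, that $A_2$ is moreover canonical). Lifting each $\pm1\in\F_3$ to $\pm1\in\Z$ yields a $\{0,\pm1\}$-matrix $\widetilde A$ with $[\,I\mid\widetilde A\,]$ reducing to $[\,I\mid A_2\,]$ modulo $2$ and to $[\,I\mid A_3\,]$ modulo $3$; if one can show $\widetilde A$ is totally unimodular, then $[\,I\mid\widetilde A\,]$ represents a regular matroid which, having the same $\F_2$-representation as $M$, equals $M$, and we are done. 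The hard part is exactly this total-unimodularity claim: knowing each square subdeterminant of $\widetilde A$ lies in $\{0,1\}$ modulo $2$ and in $\{0,\pm1\}$ modulo $3$ only pins it down modulo $6$, and upgrading this to membership in $\{0,\pm1\}$ is a genuine lemma --- essentially equivalent, via an induction on minors, to the excluded-minor theorem for regular matroids itself. So the lifting route repackages the same core content rather than bypassing it, but it is the version that generalizes to other pairs of partial fields.
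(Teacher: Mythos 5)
Your argument is correct, but it is not the route the paper has in mind: the survey states this theorem as a result that Pendavingh--van Zwam recover by manipulating matrix representations over partial fields (lifting an $\F_2$-point to an $\F_{1^2}$-point), and gives no proof of its own, whereas your primary argument derives it as a formal corollary of the three excluded-minor theorems quoted in Section 6 --- no $U_{2,4}$-minor from binarity, no $F_7$- or $F_7^*$-minor from ternarity, hence regular by Tutte's characterization (your observation that $U_{2,4}$ sits inside $U_{2,5}$ and $U_{3,5}$ is fine but not even needed). That deduction is complete modulo the cited deep theorems, and it is the cheapest proof available once those are granted; what the lifting route buys, and the reason the paper frames the theorem this way, is independence from the excluded-minor lists and uniformity across other partial fields. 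On that second sketch, one correction: the total-unimodularity step is not ``essentially equivalent to the excluded-minor theorem for regular matroids'' --- the point of the Pendavingh--van Zwam lift theorem (and of older direct arguments in the Gerards style, e.g.\ analyzing a minimal square submatrix of the lifted $\{0,\pm1\}$ matrix whose determinant has absolute value at least $2$) is precisely that one can prove the lift is totally unimodular without invoking any forbidden-minor characterization; your version, which would reduce the TU claim back to Tutte's excluded-minor theorem, would make the constructive argument circularly dependent on the very machinery it is meant to replace. Also, in your mod-arithmetic remark, agreement of a subdeterminant with prescribed values mod $2$ and mod $3$ constrains it mod $6$ only after one has a bound on its size, which is exactly what the induction on minimal violating submatrices supplies; as stated the sentence suggests more than it proves.
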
  
Here, they manipulate matrix representations of matroids by matrix operations, but it would be very interesting to rephrase their work in the language of deformation theory where, for examples, one tries to extend the matroid from the partial field $F_2\otimes F_3$ to $F_{1^2}$.  The category of partial fields is more interesting than that of fields because of the existence of tensor products and many more morphisms.

\subsection{$K$-theoretic matroid invariants}

Speyer \cite{SpeyerMatroid} introduced a way of producing valuative invariants of matroids by using the $K$-theory of the Grassmannian.  He begins with an invariant valued in the $K$-theory of the Grassmannian.  This invariant can be specialized by particular geometric operations to take values in less exotic rings.  In fact, it is the main result of \cite{FinkSpeyer} that this invariant specializes to the Tutte polynomial.

We first review algebraic $K$-theory.  See \cite[Sec. 15.1]{FultonIntersectiontheory} for a very brief summary.  For a scheme $X$, let $K_0(X)$ be the Grothendeick group of coherent sheaves on $X$.  It is the group of formal linear combinations of coherent sheaves on $X$ subject to the relation
\[[\cf]=[\cf']+[\cf'']\]
whenever there is an exact sequence of coherent sheaves
\[\xymatrix{
0\ar[r]&\cf'\ar[r]&\cf\ar[r]&\cf''\ar[r]&0.
}\]
Given a morphism of schemes $f:X\rightarrow Y$, there is a pushforward homomoprhism $f_*:K_0(X)\rightarrow K_0(Y)$ given by
\[f_*\cf=\sum_{i=0}^\infty (-1)^i[R^if_*\cf]\]
On the other hand, we may define $K^0(X)$ as the Grothendieck group of vector bundles on $X$, the group of formal linear combinations of vector bundles on $X$ subject to the relation
\[[E]=[E']+[E'']\]
whenever $E'$ is a sub-bundle of $E$ with quotient $E''=E/E'$.  
Given a moprhism $f:X\rightarrow Y$, there is a pullback homomorphism $f^*:K^0(Y)\rightarrow K^0(X)$ given by pullback of vector bundles.
The group $K^0(X)$ can be made into a ring by introducing tensor product as multiplication.
The natural group homomorphism $K^0(X)\rightarrow K_0(X)$ taking a vector bundle to its sheaf of sections is an isomorphism if $X$ is a non-singular variety.  We will freely switch between $K_0(X)$ and $K^0(X)$ as all of our schemes will be non-singular varieties.  We will make use of the fact that $K_0(\P^n)=\Q[x]/(x^{n+1})$ where $x=1-\O(-1)$ which corresponds to the structure sheaf of a hyperplane.

Let  $T=(\k^*)^{n+1}$ be an algebraic torus acting on a scheme $X$.   We may define equivariant $K$-groups $K_0^T(X)$ and $K^0_T(X)$ by considering equivariant coherent sheaves and vector bundles.  There are non-equivariant restriction maps
\[K_0^T(X)\rightarrow K_0(X),\ K^0_T(X)\rightarrow K^0(X)\]
forgetting the $T$-action.  
Now, the equivariant $K$-theory of a point is particularly simple: $K_T^0(\operatorname{pt})$ is the Grothendieck group of representations of $T$.  By taking characters, we see $K_T^0=\Z[t_0^\pm,\dots,t_n^\pm]$, the ring of Laurent polynomials.

The Grassmannian has a natural group action which allows us to consider equivariant $K$-theory \cite{KnutsonRosu}.  By localization, this equivariant $K$-theory has a quite combinatorial flavour.  Let $T$ act on $\Gr(d+1,n+1)$ as follows: $t\in T$ dilates the coordinates of ambient $\k^{n+1}$ taking a subspace $P\in \Gr(d+1,n+1)$ to $t\cdot P$.  Note that the diagonal subtorus in $T$ acts trivially.  The equivariant $K$-theory of the Grassmannian can be expressed in terms of its fixed points and $1$-dimensional orbits under $T$.   The following is easily verified:

\begin{lemma}
The fixed points of the $T$-action on $\Gr(d+1,n+1)$ are in one-to-one correspondence with subsets $B\subset\{0,1,\dots,n\}$ with $|B|=d+1$ with the fixed points $x_B$ given by
the $(d+1)$-dimensional subspaces of the form $\Span(e_i\mid i\in B)$.  The $1$-dimensional torus orbits of the action are in one-to-one correspondence with pairs $B_1,B_2\subset  \{0,1,\dots,n\}$ with $|B_1|=|B_2|=d+1$ and $|B\triangle B'|=2$ with the closure of the torus orbit parameterized by $\P^1$ as
\[[s:t]\mapsto \Span(e_i'\mid i'\in B_1\cap B_2)+\k(se_i+te_j)\]
where $i\in B_1\setminus B_2$ and $j\in B_2\setminus B_1$.
\end{lemma}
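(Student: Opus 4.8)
The plan is to treat the two assertions separately, reducing each to a short weight-space computation; this is the sense in which the statement is ``easily verified.''

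For the fixed points, decompose $\k^{n+1}=\bigoplus_{i=0}^{n}\k e_i$ into the eigenlines of $T$, each carrying a distinct character (namely $t_i$). A subspace $P$ is $T$-invariant if and only if it is a direct sum of some of these eigenlines, hence of the form $\Span(e_i\mid i\in B)$; the dimension constraint forces $|B|=d+1$, giving the asserted bijection with $(d+1)$-element subsets of $\{0,\dots,n\}$. Equivalently, one can argue in the Pl\"ucker embedding: $\tau\in T$ scales $p_B$ by the character $\prod_{i\in B}\tau_i$, and these characters are pairwise distinct as $B$ ranges over $(d+1)$-subsets, so a $T$-fixed point must have exactly one nonzero Pl\"ucker coordinate, which is precisely the coordinate subspace $x_B$.

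For the one-dimensional orbits I would first do the easy direction. Given $B_1,B_2$ with $|B_1\triangle B_2|=2$, write $C=B_1\cap B_2$ (so $|C|=d$), pick $i\in B_1\setminus B_2$ and $j\in B_2\setminus B_1$, and for $[s:t]\in\P^1$ set $P_{[s:t]}=\Span(e_k\mid k\in C)+\k(se_i+te_j)$. A direct check shows $\tau=(\tau_0,\dots,\tau_n)\in T$ carries $P_{[s:t]}$ to $P_{[s\tau_i:\,t\tau_j]}$, so $[s:t]\mapsto P_{[s:t]}$ identifies this $\P^1$ with a $T$-invariant curve in $\Gr(d+1,n+1)$ on which $T$ acts through the one-dimensional quotient torus via $[s:t]\mapsto[\tau_i s:\tau_j t]$. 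This action has a single one-dimensional orbit (the locus $st\neq 0$) and two fixed points, $x_{B_1}$ at $[1:0]$ and $x_{B_2}$ at $[0:1]$, which matches the claimed parametrization.

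For the converse I would invoke the identification (proved earlier in the survey) of $\overline{T\cdot P}$ with the projective toric variety of the matroid polytope $P(M)$, where $M$ is the matroid of $P$: the orbit $T\cdot P$ is one-dimensional exactly when $P(M)$ is a segment, i.e.\ when $M$ has precisely two bases $B_1,B_2$; basis exchange applied to an element of $B_1\setminus B_2$ then forces $|B_1\triangle B_2|=2$, since otherwise $\{B_1,B_2\}$ is not the basis set of a matroid. To pin down $P$ itself, observe that every element outside $B_1\cup B_2=C\cup\{i,j\}$ is a loop of $M$, so $P\subseteq\k^{C\cup\{i,j\}}$, a space of dimension $d+2$; thus $P$ is a hyperplane there, cut out by a form $\sum_{k\in C}a_k x_k+a_i x_i+a_j x_j=0$, and reading off which Pl\"ucker coordinates vanish shows $a_k=0$ for all $k\in C$ while $a_i,a_j\neq 0$, so $P=P_{[a_j:-a_i]}$ lies on the curve above. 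The only step where I anticipate any (very mild) friction is precisely this last one --- correctly translating the defining linear form of a hyperplane in $\k^{d+2}$ into a list of which $(d+1)$-subsets are bases --- but the bookkeeping is entirely routine.
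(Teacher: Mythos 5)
Your proposal is correct. Note that the paper itself offers no argument here: the lemma is prefaced only by ``The following is easily verified,'' so there is no proof to compare against. Your verification is complete: the fixed-point half is the standard weight-space (or single-nonzero-Pl\"ucker-coordinate) argument, and the forward direction for the curves is the obvious computation $\tau\cdot P_{[s:t]}=P_{[\tau_i s:\tau_j t]}$. For the converse you lean on the Gelfand--Goresky--MacPherson--Serganova identification of the weight polytope of $\overline{T\cdot P}$ with $P(M)$, which the survey does prove earlier, so the reduction ``one-dimensional orbit $\Leftrightarrow$ exactly two bases'' is legitimate; your basis-exchange argument that two bases force $|B_1\triangle B_2|=2$ is fine (one could equally quote the edge condition in the definition of a matroid polytope), and the final hyperplane/Pl\"ucker bookkeeping ($p_{\widehat{m}}=\pm a_m$ for a hyperplane $\ker(a)$ in $\k^{C\cup\{i,j\}}$) does pin down $P=P_{[a_j:-a_i]}$ as you claim. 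If anything, the GGMS machinery is heavier than necessary --- one can run the same segment argument directly on the set of characters of the $T$-span of a lift of $P$ in the Pl\"ucker embedding --- but that is a matter of taste, not a gap.
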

Because $\Gr(d+1,n+1)$ is a smooth projective variety, has finitely many fixed points, and the closure of each $1$-dimensional torus orbits is isomorphic to $\P^1$, $\Gr(d+1,n+1)$ is equivariantly formal \cite[Cor 5.12]{VV}, \cite[Cor A.5]{KnutsonRosu} which means that its equivariant $K$-theory is controlled by the fixed points and $1$-dimensional orbits.  Specifically, the restriction map to the fixed points
\[K^0_T(\Gr(d+1,n+1))\rightarrow K^0_T(\Gr(d+1,n+1)^T)=\bigoplus_B K^0_T(x_B)\]
is an injection whose image is given by $f_B\in K_0^T(x_B)$ for all $B$ satisfying
\[f_{S\cup \{i\}}\equiv f_{S\cup \{j\}}\ \operatorname{mod}\ 1-t_i/t_j\]
where $S\in \binom{\{0,\dots,n\}}{d}$, $i,j\not\in S$.

Now, we describe Speyer's matroid invariant.  Let $P\in \Gr(d+1,n+1)$.  The invariant is produced from the $K$-theory class of the orbit closure $\overline{T\cdot P}$, 
\[\O_{\overline{T\cdot P}}\in K_0(\Gr(d+1,n+1)).\]
Because $\overline{T\cdot P}$ is $T$-invariant, its structure sheaf is $T$-equivariant.  The corresponding equivariant $K$-theory class, $y(P)\in  K^T_0(\Gr(d+1,n+1))$ can be given in terms of the matroid polytope.  We will give the description of $y(P)_B\in K_0^T(x_B)$.  For $B$, a basis of the matroid, let $\Cone_B(P(M))$ be the cone of $P(M)$ at $B$, in other words the real non-negative span of all vectors of the form $v-e_B$ for $v\in P(M)$.  The Hilbert series of a cone $C$ is given by 
\[\Hilb(C)=\sum_{a\in C\cap \Z^{n+1}} t^a.\]
We define $y(P)$ by
\[y(P)_B=\begin{cases} \Hilb(\Cone_B(P(M)))\prod_{i\in B}\prod_{j\not\in B} (1-t_i^{-1}t_j) &\text{if $B$ is a basis of $P$}\\
0 & \text{else}
\end{cases}\]
The rational function $y(P)_B$ turns out to be a Laurent polynomial.  Note that $y(P)$ depends only on the matroid of $P$ and so can be written $y(M)$.  This definition gives a well-defined class of $K_0^T(\Gr(d+1,n+1))$ even when $M$ is not representable.  This invariant turns out to be valuative although it is not universal among valuative invariants.

The invariant $y(M)$ specializes to the Tutte polynomial in a geometric fashion.  Recall that $\Fl(d_1,\dots,d_r;n+1)$ is the flag variety whose points parameterize flags of projective subspaces $F_{d_1-1}\subset \ldots\subset F_{d_r-1}\subset \P^n$ where $\dim F_i=i-1$.    There is a natural inclusion 
\[\Fl(1,n;n+1)\hookrightarrow \P^n\times\P^n\]
 taking $F_1\subset F_{n-1}$ to $(F_1,F_{n-1})$ where the two $\P^n$'s parameterize points and hyperplanes in $\P^n$.
There is a diagram (borrowed from \cite{FinkSpeyer}) of morphisms forgetting various stages of the flags:
\begin{equation}\xymatrix{
 & \Fl(1,d+1,n; n+1)\ar[dl]_{\pi_{d+1}}\ar[dr] \ar[ddr]_{\pi_{1n}} & \\
\Gr(d+1,n+1) & & \Fl(1,n;n+1)\ar@{^(->}[d] \\
 & & \P^{n} \times \P^{n}
}\end{equation}
If we write $K^0(\P^{n}\times\P^n)=\Q[x,y]/(x^{n+1},y^{n+1})$, one has
\begin{theorem}\cite{FinkSpeyer} Let $\O(1)$ be the pullback of $\O(1)$ on $\P^{\binom{n+1}{d+1}-1}$ to $\Gr(d+1,n+1)$ by the Pl\"{u}cker embedding.  Interpreting $y(M)$ as a non-equivariant class, we have
\[(\pi_{1n})_*\pi_{d+1}^*(y(M)\cdot [\O(1)])=T_M(x,y).\]
\end{theorem}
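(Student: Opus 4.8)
The plan is to compute the left-hand side by equivariant $K$-theory localization and then match the resulting sum over bases with a combinatorial expansion of $T_M$. I would work $T$-equivariantly with $T=(\k^*)^{n+1}$: each of $\Fl(1,d+1,n;n+1)$, $\Gr(d+1,n+1)$, $\Fl(1,n;n+1)$ and $\P^n\times\P^n$ is a smooth projective $T$-variety with isolated fixed points, hence (as recorded for the Grassmannian) equivariantly formal, and its equivariant $K$-theory injects into the product of the fixed-point contributions. So one lifts $y(M)$, $[\O(1)]$ and the pushforward $(\pi_{1n})_*$ to the equivariant setting, computes everything on fixed points via the Lefschetz--Riemann--Roch localization formula, and restricts the $T$-action away only at the end, landing in $K^0(\P^n\times\P^n)=\Q[x,y]/(x^{n+1},y^{n+1})$.

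First I would record the fixed points and the maps between them. The $T$-fixed points of $\Fl(1,d+1,n;n+1)$ are the coordinate flags, indexed by triples $(i,B,j)$ with $i\in B$, $j\notin B$, $|B|=d+1$; the map $\pi_{d+1}$ sends such a flag to the fixed point $x_B\in\Gr(d+1,n+1)$, and $\pi_{1n}$ sends it to the fixed point of $\P^n\times\P^n$ given by the coordinate point $[e_i]$ and the coordinate hyperplane $\{x_j=0\}$. Restricting the integrand, $\pi_{d+1}^*(y(M)\cdot[\O(1)])$ at $(i,B,j)$ equals $y(M)|_{x_B}\cdot[\O(1)]|_{x_B}$, where $y(M)|_{x_B}=\Hilb(\Cone_B(P(M)))\prod_{k\in B}\prod_{\ell\notin B}(1-t_k^{-1}t_\ell)$ when $B$ is a basis and $0$ otherwise, and $[\O(1)]|_{x_B}$ is a single character (the Plücker weight of $p_B$). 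The crucial structural point, and the reason the definition of $y(M)$ carries that product, is that $\prod_{k\in B}\prod_{\ell\notin B}(1-t_k^{-1}t_\ell)$ is, up to an invertible monomial, the $K$-theoretic Euler class of the (co)tangent space of $\Gr(d+1,n+1)$ at $x_B$, whose weights are the $t_k^{-1}t_\ell$ with $k\in B$, $\ell\notin B$.

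Now localize $(\pi_{1n})_*$: the restriction of $(\pi_{1n})_*\pi_{d+1}^*(y(M)\cdot[\O(1)])$ to the fixed point of $\P^n\times\P^n$ indexed by a pair $(i,j)$ with $i\neq j$ is the sum, over bases $B$ of $M$ with $i\in B$ and $j\notin B$, of $y(M)|_{x_B}\,[\O(1)]|_{x_B}$ divided by the $K$-theoretic Euler class of the relative cotangent bundle of $\pi_{1n}$ at $(i,B,j)$. Since the fibre of $\pi_{1n}$ there is a copy of $\Gr(d,n-1)$ with tangent weights $t_k^{-1}t_\ell$ for $k\in B\setminus\{i\}$, $\ell\notin B\cup\{j\}$, that denominator is $\prod_{k\in B\setminus\{i\}}\prod_{\ell\notin B\cup\{j\}}(1-t_kt_\ell^{-1})$, and cancelling it against the Euler-class factor hidden in $y(M)|_{x_B}$ leaves, for each basis $B$, the contribution $\Hilb(\Cone_B(P(M)))\cdot\prod_{\ell\notin B}(1-t_i^{-1}t_\ell)\cdot\prod_{k\in B}(1-t_k^{-1}t_j)$ times an invertible monomial. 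The remaining task is to identify this sum, after restricting the $T$-action away, with $R_M(x-1,y-1)=T_M(x,y)$: evaluating the rational Hilbert series of $\Cone_B(P(M))$ — generated by the directions $e_\ell-e_k$ with $(B\setminus k)\cup\{\ell\}$ again a basis — and combining it with the two products displayed above, the contribution of $B$ should collapse to $x^{\operatorname{ia}(B)}y^{\operatorname{ea}(B)}$, with the exponents the internal and external activities of $B$ relative to a fixed linear order on $E$; summing over $B$ then reproduces either Crapo's activities expansion of the Tutte polynomial or, after resumming, the corank--nullity definition $R_M(u,v)=\sum_{I\subseteq E}u^{r(M)-r(I)}v^{|I|-r(I)}$. \emph{This last identification is the main obstacle}: controlling the Hilbert series of the generally non-simplicial vertex cones $\Cone_B(P(M))$, getting every sign and monomial twist right in the cancellation, and matching the exchange bookkeeping with activity.

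A cleaner route to that final step, which I would pursue in parallel as a sanity check, is a valuativity argument. Both sides of the identity are valuative invariants of $M$ — the right-hand side because the Tutte polynomial is valuative, the left-hand side because $y(M)$ is valuative and $\pi_{d+1}^*(-)$, multiplication by $[\O(1)]$, and $(\pi_{1n})_*$ are all $\Z$-linear — and both depend only on the isomorphism class of $M$, since relabelling $E$ acts compatibly across the whole diagram and trivially on $K^0(\P^n\times\P^n)$. Since Schubert matroids, together with all relabellings of the ground set, span the space of matroids modulo valuative relations, it suffices to verify the identity when $M$ is a Schubert matroid; for such $M$ the matroid polytope and its vertex cones factor as products of shifted simplicial pieces, so the localization sum degenerates to an explicit product, the Tutte polynomial of a Schubert matroid has a matching product form, and the comparison is routine.
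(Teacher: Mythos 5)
The paper you are working from does not actually prove this theorem: it cites Fink--Speyer and offers only the one-line remark that ``the proof works by relating the $K$-theoretic class to the rank-generating polynomial.'' So there is no in-text argument to compare against; judged on its own, your localization framework is the right one and is indeed in the spirit of the cited proof (the fixed-point indexing of $\Fl(1,d+1,n;n+1)$ by triples $(i,B,j)$, the identification of $\prod_{k\in B}\prod_{\ell\notin B}(1-t_k^{-1}t_\ell)$ with the $K$-theoretic Euler factor at $x_B$, and the fibration $\pi_{1n}$ with fibre $\Gr(d,n-1)$ are all correct). But as a proof the proposal has a genuine gap exactly where you flag it: the evaluation of $\Hilb(\Cone_B(P(M)))$ and the collapse of the localized sum to $T_M(x,y)$ is asserted, not carried out, and this is the entire content of the theorem. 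In particular, the claim that the contribution of a single basis $B$ collapses to $x^{\operatorname{ia}(B)}y^{\operatorname{ea}(B)}$ is not something the localization formula hands you: each individual summand is a rational function in all of $t_0,\dots,t_n$ (the vertex cones of $P(M)$ are generally not simplicial, so their Hilbert series do not factor), and only the sum over all bases becomes a polynomial; in Fink--Speyer the work lies precisely in a nontrivial decomposition of these Hilbert series (summing over chains/flags) that produces the corank--nullity generating function $R_M(u,v)$ rather than an activities expansion basis by basis.

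Your valuativity fallback is a legitimate strategy and closer to being closable: $y(M)$ is valuative, the Tutte polynomial is valuative, the three operations $\pi_{d+1}^*$, $\cdot[\O(1)]$, $(\pi_{1n})_*$ are additive, and Schubert matroids together with relabellings of the ground set span the dual space of valuative invariants (all facts the survey records, citing Speyer and Derksen--Fink). However, as written this branch is also not complete: the assertion that for a Schubert matroid ``the matroid polytope and its vertex cones factor as products of shifted simplicial pieces'' is false in general (Schubert, i.e.\ nested, matroids are rarely direct sums, and their vertex cones need not be simplicial), so the base-case verification is not routine; one still has to compute both the localization sum and the Tutte polynomial for this family and match them, which is a real, if tractable, computation. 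So either route requires substantial additional work before the identity $(\pi_{1n})_*\pi_{d+1}^*(y(M)\cdot[\O(1)])=T_M(x,y)$ is established; what you have is a correct and well-organized plan, with the central identification still open.
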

The proof works by relating the $K$-theoretic class to the rank-generating polynomial.

\section{Review of Toric Varieties}

We give a quick review of toric varieties in preparation for the next section.  More complete references are \cite{CLS,FultonInt}.

\subsection{Rational fans and Minkowski weights}
A toric variety $X=X(\Delta)$ is an algebraic variety specified by a rational fan $\Delta$ in $N_\R=N \otimes_\Z \R$ for a lattice $N \cong \mathbb{Z}^n$. They are normal varieties compactifying the algebraic torus $T=(\k^*)^n$ such that the natural multiplication of $T$ on itself extends to a $T$-action.  
A rational fan is  a particular set $\Delta$ of strongly convex rational polyhedral cones.  A strongly convex rational polyhedral cone is the non-negative span of finitely many vectors in $N$ and which contains no lines through the origin.  A rational fan is defined as a polyhedral complex whose cells are strongly convex rational polyhedral cones, that is:
\begin{enumerate}
\item if $\sigma\in\Delta$, then every face of $\sigma$ is an element of $\Delta$, and
\item if $\sigma,\sigma'\in\Delta$, then $\sigma\cap \sigma'\in\Delta$.
\end{enumerate}
We say that a fan is pure of dimension $d$ if every maximal cone is $d$-dimensional.
For a cone $\sigma$, let $\sigma^\circ$ denote the relative interior of $\sigma$.
Associated to a fan $\Delta$ is a toric variety $X(\Delta)$ defined over a field $\k$.  

The toric variety is stratified by torus orbits.   Write $N^\vee$ for the lattice dual to $N$.  Write $N_\sigma$ for the sublattice of $N$  given by by $\Span(\sigma) \cap N$.
For a cone $\sigma$, let $\sigma^\perp\subseteq N^\vee$ be the kernel of the projection $N^\vee\rightarrow N_\sigma^\vee$ that is dual to the natural inclusion.  
Given a cone $\sigma$ with $\dim(\sigma)=k$, there is a torus orbit $\O_\sigma$ with $\dim(\O_\sigma)=n-k$.  This torus orbit is canonically isomorphic to $\Hom(\sigma^\perp,\k^*)$ . The torus orbit $\O_0=\Hom(N^\vee,\k^*)$ corresponding to $0\in\Delta$ is called the \emph{big open torus} and is isomorphic to $T$.  We can view elements of $N^\vee$ as regular functions on $T$.  Specifically, we view $m\in N^\vee$ as the function $\chi^m$, for $t\in \Hom(N^\vee,\k^*)$,
\[\chi^m:t\mapsto t(m).\]
We refer to $\chi^m$ as a character of $T$.
The group $T$ acts on $\O_\sigma$ as follows: for $\gamma:\sigma^\perp\rightarrow\k^*$, we have $t\cdot \gamma$ given by for $m\in\sigma^\perp$,
\[t\cdot\gamma(m)=\chi^m(t)\gamma(m).\] 
The toric variety $X(\Delta)$ can be decomposed (as a set with action of $T$ as)
\[X(\Delta)=\bigsqcup_{\sigma\in\Delta} \O_\sigma.\]
The closure of the orbit $\O_\sigma$ is denoted by $V(\sigma)$.  The cones of $\Delta$ are in inclusion-reversing bijection with orbit closures of $X(\Delta)$: if $\tau$ is a face of $\sigma$ in $\Delta$, then $V(\sigma)\subseteq V(\tau)$.  In other words, $\O_\sigma\subseteq \overline{\O_\tau}$ if and only if $\tau\subseteq \sigma$. 
The closure of $T=\O_0$ is $X(\Delta)$.  Characters of $T$ extend to $X(\Delta)$ as rational functions.

A toric variety $X(\Delta)$ is compact if and only if the {\em support of $\Delta$}, $\bigcup \sigma$ is equal to $N_\R$.  A toric variety is smooth if and only if every cone is unimodular, that is, it is the non-negative span of a set of vectors that can be extended to a basis of the lattice $N$.

\begin{example} \label{e:smooth} The building blocks of smooth toric varieties are the toric varieties associated to unimodular simplices.  Let $N=\Z^n$.  Write the basis vectors of $N$ as $e_1,\dots,e_n$.  Let $\sigma$ be the nonnegative span of $e_1,\dots,e_k$.  Let $\Delta$ consist of  $\sigma$ and its faces.  Then $X(\Delta)=\k^k\times (\k^*)^{n-k}$.  Write the coordinates on $X(\Delta)$ as $(x_1,\dots,x_k,x_{k+1},\dots,x_n)$.
The faces of $\sigma$ are
\[\sigma_S=\Span_{\geq 0}(e_i\mid i\in S)\]
for $S\subseteq \{1,\dots,k\}$.  The torus orbits and their closures are given by  
\begin{eqnarray*}
\O_{\sigma_S}&=&\{x \mid x_i=0\ \text{if and only if}\ i\in S\},\\
V(\sigma_S)&=&\{x \mid x_i=0\ \text{if}\ i\in S\}.
\end{eqnarray*}
The group $(\k^*)^n$ acts by multiplication on coordinates.  The characters are the usual monomials in the $x_i$'s.
\end{example}

\begin{example} The most basic compact toric variety is $\P^n$.  Let $N=\Z^{n+1}/\Z$ where $\Z^{n+1}$ is spanned by unit vectors $e_0,e_1,\dots,e_n$ and the quotient is by the span of the diagonal element $e_0+e_1+\dots+e_n$.  The cones of $\Delta_{\P^n}$ are given by $\sigma_S=\Span_{\geq 0}(e_i\mid i\in S)$ for each proper subset $S\subsetneq \{0,1,\dots,n\}$.  Then $X(\Delta_{\P^n})=\P^n$.  The torus orbits and their closures are given in homogeneous coordinates by
\begin{eqnarray*}
\O_{\sigma_S}&=&\{[X_0:X_1:\dots:X_n]\in\P^n\mid X_i=0\ \text{if and only if}\ i\in S\},\\
V(\sigma_S)&=&\{[X_0:X_1:\dots:X_n]\in\P^n\mid X_i=0\ \text{if}\ i\in S\}.
\end{eqnarray*}
The group $T=(\k^*)^n$ acts on $\P^n$ by
\[(t_1,t_2,\dots,t_n)\cdot [X_0:X_1:\dots:X_n]=[X_0:t_1X_1:\dots:t_nX_n].\]
The characters are rational functions on $\P^n$: $N^\vee$ is canonically isomorphic to the sublattice of vectors $(m_0,m_1,\dots,m_n)\in\Z^{n+1}$ satisfying $m_0+m_1+\dots+m_n=0$, and 
\[\chi^m([X_0:X_1:\dots:X_n])=\prod_i X_i^{m_i}.\]
\end{example}

There is an important notion of morphisms of toric varieties.  Let $N$ and $N'$ be lattices with rational fans $\Delta,\Delta'$ in $N_\R,N_\R$, respectively.
Let $\phi:N'\rightarrow N$ be a homomorphism of lattices such that for any cone $\sigma'\in\Delta'$ there is a cone $\sigma\in\Delta$ with $\phi_\R(\sigma')\subset \sigma$.  In this case, we say $\phi:\Delta'\rightarrow\Delta$ is a morphism of fans.  Then, there is an induced map of toric varieties $\phi_*:X(\Delta')\rightarrow X(\Delta)$ that intertwines the torus actions.  Of particular interest for us is the case where $N=N'$ and the homomorphism $\phi$ is the identity.  In that case, $\Delta'$ is a {\em refinement} of $\Delta$, that is, every cone $\sigma'\in\Delta'$ is contained in a cone of $\Delta$.  The induced morphism $\phi_*:X(\Delta')\rightarrow X(\Delta)$ is a birational morphism.  

\begin{example} Blow-ups of $X(\Delta)$ at the orbit closures $V(\sigma)$ can be phrased in terms of  refinements of fans \cite[Section 2.4]{FultonInt}, \cite[Definition~3.3.17]{CLS}.  We will explain the case of a unimodular cone as in Example \ref{e:smooth}.  The general case is similar.  Let $\sigma$ be the cone spanned by $e_1,\dots,e_n$.  Let $\Delta$ be the fan consisting of all faces of $\sigma$.  Let $e'$ be the barycenter of $\sigma$ given by $e'=e_1+\dots+e_n$.  Let $\Delta'$ be the fan consisting of all cones spanned by subsets of $\{e',e_1,\dots,e_n\}$ not containing $\{e_1,\dots,e_n\}$.  By an explicit computation, one can show that $X(\Delta')$ is the blow-up of $\k^n$ at the origin.  
This has the effect of subdividing the cone $\sigma$ while not chainging its boundary.

We can apply this operation to a top-dimensional cone of a toric variety.  Let $\Delta$ be a fan, and let $\sigma$ be a top-dimensional, unimodular cone of $\Delta$.  We can form $\tilde{\Delta}$ by replacing $\sigma$ with the cones considered above and not changing any of the other cones.  Then $X(\tilde{\Delta})$ is the blow-up of $X(\Delta)$ at the smooth point $V(\sigma)$.

This operation can also be applied to smaller-dimensional cones as well.  Here, we follow the exposition of \cite{CLS}.  Let $X(\Delta)$ be a smooth toric variety.  For a cone $\sigma$, let $\sigma(1)$ be the set of primitive lattice vectors through the $1$-dimensional faces of $\sigma$.  Recall that a primitive lattice vector is a vector $v\in N$ such that if $v=nw$ for $n\in\Z$ and $w\in N$ then $n=\pm 1$.
 Let $\tau$ be a cone of $\Delta$.  Let the barycenter of $\tau$ be
\[u_\tau=\sum_{u_\rho\in \tau(1)} u_\rho.\]
For each cone $\sigma$ containing $\tau$, define a fan
\[\Delta_\sigma'(\tau)=\{\Span_{\geq 0}(S) \mid S\subseteq \{u_\tau\}\cup \sigma(1),\tau(1)\not\subseteq S\}.\] 
Then the subdivision of $\Delta$ relative to $\tau$ is the fan
\[\Delta'(\tau)=\{\sigma\in \Delta \mid \sigma\not\supseteq\tau\}\cup \bigcup_{\sigma\supseteq \tau} \Delta_\sigma'(\tau).\]
It turns out that $X(\Delta'(\tau))$ is the blow-up of $X(\Delta)$ along the subvariety $V(\tau)$.  The orbit closure $V(\R_{\geq 0}u_\tau)$ is the exceptional divisor of the blow-up.  Observe that only the cones containing $\tau$ are affected by the subdivision.

We can form the barycentric subdivision of $\Delta$ by first subdividing the cones $\tau$ with $\dim(\tau)=n$ and then subdividing the cones $\tau$ of $\Delta$ with $\dim(\tau)=n-1$ and so on down to the $2$-dimensional cones.  This produces an iterated blow-up of $X(\Delta)$.  This construction will be very important in the sequel. 
\end{example}

A natural way that rational fans arise is as normal fans to lattice polytopes.  Let  $P$ be a lattice polytope in $\R^n$, that is, a polytope whose vertices are in $\Z^n$.  Set $N=\operatorname{Hom}(\Z^n,\Z)$.  

\begin{definition}
For a face $Q$ of $P$, the \emph{(inward) normal cone} to $Q$ is the set
  \[\sigma_Q=\left\{w\in N\ \mid P_w\supseteq Q\right\}.\]
\end{definition}
  
 For $w$ in the relative interior of $\sigma_Q$, we have $P_w=Q$.  
 
 \begin{definition} The \emph{inward normal fan}, $N(P)$ of the polytope $P$ is 
 the union of the cones $\sigma_Q$ as $Q$ ranges over the faces of $P$.
 \end{definition}
 
 The correspondence between $Q$ and $\sigma_Q$ is inclusion-reversing.  If $P$ is full-dimensional, then $N(P)$ is strongly rational.  Because $P$ is an integral polytope, $N(P)$ is a rational fan.    

 We can use use normal fans to relate not-necessarily-normal toric varieties to the more usual toric varieties. Suppose we have $i:\P_\cA\rightarrow \P^N$.   If $i$ is an inclusion and $\P_\cA$ is normal, then $\P_\cA$ is the toric variety associated to the normal fan of the weight polytope of $\P_\cA$.
  
  The fan $\Delta_{\P^n}$ occurs as the normal fan to the  simplex whose vertices are $-e_0,-e_1,\dots,-e_n$ in  the hyperplane defined by $x_0+x_1+\dots+x_n=-1$ in $\R^{n+1}$.

\subsection{Intersection Theory and Minkowski Weights}
We review the central notions of intersection theory \cite{FultonIntersectiontheory}, eventually specializing to toric varieties.  We recommend \cite{FultonInt} as a reference for toric varieties, \cite{FS} for results on intersection theory on toric varieties, and \cite{KTT} for results most directly suited to our purposes.

Let $X$ be an $n$-dimensional algebraic variety over $\k$.  The \emph{cycle group} $Z_p(X)$ is the group of finite formal integer combinations $\sum n_i [Z_i]$ where each $Z_i$ is an irreducible $p$-dimensional subvariety of $X$.  These sums are called cycles.  If all the coefficients are non-negative, the cycle is said to be effective.  A cycle is declared to be rationally equivalent to $0$ if there exists irreducible $(k+1)$-dimensional varieties $W_1,\dots,W_l$ together with rational functions $f_1,\dots,f_l$ on $W_1,\dots,W_l$, respectively, such that 
\[\sum n_i Z_i=\sum_j (f_j)\]
where $(f_j)$ denotes the principal divisor on $W_j$ associated to the rational function $f_j$.  The Chow group $A_p(X)$ is the quotient of $Z_p(X)$ by the subgroup of cycles rationally equivalent to $0$.  Elements of $A_p(X)$ are cycle classes, but we may refer to them as cycles when we have picked a member of their class.  A cycle class is said to be \emph{effective} if it is rationally equivalent to an effective cycle.  The Chow groups should be thought of as an algebraic analogue of the homology groups $H_{2p}(X)$ with the caveat that not every homology class is realizable by a cycle and that rational equivalence is a much finer relation that homological equivalence.

If $X$ is smooth, then there is an intersection product
\[\cdot:A_p(X)\otimes A_{p'}(X)\rightarrow A_{p+p'-n}(X)\]
which  should be thought of as taking two irreducible subvarieties $Z,Z'$ to the rational equivalence class of their intersection $[Z\cap Z']$ if the subvarieties meet transversely.  A lot of work has to be done to make this intersection product well-defined for pairs of subvarieties that are not rational equivalent to pairs that meet transversely.  The intersection product is well-defined on rational equivalence classes.
We think of elements of $A_0(X)$ as a formal sum of points, and if $X$ is compact, there is a degree map:
\[\deg:A_0(X)\rightarrow\Z\]
taking $\sum n_i P_i$ to $\sum n_i$.
When $X$ is smooth and compact, we may define the Chow cohomology groups simply as $A^k(X)=A_{n-k}(X)$.  These groups are, in fact, graded rings under the cup product
\[\cup:A^k(X)\otimes A^{k'}(X)\rightarrow A^{k+k'}(X)\]
which is simply the intersection product.
We will make use of the cap-product
\[\cap:A^k(X)\otimes A_p(X)\rightarrow A_{p-k}(X)\]
where $c\cap z$ is given by taking the intersection product of $c$ (considered as a $(n-k)$-dimensional cycle) with $z$ considered as a $p$-dimensional cycle.
By definition, for $c,d\in A^*(X)$, $z\in A_*(X)$,
\[c\cap (d\cap z)=(c\cup d)\cap z.\]
For non-smooth varieties, the definition of Chow cohomology is quite different.
Here, we used Poincar\'{e} duality to simplify our exposition.

\begin{definition} A Chow cohomology class $c\in A^1(X(\Delta))$ is said to be numerically effective or {\em nef} if for every curve $C$ on $X$, $\deg(c\cap [C])\geq 0$.
\end{definition}

For $X(\Delta)$, a compact toric variety, the Chow cohomology groups $A^k(X(\Delta))$ are canonically isomorphic to a combinatorially-defined object, the group of Minkowski weights.  Let $\Delta^{(k)}$ denote the set of all cones in $\Delta$ of dimension $n-k$.  If $\tau\in\Delta^{(k+1)}$ is contained in a cone $\sigma\in\Delta^{(k)}$, let $v_{\sigma/\tau}\in N/N_\tau$ be the primitive generator of the ray $(\sigma+N_\tau)/N_\tau$.

\begin{definition} A function $c:\Delta^{(k)}\rightarrow\Z$ is said to be a {\em Minkowski weight} of codimension $k$ if it satisfies the {\em balancing condition}, that is, for every $\tau\in\Delta^{(k+1)}$,
\[\sum_{\substack{\sigma\in\Delta^{(k)}\\\sigma\supset\tau}} c(\sigma)v_{\sigma/\tau}=0\]
in $N/N_\tau$.
The \emph{support} of $c$ is the set of cones in $\Delta^{(k)}$ on which $c$ is non-zero.
\end{definition}

\begin{theorem} \cite{FS} The Chow group  $A^k(X(\Delta))$ is canonically isomorphic to the group of codimension $k$ Minkowski weights.   
\end{theorem}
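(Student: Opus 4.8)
The plan is to exhibit the isomorphism explicitly as $c\mapsto\widehat c$, where $X(\Delta)$ has dimension $n$ and
\[
\widehat c(\sigma)=\deg\!\big(c\cap[V(\sigma)]\big)\qquad\text{for }\sigma\in\Delta^{(k)};
\]
this is legitimate because $\dim V(\sigma)=k$, so $c\cap[V(\sigma)]\in A_0(X(\Delta))$, and $X(\Delta)$ is compact, so the degree is defined. First I would check that $\widehat c$ is always a Minkowski weight, i.e.\ satisfies the balancing condition. For this I would invoke the standard presentation of the Chow homology of a complete toric variety \cite{FultonInt}: $A_k(X(\Delta))$ is generated by the torus-invariant cycles $[V(\sigma)]$, $\sigma\in\Delta^{(k)}$, with all relations generated by the principal divisors of characters on the orbit closures $V(\tau)$, $\tau\in\Delta^{(k+1)}$, namely
\[
\operatorname{div}_{V(\tau)}(\chi^{u})=\sum_{\substack{\sigma\in\Delta^{(k)}\\ \sigma\supset\tau}}\langle u,v_{\sigma/\tau}\rangle\,[V(\sigma)]\ \sim\ 0,\qquad u\in\tau^{\perp}.
\]
Capping such a relation with $c$ and taking degrees yields $\sum_{\sigma\supset\tau}\langle u,v_{\sigma/\tau}\rangle\,\widehat c(\sigma)=0$ for every $u\in\tau^{\perp}\cong(N/N_{\tau})^{\vee}$; since $N_{\tau}$ is saturated, $N/N_{\tau}$ is torsion free, so this is equivalent to $\sum_{\sigma\supset\tau}\widehat c(\sigma)\,v_{\sigma/\tau}=0$ in $N/N_{\tau}$, which is exactly the balancing condition.

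Next I would establish that $c\mapsto\widehat c$ is bijective by dualizing the same presentation. Writing $\partial\colon\bigoplus_{\tau\in\Delta^{(k+1)}}\tau^{\perp}\to\bigoplus_{\sigma\in\Delta^{(k)}}\Z$ for the map $u\mapsto\sum_{\sigma\supset\tau}\langle u,v_{\sigma/\tau}\rangle\,e_{\sigma}$, the presentation reads $A_k(X(\Delta))=\operatorname{coker}\partial$, so $\Hom(A_k(X(\Delta)),\Z)=\ker\partial^{\vee}$; and the computation of the previous paragraph, read backwards, identifies $\ker\partial^{\vee}$ verbatim with the group of codimension $k$ Minkowski weights. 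It therefore remains to identify $A^{k}(X(\Delta))$ with $\Hom(A_k(X(\Delta)),\Z)$ via the Kronecker pairing $c\mapsto(\alpha\mapsto\deg(c\cap\alpha))$ and to observe that this identification carries $c$ to $\widehat c$ --- which it does, since $\widehat c$ was defined by pairing against the generators $[V(\sigma)]$.

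The hard part is precisely this last identification, the one genuinely non-formal ingredient. When $X(\Delta)$ is smooth --- the only case actually needed in the sequel, the permutohedral variety being smooth --- it amounts to the assertion that the Poincar\'e duality pairing $A^{k}(X(\Delta))\times A_{k}(X(\Delta))\to\Z$ is unimodular, which I would deduce from the orbit filtration together with the fact that the Chow groups of a smooth complete toric variety are free of finite rank with a combinatorially transparent intersection ring, leaving no torsion obstruction. For a general complete $X(\Delta)$ one must instead use operational Chow cohomology, and the corresponding statement --- that the Kronecker map $A^{k}(X)\to\Hom(A_k(X),\Z)$ is an isomorphism for complete toric varieties --- is itself the substance of \cite{FS} (after Fulton--MacPherson--Sottile--Sturmfels), which I would cite rather than reprove. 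Granting it, chasing the two presentations through one another gives the stated isomorphism, and its canonicity is clear since the construction of $c\mapsto\widehat c$ involved no choices.
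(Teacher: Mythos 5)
The paper itself contains no proof of this statement: it is quoted directly from Fulton--Sturmfels \cite{FS}, accompanied only by the description of the correspondence $c\mapsto\big(\sigma\mapsto\deg(c\cap[V(\sigma)])\big)$ and the remark that balancing encodes invariance under the rational equivalences given by characters on the orbit closures $V(\tau)$. Your outline is a faithful reconstruction of how the cited proof actually runs: the same evaluation map, balancing deduced by capping $c$ with the relations $\operatorname{div}_{V(\tau)}(\chi^u)\sim 0$ in the presentation of $A_k(X(\Delta))$ by invariant cycles (that presentation is itself a nontrivial input, found in \cite{FultonInt} and \cite{FS}), dualization of the presentation to identify $\Hom(A_k(X(\Delta)),\Z)$ with codimension $k$ Minkowski weights, and finally the Kronecker duality $A^k(X(\Delta))\cong\Hom(A_k(X(\Delta)),\Z)$ for complete toric varieties. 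You correctly isolate that last step as the only non-formal ingredient and correctly attribute it to \cite{FS} (after Fulton--MacPherson--Sottile--Sturmfels); just be aware that citing it means your argument is not an independent proof of the theorem, since that duality is essentially what the theorem asserts once the combinatorial dictionary is in place --- what you have genuinely supplied is the dictionary itself, including the point that saturation of $N_\tau$ makes the passage from vanishing of all pairings $\langle u,\cdot\rangle$, $u\in\tau^{\perp}$, to vanishing in $N/N_\tau$ reversible. Two minor cautions: the degree map you use requires $X(\Delta)$ complete, the paper's standing compactness hypothesis, so state it; and in the smooth case your ``no torsion obstruction'' step is doing real work --- the freeness of the Chow/cohomology groups and perfectness of the Poincar\'e pairing for smooth complete toric varieties is the Danilov--Jurkiewicz-type description (see \cite{FultonInt}), not a formal consequence of the orbit filtration, since the orbits are tori rather than affine cells; citing that, or simply invoking the operational statement of \cite{FS} uniformly, closes the gap.
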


The correspondence between Chow cohomology classes and Minkowski weights is as follows: given $d\in A^k(X)$, define $c(\sigma)=\deg\big(d\cap [V(\sigma)]\big)$.  The content of the Fulton-Stumfels result is that Chow cohomology classes are determined by their intersections with orbit closures.  The balancing condition is a combinatorial translation of the fact that cohomology classes are constant on rational equivalence classes generated by the rational functions given by characters of the torus $\O_\tau$ on the orbit closure $V(\tau)$.  The degree $\deg(c)$ of a class $c\in A^n(X)$ is defined to be $c(0)$, the value of $c$ on the unique zero-dimensional cone $0$.  
There is a combinatorial description of the cup product of Chow cohomology classes by the fan-displacement rule which we will not describe.  The identity in the Chow ring is $c\in A^0(X(\Delta))$ given by 
\[c(\sigma)=1\ \text{for}\ \sigma\in \Delta^{(0)}.\]
For compact smooth toric varieties, Chow cohomology is isomorphic to singular cohomology.

\begin{example}
Let us consider $\Delta_{\P^n}$.  We know that $A^k(\P^n)=\Z H^k$ where $H$ is a hyperplane class, and, consequently, $H^k$ is the class of a codimension $k$ projective subspace.
Now, we will see this fact in terms of Minkowski weights.
Let $c$ be a Minkowski weight of codimension $k$.  It is straightforward to verify that the Minkowski weight condition is equivalent to $c$ being constant on $\Delta^{(k)}$.  If $\tau\in\Delta^{(k)}$, $V(\tau)$ is a $k$-dimensional coordinate subspace.  Now, $c$, considered as a cycle, must intersect $V(\tau)$ in $c(\tau)$ points counted with multiplicity.  Therefore, $c$ is in the class of $c(\tau)H^k$.
\end{example}

There is an equivariant version of Chow cohomology which has a combinatorial description on toric varieties.  Here, equivariant Chow cohomology means equivariant with respect to the $T$-action and is analogous to equivariant cohomology.  We will only make use of the codimension $1$ case.  Let $N^\vee=\Hom(N,\Z)$ be the dual lattice to $N$, interpreted as linear functions on $N_\R$ with integer slopes.  A  linear function on a cone $\sigma$ can be interpreted as an element of $N^\vee(\sigma)=N^\vee/\sigma^\perp$.  A piecewise linear function $\alpha$ on $\Delta$ is a continuous function on the support of $\Delta$ whose restriction to each cone is a linear function with integer slopes, or more formally as the following:

\begin{definition} A piecewise linear function on the fan $\Delta$ is a collection of elements $\alpha_\sigma\in N^\vee/\sigma^\perp$ for each $\sigma\in \Delta$ such that for $\tau\subset \sigma$, the image of $\alpha_\sigma$ under the quotient $N^\vee/\sigma^\perp\rightarrow N^\vee/\tau^\perp$ is $\alpha_\tau$.
\end{definition}

Observe that if $\phi:\Delta'\rightarrow\Delta$ is a moprhism of fans, and $\alpha$ is a piecewise linear function on $\Delta$, then the pullback $\phi^*\alpha$ is a piecewise linear function of $\Delta'$.
The piecewise linear function $\alpha$ can be thought of as a $T$-Cartier divisor on $X(\Delta)$: 
if the restriction of $\alpha$ to $\sigma$, $\alpha_{\sigma}$ is given by $m\in M/M(\sigma)$, and the Cartier divisor is locally defined by the rational function $\chi^m$, the character associated to $m$ on the toric open affine associated to $\sigma$.   This $T$-Cartier divisor can be thought of as an element of $A_T^1(X(\Delta))$, the equivariant Chow cohomology group \cite{Edidin-Graham}.    There is a natural non-equivariant restriction map $\iota^*:A_T^1(X(\Delta))\rightarrow A^1(X(\Delta))$ that takes the $T$-Cartier divisor to an ordinary Chow cohomology class.  The piecewise linear function $\alpha$ induces a $T$-equivariant line bundle $L$ on $X(\Delta)$ and $\iota^*\alpha=c_1(L)$, the first Chern class of $L$.

If $c\in A^k(X(\Delta))$ is viewed as a Minkowski weight, we may compute the cup product $\iota^*\alpha\cup c$ as an element of $A^{k+1}(X)$ by using a formula that first appeared in \cite{AR}:
for $\sigma\in\Delta^{(k)}, \tau\in\Delta^{(k+1)}$, let $u_{\sigma/\tau}$ be a vector in $N_\sigma$ descending to $v_{\sigma/\tau}$ in $N/N_\tau$; then the value of $\iota^*\alpha\cup c$ on a cone $\tau\in\Delta^{(k+1)}$ is
\[
(\iota^*\alpha\cup c)(\tau)=-\sum_{\sigma\in\Delta^{(k)}\mid\sigma\supset\tau} \alpha_\sigma(u_{\sigma/\tau})c(\sigma)+\alpha_\tau\left(\sum_{\sigma\in\Delta^{(k)}\mid\sigma\supset\tau}c(\sigma)u_{\sigma/\tau}\right)
\]
where $\alpha_\sigma$ (respectively $\alpha_\tau$) is the linear function on $N_\sigma$ (on $N_\tau$) which equals $\alpha$ on $\sigma$ (on $\tau$).   Taking $\iota^*\alpha\cup c$ yields a Minkowski weight.  The following lemma can be proved using intersection theory \cite{KTIT} or by elementary means \cite[Prop~3.7]{AR}:

\begin{lemma} Let $\alpha$ be a piecewise linear function on $\Delta$, and let $c$ be a Minkowski weight of codimension $k$.  Then $\iota^*\alpha\cup c$ is a Minkowski weight of codimension $k+1$.
\end{lemma}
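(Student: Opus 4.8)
The plan is to derive the lemma from two facts already in hand: the Chow cohomology $A^*(X(\Delta))$ of a compact toric variety is a ring under cup product, and (Fulton--Sturmfels, quoted above) a codimension $j$ Chow cohomology class on such a variety is \emph{the same datum} as a codimension $j$ Minkowski weight. Granting that the displayed formula is a correct evaluation of the cup product $\iota^*\alpha\cup c$ — which is how it was introduced, following \cite{AR} — the argument is immediate: $\iota^*\alpha=c_1(L)$ is an element of $A^1(X(\Delta))$, the codimension $k$ Minkowski weight $c$ is an element of $A^k(X(\Delta))$, hence $\iota^*\alpha\cup c\in A^{k+1}(X(\Delta))$, and applying the Fulton--Sturmfels isomorphism in codimension $k+1$ presents this class as a codimension $k+1$ Minkowski weight, i.e.\ a function on $\Delta^{(k+1)}$ satisfying the balancing condition. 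The one point this shortcut does not itself supply is the verification that the numbers produced by the displayed formula really are the Minkowski weight attached to $\iota^*\alpha\cup c$: that is a statement local at each $\tau\in\Delta^{(k+1)}$, namely $(\iota^*\alpha\cup c)(\tau)=\deg\big(c_1(L)\cap(c\cap[V(\tau)])\big)$, proved by a computation on the orbit closure $V(\tau)$ — this is what \cite{KTIT} does.

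Alternatively, one checks the balancing condition for $\iota^*\alpha\cup c$ by hand, which is the route of \cite[Prop~3.7]{AR} and makes no appeal to the fact that the formula computes the genuine cup product. First I would fix $\rho\in\Delta^{(k+2)}$ and substitute the displayed formula into $\sum_{\tau\in\Delta^{(k+1)},\,\tau\supset\rho}(\iota^*\alpha\cup c)(\tau)\,v_{\tau/\rho}$, turning it into a sum indexed by flags $\rho\subset\tau\subset\sigma$ with $\tau\in\Delta^{(k+1)}$ and $\sigma\in\Delta^{(k)}$, which splits into an ``$\alpha_\sigma$'' part and an ``$\alpha_\tau$'' part. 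I would then reorganize the $\alpha_\sigma$ part by summing over $\sigma$ first: the cone $\sigma/\rho$ is two-dimensional in $N/N_\rho$ and has exactly two rays, spanned by the images of the two cones $\tau^+_\sigma,\tau^-_\sigma\in\Delta^{(k+1)}$ lying between $\rho$ and $\sigma$, so $\sigma$ contributes the two-term combination $\alpha_\sigma(u_{\sigma/\tau^+})v_{\tau^+/\rho}+\alpha_\sigma(u_{\sigma/\tau^-})v_{\tau^-/\rho}$. A computation internal to this two-dimensional cone rewrites each such combination; summing over all $\sigma\supset\rho$ and then collecting terms ray by ray — i.e.\ over all $\sigma\supset\tau$ for fixed $\tau$ — one matches the result against the $\alpha_\tau$ part using the balancing of $c$ at $\tau$, which is exactly the statement that the combination $\sum_{\sigma\supset\tau}c(\sigma)\,u_{\sigma/\tau}$ to which $\alpha_\tau$ is applied lies in $N_\tau$. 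Everything surviving the cancellation then lies in $N_\rho$, which is the balancing condition for $\iota^*\alpha\cup c$ at $\rho$.

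In both routes the substantive step is the same piece of two-dimensional lattice bookkeeping: understanding how the slopes of $\alpha$ on the cones of $\Delta^{(k)}$ surrounding $\rho$ interact with the only balancing relation available, that of $c$ one codimension up. The remaining ingredients — that Chow cohomology of $X(\Delta)$ is a ring, the Fulton--Sturmfels dictionary between Chow classes and Minkowski weights, and the identity $\iota^*\alpha=c_1(L)$ — are already established in the preceding material, so I expect no further obstacle there.
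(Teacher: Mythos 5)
Your proposal is correct and takes essentially the same approach as the paper, which offers no argument of its own but cites exactly the two routes you sketch: the intersection-theoretic one (the class $\iota^*\alpha\cup c=c_1(L)\cup c$ lives in $A^{k+1}$ of the complete toric variety, hence is a codimension $k+1$ Minkowski weight by Fulton--Sturmfels, with the identification of the displayed formula as the genuine cup product deferred to \cite{KTIT}) and the elementary balancing verification of \cite[Prop~3.7]{AR}. Nothing further is needed.
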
 

A $T$-Cartier divisor $\alpha$ is said to be {\em nef} if for every codimension $1$ cone $\tau\in\Delta^{(1)}$, we have $\iota^*\alpha(\tau)\geq 0$. This says that the cohomology class $\iota^*\alpha$ is non-negative on any $1$-dimensional orbit closure.  This happens if and only if $\iota^*\alpha$ is non-negative on every curve class and is therefore nef in the classical sense.  Consequently if $\alpha$ is nef, it  induces a $T$-equivariant line bundle on $X(\Delta)$ whose first Chern class is nef. 
Nefness can be interpreted in terms of convexity.  If $\tau$ is a codimension $1$ cone of $\Delta$, it is contained in two top-dimensional cones, $\sigma_1,\sigma_2$.  We can pick $u_{\sigma_1/\tau},u_{\sigma_2/\tau}$ such that $u_{\sigma_1/\tau}+u_{\sigma_2/\tau}=0$.
Then, we have
\[
\iota^*\alpha(\tau)=-\alpha_{\sigma_1}(u_{\sigma_1/\tau})-\alpha_{\sigma_2}(u_{\sigma_2/\tau})
\]
which expresses how the slope of $\alpha$ changes as we pass through the wall $\tau$ from $\sigma_1$ to $\sigma_2$.  Therefore, the condition $\iota^*\alpha(\tau)\geq 0$ can be interpreted as a convexity condition on the piecewise linear function $\alpha$.

\begin{example} \label{e:hyperplaneclass} Consider the fan $\Delta_{\P^n}$.  Let $w_1,\dots,w_n$ be coordinates on $N_{\R}$ given by the basis vectors $e_1,\dots,e_n$.  Set $e_0=-e_1-\dots-e_n$.  Let $\alpha$ be the piecewise linear function given by \[\alpha=\min(0,w_1,\dots w_n).\]
Let us compute $\iota^*\alpha\in A^{n-1}(\P^n)$.  The $(n-1)$-dimensional cones of $\Delta_{\P^n}$ are of the following form: for $\{j,j'\}\subset\{0,1,\dots,n\}$
\[\tau_{jj'}=\Span(e_i\mid i\in\{0,1,\dots,n\}, i\neq j, i\neq j'),\]
By explicit computation,
\[\iota^*\alpha(\tau_{jj'})=1.\]
The orbit closures $V(\tau_{jj'})$ are the coordinate lines in $\P^n$ cut out by setting all but two homogeneous coordinates to $0$. 
This tells us that $\iota^*\alpha$ intersects each $1$-dimensional coordinate line in a point,  and therefore, it is a hyperplane class.  \end{example}

If $\phi:\Delta'\rightarrow\Delta$ is a refinement of fans, then we can treat a piecewise linear function $\alpha$ on $\Delta$ as a piecewise linear function on $\Delta'$, the pullback $\phi^*\alpha$.  Moreover, if $\alpha$ is nef on $\Delta$, then $\phi^*\alpha$ is nef on $\Delta'$.

There is a way of associating a Minkowski weight of codimension $r$ to a codimension $r$ subvariety $Y$ of a smooth, complete toric variety $X(\Delta)$.  This Minkowski weight should be thought of as a Poincar\'{e} dual to the cycle $[Y]$.  Let $Y$ be a subvariety of dimension $r$. Define a function
\[
c : \Delta^{(n-r)} \to \mathbb{Z}, \qquad \sigma \mapsto \deg\big([Y] \cdot [V(\sigma)]\big).
\]
Then $c$ is a Minkowski weight, called the {\em associated cocycle} of $Y$. See \cite{KTT} or \cite{ST} for details.

\begin{definition} A $d$-dimensional irreducible subvariety $Y$ of $X(\Delta)$ is said to {\em intersect the torus orbits of $X(\Delta)$ properly}, if for any cone $\sigma$ of $\Delta$,
\[\dim(Y\cap V(\sigma))=\dim(Y)+\dim V(\sigma)-n.\]
\end{definition}

\begin{lemma}\label{l:assoccyc} \cite[Lemma 9.2]{KTT} \excise{Suppose $Y$ intersects the torus orbits of $X$ properly.} If $c$ is the associated cocycle of a subvariety $Y$, then
\[c\cap [X]=[Y]\in A_{r}(X).\]
\end{lemma}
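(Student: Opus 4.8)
The plan is to show that the associated cocycle $c$ is, under the Fulton--Sturmfels identification, nothing but the Poincar\'e dual of $[Y]$; the asserted identity $c\cap[X]=[Y]$ is then just the definition of that duality. Concretely: since $X=X(\Delta)$ is smooth and complete, the cap product with the fundamental class is the Poincar\'e duality isomorphism $A^{n-r}(X)\xrightarrow{\ \sim\ }A_{r}(X)$, $d\mapsto d\cap[X]$ (indeed, as recalled in the excerpt, for smooth complete $X$ one sets $A^{k}(X)=A_{n-k}(X)$), so there is a unique $\eta_Y\in A^{n-r}(X)$ with $\eta_Y\cap[X]=[Y]$. It therefore suffices to prove $c=\eta_Y$ in $A^{n-r}(X)$.

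By the Fulton--Sturmfels theorem recalled above, a class $d\in A^{n-r}(X(\Delta))$ is completely determined by the Minkowski weight $\sigma\mapsto\deg\big(d\cap[V(\sigma)]\big)$ it defines on $\Delta^{(n-r)}$, so it is enough to check that $\eta_Y$ and $c$ yield the same Minkowski weight. Fix $\sigma\in\Delta^{(n-r)}$, so $\dim\sigma=r$ and $\dim V(\sigma)=n-r$. On the smooth variety $X$ the intersection product of $[Y]$ with an arbitrary class equals the cap product against the Poincar\'e dual, i.e. $[Y]\cdot\alpha=\eta_Y\cap\alpha$ for $\alpha\in A_*(X)$ (this is exactly the content of the identification $A^{k}(X)=A_{n-k}(X)$ with cup product equal to intersection product, recalled in the excerpt; alternatively, write $[V(\sigma)]=\xi_\sigma\cap[X]$ and use associativity of the cap product, $c\cap(d\cap z)=(c\cup d)\cap z$). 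Taking $\alpha=[V(\sigma)]$ and applying the degree map,
\[
c(\sigma)\;=\;\deg\big([Y]\cdot[V(\sigma)]\big)\;=\;\deg\big(\eta_Y\cap[V(\sigma)]\big),
\]
which is precisely the value at $\sigma$ of the Minkowski weight attached to $\eta_Y$. Hence $c$ and $\eta_Y$ determine the same Minkowski weight, so $c=\eta_Y$ in $A^{n-r}(X)$, and consequently $c\cap[X]=\eta_Y\cap[X]=[Y]$, as claimed.

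The argument is essentially formal, and I expect the only delicate point --- the ``main obstacle'' --- to be purely a matter of reconciling conventions: one must be certain that the intersection product ``$[Y]\cdot[V(\sigma)]$'' appearing in the definition of the associated cocycle is the same product that, via Poincar\'e duality on the smooth toric variety $X$, corresponds to the cup product on $A^{*}(X)$, and that the Fulton--Sturmfels isomorphism is normalized so that the Minkowski weight of $d$ is indeed $\sigma\mapsto\deg(d\cap[V(\sigma)])$ (exactly as stated in the excerpt). Once these normalizations are pinned down the proof is as above; note in particular that it uses neither simpliciality of $\Delta$ nor any transversality between $Y$ and the torus orbits --- properness of the intersection of $Y$ with the $V(\sigma)$ is needed only if one wants to evaluate $c(\sigma)$ concretely as $\deg[Y\cap V(\sigma)]$.
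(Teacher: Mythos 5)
Your argument is correct, and there is nothing in the paper to measure it against line by line: the survey gives no proof of this lemma, deferring entirely to the citation \cite[Lemma 9.2]{KTT}. What you write is the standard derivation one would expect behind that citation: Poincar\'e duality on the smooth complete toric variety gives a unique $\eta_Y\in A^{n-r}(X)$ with $\eta_Y\cap[X]=[Y]$, the Fulton--Sturmfels isomorphism says a Chow cohomology class is determined by its Minkowski weight $\sigma\mapsto\deg(d\cap[V(\sigma)])$, and the compatibility $\eta_Y\cap\alpha=[Y]\cdot\alpha$ identifies that weight with the associated cocycle of $Y$. The only point you rightly flag as delicate is that last compatibility; it is not literally just the definition $A^k(X)=A_{n-k}(X)$ as recalled in the survey, but the slightly stronger content of Fulton's Poincar\'e duality (Corollary 17.4 of \cite{FultonIntersectiontheory}, or your alternative route writing $[V(\sigma)]=\xi_\sigma\cap[X]$ and using associativity of cap together with the fact that the duality identification turns cup product into intersection product), so it is worth citing that explicitly rather than treating it as tautological. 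Your closing observation that no properness or transversality of $Y$ with the torus orbits is needed for the identity itself is also consistent with the paper, which excised exactly that hypothesis from the statement; properness only matters if one wants to compute $c(\sigma)$ as an actual count of intersection points or to relate $c$ to the tropicalization of $Y^\circ$.
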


If $Y$ intersects orbits properly,  the associated cocycle is essentially the same thing as the tropicalization of $Y^\circ\subset(\k^*)^n$ where $Y^\circ$ is defined below.

\begin{definition}
For a subvariety $Y\subset X(\Delta)$, the \emph{interior of $Y$} is
\[Y^\circ=Y\cap (\k^*)^n\]
where $(\k^*)^n$ is the big open torus of $X(\Delta)$.
\end{definition}

It is not difficult to show that the Chow  ring of projective space, $A^*(\P^n)$ is $\Z[H]/H^{n+1}$ where $H$ is the class of a hyperplane.  Because the intersection of $n$ hyperplanes in generic position is a point, $\deg(H^n)=1$.  Moreover, we have 
\[A^*(\P^n\times \P^n)\cong \Z[H_1,H_2]/(H_1^{n+1},H_2^{n+1})\]
where the codimension of a cohomology class is the total degree of the corresponding polynomial in $H_1,H_2$.  An element of $A^k(\P^n\times \P^n)$ can be written as
\[c=\sum_{i=0}^k a_i H_1^i H_2^{k-i}\]
for $a_i\in \Z$.  Because $\deg(H_1^nH_2^n)=1$, $a_i=\deg(H_1^{n-i}H_2^{n-k+i}\cup c).$

\section{Bergman fans} \label{s:bergman}
\subsection{Definition of a Bergman fan} \label{ss:defbergman}
The Bergman fan is a combinatorial object associated to a matroid.  We will see that it leads to a cryptomorphic definition of a matroid.  It was first introduced as the logarithmic limit set by Bergman \cite{Bergman} and then shown to be a finite polyhedral complex by Bieri-Groves \cite{BieriGroves}.  Sturmfels gave a combinatorial definition \cite{StuSolving} which was elaborated by Ardila-Klivans \cite{AKBergman}.  

The logarithmic limit set is an invariant of a $(d+1)$-dimensional linear subspace $V\subset \C^{n+1}$.  The \emph{amoeba} $A(V)$ of $V$ is the set of all vectors of the form
\[
\big(\log|x_0|,\log|x_2|,\ldots,\log|x_n|\big) \in \mathbb{R}^{n+1}
\]
for $x\in V$.
One considers the Hausdorff limit of dilates $t A(V)$ as $t$ goes to $0$. This gives a logarithmic limit set, which is also called the tropicalization and captures the asymptotic behavior of the amoeba.  Because $V$ is invariant under dilation by elements of $\C^*$, the amoeba and hence the logarithmic limit set is invariant under translation by the diagonal vector $(1,\dots,1)$.  Therefore, we may view the logarithmic limit set as a subset of $\R^{n+1}/\R$.  We will give a definition of the logarithmic limit set with reference only to the matroid of $V$ and which makes sense for non-representable matroids.

\excise{As an aside, we note that the matroid $M_w$ is, in a certain sense, a Gr\"{o}bner degeneration of the matroid $M$ if it is realizable.  Suppose $M$ is realized by a $d$-dimensional projective subspace $V$ in $\P^n$.  Let $R=\k[[t]]$, the ring of formal power series in $t$..  Suppose $w\in \Z^{n+1}$, then we may consider $t^{-w} V_R$ where $t^{w}=(t^{w_0},t^{w_1},\dots,t^{w_n})\in (\G_m)_R$ and $V_R=V\times_{\k} R$.  Then the special fiber of $t^{-w} V_R$ is 
\[\operatorname{in}_w V=(t^{-w} V_R)\times_R \k.\]
The irreducible component of $\operatorname{in}_w V$ that intersects the open torus $(\k^*)^n$ is a projective subspace with matroid $M_w$.}

Let $M$ be a matroid on $E=\{0,1,\dots,n\}$.  We first define the underlying set of the Bergman fan.  

\begin{definition} An element $w\in\R^E$ is said to be {\em valid} if $M_w$ has no loops.  The \emph{underlying set of the Bergman fan} of $M$ is the subset of $\R^E$ consisting of valid $w$.
\end{definition}

Observe that the underlying set of the Bergman fan is closed.   If we have a sequence $\{w_i\}$ in the underlying set of the Bergman fan with $\lim  w_j=w$, then $P(M)_w$ contains $P(M)_{w_j}$ for sufficiently large $j$.  If $M_{w_j}$ has no loops, every element of the ground set occurs as an element of some basis of $M_{w_j}$.  To show that this is true for $M_w$, let $i\in E$.  Then, for all $j$, $P(M)_{w_j}$ is not contained in the hyperplane $x_i=0$.  It follows that the same is true for $M_w$.  Consequently $i$ is an element of some basis of $P(M)_w$ and hence is not a loop.

\begin{proposition} \cite{StuSolving} The underlying set of the Bergman fan is the logarithmic limit set of any realization of $M$ as a linear subspace in $\k^{n+1}$.
\end{proposition}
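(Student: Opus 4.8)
The plan is to route through tropical geometry. Write $V^\circ = V \cap (\k^*)^{n+1}$, so that the amoeba $A(V)$ as defined is $\operatorname{Log}(V^\circ)$. Over $\k = \C$ the theorem of Bieri--Groves \cite{BieriGroves} identifies the logarithmic limit set $\lim_{t\to 0} t\,A(V)$ with the tropicalization $\Trop(V^\circ)$ (taken with the ``$\max$'' convention, which is the one dictated by the rescaling procedure — the limit records the directions in which $A(V)$ is unbounded); I would cite this rather than reprove it, since the real content of the proposition is combinatorial. It thus suffices to prove
\[
\Trop(V^\circ) \;=\; \{\, w \in \R^E : M_w \text{ is loopless} \,\}.
\]
Here one uses that the ideal $I \subseteq \k[x_0,\dots,x_n]$ of $V$ is generated by linear forms, and that the minimal supports of the nonzero linear forms in $I$ are exactly the circuits of $M$: a family of restricted coordinate functionals $\{e_i^*|_V : i \in S\}$ is dependent in $V^*$ precisely when some nonzero $\sum_{i\in S} a_i x_i$ vanishes on $V$.

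Next I would compute $\Trop(V^\circ)$. By the fundamental theorem of tropical geometry, $w \in \Trop(V^\circ)$ iff the initial ideal $\init_w I$ contains no monomial. The essential input is the classical fact that for a linear ideal the \emph{circuit forms} $\ell_C = \sum_{i\in C} a^C_i x_i$ (one per circuit $C$ of $M$, with all $a^C_i \in \k^\times$) form a tropical basis of $I$ — indeed, after normalization, a universal Gröbner basis. I expect this to be the main obstacle. Granting it, $\init_w I$ is monomial-free exactly when, for every circuit $C$, the initial form $\init_w \ell_C$ is not a monomial, i.e.\ when $\max_{i\in C} w_i$ is attained at least twice. (Concretely this is the ``lifting'' direction: every such $w$ must be realized by an actual point of $V^\circ$ over a nonarchimedean extension; for a linear space one can produce such a point fairly explicitly by solving a maximal non-degenerate subsystem of the circuit relations $\ell_C = 0$ by Cramer's rule, which is an alternative to invoking the tropical-basis statement as a black box.)

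It remains to match ``$\max_{i\in C} w_i$ attained at least twice for every circuit $C$'' with ``$M_w$ loopless.'' For this I would invoke the description of the face $M_w$ established earlier: if $S_\bullet$ is the flag of level sets of $w$ in order of increasing $w$-value, then $M_w = \bigoplus_k (M|_{S_k})/S_{k-1}$. An element $i \in S_k \setminus S_{k-1}$ is a loop of this matroid exactly when $r(S_{k-1}\cup\{i\}) = r(S_{k-1})$, i.e.\ $i \in \cl\big(\{\, j : w_j < w_i \,\}\big)$; and $i \in \cl(T)$ (for $i \notin T$) holds iff there is a circuit $C$ with $i\in C$ and $C\setminus\{i\}\subseteq T$. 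Taking $T = \{\,j : w_j < w_i\,\}$ this says: $M_w$ has a loop iff some circuit has a \emph{unique} $w$-maximizer, which is exactly the negation of the condition above. Hence $\Trop(V^\circ) = \{\,w : M_w \text{ loopless}\,\}$, the underlying set of the Bergman fan, as desired. The one bookkeeping item to pin down is the min/max sign convention linking $\init_w$, $\Trop$, and the direction of the rays of $A(V)$; it is fixed once and propagates through.
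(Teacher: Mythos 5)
Your argument is correct, and it is essentially the standard proof from the cited literature: the survey itself gives no proof of this proposition, deferring to Sturmfels \cite{StuSolving} (see also Ardila--Klivans \cite{AKBergman}), where the identification is obtained exactly as you outline --- Bieri--Groves \cite{BieriGroves} to pass from the logarithmic limit set to the (nonarchimedean) tropicalization, the classical fact that the circuit forms are a universal Gr\"obner, hence tropical, basis of a linear ideal, and then the same matching of ``every circuit attains its extremum at least twice'' with looplessness of $M_w$ via the decomposition $M_w=\bigoplus_k (M|_{S_k})/S_{k-1}$. The sign bookkeeping you defer does close up consistently: with the paper's literal definitions (natural logarithms, dilates $tA(V)$ with $t\to 0^+$, and $M_w$ the matroid of $w$-minimal bases) the max convention you chose is the correct one on both sides, so there is no gap --- just be aware that the paper's later description of the cones $\sigma_{F_\bullet}$ by flags of flats implicitly uses the opposite (min/maximal-basis) convention, so the convention must be fixed to match whichever description of the underlying set one is comparing against.
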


Now, because the underlying set of the Bergman fan is invariant under translation by the vector $(1,1,\dots,1)$, we quotient by that vector.  Consider the lattice $N=\Z^{n+1}/\Z(1,1,\dots,1)$.  We write $N_\R$ for $\R^{n+1}/\R(1,1,\dots,1)$.  We will treat $N$ as dual to the lattice in the hyperplane $x_0+x_1+\dots+x_n=d+1$ that contains the matroid polytope $P(M)$.  The underlying set of the Bergman fan is the union of cones of the normal fan to the matroid polytope because the matroid $M_w$ only depends on which cone of the normal fan to which $w$ belongs.   Therefore, the underlying set of the Bergman fan can be given the structure of a subfan of the normal fan of the matroid polytope.  However, following \cite{AKBergman}, we will put a finer structure on the Bergman fan.  This structure will be very important in the sequel

We first define cones $\sigma_{S_\bullet}$ in $N_\R$ that refine the normal fan to the matroid polytope.  That means that every cone $\sigma_{S_\bullet}$ will be contained in a cone $\sigma_{P(M)_w}$ of the normal fan of the matroid polytope.
These cones will therefore have the property that if $w_1,w_2\in \sigma_{S_\bullet}^\circ$, then $M_{w_1}=M_{w_2}$.

For a subset $S \subset E$, let $e_S$ be the vector 
\[
e_S= \sum_{i \in S} e_i
\]
in $N_\R$.
Note that $e_E=0$.  
For a flag of subsets,
\[S_\bullet=\{\emptyset\subsetneq S_1\subsetneq \ldots \subsetneq S_k\subsetneq E\},\]
let $\sigma_{S_\bullet}=\Span_{\geq 0}(e_{S_1},\ldots,e_{S_l}).$
If $w$ is in the relative interior of $\sigma_{S_\bullet}$ then $w_i<w_j$ if and only if $i$ occurs in an
$S_k$ with a larger index than $j$ does.  Similarly, given $w\in \R_{\geq 0}^E$, we may pick subsets 
$\emptyset\subsetneq S_1\subsetneq\ldots\subsetneq S_l\subsetneq E$ such that the elements of 
$E$ with maximum $w$-weight are exactly the elements of $S_1$, the ones with the next smallest $w$-weight are the elements of $S_2$, and so on.  Consequently, $w$ is an element of the relative interior of $\sigma_{S_\bullet}$.

\begin{lemma} Let $M$ be a matroid.  For any $S_\bullet$, $\sigma_{S_\bullet}$ is contained in a cone of the normal fan to the matroid polytope, $N(P(M))$.
\end{lemma}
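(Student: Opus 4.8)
The plan is to read the statement off from two facts already available: the description of the cones of $N(P(M))$ as normal cones of faces, and the earlier Lemma computing the face $P(M)_w$ from $w$. Recall that the cones of $N(P(M))$ are exactly the normal cones $\sigma_Q=\{w\in N_\R\mid P(M)_w\supseteq Q\}$ of faces $Q$ of $P(M)$, and that for $w$ in the relative interior $\sigma_Q^\circ$ one has $P(M)_w=Q$; since changing $w$ by a multiple of $(1,\dots,1)$ alters $\langle x,w\rangle$ by a constant on $P(M)$, everything makes sense in $N_\R=\R^{n+1}/\R(1,\dots,1)$, the ambient space of both $\sigma_{S_\bullet}$ and $N(P(M))$. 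Hence it suffices to exhibit a single face $F$ of $P(M)$ with $\sigma_{S_\bullet}^\circ\subseteq\sigma_F^\circ$: taking closures then gives $\sigma_{S_\bullet}=\overline{\sigma_{S_\bullet}^\circ}\subseteq\overline{\sigma_F^\circ}=\sigma_F$, a cone of $N(P(M))$.

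The key step is to check that $P(M)_w$ is the same face for every $w\in\sigma_{S_\bullet}^\circ$. Such a $w$ equals $\sum_{l=1}^k\lambda_l e_{S_l}$ with all $\lambda_l>0$, so $w_i=\sum_{l\,:\,i\in S_l}\lambda_l=\sum_{l\geq m_i}\lambda_l$, where $m_i=\min\{l\mid i\in S_l\}$ (and $w_i=0$ if $i\notin S_k$). Therefore $w$ is constant on each block $S_l\setminus S_{l-1}$ and on $E\setminus S_k$, with values strictly decreasing as the block index increases, independently of the particular positive $\lambda_l$'s. Consequently all $w\in\sigma_{S_\bullet}^\circ$ induce the same flag of level sets (reading the blocks in increasing order of $w$-value), namely $\emptyset\subsetneq E\setminus S_k\subsetneq E\setminus S_{k-1}\subsetneq\dots\subsetneq E\setminus S_1\subsetneq E$. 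By the earlier Lemma, which expresses $M_w$ — and hence $P(M)_w=P(M_w)$ — purely in terms of that flag as a direct sum of minors, the face $P(M)_w$ is independent of the choice of $w\in\sigma_{S_\bullet}^\circ$. Call it $F$.

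Putting the pieces together, $\sigma_{S_\bullet}^\circ\subseteq\{w\mid P(M)_w=F\}=\sigma_F^\circ$, and closing up gives $\sigma_{S_\bullet}\subseteq\sigma_F$, which is the claim. I do not expect a genuine obstacle here: the only items needing care are the translation between weight vectors and ordered set partitions (the computation above, essentially already recorded in the surrounding text) and matching the minimization/flag-direction conventions of the earlier Lemma. If one prefers to avoid invoking that Lemma, there is a self-contained alternative: since $P(M)_{\sum\lambda_l e_{S_l}}\supseteq\bigcap_{l=1}^k P(M)_{e_{S_l}}$ for all $\lambda_l\geq0$, one gets $\bigcap_{w\in\sigma_{S_\bullet}}P(M)_w=\bigcap_{l=1}^k P(M)_{e_{S_l}}=:F$, a finite intersection of faces of $P(M)$ and hence a face, which is nonempty because it contains the vertex $e_B$ for $B$ the basis produced by the greedy algorithm for the ordering attached to $S_\bullet$; then $F\subseteq P(M)_w$ for every $w\in\sigma_{S_\bullet}$, i.e. $\sigma_{S_\bullet}\subseteq\sigma_F$.
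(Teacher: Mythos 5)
Your proof is correct and takes essentially the same route as the paper's: the paper likewise observes that all $w$ in the relative interior of $\sigma_{S_\bullet}$ induce the same relative ordering of coordinates, hence the same $M_w$ and the same face $P(M)_w$, so $\sigma_{S_\bullet}^\circ$ lies in a single cone of $N(P(M))$ and one closes up. Your concluding self-contained variant via $F=\bigcap_l P(M)_{e_{S_l}}$ is a pleasant alternative, but the main argument coincides with the paper's (with your bookkeeping of the min/ordering conventions being, if anything, more careful).
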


\begin{proof}
It suffices to show that the relative interior $\sigma_{S_\bullet}^\circ$ is contained in the relative interior of a cone of $N(P(M))$.   By our description of the normal fan of the matroid polytope, we need to show that for $w_1,w_2\in\sigma_{S_\bullet}^\circ$, $M_{w_1}=M_{w_2}$.  However, the condition $w_1,w_2\in\sigma_{S_\bullet}^\circ$ implies the relative ordering of weights of bases  with respect to $w_1$ and $w_2$ are the same.
\end{proof}

\begin{lemma} \cite{AKBergman} \label{l:validflats} A cone $\sigma_{S_\bullet}$ consists of valid vectors $w$ if and only if $S_\bullet$ is a flag of flats.  \end{lemma}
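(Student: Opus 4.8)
The plan is to reduce the statement to the structure of $M_w$ worked out in the discussion of faces of the matroid polytope, and then to a short induction up the flag that uses only monotonicity of the matroid closure. We may assume $M$ is loopless: a loop of $M$ lies in no basis of $M$, hence in no basis of any $M_w$ (the bases of $M_w$ form a subset of those of $M$), so it is a loop of $M_w$, and therefore no $w$ is valid when $M$ has a loop.

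Fix a vector $w$ in the relative interior of $\sigma_{S_\bullet}$, write $S_\bullet=\{\emptyset\subsetneq S_1\subsetneq\dots\subsetneq S_k\subsetneq E\}$, and set $S_0=\emptyset$, $S_{k+1}=E$. Combining the description of $w$ on $\sigma_{S_\bullet}^\circ$ (it is constant on each layer $S_i\setminus S_{i-1}$) with the Lemma identifying $M_w$ as $\bigoplus_{i=1}^{k+1}(M|_{S_i})/S_{i-1}$, we get
\[
M_w\;\cong\;\bigoplus_{i=1}^{k+1}(M|_{S_i})/S_{i-1}.
\]
A direct sum of matroids is loopless exactly when every summand is, so $w$ is valid iff each $(M|_{S_i})/S_{i-1}$ is loopless. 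Expanding the rank function of $(M|_{S_i})/S_{i-1}$, an element $x\in S_i\setminus S_{i-1}$ is a loop of this matroid iff $r(S_{i-1}\cup\{x\})=r(S_{i-1})$, i.e. iff $x\in\operatorname{cl}(S_{i-1})$. Hence $w$ is valid if and only if $\operatorname{cl}(S_{i-1})\cap S_i=S_{i-1}$ for every $i=1,\dots,k+1$ (the inclusion $\supseteq$ being automatic); call this condition $(\star)$.

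The heart of the matter is to show that $(\star)$, for this fixed flag, is equivalent to each $S_i$ being a flat of $M$. For the ``if'' direction: when $S_{i-1}$ is a flat---using that $\emptyset$ is a flat since $M$ is loopless and that $E$ is always a flat---we have $\operatorname{cl}(S_{i-1})=S_{i-1}$, so $(\star)$ holds trivially. For ``only if'' I would argue by downward induction on $i$. The instance $i=k+1$ of $(\star)$ says $\operatorname{cl}(S_k)=\operatorname{cl}(S_k)\cap E=S_k$, so $S_k$ is a flat. If $S_i$ has been shown to be a flat for some $1\le i\le k$, then $S_{i-1}\subseteq S_i$ together with monotonicity of closure give $\operatorname{cl}(S_{i-1})\subseteq\operatorname{cl}(S_i)=S_i$, whereupon $(\star)$ forces $\operatorname{cl}(S_{i-1})=\operatorname{cl}(S_{i-1})\cap S_i=S_{i-1}$, so $S_{i-1}$ is a flat. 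Thus $S_1,\dots,S_k$ are all flats, and a vector in $\sigma_{S_\bullet}^\circ$ is valid precisely when $S_\bullet$ is a flag of flats.

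It remains to pass from the relative interior to all of $\sigma_{S_\bullet}$. The vectors $e_{S_1},\dots,e_{S_k}$ are linearly independent in $N_\R$ (a staircase argument using $S_k\subsetneq E$), so $\sigma_{S_\bullet}$ is a simplicial cone and each of its points lies in the relative interior of a face $\sigma_{S'_\bullet}$ with $S'_\bullet\subseteq S_\bullet$ a subflag. If $S_\bullet$ is a flag of flats then so is each subflag, and the relative-interior case makes every such point valid; if $S_\bullet$ is not a flag of flats, the relative-interior case already produces non-valid vectors in $\sigma_{S_\bullet}$. This proves the lemma. The one step demanding care is the downward induction above: one must propagate flatness from the top of the flag downward using monotonicity of the closure, rather than hoping to read flatness of each $S_i$ directly off the local identity $(\star)$.
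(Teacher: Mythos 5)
Your proof is correct and follows essentially the same route as the paper's: both rest on the decomposition $M_w\cong\bigoplus_i (M|_{S_i})/S_{i-1}$ for $w$ in the relative interior and the observation that loops of the summands are exactly the elements of $S_i\cap\operatorname{cl}(S_{i-1})\setminus S_{i-1}$. Your repackaging of this as the condition $(\star)$ with a downward induction (where the paper instead directly exhibits a loop from $j\in\operatorname{cl}(S_i)\setminus S_i$ in a later layer), and your use of subflag faces in place of the paper's appeal to closedness of the set of valid vectors, are only minor variations.
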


\begin{proof}
Because the underlying set of the Bergman fan is closed, it suffices to prove the same statement with $\sigma_{S_\bullet}$ replaced by $\sigma_{S_\bullet}^\circ$.  

Suppose some $S_i$ is not a flat of $M$.  Then there exists $j\in \operatorname{cl} S_i\setminus S_i$ and an $i'>i$ such that $j\in S_{i'}\setminus S_{i'-1}$.  Because $j$ is in the closure of a flat in $S_{i'-1}$, $j$ must be a loop in $M|_{S_{i'}}/S_{i'-1}$, hence in $M_w$ by the behaviour of loops under direct sum.

Now, suppose that each $S_i$ is a  flat of $M$.  Let $j\in E$.  We must find a basis of $M_w$ containing $j$.  There is an $i$ such that $j\in S_i\setminus S_{i-1}$.  Because $M$ has no loops, $j$ is not a loop of $M|_{S_i}$.   Because $j$ is not in the closure of $S_{i-1}$, it is not a loop of $M|_{S_i}/S_{i-1}$, and therefore, it is not a loop in $M_w$.
\end{proof}

Lemma \ref{l:validflats} shows us that the underlying set of the Bergman fan is a union of cones of the form $\sigma_{S_\bullet}$.  We use this fact to define the {\emph{Bergman fan}} $\Delta_M$ as the simplicial fan in $N_\R$ given by $\sigma_{F_\bullet}$ for flags of flats.  A $k$-step {\em flag of proper flats} is a sequence of proper flats ordered by containment:
\[F_\bullet=\{\emptyset \subsetneq F_1 \subsetneq \cdots \subsetneq F_k\subsetneq E\}\]
The cone associated to $F_\bullet$ is the non-negative span
\[\sigma_{F_\bullet}=\Span_{\geq 0} \{ e_{F_1}, \ldots, e_{F_k} \}.\]

\begin{definition} The {\emph Bergman fan} of $M$ is the fan $\Delta_M$ consisting of the cones $\sigma_{F_\bullet}$ for all flags of flats $F_\bullet$.
\end{definition}

Note that this is a fan structure, not just an underlying set.   Ardila and Klivans introduced this fan in \cite{AKBergman} and called it the fine subdivision of the Bergman fan of the matroid.  Because every flag of flats in a matroid can be extended to a maximal flag of proper flats of length $d$, the fan $\Delta_M$ is pure of dimension $d$.

Recall that the order complex of a finite poset \cite{StanleyBook} is the simplicial complex whose vertices are elements of the poset and whose simplices are the chains of the poset.  The Bergman fan is a realization of the cone over the order complex of the lattice of proper, non-trivial flats, $L(M)\setminus\{\emptyset,E\}$.

\subsection{The Uniform matroid and the permutohedral variety}

Of particular interest is the Bergman fan associated to the uniform matroid.  We will consider $\U=U_{n+1,n+1}$, the rank $n+1$ uniform matroid on $E=\{0, \ldots, n\}$.  Its Bergman fan  consists of cones in $N_\R=\R^{n+1}/\R$.  Every subset of $E$ is a flat.  Consequently, the top-dimensional cones of $\Delta_{\U}$ are of the form 
\[\Span_{\geq}\{e_{i_0},e_{i_0}+e_{i_1},\dots,e_{i_0}+\dots+e_{i_{n-1}}\}\]
for every permutation $i_0,\dots,i_n$ of $0,\dots,n$.  Because these cones are generated by a  basis of $N$, $X(\Delta_\U)$ is smooth.
The fan is the barycentric subdivision of the fan corresponding to $\P^n$, and $X(\Delta_{\U})$ is the toric variety obtained from $\P^n$ by a sequence of blowups,
\[
\pi_1:X(\Delta_{\U}) = X_{n-1} \rightarrow \cdots \rightarrow X_1 \rightarrow X_0 = \P^n,
\]
where $X_{i+1} \rightarrow X_i$ is the blowup along the proper transforms of the $i$-dimensional torus-invariant subvarieties of $\P^n$. The fan, $\Delta_{\U}$ is the normal fan to the permutohedron in its affine span, where the permutohedron is the convex hull of all coordinate permutations of the point $(0,1,\dots,n)\in\R^{n+1}$ \cite{GKZ}.  

\begin{example} The Bergman fan of $U_{d+1,n+1}$ also has a simple description.  The top-dimensional cones 
are of the form 
\[\Span_{\geq 0}\{e_{i_0},e_{i_0}+e_{i_1},\dots,e_{i_0}+\dots+e_{i_{d-1}}\}\]
for every $d$-tuple $(i_0,\dots,i_{d-1})$ of distinct elements of $\{0,\dots,n\}$. 

In particular, if $d+1=1$, then the Bergman fan consists simply of the origin.  If $d+1=2, n+1=3$, then the Bergman has top-dimensional cones $\R_{\geq 0}e_0,\R_{\geq 0}e_1,\R_{\geq 0}e_2$, and its underlying set is the much-pictured tropical line in the plane with vertex at the origin.
\end{example}

\begin{definition} The $n$-dimensional {\em permutohedral variety} is $X(\Delta_{\U})$, the toric variety associated to the Bergman fan of the uniform matroid $\U=U_{n+1,n+1}$.
\end{definition}

Now, the permutohedral variety $X(\Delta_{\U})$ possesses two maps, $\pi_1,\pi_2$, to $\P^n$ that will be of particular importance.  As noted above, every cone of $\Delta_{\U}$ is contained in a cone of $\Delta_{\P^n}$.  The induced map $\pi_1:X(\Delta_{\U})\rightarrow \P^n$ is the blow-up described above.  Now, let $-\Delta_{\P^n}$ be the fan whose cones are of the form $-\sigma$ for each $\sigma\in\Delta_{\P^n}$.  The toric variety $X(-\Delta_{\P^n})$ is $\P^n$ but with its torus action precomposed by taking inverse.  Now, $\Delta_\U$ is a refinement of $-\Delta_{\P^n}$ according to the following lemma:

\begin{lemma} Each cone of $\Delta_\U$ is contained in a cone of $-\Delta_{\P^n}$.
\end{lemma}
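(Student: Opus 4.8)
The plan is to write down, for every cone of $\Delta_\U$, an explicit cone of $-\Delta_{\P^n}$ containing it, reading the data off the defining flag. Recall that since every subset of $E=\{0,\dots,n\}$ is a flat of $\U=U_{n+1,n+1}$, the cones of $\Delta_\U$ are exactly the $\sigma_{F_\bullet}=\Span_{\geq 0}(e_{F_1},\dots,e_{F_k})$ indexed by flags of proper nonempty subsets $\emptyset\subsetneq F_1\subsetneq\cdots\subsetneq F_k\subsetneq E$ (the empty flag giving the cone $\{0\}$), whereas the cones of $-\Delta_{\P^n}$ are the $-\sigma_S=\Span_{\geq 0}(-e_i\mid i\in S)$ for proper subsets $S\subsetneq E$. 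The empty-flag cone $\{0\}$ lies in every cone of $-\Delta_{\P^n}$, so I would assume $k\geq 1$ and claim that $\sigma_{F_\bullet}\subseteq-\sigma_{E\setminus F_1}$; here $E\setminus F_1$ is a proper subset of $E$ because $F_1\neq\emptyset$, so $-\sigma_{E\setminus F_1}$ really is a cone of $-\Delta_{\P^n}$.

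Because $-\sigma_{E\setminus F_1}$ is convex, it suffices to check that each generator $e_{F_j}$ of $\sigma_{F_\bullet}$ lies in $-\sigma_{E\setminus F_1}$. For this I would compute in $N_\R=\R^{n+1}/\R(1,\dots,1)$, where $e_0+\cdots+e_n=0$, hence
\[
e_{F_j}=\sum_{i\in F_j}e_i=-\sum_{i\in E\setminus F_j}e_i=\sum_{i\in E\setminus F_j}(-e_i).
\]
Thus $e_{F_j}\in\Span_{\geq 0}(-e_i\mid i\in E\setminus F_j)=-\sigma_{E\setminus F_j}$. Since the flag is nested, $F_1\subseteq F_j$, so $E\setminus F_j\subseteq E\setminus F_1$ and therefore $-\sigma_{E\setminus F_j}\subseteq-\sigma_{E\setminus F_1}$. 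Hence every generator $e_{F_j}$ (for $j=1,\dots,k$) lies in $-\sigma_{E\setminus F_1}$, and so does the cone $\sigma_{F_\bullet}$ they span. This is the whole argument.

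There is essentially no obstacle; the only points requiring a moment's care are that the relation $e_0+\cdots+e_n=0$ holds in $N_\R$ (legitimizing the rewriting of $e_{F_j}$) and that $E\setminus F_1$ is a genuine index for a cone of $\Delta_{\P^n}$, both immediate. An alternative route, which I would mention if a more invariant phrasing were preferred, is the point-set description: $\bar w\in-\sigma_S$ if and only if the coordinates $w_i$ for $i\in E\setminus S$ all attain $\max_j w_j$, and any $\bar w\in\sigma_{F_\bullet}$ lifts to $w=\sum_j b_j e_{F_j}$ with all $b_j\geq 0$, so that $w$ attains its maximum $b_1+\cdots+b_k$ on all of $F_1=E\setminus(E\setminus F_1)$; this yields the same conclusion. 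It is also worth recording the companion fact used for the other projection, namely $\sigma_{F_\bullet}\subseteq\sigma_{F_k}$ (each generator $e_{F_j}$ is a nonnegative combination of $\{e_i\mid i\in F_k\}$), so that $\Delta_\U$ refines $\Delta_{\P^n}$ through the top flat of each flag and refines $-\Delta_{\P^n}$ through the bottom flat.
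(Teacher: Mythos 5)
Your proof is correct. The paper takes a slightly different, more structural route: it first notes that $e_S=-e_{E\setminus S}$ gives $-\sigma_{F_\bullet}=\sigma_{F^c_\bullet}$ for the complementary flag $F^c_\bullet=\{\emptyset\subsetneq E\setminus F_k\subsetneq\cdots\subsetneq E\setminus F_1\subsetneq E\}$, hence $-\Delta_\U=\Delta_\U$, and then negates the previously recorded fact that $\Delta_\U$ refines $\Delta_{\P^n}$ to conclude. You instead verify the containment directly, applying the same identity $e_{F_j}=-e_{E\setminus F_j}$ to each generator and using the nestedness of the flag to exhibit the explicit containing cone $-\sigma_{E\setminus F_1}$, indexed by the complement of the smallest flat; the check that $E\setminus F_1$ is a proper subset and the convexity of the target cone are exactly the right points to flag. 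Your argument is self-contained, since it does not invoke the prior refinement of $\Delta_{\P^n}$ (and your closing remark $\sigma_{F_\bullet}\subseteq\sigma_{F_k}$ reproves that refinement by the same mechanism), and it has the mild virtue of naming, for each cone of $\Delta_\U$, the cone of $-\Delta_{\P^n}$ containing it, which is convenient when tracking where $\pi_2$ sends torus orbits. What the paper's version buys is the stronger structural observation that $\Delta_\U$ is invariant under negation, which is the combinatorial shadow of the Cremona involution acting on $X(\Delta_\U)$ and is used in the surrounding discussion.
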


\begin{proof}
For a subset $S\subseteq E=\{0,1,\ldots,n\}$, $e_S=-e_{E\setminus S}$.  Consequently, if 
\[F_\bullet=\{\emptyset \subsetneq F_1 \subsetneq \cdots \subsetneq F_k\subsetneq E\}\]
is a flag of flats, so is
\[F^c_\bullet=\{\emptyset \subsetneq E\setminus F_k \subsetneq \cdots \subsetneq E\setminus F_1\subsetneq E\}.\]
Therefore, $-\sigma_{F_\bullet}=\sigma_{F^c_\bullet}$ is a cone of $\Delta_\U$, and $-\Delta_\U=\Delta_\U$.  Since $\Delta_\U$ refines $\Delta_{\P^n}$, the conclusion follows.
\end{proof}

We set $\pi_2$ to be the induced birational morphism $\pi_2:X(\Delta_{\U})\rightarrow X(-\Delta_{\P^n})$.  The existence of $\pi_1$ and $\pi_2$ shows that the permutohedral variety resolves the  indeterminacy of the standard  generalized Cremona transformation
\[
\operatorname{Crem} : \mathbb{P}^n \dashrightarrow \mathbb{P}^n, \qquad (z_0:\cdots:z_n) \mapsto  (z_0^{-1}:\cdots:z_n^{-1}).
\]
This map is induced by multiplication by $-1$ on $N$.  This map is only a rational map on $\P^n$ because multiplication by $-1$ does not take cones of $\Delta_{\P^n}$ to cones of $\Delta_{\P^n}$. However, it does induce a birational automoprhism of $X(\Delta_{\U})$ fitting into a commutative diagram
\[\xymatrix{
X(\Delta_\U)\ar[r]^{\operatorname{Crem}}\ar[d]^{\pi_1}&X(\Delta_\U)\ar[d]^{\pi_1}\\
\P^n\ar@{-->}[r]^{\operatorname{Crem}}&\P^n.
}\]
Note that $\pi_2=\pi_1\circ{\operatorname{Crem}}.$
There is a natural morphism $\pi_1\times\pi_2:X(\Delta_\U)\rightarrow\P^n\times\P^n$.  The presence of the Cremona transformation is related to the use of reciprocal hyperplanes as in \cite{BrokenCircuitRing} and \cite{EntropicDiscriminant}. 

There are natural subvarieties of $X(\Delta_\U)$, called proper transforms, that are associated to subspaces of $\P^n$:

\begin{definition}  Let $V$ be a $(d+1)$-dimensional subspace of $\k^{n+1}$ that is not contained in any hyperplane, and let $\P(V)\subset \P^n$ be its projectivization.  The \emph{proper transform} of $\P(V)$ in $X(\Delta_\U)$ is 
\[\widetilde{\P(V)}=\overline{\pi_1^{-1}(\P(V)^\circ)}.\]
\end{definition}

By the blow-up interpretation of the permutohedral variety, the proper transform is an iterative blow-up of $\P(V)$ at its intersections with the coordinate subspaces of $\P^n$.  First one blows up the $0$-dimensional intersections, then the proper transforms of $1$-dimensional intersections, and then so on.  It is easily seen that $\widetilde{\P(V)}$ intersects the torus orbits of $X(\Delta_{\U})$ properly.  

There are two piecewise linear functions of $\Delta_{\U}$ that will be of great importance in the sequel.  We have the piecewise linear function on $\Delta_{\P^n}$ from Example \ref{e:hyperplaneclass}, 
\[\alpha=\min(0,w_1,\dots,w_n)\]
 We will abuse notation and denote the pullback $\pi_1^*\alpha$ on $X(\Delta_{\U})$ by $\alpha$.  We can also pull back the analogous piecewise linear function from  $X(-\Delta_{\P^n})$ to obtain
\[\beta=\min(0,-w_1,\dots,-w_n).\]
The non-equivariant cohomology classes $\iota^*\alpha$ corresponds to the proper transform of a generic hyperplane in $X(\Delta_\U)$.  On the other hand, $\iota^*\beta$ corresponds to the closure in $X(\Delta_{\U})$ of reciprocal hyperplanes in $(\k^*)^n$ given by
\[a_0+\frac{a_1}{x_1}+\dots+\frac{a_n}{x_n}=0\]
for generic choices of $a_i\in \k$ where $(\k^*)^n$ is the big open torus.
We may suppress $\iota^*$ and write $\alpha$ and $\beta$ for the induced non-equivariant cohomology classes.

\subsection{The Bergman fan as a Minkowski weight}
 
In this section, we show how the Bergman fan $\Delta_M$ can be thought of a Minkowski weight on $\Delta_{\U}$.   We will let $M$ be a rank $d+1$ matroid so that for $F_\bullet$, a flag of flats in $M$ of maximum length, $\sigma_{F_\bullet}$ is a $d$-dimensional cone.

\begin{definition} The Minkowski weight on $\Delta_\U$ corresponding to the rank $d+1$ matroid $M$ is given by $\Delta_M:\Delta_{\U}^{(n-d)}\rightarrow\Z$ where
\[\Delta_M(\sigma_{F_\bullet})=\begin{cases}
1&\text{if $F_\bullet$ is a  flag of flats in $M$}\\
0&\text{otherwise}.
\end{cases}\]
\end{definition}

\begin{lemma} The function $\Delta_M$ is a Minkowski weight on $\Delta_\U$
\end{lemma}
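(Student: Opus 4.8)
The strategy is to check the balancing condition directly on $\Delta_{\U}$. Recall that the codimension $n-d+1$ cones of $\Delta_{\U}$ are the cones $\sigma_{G_\bullet}$ indexed by flags of subsets $G_\bullet=\{\emptyset\subsetneq G_1\subsetneq\cdots\subsetneq G_{d-1}\subsetneq E\}$ (every subset of $E$ is a flat of $\U$), and the codimension $n-d$ cones $\sigma$ containing a given $\sigma_{G_\bullet}$ are exactly those of the form $\sigma_{G_\bullet^{+H}}$, where $G_\bullet^{+H}$ is obtained by inserting a subset $H$ strictly between two consecutive members $G_i$ and $G_{i+1}$ (with the convention $G_0=\emptyset$, $G_d=E$). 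First I would dispose of the degenerate cases. Throughout we assume $M$ is loopless, exactly as the rest of Section~\ref{s:bergman} does: if $M$ has a loop then that loop persists as a loop of $M_w$ for every $w$ (loops are stable under restriction and contraction), so no $w$ is valid, the underlying set of the Bergman fan is empty, and $\Delta_M\equiv 0$, which is trivially balanced. Similarly, if some $G_j$ is not a flat of $M$ then $\Delta_M(\sigma)=0$ for every $\sigma\supseteq\sigma_{G_\bullet}$, so the balancing sum at $\sigma_{G_\bullet}$ vanishes. Hence we may assume $M$ is loopless and $G_1,\dots,G_{d-1}$ are flats of $M$.

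The combinatorial core is then the following. The chain $\emptyset\subsetneq G_1\subsetneq\cdots\subsetneq G_{d-1}\subsetneq E$ consists of $d+1$ flats in the rank $d+1$ lattice $L(M)$, hence has $d$ consecutive links across which the rank increases by a total of $r(E)-r(\emptyset)=d+1$; since a strict containment of flats strictly increases rank, exactly one link $[G_i,G_{i+1}]$ has $r(G_{i+1})-r(G_i)=2$ and all other links have rank jump $1$. Consequently $\Delta_M(\sigma_{G_\bullet^{+H}})=1$ if and only if $H$ is a flat of $M$ with $G_i\subsetneq H\subsetneq G_{i+1}$ for this unique $i$: inserting such an $H$ completes $G_\bullet$ to a maximal flag of flats of $M$, whereas no flat can be slotted into a link of rank jump $1$. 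By the description of minors in Section~\ref{s:operations}, the interval $[G_i,G_{i+1}]$ in $L(M)$ is the lattice of flats of the rank $2$ loopless matroid $(M|_{G_{i+1}})/G_i$ on ground set $G_{i+1}\setminus G_i$; its rank $1$ flats are the parallel classes of that matroid, so, writing $H_1,\dots,H_m$ for the flats of $M$ strictly between $G_i$ and $G_{i+1}$, the sets $H_1\setminus G_i,\dots,H_m\setminus G_i$ partition $G_{i+1}\setminus G_i$ (this is the flat-partition axiom applied to that rank $2$ minor).

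Finally I would assemble the balancing identity. Because every cone of $\Delta_{\U}$ is unimodular, $\sigma_{G_\bullet}$ is a face of each $\sigma_{G_\bullet^{+H_j}}$ and the primitive generator $v_{\sigma_{G_\bullet^{+H_j}}/\sigma_{G_\bullet}}$ is simply the image $\bar e_{H_j}$ of $e_{H_j}$ in $N/N_{\sigma_{G_\bullet}}$; moreover $\bar e_{G_k}=0$ for all $k$, since $N_{\sigma_{G_\bullet}}$ is the saturated sublattice spanned by $e_{G_1},\dots,e_{G_{d-1}}$. Writing $e_{H_j}=e_{G_i}+e_{H_j\setminus G_i}$ and using the partition above,
\[\sum_{\sigma\supseteq\sigma_{G_\bullet}}\Delta_M(\sigma)\,v_{\sigma/\sigma_{G_\bullet}}=\sum_{j=1}^m \bar e_{H_j}=\sum_{j=1}^m \bar e_{H_j\setminus G_i}=\bar e_{G_{i+1}\setminus G_i}=\bar e_{G_{i+1}}-\bar e_{G_i}=0\]
in $N/N_{\sigma_{G_\bullet}}$, which is the balancing condition at $\sigma_{G_\bullet}$. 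Since $\sigma_{G_\bullet}$ was an arbitrary codimension $n-d+1$ cone, $\Delta_M$ is a Minkowski weight. The only delicate point is the identification of which neighbouring cones carry weight $1$ — that inserting a flat saturates the flag — together with the standard fact that the rank $1$ flats of a loopless matroid partition its ground set; the rest is formal bookkeeping. (Alternatively, in the representable case one could identify $\Delta_M$ with the cocycle associated to the proper transform $\widetilde{\P(V)}$ and invoke Lemma~\ref{l:assoccyc}, but the combinatorial argument above has the virtue of covering non-representable matroids as well.)
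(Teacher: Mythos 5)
Your argument is correct and is essentially the paper's own proof: you check balancing at a codimension-one cone of the Bergman fan, observe that the insertable flats are exactly those filling the unique rank-two gap, invoke the partition property of Definition~\ref{d:flats}~(3) applied to the relevant restriction/contraction, and conclude that the weighted sum of the $e_{H_j}$ lies in $\Span(e_{G_i},e_{G_{i+1}})\subseteq N_{\sigma_{G_\bullet}}$, exactly as in the paper (which silently restricts to flags of flats and to loopless $M$, the two degenerate cases you spell out). One small caveat: your parenthetical claim that $\Delta_M\equiv 0$ when $M$ has a loop does not follow from the paper's flag-of-flats definition of the weight (a loopy matroid still has flags of proper flats, and the resulting weight need not even be balanced), but since you assume $M$ loopless throughout, as the paper implicitly does, this aside does not affect the proof.
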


\begin{proof}
Let $\sigma_{G_\bullet}$ be a codimension $1$ cone in $\Delta_M$.  Then $\sigma_{G_\bullet}$ corresponds to a flag of flats of length $d-1$ of the form
\[G_{\bullet}=\{\emptyset\subsetneq F_1 \subsetneq \dots\subsetneq F_{j-1}\subsetneq F_{j+1}\subsetneq \dots\subsetneq F_d\}\]
where $r(F_i)=i$.
We restrict $M$ to $F_{j+1}$ since any cone containing $\sigma_{G_\bullet}$ corresponds to a flag of flats refining $G_\bullet$.  The flats $F'_1,\dots,F'_l$ of $M|_{F_{j+1}}$ properly containing $F_{j-1}$ are exactly the flats that can be inserted in $G_\bullet$  to obtain a flag of flats of length $d$.  The sets $F'_k\setminus F_{j-1}$ partition $F_{j+1}\setminus F_{j-1}$ by Definition \ref{d:flats}.   Let $(F_{\bullet})_k$ be the flag of flats given by inserting $F'_k$ into $G_\bullet$.  Therefore, 
$u_{\sigma_{(F_\bullet)_k}/\sigma_{G_\bullet}}=e_{F'_k}$.  From 
\[\sum_{k=1}^l \Delta_M(\sigma_{(F_\bullet)_k}) u_{\sigma_{(F_\bullet)_k}/\sigma_{G_\bullet}} =
\sum_{k=1}^l e_{F'_k}\in\Span(e_{F_{j-1}},e_{F_{j+1}})\subseteq N_{\sigma_{G_\bullet}},\]
we see that $\Delta_M$ is a Minkowski weight.
\end{proof}

The following is straightforward:
\begin{lemma} \label{l:bergmanassociated} Let $V\subset\k^{n+1}$ be a representation of a simple rank $d+1$ matroid on $\{0,1,\dots,n\}$.  Then the Minkowski weight $\Delta_M$ is the associated cocycle of the proper transform $\widetilde{\P(V)}\subset X(\Delta_\U)$.
\end{lemma}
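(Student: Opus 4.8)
The plan is to compute the associated cocycle of $\widetilde{\P(V)}$ directly from its definition and match it against $\Delta_M$. Recall that the associated cocycle of a codimension-$(n-d)$ subvariety $Y$ assigns to a cone $\sigma\in\Delta_\U^{(n-d)}$ the number $\deg\big([Y]\cdot[V(\sigma)]\big)$. Since $\widetilde{\P(V)}$ intersects the torus orbits of $X(\Delta_\U)$ properly (noted just after the definition of proper transform), for each $d$-dimensional cone $\sigma_{F_\bullet}$ the intersection $\widetilde{\P(V)}\cap V(\sigma_{F_\bullet})$ has the expected dimension $d+(n-d)-n=0$, so the degree is literally a (weighted) point count. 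I would first argue that this count is $0$ unless $F_\bullet$ is a flag of flats of $M$, and then that when $F_\bullet$ \emph{is} a flag of flats the count is exactly $1$; by the Fulton--Sturmfels identification of $A^{n-d}(X(\Delta_\U))$ with Minkowski weights, establishing these two facts on all top cones $\sigma_{F_\bullet}$ of $\Delta_\U$ suffices.

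For the vanishing part, I would use the description of $M_w$ from Subsection~\ref{ss:defbergman}: if $w$ lies in the relative interior of $\sigma_{F_\bullet}$ and $F_\bullet$ is \emph{not} a flag of flats, then by Lemma~\ref{l:validflats} the matroid $M_w$ has a loop, i.e.\ $w$ is not valid, i.e.\ $\sigma_{F_\bullet}$ is not contained in the underlying set of the Bergman fan of $V$. Geometrically, $V(\sigma_{F_\bullet})$ is a torus-invariant subvariety lying over a coordinate-flat stratum in $\P^n$ that $\P(V)^\circ$ avoids asymptotically; more precisely, one checks that the orbit $\O_{\sigma_{F_\bullet}}$ does not meet $\widetilde{\P(V)}$. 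Here the clean way is to invoke Proposition~\ref{} — the Sturmfels result that the underlying set of the Bergman fan equals the logarithmic limit set (tropicalization) of any realization of $M$ — together with the fact (from Lemma~\ref{l:assoccyc} and the surrounding discussion, or directly from \cite{KTT}) that when $Y$ meets orbits properly its associated cocycle is supported on the tropicalization of $Y^\circ$. Thus the support of the associated cocycle of $\widetilde{\P(V)}$ is contained in the set of cones $\sigma_{F_\bullet}$ with $F_\bullet$ a flag of flats, giving the $0$ case.

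For the value $1$ on a genuine flag of flats $F_\bullet=\{\emptyset\subsetneq F_1\subsetneq\cdots\subsetneq F_d\}$ with $r(F_i)=i$, I would show $\widetilde{\P(V)}$ meets $V(\sigma_{F_\bullet})$ transversally in a single reduced point. One approach: pick $w$ in the relative interior of $\sigma_{F_\bullet}$ and pass to the Gröbner-type degeneration $\operatorname{in}_w V$ over $\k[[t]]$ (the ``aside'' degeneration mentioned in Subsection~\ref{ss:defbergman}); the component meeting the big torus is a linear space whose matroid is $M_w=\bigoplus_i (M|_{F_i})/F_{i-1}$, which is a direct sum of \emph{rank-one} loopless matroids because each step $r(F_i)-r(F_{i-1})=1$. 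A rank-one loopless uniform-type matroid corresponds to a single point of the corresponding projective line, so the multiplicity is $1$. Equivalently — and this is the route I'd actually write out — one uses the blow-up description: $\widetilde{\P(V)}$ is the iterated blow-up of $\P(V)$ along its coordinate-flat intersections, the stratum $V(\sigma_{F_\bullet})$ is the closure of the orbit corresponding to the chain, and one verifies by a local coordinate computation on the last blow-up chart that the proper transform meets this stratum once, transversally. The fact that $M$ is simple guarantees the coordinate hyperplanes cut $\P(V)$ in distinct hyperplanes, so no unexpected higher multiplicities or embedded components appear.

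The main obstacle is the transversality/multiplicity-one computation in the second half: making rigorous that $\deg\big([\widetilde{\P(V)}]\cdot[V(\sigma_{F_\bullet})]\big)=1$ rather than merely nonzero. The cleanest resolution is to reduce it, via Lemma~\ref{l:assoccyc}, to the known fact that the tropicalization of a linear space (equivalently the Bergman fan with its fine structure) carries all weights equal to $1$ — this is exactly the content of the Sturmfels/Ardila--Klivans description invoked in Subsection~\ref{ss:defbergman} — so that ``straightforward'' in the lemma statement is justified by citing that the Bergman fan, as the tropical cycle of $\P(V)^\circ$, is multiplicity-free, and the associated cocycle of a subvariety meeting orbits properly \emph{is} its tropicalization as a weighted balanced complex.
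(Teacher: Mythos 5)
Your proposal is sound, and since the paper states this lemma without proof (it is introduced only with ``the following is straightforward''), there is no official argument to compare against; what you outline is the natural one. The vanishing half is handled correctly: because $\widetilde{\P(V)}$ meets all torus orbits properly, its intersection with $V(\sigma_{F_\bullet})$ for a $d$-dimensional cone can only occur in the open orbit $\O_{\sigma_{F_\bullet}}$ (intersections with deeper orbits have negative expected dimension), and that orbit is missed unless the relative interior of $\sigma_{F_\bullet}$ lies in the tropicalization of $\P(V)^\circ$, i.e.\ unless $F_\bullet$ is a flag of flats by Lemma~\ref{l:validflats}; combined with the identification of the associated cocycle with the tropical cycle from \cite{KTT,ST}, this gives the value $0$ off the Bergman fan.

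One caution about the step you call the ``cleanest resolution.'' The Sturmfels result quoted in Subsection~\ref{ss:defbergman} identifies only the \emph{underlying set} of the Bergman fan with the logarithmic limit set; it says nothing about weights, and within this paper the statement ``$\Delta_M$ is the tropicalization of $\P(V)^\circ$ with all multiplicities $1$'' is precisely what Lemma~\ref{l:bergmanassociated} is being used to establish (the sentence immediately following it). So citing multiplicity-freeness of tropicalized linear spaces as the justification is circular in this context, even though it is a true external fact. Either of your other two routes closes the gap and should be the one written out: for $w$ in the relative interior of $\sigma_{F_\bullet}$, the initial degeneration $\init_w V$ is cut out by initial forms of linear forms, hence is again a torus translate of a linear subspace, irreducible and reduced, so the weight on $\sigma_{F_\bullet}$ (computed as the multiplicity of the initial degeneration, or equivalently as the intersection number $\deg([\widetilde{\P(V)}]\cdot[V(\sigma_{F_\bullet})])$ after a generic torus translation) equals $1$; the local computation in a blow-up chart gives the same conclusion. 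Note also that simplicity is not really what controls the multiplicity here---looplessness is what is needed, and parallel elements cause no harm---so that remark can be dropped or rephrased.
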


It follows that the Bergman fan is the same thing as the tropicalization of $\P(V)^\circ$.  
In less technical terms, the Minkowski weight records the toric strata that $\widetilde{\P(V)}$ intersects.  These corresponds to flags of flats of $M$.  

It turns out that among tropicalizations, the Bergman fans exactly correspond to the tropicalizations of linear subspaces.  This corresponds to a theorem that emerged in a discussion of Mikhalkin and Ziegler and which was written down in \cite{KPReal}.  See \cite{Huhthesis} for another proof perhaps more suitable for the statement here.  We call this theorem, ``the duck theorem'' in the sense that {\em if it looks like a duck, swims like a duck, and quacks like a duck, then it is probably a duck}:

\begin{theorem} Let $Y^\circ\subset (\k^*)^n\subset X(\Delta_{U_{\U}})$ be a variety whose associated cocycle is $\Delta_M$, the Bergman fan of a matroid $M$.  Then $Y^\circ=\P(V)^\circ$ where $V$ is a subspace realizing the matroid $M$.
\end{theorem}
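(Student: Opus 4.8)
The plan is to realize $Y^\circ$ as the torus part of a linear subspace of $\P^n$ via the blowdown $\pi_1\colon X(\Delta_\U)\to\P^n$, to prove that subspace is linear by a degree computation on the permutohedral variety, and then to recover the matroid from the Bergman fan.

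Write $\overline Y\subset X(\Delta_\U)$ for the closure of $Y^\circ$. Since $Y^\circ$ is a variety and its associated cocycle is the codimension-$(n-d)$ Minkowski weight $\Delta_M$, $Y^\circ$ is irreducible of dimension $d$; if one does not build irreducibility into the word "variety", this still follows because $\Delta_M$ is connected in codimension one (any two maximal flags of flats are joined by elementary exchanges), so the effective weight $\Delta_M$, all of whose multiplicities equal $1$, admits no nontrivial decomposition into effective Minkowski weights, forcing $\overline Y$ to be irreducible. Because $\pi_1$ is induced by the identity map $N\to N$, it is the identity on the common big open torus $(\k^*)^n$; hence $\pi_1(Y^\circ)=Y^\circ$ inside $(\k^*)^n$, and $Z:=\overline{\pi_1(Y^\circ)}\subset\P^n$ is an irreducible $d$-dimensional subvariety with $Z\cap(\k^*)^n=Y^\circ$ that is not contained in any coordinate hyperplane.

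Next I compute $\deg Z$. By Lemma~\ref{l:assoccyc}, $[\overline Y]=\Delta_M\cap[X(\Delta_\U)]$ in $A_d(X(\Delta_\U))$. The hyperplane class $H$ of $\P^n$ pulls back under $\pi_1$ to $\alpha=\pi_1^*\min(0,w_1,\dots,w_n)$ (Example~\ref{e:hyperplaneclass}), and $\pi_{1*}[\overline Y]=[Z]$ since $\pi_1$ maps $\overline Y$ birationally onto $Z$; so by the projection formula
\[
\deg Z=\deg\big([Z]\cdot H^d\big)=\deg\big([\overline Y]\cdot\alpha^d\big)=\deg\big(\alpha^d\cup\Delta_M\big),
\]
the value of the codimension-$n$ Minkowski weight $\alpha^d\cup\Delta_M$ on the zero cone. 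Thus the whole theorem reduces to the purely combinatorial identity $\deg(\alpha^d\cup\Delta_M)=1$, which I would obtain from
\[
\alpha\cup\Delta_M=\Delta_{\Trunc^d(M)}
\]
— capping with the hyperplane class of the permutohedral variety truncates the matroid, the combinatorial shadow of Lemma~\ref{l:trunc} — proved by feeding the description of $\alpha$ and the partition axiom \eqref{iflat:3} of Definition~\ref{d:flats} into the cap-product formula for $\iota^*\alpha\cup c$. Iterating this $d$ times brings us to $\deg(\Delta_{\Trunc^1(M)})=\deg(\Delta_{U_{1,n+1}})=1$, the Bergman fan of a loopless rank-$1$ matroid being the single cone $\{0\}$ carrying weight $1$.

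A degree-$1$ irreducible $d$-dimensional subvariety of $\P^n$ is a linear subspace, so $Z=\P(V)$ for a $(d+1)$-dimensional subspace $V\subseteq\k^{n+1}$ not lying in a coordinate hyperplane, and $\P(V)^\circ=\P(V)\cap(\k^*)^n=Z\cap(\k^*)^n=Y^\circ$, which is the first assertion. Finally, $\overline Y=\widetilde{\P(V)}$, so its associated cocycle is $\Delta_{M(V)}$ for $M(V)$ the matroid of $V$ by Lemma~\ref{l:bergmanassociated}; comparing with the hypothesis gives $\Delta_{M(V)}=\Delta_M$ as weighted fans, and since each proper nontrivial flat $F$ is read off from the ray $e_F=\sum_{i\in F}e_i$ spanning the corresponding one-dimensional cone, and the Bergman fan is the cone over the order complex of the lattice of flats (Section~\ref{s:bergman}), we conclude $M(V)=M$; thus $V$ realizes $M$. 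The one place the argument does real work is the degree identity $\deg(\alpha^d\cup\Delta_M)=1$: it is immediate from Lemmas~\ref{l:trunc} and~\ref{l:bergmanassociated} when $M$ is representable, but since representability of $M$ is precisely part of the conclusion, it must instead be established combinatorially through $\alpha\cup\Delta_M=\Delta_{\Trunc^d(M)}$.
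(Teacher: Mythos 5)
The paper does not actually prove this theorem; it quotes it from \cite{KPReal} and \cite{Huhthesis}, so there is no internal proof to compare with. Judged on its own, your argument is correct, and it is essentially the argument of Huh's thesis (Katz--Payne prove the same degree computation in tropical language): identify $[\overline{Y^\circ}]=\Delta_M\cap[X(\Delta_{\U})]$ via Lemma~\ref{l:assoccyc}, push forward by $\pi_1$ and use the projection formula to get $\deg Z=\deg(\alpha^d\cup\Delta_M)$, establish $\deg(\alpha^d\cup\Delta_M)=1$ by iterating $\alpha\cup\Delta_M=\Delta_{\Trunc^d(M)}$ (Lemma~\ref{l:alpha}, whose proof in Section~12 is purely combinatorial, so you are right that no circular appeal to representability occurs), conclude $Z=\P(V)$ because a degree-one irreducible $d$-dimensional subvariety of $\P^n$ is linear, and recover $M(V)=M$ from equality of the Bergman weights since every proper flat occurs in some maximal flag of flats. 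Two small points deserve tightening. First, your parenthetical route to irreducibility is the only loosely justified step: it needs that the associated cocycle of each top-dimensional component of $Y^\circ$ is a nonzero \emph{effective} Minkowski weight (this uses generic torus translates meeting each orbit properly, as in Fulton--MacPherson--Sottile--Sturmfels, not just general intersection theory on a smooth variety) and that maximal flags of flats are gallery-connected (shellability of the order complex of a geometric lattice); if ``variety'' is taken to mean irreducible, as is standard here, you can simply drop this. Second, Lemma~\ref{l:bergmanassociated} is stated in the paper for \emph{simple} matroids, whereas your $M$ and $M(V)$ need only be loopless; the lemma does hold in that generality (simplicity is not used in an essential way), but you should note this rather than cite it verbatim.
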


This theorem can be used to come up with counterexamples to questions about tropical lifting, that is, to determine whether a Minkowski weight is the associated cocycle of an algebraic subvariety of $(\k^*)^n$.  By starting with a non-representable matroid,  one can produce a Minkowski weight $\Delta_M$ on $X(\Delta_\U)$ which is is not the associated cocycle of any subvariety of $X(\Delta_\U)$.  Indeed if such a subvariety existed, it would be the closure of $\P(V)^\circ$ where $V$ is a representation of $M$.  

In addition, this theorem can be used to study which homology classes in $X(\Delta_{\U})$ can be realized by irreducible subvarieties.  In \cite{Huhthesis}, it is shown that the Bergman class of a loopless matroid of rank $d+1$, considered as cohomology class is nef in the sense that it has non-negative intersection with all effective cycles of complementary dimension.  Moreover, it generates an extremal ray of the nef cone in dimension $d$.  It is effective in that it can be written as the sum of effective cycles.  However, it can  be represented by an irreducible subvariety over $\k$ if and only if the matroid is representable over $\k$.  This shows that the study of the realizability of homology classes in toric varieties is heavily dependent on the ground field and is as complicated as the theory of representability of matroids.

\subsection{$(r_1,r_2)$-Truncation of Bergman fans}

One advantage of working with Minkowski weights in $\Delta_{\U}$ rather than matroids is that they allow a new operation, $(r_1,r_2)$-truncation as developed by Huh and Katz. More details can be found in \cite{Huhthesis}.

\begin{definition}
Let $M$ be a matroid of rank $d+1$.  Let $r_1,r_2$ be integers with $1\leq r_1\leq r_2\leq d+1$.  The {\em $(r_1,r_2)$-truncation of $M$} is the Minkowski weight of dimension $r_2-r_1+1$, $\Delta_{M[r_1,r_2]}$ on $\Delta_\U$ defined as follows: if $\sigma_{F_\bullet}$ is the cone determined by 
\[F_{\bullet}=\{F_{r_1}\subsetneq F_{r_1+1} \subsetneq \ldots \subsetneq F_{r_2}\}\]
then $\Delta_{M[r_1,r_2]}(\sigma_{F_\bullet})$ is
\begin{enumerate}
\item $|\mu(\emptyset, F_{r_1})|$ if each $F_i$ is a flat of $M$ of rank $i$ or
\item $0$ otherwise.
\end{enumerate}
\end{definition}

Note that the $(1,r_2)$-truncation of $M$ is the Bergman fan of the matroid $\Trunc^{r_2+1}(M)$.  However, if $r_1\geq 2$, the $(r_1,r_2)$-truncation of $M$ is not the Bergman fan of any matroid.  It is not obvious that $\Delta_{M[r_1,r_2]}$ is indeed a Minkowski weight.  This will follow from the following lemma whose proof we will defer until subsection \ref{ss:alphabeta}.

\begin{proposition} \label{p:alphabeta} We have the following relation among Minkowski weights:
\[\alpha^{d-r_2}\beta^{r_1-1}\cup\Delta_M=\Delta_{M[r_1,r_2]}.\]
Moreover, we have the following degree computations:
\begin{enumerate}
\item $\deg(\alpha\cup \Delta_{M[r,r]})=\mu^{r-1}$
\item $\deg(\beta\cup \Delta_{M[r,r]})=\mu^r$
\end{enumerate}
where $\mu^r$ is a coefficient of the reduced characteristic polynomial.
\end{proposition}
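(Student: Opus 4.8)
The plan is to reduce everything to repeated application of the single-step truncation formula $\alpha\cup\Delta_N$ and $\beta\cup\Delta_N$ for Bergman-type Minkowski weights, using the combinatorial cup-product formula for piecewise linear functions recorded before Lemma 10.?? (the $\iota^*\alpha\cup c$ formula from \cite{AR}). The central identity to establish first is the one-step relation: for a loopless matroid $N$ of rank $e+1$,
\begin{equation*}
\alpha\cup \Delta_N = \Delta_{\Trunc^e(N)},\qquad
\beta\cup\Delta_N = \sum_{\substack{F \text{ a rank-}1\text{ flat of }N}} |\mu(\emptyset,F)|\,\Delta_{(N/F)[\cdot]},
\end{equation*}
where the second weight is, cone by cone on $\sigma_{F_\bullet}$ with $F_\bullet=\{F_1\subsetneq\cdots\subsetneq F_e\}$, equal to $|\mu(\emptyset,F_1)|$ if every $F_i$ is a flat of rank $i$ and $0$ otherwise — i.e. exactly the $(2,e)$-truncation pattern. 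The $\alpha$-computation is essentially the observation that $\alpha$ is the proper-transform-of-a-hyperplane class (Example \ref{e:hyperplaneclass}), and capping a Bergman fan with it corresponds geometrically to intersecting $\widetilde{\P(V)}$ with a generic hyperplane, which by Lemma \ref{l:trunc} gives the truncation; combinatorially one just runs the $\iota^*\alpha\cup c$ formula on a codimension-one cone $\sigma_{G_\bullet}$ of $\Delta_{\Trunc^e(N)}$ and checks that the linear-function contributions telescope to $1$, using that $\alpha=\min(0,w_1,\dots,w_n)$ is linear with slope $0$ on the cones $\sigma_{F_\bullet}$ coming from flags of flats (since $e_E=0$). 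The $\beta$-computation is the dual statement via the Cremona symmetry $-\Delta_\U=\Delta_\U$ and $e_S=-e_{E\setminus S}$, together with Weisner's theorem (Lemma \ref{l:weisner}): applying $\beta=\min(0,-w_1,\dots,-w_n)$ peels off the \emph{top} of each flag rather than refining, and the Möbius coefficients that appear are governed by the alternating-sum identity $\mu(\emptyset,F)=-\sum_{a\notin F'\lessdot F}\mu(\emptyset,F')$.

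Granting the one-step relation, the main identity $\alpha^{d-r_2}\beta^{r_1-1}\cup\Delta_M=\Delta_{M[r_1,r_2]}$ follows by induction on $d-r_2$ and then on $r_1-1$. Applying $\alpha$ a total of $d-r_2$ times truncates the rank from $d+1$ down to $r_2+1$, producing $\Delta_{\Trunc^{r_2+1}(M)}=\Delta_{M[1,r_2]}$ — this is the case $r_1=1$, matching the remark in the text that the $(1,r_2)$-truncation is the Bergman fan of $\Trunc^{r_2+1}(M)$. Then applying $\beta$ a total of $r_1-1$ times strips the bottom $r_1-1$ flats from each flag of flats, and the key bookkeeping is that after $k$ applications of $\beta$ the coefficient on a surviving cone $\sigma_{\{F_{k+1}\subsetneq\cdots\subsetneq F_{r_2}\}}$ is $\sum |\mu(\emptyset,F_1)|$ summed over saturated chains $F_1\lessdot\cdots\lessdot F_{k}\lessdot F_{k+1}$ of flats below $F_{k+1}$; by the same Weisner/Möbius-inversion argument used in the proof of Lemma \ref{l:reducedcoeff}, this telescoping sum collapses to the single value $|\mu(\emptyset,F_{r_1})|$, which is exactly the definition of $\Delta_{M[r_1,r_2]}$. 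In particular this shows $\Delta_{M[r_1,r_2]}$ is a genuine Minkowski weight, since it is an iterated cup product of the Minkowski weight $\Delta_M$ with the piecewise linear classes $\alpha,\beta$ (the cup product of Minkowski weights being a Minkowski weight by the lemma preceding this one).

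For the two degree formulas, specialize $r_1=r_2=r$, so $\Delta_{M[r,r]}$ is a one-dimensional Minkowski weight supported on the rays $\R_{\geq 0}e_F$ for rank-$r$ flats $F$, with weight $|\mu(\emptyset,F)|$. Then $\deg(\alpha\cup\Delta_{M[r,r]})$ is computed by the $\iota^*\alpha\cup c$ formula on the zero cone: one gets $\sum_{F\in L(M)_r}|\mu(\emptyset,F)|\cdot(\text{slope of }\alpha\text{ along }e_F)$, and since $\alpha=\min(0,w_1,\dots,w_n)$ evaluates on $e_F$ (for $F\neq E$, so $F$ omits some coordinate, and $F\ni$ some coordinates among $1,\dots,n$ or not) to a fixed value, this reduces to $\pm\sum_{F\in L(M)_r}\mu(\emptyset,F)$, which by Lemma \ref{l:reducedcoeff} equals $\mu^{r-1}$ up to the sign bookkeeping built into the $(-1)^k$'s there; similarly the $\beta$ version, using $\beta$'s slopes, gives $\mu^r$ via the second equality in Lemma \ref{l:reducedcoeff} (the one involving rank $k+1$ flats containing a fixed element $a$). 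One must be slightly careful to choose the basepoint element $a\in E$ consistently and to track that $\alpha$ "sees" $0\in E$ while $\beta$ "sees" the reciprocal side, but after fixing conventions this is a direct substitution.

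The hard part will be the one-step $\beta$ relation: unlike $\alpha$, which has a transparent geometric meaning as a generic hyperplane section and whose combinatorial effect (refining a flag by one more flat) is clean, the class $\beta$ acts through the Cremona-transformed fan, so proving that capping with $\beta$ deletes the \emph{minimal} flat of a flag and introduces precisely the Möbius weight $|\mu(\emptyset,F_1)|$ requires carefully matching the $\iota^*\beta\cup c$ formula against Weisner's theorem. This is where the sign conventions ($e_S=-e_{E\setminus S}$, $\beta=-\alpha\circ(-1)$) and the alternating signs in the definition of the reduced characteristic polynomial all have to be reconciled simultaneously; I expect this reconciliation, rather than any deep new idea, to be the bulk of the work, and it is presumably why the author defers the proof to subsection \ref{ss:alphabeta}.
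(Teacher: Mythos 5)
Your overall route is the same as the paper's: prove one-step cup-product lemmas for $\alpha$ and $\beta$ using the $\iota^*\alpha\cup c$ formula of Allermann--Rau, iterate, and then compute the two degrees at the zero-dimensional cone via Lemma~\ref{l:reducedcoeff}. The $\alpha$-step and the degree computations are essentially correct as sketched. The genuine gap is in how you iterate $\beta$. Your ``key bookkeeping'' claim --- that after $k$ applications of $\beta$ the coefficient on the cone of $\{F_{k+1}\subsetneq\cdots\subsetneq F_{r_2}\}$ is $\sum|\mu(\emptyset,F_1)|$ over \emph{all} saturated chains $F_1\lessdot\cdots\lessdot F_k\lessdot F_{k+1}$, and that this ``telescopes'' to $|\mu(\emptyset,F_{k+1})|$ by Weisner --- is false. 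Since every atom of a loopless matroid has $|\mu(\emptyset,F_1)|=1$, your sum is just the number of saturated chains below $F_{k+1}$, which in general exceeds $|\mu(\emptyset,F_{k+1})|$: already for $M=U_{3,3}$ and the rank-two flat $F=\{0,1\}$ there are two such chains while $|\mu(\emptyset,F)|=1$, and a direct evaluation of the cup-product formula for $\beta\cup\Delta_M$ at the ray $e_F$ gives $1$, not $2$. Weisner's theorem (Lemma~\ref{l:weisner}) is a signed sum over the lower covers \emph{avoiding a fixed element} $a$, not over all lower covers, so it cannot collapse your chain count.

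The underlying misconception is modeling $\beta$ as distributing the previous weight over all ways of deleting the bottom flat. In the actual computation the correction term $\beta'_{F_\bullet}\bigl(\sum_{G_\bullet\gtrdot F_\bullet}\nu_{G_\bullet}u_{G_\bullet/F_\bullet}\bigr)$ of the cup-product formula cancels precisely the lower covers containing a distinguished element $a\in F_{k+1}$, so what survives is the partial sum $\sum_{a\notin F\lessdot F_{k+1}}|\mu(\emptyset,F)|$, and only then does Lemma~\ref{l:weisner} turn this into $|\mu(\emptyset,F_{k+1})|$. Consequently you must prove the one-step $\beta$ identity not just for Bergman fans $\Delta_N$ of matroids but for the already-truncated weights, carrying the coefficient $\nu_{F_\bullet}=|\mu(\emptyset,F_{\min})|$ through the computation --- this is exactly the paper's Lemma~\ref{l:beta}, $\beta\cup\Delta_{M[r_1,d]}=\Delta_{M[r_1+1,d]}$; with that statement in hand the iteration (in either order, since cup products commute, plus the easy identification of $\Delta_{\Trunc^{r_2+1}(M)[k,r_2]}$ with $\Delta_{M[k,r_2]}$) is immediate and no chain-counting is needed. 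Relatedly, your displayed formula $\beta\cup\Delta_N=\sum_F|\mu(\emptyset,F)|\,\Delta_{(N/F)[\cdot]}$ over rank-one flats has the same overcounting defect (it would assign to a flag starting at $F_2$ the number of atoms below $F_2$ rather than $|\mu(\emptyset,F_2)|$) and contradicts the correct cone-by-cone description you give immediately afterwards.
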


Note that applying powers of $\alpha$ in the above makes geometric sense.  It states that $\alpha^{k}\cup\Delta_M=\Delta_{\Trunc^{d+1-k}(M)}$.  Because $\alpha$ is the class of a hyperplane in $\P^n$, we are intersecting a projective subspace $\P(V)$ with generic hyperplanes to obtain a projective subspace whose matroid is a truncation of $M$ by Lemma \ref{l:trunc}.

We have the following easy corollary:
\begin{corollary} \label{c:reddeg} There is the following intersection-theoretic description of coefficients of the reduced characteristic polynomial:
\label{l:reddeg} 
\[\deg(\alpha^{d-r}\beta^r\cup\Delta_M)=\mu^r.\]
\end{corollary}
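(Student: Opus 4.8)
The plan is to derive Corollary~\ref{c:reddeg} as an immediate consequence of Proposition~\ref{p:alphabeta}, so the only real content is recognizing that the two claims in that proposition, applied to a single rank, assemble into the stated degree formula. First I would specialize Proposition~\ref{p:alphabeta} to the case $r_1=r_2=r$. That gives the identity of Minkowski weights
\[
\alpha^{d-r}\beta^{r-1}\cup\Delta_M=\Delta_{M[r,r]}
\]
on $\Delta_\U$. Now I would cap with one more copy of $\beta$ and take degrees: using associativity of the cup product on the Chow ring $A^*(X(\Delta_\U))$ (equivalently, of the product of Minkowski weights), we get
\[
\deg(\alpha^{d-r}\beta^{r}\cup\Delta_M)=\deg\bigl(\beta\cup(\alpha^{d-r}\beta^{r-1}\cup\Delta_M)\bigr)=\deg(\beta\cup\Delta_{M[r,r]}).
\]
By part (2) of the degree computations in Proposition~\ref{p:alphabeta}, $\deg(\beta\cup\Delta_{M[r,r]})=\mu^r$, which is exactly the desired equality.

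The one point that needs a sentence of care is dimensional bookkeeping, to be sure that all the intersection products land in the right graded pieces and that taking the degree makes sense. Here $\Delta_M$ is a Minkowski weight of dimension $d$ (codimension $n-d$) on the $n$-dimensional permutohedral variety $X(\Delta_\U)$; $\alpha$ and $\beta$ are codimension-$1$ classes, so $\alpha^{d-r}\beta^{r-1}\cup\Delta_M$ has dimension $d-(d-r)-(r-1)=1$, matching the dimension $r_2-r_1+1=1$ of $\Delta_{M[r,r]}$ claimed in the definition of $(r_1,r_2)$-truncation. Capping with the final $\beta$ brings the dimension down to $0$, so $\deg(-)$ is defined. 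This confirms the computation is type-correct and no hidden degree shift occurs.

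I expect no genuine obstacle in this corollary itself: once Proposition~\ref{p:alphabeta} is granted (its proof is deferred to Subsection~\ref{ss:alphabeta}), the corollary is a two-line manipulation. The only mild subtlety — and the thing I would make explicit rather than gloss over — is that the deferred Proposition~\ref{p:alphabeta} is doing all the work, in particular that $\Delta_{M[r_1,r_2]}$ really is a Minkowski weight (which the remark preceding the proposition flags as non-obvious) and that its degree against $\alpha$ or $\beta$ computes a coefficient of the reduced characteristic polynomial $\overline{\chi}_M$; that identification ultimately rests on Lemma~\ref{l:reducedcoeff} and Lemma~\ref{l:weisner}. For the corollary as stated, I would simply cite Proposition~\ref{p:alphabeta} and perform the cup-product-and-degree step above.
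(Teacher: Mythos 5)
Your proposal is correct and follows the paper's own argument exactly: specialize Proposition~\ref{p:alphabeta} to $r_1=r_2=r$ to get $\alpha^{d-r}\beta^{r-1}\cup\Delta_M=\Delta_{M[r,r]}$, then cup with one more $\beta$ and apply the degree formula $\deg(\beta\cup\Delta_{M[r,r]})=\mu^r$. The added dimensional bookkeeping is a harmless (and welcome) clarification, but there is no substantive difference from the paper's two-line proof.
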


\begin{proof}
By applying the above proposition, we have
\[\deg(\alpha^{d-r}\beta^r\cup\Delta_M)=\deg(\beta\cup(\alpha^{d-r}\beta^{r-1}\cup\Delta_M))=\deg(\beta \cup \Delta_{M[r,r]})=\mu^r.\]
\end{proof}

\subsection{The Bergman fan as a cryptomorphic definition of a matroid}

Minkowski weights on $\Delta_{\U}$ are interesting combinatorial objects in their own right.  They contain matroids on $\{0,1,\dots,n\}$ as a specific subclass but are closed under the truncation operation defined above.  It would be an interesting exercise to generalize the matroid operations that we discussed in Section \ref{s:operations} to them.   Here, extensions are closely related to the notion of tropical modifications as introduced by Mikhalkin.  See \cite{Shaw} for more on tropical modifications.

It is a consequence of Proposition \ref{p:alphabeta} that if $c$ is a Minkowski weight associated to a rank $d+1$ matroid on $\{0,1,\dots,n\}$ then $\alpha^{d}\cup c$ is the Minkowski weight corresponding to the rank $0$ matroid.  Consequently, $\deg(\alpha^{d}\cup c)=1$.  The converse is true by the following theorem which was  noted by Mikhalkin, Sturmfels, and Ziegler as described by \cite{Oberwolfach} and proved by Fink \cite{FinkCycles}.

\begin{theorem} Let $c\in A^{n-d}(X(\Delta_{\U}))$ be a Minkowski weight of codimension $n-d$.  Then $c=\Delta_M$ for some matroid $M$ of rank  $d+1$ if and only $\deg(\alpha^{d}\cup c)=1$.
\end{theorem}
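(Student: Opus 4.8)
The plan is to prove both directions of the equivalence. The ``only if'' direction is immediate from Proposition~\ref{p:alphabeta}: taking $r_1=r_2=1$ would give $\alpha^{d-1}\cup\Delta_M = \Delta_{M[1,1]}$, but more directly, iterating the relation $\alpha^k\cup\Delta_M = \Delta_{\Trunc^{d+1-k}(M)}$ (the $(1,r_2)$-truncation remark) shows $\alpha^d\cup\Delta_M$ is the Bergman fan of $\Trunc^1(M)$, the rank $1$ truncation. A rank $1$ loopless matroid has a single flat of rank $1$ (the whole ground set after removing loops), so its Bergman fan is the $0$-dimensional fan $\{0\}$ with weight $1$; hence $\deg(\alpha^d\cup\Delta_M)=1$. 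One should be slightly careful that $M$ may have loops, but a Minkowski weight $\Delta_M$ as defined requires $M$ to be given; if $M$ has loops then $\Delta_M$ as a Bergman fan is typically taken with the loops' contribution — I would state the theorem (as the excerpt does) for the matroid attached to $c$ and verify the normalization on the loopless representative, noting that the rank $d+1$ is read off as $\dim(\Delta_M)+$(number of loops), so WLOG $M$ is loopless.

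For the ``if'' direction — the substantive content — suppose $c\in A^{n-d}(X(\Delta_{\U}))$ is a Minkowski weight with $\deg(\alpha^d\cup c)=1$. The strategy is to reconstruct a matroid from the support and weights of $c$. First I would analyze the structure forced by the hypothesis: $\alpha$ is the pullback of the hyperplane class on $\P^n$, and intersecting with $\alpha$ corresponds combinatorially to a ``truncation'' operation on weights on $\Delta_{\U}$ (collapsing the bottom of each flag). Iterating $d$ times and getting total degree $1$ forces, at each intermediate stage, a balancing/positivity phenomenon: I would show by downward induction on $k$ that $\alpha^k\cup c$ is the indicator weight of the order complex of a rank $(d-k+1)$ \emph{matroid-like} poset, i.e. that the cones $\sigma_{S_\bullet}$ on which $\alpha^k\cup c$ is nonzero are exactly flags in a lattice $\mathcal{L}$ satisfying the flat axioms of Definition~\ref{d:flats}, and that the weights are all $1$. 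The base case $k=d$ is the hypothesis $\deg=1$ together with effectivity; the inductive step uses the balancing condition for $c$ at codimension-$(n-d)$ cones to propagate the ``partition of $E\setminus F$ by covers'' axiom downward.

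Concretely, I would extract $\mathcal L$ as follows: let $\mathcal F$ be the collection of subsets $F\subsetneq E$ such that $e_F$ spans a ray in the support of $c$, together with $E$ itself (and $\emptyset$ if $c$ is supported there). The balancing condition on $c$ at a codimension-$(n-d)$ cone $\sigma_{G_\bullet}$ says precisely that $\sum c(\sigma)v_{\sigma/G_\bullet}=0$ in $N/N_{G_\bullet}$; because all the relevant generators are of the form $e_{F'}$ with $F'$ between two consecutive members of $G_\bullet$ and $N_{G_\bullet}$ is spanned by those members' $e$-vectors, this equation forces the relevant $F'$ to partition an interval — which is exactly flat axiom~\eqref{iflat:3}. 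Intersection axioms (closure under $\cap$) come from a symmetric balancing argument after one more truncation. The degree-$1$ normalization after $d$ truncations is what guarantees the weights never exceed $1$ and never vanish where they shouldn't: any excess multiplicity or extra top cone would contribute $\geq 2$ or break the count, so a strict-positivity / effectivity argument pins everything to $0$ and $1$. Having produced a matroid $M$ from $\mathcal L$, one checks $c=\Delta_M$ directly from the construction.

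\textbf{Main obstacle.} The hard part is the inductive bookkeeping in the ``if'' direction: showing that the support of $\alpha^k\cup c$ is genuinely the order complex of a matroid lattice and not some more general balanced complex. The balancing condition alone gives a ``tropical cycle'' which need not come from a matroid in general; it is the interaction of balancing with the very rigid combinatorics of $\Delta_{\U}$ (the $e_F$ rays) \emph{plus} the sharp degree constraint $\deg(\alpha^d\cup c)=1$ that forces the matroid structure — converting that soft geometric input into the combinatorial flat axioms is the crux. An alternative route to finesse this, which I would keep in reserve, is to invoke the duck theorem together with the Fink--Speyer / Fink cycle description: show $\alpha^d\cup c$ being a point class forces $c$ to be a fan tropical linear space, then cite the classification of fan tropical linear spaces as Bergman fans of matroids (this is essentially Fink's \cite{FinkCycles} result, so the theorem would follow by quoting it, modulo the $\deg=1$ characterization which is the new combinatorial content).
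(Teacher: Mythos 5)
Your ``only if'' half is correct and coincides with the paper's: by Proposition \ref{p:alphabeta} (equivalently Corollary \ref{c:reddeg}), $\deg(\alpha^{d}\cup\Delta_M)=\mu^{0}=1$, which is the same computation as your route through $\alpha^{k}\cup\Delta_M=\Delta_{\Trunc^{d+1-k}(M)}$. Be aware, though, that the paper does not prove the converse at all: it records the statement as observed by Mikhalkin--Sturmfels--Ziegler and proved by Fink, and simply cites \cite{FinkCycles}. So your ``reserve'' plan of quoting Fink is not an alternative shortcut --- it is the paper's entire treatment of the hard direction, and since the $\deg=1$ characterization \emph{is} Fink's theorem, quoting it leaves nothing ``modulo'' still to be supplied.

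Your primary reconstruction sketch for the converse has a genuine gap. A Minkowski weight is only an integer-valued function satisfying the balancing condition; there is no effectivity hypothesis in the statement. Hence the appeals to ``effectivity'' in your base case, and the ``strict-positivity / effectivity argument'' that is supposed to pin all weights to $0$ and $1$, are not available: negative weights could cancel in the degree count (so ``any excess multiplicity would contribute $\geq 2$'' fails), and ruling them out --- together with showing the nonzero weights all equal $1$ and that the support is the order complex of a geometric lattice --- is precisely the content of Fink's theorem, not an input to it. Similarly, balancing at a codimension-one cone of the support gives one linear relation $\sum_{\sigma\supset\tau} c(\sigma)v_{\sigma/\tau}=0$ with unknown, possibly negative coefficients; without already knowing these coefficients are equal and positive you cannot read off the partition axiom \eqref{iflat:3}, and closure of your proposed family $\mathcal{F}$ under intersection is only gestured at. The duck theorem also cannot help: it concerns an algebraic subvariety whose associated cocycle is a Bergman fan, and an abstract Minkowski weight comes with no subvariety to which it could be applied. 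As written, the hard direction is a plausible plan rather than a proof; the faithful way to match the paper is to prove the easy direction as you did and cite \cite{FinkCycles} for the converse.
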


\excise{We give the proof of the easy implication.
\begin{proof} Suppose that $c$ is the Bergman fan of a matroid $M$ of rank $d+1$.  Then by Proposition \ref{p:alphabeta}, 
\[\deg(\alpha^d\cup\Delta_M)=\deg(\alpha\cup \Delta_{M[1,1]})=\mu^0=1.\] 
\end{proof}}

The above theorem can be thought of as cryptomorphic definition of a matroid.  In the course of the proof, Fink gives an explicit recipe for finding the matroid polytope associated to $c$.

\section{Log-concavity of the characteristic polynomial}

\subsection{Proof of log-concavity}
We will prove that $\chi_M(q)$ is log-concave when $M$ is a representable matroid.  By an easy algebra computation, it suffices to show that the reduced characteristic polynomial $\overline{\chi}_M(q)$ is log-concave.

We will identify $\mu^k$ with intersection numbers on a particular algebraic variety.  Let $V\subset \k^{n+1}$ be a linear subspace representing $M$.  We will give the proof assuming Proposition \ref{p:alphabeta} for now.

\begin{proposition} \label{p:intersection} There is a complete irreducible algebraic variety $\widetilde{\P(V)}$ with nef divisors $\alpha,\beta$ such that 
\[\mu^k=\deg(\alpha^{d-k}\beta^k\cap [\widetilde{\P(V)}]).\]
\end{proposition}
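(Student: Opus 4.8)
The plan is to take $\widetilde{\P(V)}$ to be the proper transform of $\P(V)$ in the permutohedral variety $X(\Delta_{\U})$, as in Subsection \ref{ss:defbergman}, and to take $\alpha,\beta$ to be the pullbacks of the hyperplane classes from the two copies of $\P^n$ under $\pi_1,\pi_2$. This variety is irreducible (it is birational to $\P(V)$, hence irreducible since $V$ is a linear subspace), and it is complete since it is a closed subvariety of the complete toric variety $X(\Delta_{\U})$; here we use that $\P(V)$ is not contained in any coordinate hyperplane, so that $\P(V)^\circ$ is dense in $\P(V)$ and $\widetilde{\P(V)}$ genuinely is an iterated blow-up of $\P(V)$.

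First I would recall that $\widetilde{\P(V)}$ intersects the torus orbits of $X(\Delta_{\U})$ properly (noted after the definition of the proper transform), so by Lemma \ref{l:assoccyc} its associated cocycle $c$ satisfies $c\cap[X(\Delta_\U)]=[\widetilde{\P(V)}]$ in $A_d(X(\Delta_\U))$, and by Lemma \ref{l:bergmanassociated} this associated cocycle is precisely the Bergman Minkowski weight $\Delta_M$ (using that $M$ is simple — or, if $M$ is not simple, one first passes to its simplification, which does not change $\chi_M$ up to the relevant normalization; I would either assume simplicity or remark on this reduction). Then by the projection/associativity formula for cap products, $\deg(\alpha^{d-k}\beta^k\cap[\widetilde{\P(V)}]) = \deg\big(\alpha^{d-k}\beta^k\cap(\Delta_M\cap[X(\Delta_\U)])\big) = \deg\big((\alpha^{d-k}\beta^k\cup\Delta_M)\cap[X(\Delta_\U)]\big) = \deg(\alpha^{d-k}\beta^k\cup\Delta_M)$. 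Now apply Corollary \ref{c:reddeg}, which asserts exactly that $\deg(\alpha^{d-r}\beta^r\cup\Delta_M)=\mu^r$; this gives $\mu^k=\deg(\alpha^{d-k}\beta^k\cap[\widetilde{\P(V)}])$, as desired.

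It remains to check that $\alpha$ and $\beta$ are nef as divisor classes on $\widetilde{\P(V)}$. For this I would argue that $\alpha=\pi_1^*(\text{hyperplane on }\P^n)$ and $\beta=\pi_2^*(\text{hyperplane on }\P^n)$ are pullbacks of ample (hence nef) classes under morphisms $\pi_1,\pi_2:X(\Delta_\U)\to\P^n$, so they are nef on $X(\Delta_\U)$ (pullback of nef along a proper morphism is nef), and nefness is preserved under restriction to the closed subvariety $\widetilde{\P(V)}$. The identification of $\iota^*\alpha$ and $\iota^*\beta$ with the stated hyperplane/reciprocal-hyperplane classes was already recorded at the end of Subsection \ref{ss:defbergman}, so this step is essentially bookkeeping.

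The main obstacle I anticipate is not any single deep step but rather making the chain of identifications airtight: specifically, justifying the cap/cup compatibility $\alpha^{d-k}\beta^k\cap(\Delta_M\cap[X])=(\alpha^{d-k}\beta^k\cup\Delta_M)\cap[X]$ on the possibly singular toric variety $X(\Delta_\U)$ — though $X(\Delta_\U)$ is in fact smooth, being built from unimodular cones, so Poincaré duality and the displayed identity $c\cap(d\cap z)=(c\cup d)\cap z$ from the toric-varieties section apply directly — and handling the simplicity hypothesis if $M$ is not simple. The genuinely substantive input, Proposition \ref{p:alphabeta} and hence Corollary \ref{c:reddeg}, is being assumed here (its proof is deferred to Subsection \ref{ss:alphabeta}), so modulo that input the proposition is a short assembly of already-established lemmas.
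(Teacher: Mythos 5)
Your proposal is correct and follows essentially the same route as the paper: take $\widetilde{\P(V)}$ to be the proper transform in $X(\Delta_\U)$ with $\alpha,\beta$ the pullback classes, identify $\Delta_M\cap[X(\Delta_\U)]=[\widetilde{\P(V)}]$ via Lemma \ref{l:assoccyc} and Lemma \ref{l:bergmanassociated}, and conclude by Corollary \ref{c:reddeg}. The extra care you take with irreducibility, nefness via pullback, and the simplicity hypothesis is sound bookkeeping that the paper leaves implicit.
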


Log-concavity then follows by applying the  Khovanskii-Teissier inequality \cite[Example 1.6.4]{Lazarsfeld}:

\begin{theorem} \label{t:kt}
Let $X$ be a complete irreducible $d$-dimensional variety, and let $\alpha,\beta$ be nef divisors on $X$.
Then  $\deg((\alpha^{d-k}\beta^{k})\cap [X])$
is a log-concave sequence.
\end{theorem}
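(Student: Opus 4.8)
The plan is to deduce the statement from the Hodge index theorem on a surface. The first observation is that it suffices to establish the three--term inequality
\[
\deg\!\big(\alpha^{d-k}\beta^{k}\cap[X]\big)^2 \ \geq\ \deg\!\big(\alpha^{d-k+1}\beta^{k-1}\cap[X]\big)\cdot\deg\!\big(\alpha^{d-k-1}\beta^{k+1}\cap[X]\big)
\]
for each $k$, and for this one intersects everything in sight with the complementary--dimension class $\alpha^{d-k-1}\beta^{k-1}$. So I would first reduce: multiplying $\alpha$ and $\beta$ by positive integers scales all the numbers $a_j=\deg(\alpha^{d-j}\beta^{j}\cap[X])$ by fixed positive constants and hence preserves log-concavity; combining this with the fact that nef classes are limits of ample classes and that intersection numbers are polynomial (hence continuous) in the classes, I may assume $\alpha$ and $\beta$ are very ample. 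By Chow's lemma I may assume $X$ is projective (pullback of nef under a proper birational morphism is nef, and top intersection numbers are preserved by the projection formula), and by resolution of singularities or, in positive characteristic, by a de Jong alteration (which multiplies every $a_j$ by the same degree), I may assume $X$ is smooth projective.

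Next, with $X$ smooth projective and $\alpha,\beta$ very ample, I would cut $X$ down by $d-k-1$ general members of $|\alpha|$ and $k-1$ general members of $|\beta|$. By Bertini the resulting complete intersection $S$ is a smooth irreducible projective surface, and by construction
\[
(\alpha|_S)^2=a_{k-1},\qquad (\alpha|_S\cdot\beta|_S)=a_k,\qquad (\beta|_S)^2=a_{k+1}.
\]
Now the Hodge index theorem on $S$ says that the intersection form on $N^1(S)_{\mathbb R}$ has signature $(1,\rho-1)$; consequently, for any divisor classes $D,D'$ on $S$ with $D^2>0$ one has $(D\cdot D')^2\geq (D^2)(D'^2)$. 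Applying this with $D=\alpha|_S$, $D'=\beta|_S$ yields $a_k^2\geq a_{k-1}a_{k+1}$ whenever $a_{k-1}>0$.

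Finally I would clean up the degenerate cases. If $a_{k-1}=(\alpha|_S)^2=0$ — which, tracing back through the very-ample reduction, is the situation in which the original nef class $\alpha$ has some vanishing self-intersection along $S$ — I argue by continuity once more: replace $\alpha$ by $\alpha+\varepsilon H$ for a fixed ample $H$, apply the ample case just proved for every $\varepsilon>0$, and let $\varepsilon\to 0^{+}$; a symmetric argument covers vanishing of $a_{k+1}$. One then checks that a nonnegative sequence satisfying $a_k^2\geq a_{k-1}a_{k+1}$ is log-concave in the stated sense even when entries vanish, since a zero entry forces an initial or terminal run of zeros.

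The main obstacle is the geometric realization of the "cut down to a surface" step: one has to be sure that after passing to very ample classes the generic complete intersection $S$ really is an irreducible (smooth) surface, so that the Hodge index theorem genuinely applies and the restricted intersection numbers are the $a_j$'s. This is where Bertini together with the reduction to the smooth projective case does the essential work; once that is in place, the Hodge index theorem is the only nontrivial input and everything else is bookkeeping around approximation of nef by ample. (Alternatively, one could cite this directly as the Khovanskii--Teissier inequality, \cite[Example 1.6.4]{Lazarsfeld}.)
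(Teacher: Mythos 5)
Your proof is correct and follows essentially the same route as the paper: reduce to $X$ smooth and projective via resolution of singularities, Chow's lemma and the projection formula, pass from nef to ample by perturbation and continuity of intersection numbers, scale to very ample by homogeneity, cut down to an irreducible smooth surface by (Kleiman--)Bertini, and conclude with the Hodge index theorem. The only cosmetic difference is your closing $\varepsilon$-perturbation for the degenerate case, which is redundant once you have already reduced to very ample classes (there $(\alpha|_S)^2>0$ automatically), since the passage back to nef classes is already covered by the continuity step.
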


\begin{proof} We give an outline of the proof of this inequality.  
By resolution of singularities, Chow's lemma and the projection formula, we may suppose that $X$ is smooth and projective.
It suffices to show
\[\deg((\alpha^{d-k}\beta^{k})\cap [X])^2\geq \deg((\alpha^{d-k+1}\beta^{k-1})\cap [X])\deg((\alpha^{d-k-1}\beta^{k+1})\cap [X]).\]
By Kleiman's criterion \cite{Kleiman} and continuity of intersection numbers, we can perturb $\alpha$ and $\beta$ in $A^1(X)$ so that they are ample classes.  By homogeneity of the desired inequality, we can replace $\alpha$ and $\beta$ by $m\alpha$ and $m\beta$ for $m\in\Z_{\geq 1}$ so that they are very ample.  By the Kleiman-Bertini theorem \cite{KleimanBertini}, the intersection $\alpha^{d-k-1}\beta^{k-1}\cap [X]$  is represented by an irreducible smooth surface $Y$.  Now, we need only prove
\[\deg(\alpha\beta\cap [Y])^2\geq \deg(\alpha^2\cap [Y])\deg(\beta^2\cap [Y]).\]
By the Hodge index theorem, the intersection product on $A^1(Y)$ has signature $(1,n-1)$.  The restriction of the intersection product to the subspace spanned by $\alpha$ and $\beta$ is indefinite.  The desired inequality is exactly the non-positivity of the determinant of the matrix of the intersection product.
\end{proof}

We now give the proof of Proposition \ref{p:intersection}:
\begin{proof}
Let $M$ be represented by the linear subspace $V\subset\k^{n+1}$, and
let $\widetilde{\P(V)}$ be the proper transform of $\P(V)$ in $X(\Delta_\U)$.  Let $\alpha,\beta$ be as in Subsection \ref{ss:defbergman}.  Now, we know
\[\Delta_M\cap [X(\Delta_\U)]=\widetilde{\P(V)}\]
by  Lemma \ref{l:assoccyc} and Lemma \ref{l:bergmanassociated}.
Consequently, 
\[\deg(\alpha^{d-k}\beta^k\cap [\widetilde{\P(V)}])=\deg(\alpha^{d-k}\beta^k \cup \Delta_M)=\mu^k\]
where the last equality follows from Corollary \ref{c:reddeg}.
\end{proof}

By viewing $\alpha$ and $\beta$ as hyperplane classes on $\P^n$, we immediately have the following corollary:

\begin{corollary}
The cycle class of $[(\pi_1\times\pi_2)(\widetilde{\P(V)})]$ in $\P^n\times\P^n$ is
\[\mu^0[\P^d\times\P^0]+\mu^1[\P^{d-1}\times\P^1]+\dots+\mu^r[\P^0\times\P^d].\]
\end{corollary}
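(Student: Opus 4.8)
The plan is to obtain this directly from Proposition~\ref{p:intersection} together with the standard description of the Chow ring $A^*(\P^n\times\P^n)\cong\Z[H_1,H_2]/(H_1^{n+1},H_2^{n+1})$. First I would observe that $\pi_1\times\pi_2\colon X(\Delta_\U)\to\P^n\times\P^n$ is a morphism and that, by the construction of $\alpha$ and $\beta$ in Subsection~\ref{ss:defbergman}, one has $(\pi_1\times\pi_2)^*H_1=\alpha$ and $(\pi_1\times\pi_2)^*H_2=\beta$, since $\alpha$ is pulled back from the hyperplane class on the first $\P^n$ and $\beta$ from that on the second.

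Next I would check that $\pi_1\times\pi_2$ restricts to a birational morphism from $\widetilde{\P(V)}$ onto its image. This is because $\pi_1$ already maps $\widetilde{\P(V)}=\overline{\pi_1^{-1}(\P(V)^\circ)}$ birationally onto $\P(V)$, so the first coordinate of $\pi_1\times\pi_2$ restricted to $\widetilde{\P(V)}$ is birational, hence so is $\pi_1\times\pi_2|_{\widetilde{\P(V)}}$. Therefore $Z:=(\pi_1\times\pi_2)(\widetilde{\P(V)})$ is an irreducible subvariety of $\P^n\times\P^n$ of dimension $d$, and $(\pi_1\times\pi_2)_*[\widetilde{\P(V)}]=[Z]$ in $A_d(\P^n\times\P^n)$.

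Finally, writing $[Z]=\sum_{i=0}^{d} a_i\,[\P^{d-i}\times\P^i]$ in the basis of $A_d(\P^n\times\P^n)$ given by products of coordinate subspaces, I would extract the coefficient $a_i$ by capping with $H_1^{d-i}H_2^i$ and taking degrees (using that $[\P^{d-i}\times\P^i]$ corresponds to $H_1^{n-d+i}H_2^{n-i}$, so only the $i$-th term survives). By the projection formula,
\[
a_i=\deg\big(H_1^{d-i}H_2^i\cap[Z]\big)=\deg\big(\alpha^{d-i}\beta^i\cap[\widetilde{\P(V)}]\big)=\mu^i,
\]
where the last equality is Proposition~\ref{p:intersection}. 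This is exactly the asserted formula (with $r=d$).

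There is no real obstacle here beyond Proposition~\ref{p:intersection} itself; the one point that genuinely uses the geometry rather than formal nonsense is the birationality of $\pi_1\times\pi_2$ on $\widetilde{\P(V)}$, which is what guarantees that the pushforward of $[\widetilde{\P(V)}]$ is the class of the reduced image $[Z]$ and not a positive multiple of it. If one instead reads $[(\pi_1\times\pi_2)(\widetilde{\P(V)})]$ simply as the pushforward cycle class, the corollary is immediate from the projection formula and Proposition~\ref{p:intersection}.
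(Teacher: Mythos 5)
Your proposal is correct and is exactly the argument the paper intends: the corollary is stated as an immediate consequence of Proposition~\ref{p:intersection} by viewing $\alpha$ and $\beta$ as the pullbacks of the two hyperplane classes under $\pi_1\times\pi_2$, and your write-up simply makes explicit the projection formula and the coefficient extraction in $A_*(\P^n\times\P^n)$. Your added observation that $\pi_1\times\pi_2$ is birational onto its image (so the pushforward is the reduced image class, not a multiple) is a worthwhile point the paper leaves implicit, but it is not a different route.
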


Now, we outline Lenz's proof of Mason's conjecture in the representable case which relates the $f$-vector of a matroid to the characteristic polynomial of a well-chosen matroid.   We form the $f$-polynomial of $M$ by setting
\[f_M(q)=\sum_{i=0}^{d+1} f_iq^{d+1-i}.\]
The free co-extension of $M$ is the matroid
\[M\times e =(M^*+_E e)^*\]
for a new element $e$.  
Because $M$ is representable, after a possible extension of the field $\k$, so is $M\times e$.
Lenz proves the following formula for the characteristic polynomial of $M\times e$
\[(-1)^d\chi_{M\times e}(-q)=(1+q)f_M(q)\]
by considering various specializations of the rank-generating polynomial.  The log-concavity of $f_M$ then follows from the log-concavity of the the reduced characteristic polynomial of $M\times e$.

\subsection{Intersection theory computations} \label{ss:alphabeta}

Now, we will prove Proposition \ref{p:alphabeta}.  It will following from the following three lemmas:

\begin{lemma} \label{l:alpha} Let $M$ be a rank $d+1$ matroid on $E=\{0,\dots,n\}$.  Then,
\[\alpha\cup\Delta_M=\Delta_{M[1,d-1]}\]
in $A^*\big(X(\Delta_{\U})\big)$.
\end{lemma}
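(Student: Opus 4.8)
The plan is to compute the Chow cohomology class $\alpha\cup\Delta_M$ directly as a Minkowski weight on $\Delta_{\U}$, using the combinatorial cup-product formula for a piecewise linear function recorded in Section 10, and to check that it agrees value-by-value with the weight $\Delta_{M[1,d-1]}$; since $\alpha\cup\Delta_M$ is automatically a Minkowski weight and a Minkowski weight is determined by its values on cones, this suffices. Throughout, $M$ is loopless (as everywhere in this section); if $M$ has a loop both sides are the zero weight and there is nothing to prove. \textbf{Step 1 (locating the support).} A codimension $n-d+1$ cone of $\Delta_{\U}$ is $\sigma_{G_\bullet}$ for a flag of proper nonempty subsets $G_\bullet=\{G_1\subsetneq\cdots\subsetneq G_{d-1}\}$. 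In the formula for $(\alpha\cup\Delta_M)(\sigma_{G_\bullet})$ only cones $\sigma_{F_\bullet}$ with $F_\bullet$ a flag of flats of $M$ of length $d$ refining $G_\bullet$ contribute, and such an $F_\bullet$ is just $G_\bullet$ with one extra flat inserted; so each $G_i$ must be a flat. Because $M$ is loopless, $G_1\neq\emptyset$ forces $r(G_1)\geq 1$, and a length-$d$ chain of proper flats then has ranks exactly $1,\dots,d$, so $G_\bullet$ must be a flag of flats whose set of ranks is $\{1,\dots,d\}\setminus\{j\}$ for a single $j$. If $G_\bullet$ is not of this shape both $(\alpha\cup\Delta_M)(\sigma_{G_\bullet})$ and $\Delta_{M[1,d-1]}(\sigma_{G_\bullet})$ vanish, so from now on write $G_\bullet$ as $F_1\subsetneq\cdots\subsetneq F_{j-1}\subsetneq F_{j+1}\subsetneq\cdots\subsetneq F_d$ with $r(F_i)=i$, using the conventions $F_0=\emptyset$ and $F_{d+1}=E$, so $e_{F_0}=e_{F_{d+1}}=0$.

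\textbf{Step 2 (the local data of $\alpha$ and of the inserted rays).} Using the presentation $\alpha=\min(0,w_1,\dots,w_n)$ of Example~\ref{e:hyperplaneclass}, a short computation with the representative of $w=\sum_m t_m e_{F_m}$ in $\R^{n+1}/\R$ normalized to have vanishing $0$-th coordinate gives
\[\alpha\Big(\sum_m t_m e_{F_m}\Big)=-\sum_{m:\ 0\in F_m} t_m\qquad\text{on }\sigma_{F_\bullet},\]
so the linear form $\alpha_{\sigma_{F_\bullet}}$ sends $e_{F_m}$ to $-1$ if $0\in F_m$ and to $0$ otherwise. The flats one may insert into $G_\bullet$ to obtain a flag of flats of length $d$ are the rank $j$ flats $F'_1,\dots,F'_l$ with $F_{j-1}\subsetneq F'_k\subsetneq F_{j+1}$; by axiom~\eqref{iflat:3} of Definition~\ref{d:flats} the sets $F'_k\setminus F_{j-1}$ partition $F_{j+1}\setminus F_{j-1}$, and (exactly as in the proof that $\Delta_M$ is a Minkowski weight) the primitive generator $u_{\sigma_k/\sigma_{G_\bullet}}$ of the inserted ray is $e_{F'_k}$, these vectors being part of a lattice basis of $N$.

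\textbf{Step 3 (the computation).} Plugging into the cup-product formula,
\[(\alpha\cup\Delta_M)(\sigma_{G_\bullet})=-\sum_{k=1}^l \alpha_{\sigma_k}(e_{F'_k})+\alpha_{\sigma_{G_\bullet}}\Big(\sum_{k=1}^l e_{F'_k}\Big).\]
The partition identity gives $\sum_k e_{F'_k}=(l-1)e_{F_{j-1}}+e_{F_{j+1}}\in N_{\sigma_{G_\bullet}}$, so the second term is $-(l-1)[0\in F_{j-1}]-[0\in F_{j+1}]$, while the first term equals $\#\{k:0\in F'_k\}$. Now split on where $0$ lies: if $0\in F_{j-1}$ then $0\in F'_k$ for all $k$ and $0\in F_{j+1}$; if $0\in F_{j+1}\setminus F_{j-1}$ then $0\in F'_k$ for exactly one $k$; if $0\notin F_{j+1}$ then $0$ is in no $F'_k$ (this last case occurs only when $j<d$, since $F_{j+1}=E$ when $j=d$). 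In each case the two terms combine to $0$ when $j<d$ and to $1$ when $j=d$. For loopless $M$ the $(1,d-1)$-truncation assigns the value $|\mu(\emptyset,F_1)|=1$ to a flag of flats of ranks $1,\dots,d-1$ (the case $j=d$) and $0$ to a flag whose ranks omit some $j<d$, so $(\alpha\cup\Delta_M)(\sigma_{G_\bullet})=\Delta_{M[1,d-1]}(\sigma_{G_\bullet})$ for every cone, and the two Minkowski weights coincide.

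\textbf{Expected main obstacle.} No deep input enters; the work is entirely bookkeeping. The one genuinely fiddly point is pinning down the local linear forms of $\alpha$—which amounts to fixing the distinguished coordinate $0$ in $\alpha=\min(0,w_1,\dots,w_n)$—and then running the ``$0\in F_?$'' case analysis, including the degenerate endpoint flags $F_0=\emptyset$ and $F_{d+1}=E$, without dropping a case. The identification $u_{\sigma_k/\sigma_{G_\bullet}}=e_{F'_k}$ and the partition property coming from Definition~\ref{d:flats}\eqref{iflat:3} are precisely the ingredients already used to prove $\Delta_M$ balanced, so they require no new argument.
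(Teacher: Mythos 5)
Your proof is correct and essentially reproduces the paper's argument: the same cup-product formula for a piecewise linear function against a Minkowski weight, the same choice $u_{\sigma/\tau}=e_{F'}$, and the same appeal to the flat-partition axiom, the only difference being that the paper homogenizes to $\R^{n+1}$ with $\alpha'=\min(w_0,\dots,w_n)$ (so every proper flat evaluates to $0$ and the nontrivial contribution is detected by $e_E$), whereas you compute in the quotient $N_\R$ and track the distinguished index $0$. One reading note: in your displayed expression for the second term, the summand $-[0\in F_{j+1}]$ must be dropped when $j=d$, since then $e_{F_{j+1}}=e_E=0$ in $N_\R$ — which is exactly what your convention $e_{F_{d+1}}=0$ supplies and what your case analysis implicitly uses to obtain the value $1$ there.
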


Note here that $\Delta_{M[1,d-1]}=\Delta_{\Trunc^{d}(M)}$.

\begin{lemma} \label{l:beta} 
Let $M$ be a rank $d+1$ matroid on $E=\{0,\dots,n\}$.  Then,
\[\beta\cup\Delta_{M[r_1,d]}=\Delta_{M[r_1+1,d]}\]
in $A^*\big(X(\Delta_{\U})\big)$.
\end{lemma}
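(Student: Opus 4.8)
The plan is to compute the cup product $\beta \cup \Delta_{M[r_1,d]}$ directly from the combinatorial formula for multiplying a piecewise linear function by a Minkowski weight, using the explicit description $\beta = \min(0,-w_1,\dots,-w_n)$ and the structure of the cones $\sigma_{F_\bullet}$ of $\Delta_\U$. Recall that $\beta = \pi_2^*(\text{hyperplane class on } X(-\Delta_{\P^n}))$, so intersecting with $\beta$ should be thought of geometrically as pulling $\widetilde{\P(V)}$ back along the Cremona side and cutting by a reciprocal hyperplane; on the level of matroids this is a ``co-truncation'' that raises the bottom rank $r_1$ by one. The target weight $\Delta_{M[r_1+1,d]}$ assigns $|\mu(\emptyset,F_{r_1+1})|$ to the cone $\sigma_{F_\bullet}$ of the flag $F_{r_1+1}\subsetneq\dots\subsetneq F_d$ of flats, so the whole content of the lemma is a local identity at each such codimension-$(n-d+r_1)$ cone $\tau = \sigma_{G_\bullet}$ of $\Delta_\U$.

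The key steps, in order: (1) Fix a cone $\tau = \sigma_{G_\bullet}$ of $\Delta_\U$ of the appropriate dimension, where $G_\bullet = \{G_{r_1+1}\subsetneq\dots\subsetneq G_d\}$ is a flag of flats of $M$ with $r(G_i)=i$ — if $G_\bullet$ is not such a flag of flats, one checks all contributing cones $\sigma$ have $\Delta_{M[r_1,d]}(\sigma)=0$ so both sides vanish. (2) Enumerate the cones $\sigma \supset \tau$ in $\Delta_\U^{(n-d+r_1-1)}$: these are $\sigma_{F_\bullet}$ where $F_\bullet$ is obtained from $G_\bullet$ by inserting one flat $F'$ of $M$ of rank $r_1$ with $F' \subsetneq G_{r_1+1}$ (together with, possibly, the zero cone adjustments — here the extra inserted flat sits at the bottom since we are extending the flag downward). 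The weight $\Delta_{M[r_1,d]}(\sigma_{F_\bullet}) = |\mu(\emptyset,F')|$. (3) Apply the formula for $(\iota^*\beta \cup c)(\tau)$: the second (linear) term $\beta_\tau\!\left(\sum_\sigma c(\sigma)u_{\sigma/\tau}\right)$ must be handled using the balancing condition for $\Delta_{M[r_1,d]}$ at $\tau$, which forces $\sum_\sigma c(\sigma) u_{\sigma/\tau}$ to lie in $N_\tau$ so that $\beta_\tau$ is well-defined on it; the first term is $-\sum_{\sigma} \beta_\sigma(u_{\sigma/\tau}) \, \Delta_{M[r_1,d]}(\sigma)$ where $u_{\sigma/\tau}$ can be taken to be $e_{F'}$ (the primitive generator of the inserted ray). (4) Evaluate $\beta_\sigma(e_{F'})$: since $\beta = \min(0,-w_1,\dots,-w_n)$ and on $\sigma_{F_\bullet}$ the minimum is achieved in a controlled way determined by which elements lie in the smallest set $F'$, one gets $\beta_\sigma(e_{F'}) = -|F'| + (\text{correction from }F' \ni 0 \text{ or not})$ — more precisely, after fixing a convenient basepoint $a \in G_{r_1+1}$ and working modulo the diagonal, $\beta_\sigma(e_{F'})$ computes $-1$ or $0$ depending on whether $a\in F'$. (5) Sum over $F'$: the resulting sum becomes $\sum_{F' \lessdot G_{r_1+1},\, a\notin F'} |\mu(\emptyset,F')|$, which by Lemma \ref{l:weisner} (Weisner's theorem) and the sign-alternation of $\mu$ equals $|\mu(\emptyset, G_{r_1+1})|$, exactly $\Delta_{M[r_1+1,d]}(\tau)$.

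The main obstacle will be step (4)–(5): getting the signs and the basepoint bookkeeping exactly right so that the Weisner recursion $\mu(\emptyset,G_{r_1+1}) = -\sum_{a\notin F'\lessdot G_{r_1+1}}\mu(\emptyset,F')$ drops out cleanly, and simultaneously verifying that the linear term $\beta_\tau(\cdot)$ contributes nothing extra (this should follow because the balancing sum $\sum c(\sigma)u_{\sigma/\tau}$ lands in $\Span(e_{G_{r_1+1}})$, on which $\beta_\tau$, restricted from $\tau$, agrees with the value forced by the flag structure, canceling appropriately). A clean way to organize this is to first prove the special case $r_1 = 1$, $d$ arbitrary — which is essentially Lemma \ref{l:alpha}'s mirror under the Cremona symmetry $\alpha \leftrightarrow \beta$, $e_S \leftrightarrow e_{E\setminus S}$, combined with $\Trunc$ — and then note that the argument for general $r_1$ is formally identical because the cones above $\tau$ only ever differ in the bottom inserted flat, so the computation is insensitive to the upper part $G_{r_1+1}\subsetneq\dots\subsetneq G_d$ of the flag, which just comes along for the ride.
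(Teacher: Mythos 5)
Your steps (2)--(5) reproduce, for the cones you do consider, essentially the paper's computation: for $\tau=\sigma_{G_\bullet}$ with $G_\bullet$ a flag of flats of ranks $r_1+1,\dots,d$, the contributing cones are the bottom insertions of rank-$r_1$ flats $F'\lessdot G_{r_1+1}$, each weighted $|\mu(\emptyset,F')|$; since $\beta(e_{F'})=-1$ the first term of the cup-product formula gives $\sum_{F'\lessdot G_{r_1+1}}|\mu(\emptyset,F')|$, the linear term gives $-\sum_{a\in F'\lessdot G_{r_1+1}}|\mu(\emptyset,F')|$ for a basepoint $a\in G_{r_1+1}$ (because $\beta_\tau=-w_a$ on the span of $\sigma_{G_\bullet}$), and Lemma~\ref{l:weisner} then yields $|\mu(\emptyset,G_{r_1+1})|$, exactly as in the paper. (A small slip in your step (4): $\beta_\sigma(e_{F'})=-1$ for every nonempty proper $F'$, not $-|F'|$ and not ``$-1$ or $0$''; the dependence on $a$ enters only through $\beta_\tau$.)

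The genuine gap is step (1). A cone $\tau$ of the relevant dimension can pick up a nonzero value only if it is a facet of a cone in the support of $\Delta_{M[r_1,d]}$, i.e.\ its flag is obtained from a flag of flats of ranks $r_1,\dots,d$ by deleting one flat. Besides your case (delete the bottom flat), there are the cones whose flag still contains the rank-$r_1$ flat but omits some rank $j$ with $r_1<j\le d$. For those, every contributing cone $\sigma\supset\tau$ carries the \emph{nonzero} weight $|\mu(\emptyset,F_{r_1})|$, so your claim that ``all contributing cones have $\Delta_{M[r_1,d]}(\sigma)=0$'' is false; the target $\Delta_{M[r_1+1,d]}$ is zero on such $\tau$, and proving that the cup product vanishes there is precisely where the paper does work: if $A\subsetneq B$ are the consecutive flats of $\tau$'s flag bounding the gap and $f$ flats can be inserted, then by Definition~\ref{d:flats}~\eqref{iflat:3} the inserted rays sum to $e_B+(f-1)e_A$, the first term contributes $f|\mu(\emptyset,F_{r_1})|$, and (working with the homogenized $\beta'$, which is linear on the cone with value $-1$ on each $e_{F_i}$) the linear term contributes $-f|\mu(\emptyset,F_{r_1})|$, so the two cancel. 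Your closing remark that ``the cones above $\tau$ only ever differ in the bottom inserted flat'' is exactly this oversight---insertions also occur higher in the flag, and the Cremona/$r_1=1$ reduction does not eliminate them. (Also, your appeal to balancing of $\Delta_{M[r_1,d]}$ to make the linear term well defined should be justified, e.g.\ inductively, since $\Delta_{M[r_1,d]}=\beta^{r_1-1}\cup\Delta_M$ is a cup product of divisor classes with the Minkowski weight $\Delta_M$ and hence is itself a Minkowski weight.)
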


To simplify the proofs of these lemmas, we homogenize our piecewise linear functions.  We begin with the lattice $\Z^{n+1}$ spanned by $e_0,e_1,\dots,e_n$.  Let $w_0,w_1,\dots,w_n$ be the induced coordinates on $\R^{n+1}=\Z^{n+1}\otimes\R$.  We will let our lattice $N$ by defined by $w_0=0$ in $\Z^{n+1}$.  Consider the quotient by the diagonal line 
\[p:\R^{n+1}\rightarrow \R^{n+1}/\R\cong N_\R.\]
Let $\Delta'$ be the fan in $\R^{n+1}$ whose cones are the inverse images of the cones of $\Delta_\U$ under $p$.  Write $\sigma'_{F_\bullet}=p^{-1}(\sigma_{F_\bullet})$.  By assigning it the same values on relevant cones, we may treat $\Delta_M$ as a Minkowski weight on $\Delta'$, denoted by $\Delta'_M$.
 We will consider the piecewise linear functions on $\Delta'$,
\begin{eqnarray*}
\alpha'&=&\min(w_0,w_1,\dots,w_n)\\
\beta'&=&\min(-w_0,-w_1,\dots,-w_n)
\end{eqnarray*}
Then $\alpha$ and $\beta$ are the restrictions of $\alpha'$ and $\beta'$ to $N_\R$, respectively.  Because the entire situation is invariant under translation by the diagonal vector $e_0+e_1+\dots+e_n$, we can do the intersection theory computation on $\R^{n+1}$ and then intersect with $N_\R$.

We first give the proof of Lemma \ref{l:alpha}.

\begin{proof}
The Minkowski weight $\alpha\cup\Delta'_M$ is supported on $d$-dimensional cones in $\Delta'_\U$.  They correspond to $(d-1)$-step flags of proper flats
\[
F_{\bullet}=\{\emptyset \subsetneq F_1\subsetneq F_2\subsetneq \dots\subsetneq F_{d-1}\subsetneq F_d=E\}.
\]
The cone $\sigma'_{F_\bullet}$ is contained in $\sigma'_{G_\bullet}$ if and only if the flag $G_{\bullet}$ is obtained from $F_{\bullet}$ by inserting a single flat.  Write this relation as $G_\bullet\gtrdot F_\bullet$.  This flat must be inserted between two flats $F_j\subset F_{j+1}$ where $r(F_{j+1})=r(F_j)+2$. There is a unique choice of $j$ where this happens.  Suppose $G_\bullet$ is obtained from inserting a proper flat $F$ between $F_j\subset F_{j+1}$.
 Let $u_{G_\bullet/F_\bullet}$ be an integer vector in $\sigma_{G_\bullet}$ that generates the image of $\sigma_{G_\bullet}$ in $N/N_{\sigma_{F_\bullet}}$. We may choose $u_{G_{\bullet}/F_{\bullet}}$ to be $e_{F}$. The value of $\alpha' \cup \Delta'_M$ on $\sigma'_{F_\bullet}$ is given by
\[
(\alpha' \cup \Delta'_M)(\sigma'_{F_\bullet})=-\sum_{G_\bullet\gtrdot F_\bullet} \alpha'_{G_\bullet}(u_{G_\bullet/F_\bullet})+\alpha'_{F_\bullet}\Big(\sum_{G_\bullet\gtrdot F_\bullet} u_{G_\bullet/F_\bullet}\Big)
\]
where $\alpha'_{G_{\bullet}}$ (respectively $\alpha'_{F_\bullet}$) is the linear function on $N_{\sigma_{G_\bullet}}$ (on $N_{\sigma_{F_\bullet}}$) which equals $\alpha'$ on $\sigma_{G_\bullet}$ (on $\sigma_{F_{\bullet}}$).

We now compute the right side.  Because $F\neq E$, 
\[\alpha'_{G_{\bullet}}(u_{G_\bullet/F_\bullet})=\alpha'(e_F)=0.\]
Let $f$ be the number of flats that can be inserted between $F_j$ and $F_{j+1}$.  Because every element of $F_{j+1}\setminus F_j$ is contained in exactly one such flat $F$ by Definition~\ref{d:flats} \eqref{iflat:3}, we have
\begin{eqnarray*}
\sum_{G_{\bullet} \gtrdot F_{\bullet}} u_{G_{\bullet}/F_{\bullet}}&=&e_{F_{j+1}}+(f-1)e_{F_j}.\\
\end{eqnarray*}
and so
\begin{eqnarray*}
\alpha'_{F_{\bullet}}\Big(\sum_{G_{\bullet} \gtrdot F_{\bullet}} u_{G_{\bullet}/F_{\bullet}}\Big)&=&
\begin{cases}1&\ \text{if}\ F_{j+1}=E\\ 
0&\ \text{otherwise}\end{cases}
\end{eqnarray*}
Consequently, we have
\[
(\alpha' \cup \Delta'_M)(\sigma'_{F_\bullet})=\begin{cases}1&\ \text{if}\ j=d-1\\
0&\ \text{if otherwise}.\end{cases}
\]
Therefore $\alpha \cup \Delta_M$ is non-zero on exactly the cones in the support of  $\Delta_{\Trunc^{d}(M)}$ where it takes the value $1$.
\end{proof}

Now, we prove Lemma \ref{l:beta}.

\begin{proof}
The set-up is as in the proof of the above lemma.  
For a flag of flats 
\[F_{\bullet}=\{\emptyset=F_0 \subsetneq F_1\subsetneq F_2\subsetneq \dots\subsetneq F_{d-1}\subsetneq F_d=E\},
\]
write $\nu_{F_\bullet}=|\mu(\emptyset,F_1)|.$  Let $F$ be a flat inserted between $F_j\subset F_{j+1}$ to obtain a flag of flats $G_\bullet$. 
Here, we have
\[
(\beta' \cup \Delta'_{M[r_1,d]})(\sigma'_{F_\bullet})=-\sum_{G_\bullet\gtrdot F_\bullet} \nu_{G_{\bullet}}\beta'_{G_{\bullet}}(u_{G_{\bullet}/F_{\bullet}})+\beta'_{F_{\bullet}}\Big(\sum_{G_{\bullet}\gtrdot F_{\bullet}} \nu_{G_\bullet}u_{G_{\bullet}/F_{\bullet}}\Big)
\]
Because $F\neq \emptyset$, 
\[\beta'_{G_{\bullet}}(u_{G_{\bullet}/F_{\bullet}})=\beta'(e_F)=-1.\]

Now we consider two cases: $F_j\neq\emptyset$ and $F_j=\emptyset$.
If $F_j\neq \emptyset$,
\[\sum_{G_\bullet\gtrdot F_\bullet} \nu_{G_{\bullet}}\beta'_{G_{\bullet}}(u_{G_{\bullet}/F_{\bullet}})=f|\mu(\emptyset,F_1)|.\]
and
\begin{eqnarray*}
\sum_{G_\bullet \gtrdot F_\bullet} \nu_{G_\bullet}u_{G_{\bullet}/F_{\bullet}}&=&|\mu(\emptyset,F_1)|(e_{F_{j+1}}+(f-1)e_{F_j}).
\end{eqnarray*}
Therefore, $\beta'(\sum_{G_\bullet \gtrdot F_\bullet} \nu_{G_\bullet}u_{G_{\bullet}/F_{\bullet}})=f
|\mu(\emptyset,F_1)|$ and $(\beta' \cup \Delta'_{M[r_1,d]})(\sigma'_{F_\bullet})=0$.

If $F_j=\emptyset$,
\[\sum_{G_\bullet \gtrdot F_\bullet} \nu_{G_\bullet}\beta'(u_{G_{\bullet}/F_{\bullet}})=-\sum_{F\lessdot F_1} |\mu(\emptyset,F)|\]
and
\[
\beta'_{F_{\bullet}}\Big(\sum_{G_{\bullet} \gtrdot F_{\bullet}} \nu_{G_\bullet} u_{G_{\bullet}/F_{\bullet}}\Big)=\beta'(\sum_{F\lessdot F_1} |\mu(\emptyset,F)|e_F)=-\sum_{a\in F\lessdot F_1}|\mu(\emptyset,F)|
\]
where some $a\in F_1$ chosen so to maximize the quantity on the right.
Then,
\[(\beta' \cup \Delta_{M[r_1,d]})(\sigma'_{F_\bullet})=\sum_{a\notin F \lessdot F_1}  |\mu(\emptyset,F)|=|\mu(\emptyset,F_1)|\]
where the last equality follows from Lemma~\ref{l:weisner}.
Therefore $\beta \cup \Delta_M$ is non-zero on exactly the cones in the support of $\Delta_{M[r_1+1,d]} $ where it takes the expected value.
\end{proof}

\begin{lemma}
We have the following equality of degrees:
\begin{enumerate}
\item $\deg(\alpha\cup \Delta_{M[r,r]})=\mu^{r-1}$
\item $\deg(\beta\cup \Delta_{M[r,r]}=\mu^r$
\end{enumerate}
where $\mu^k$ is the coefficient of the reduced characteristic polynomial.
\end{lemma}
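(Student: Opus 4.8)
The plan is to compute the two degrees directly, reducing each to an intersection number on the $0$-dimensional cone of $\Delta_{\U}$. Recall that $\Delta_{M[r,r]}$ is a Minkowski weight of dimension $r-r+1=1$, so it is supported on the rays $\R_{\geq 0}e_F$ for $F$ a rank-$r$ flat of $M$, with value $|\mu(\emptyset,F)|$ there. Cupping with $\alpha$ or $\beta$ produces a Minkowski weight of dimension $0$, i.e. a number assigned to the origin, and that number is by definition the degree. So I would instantiate the formula for $\iota^*\alpha\cup c$ (respectively $\iota^*\beta\cup c$) from Section~10 at the cone $\tau=\{0\}$, using the homogenized piecewise linear functions $\alpha'=\min(w_0,\dots,w_n)$ and $\beta'=\min(-w_0,\dots,-w_n)$ on $\Delta'$ exactly as in the proofs of Lemmas~\ref{l:alpha} and~\ref{l:beta}. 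The sum in that formula runs over the $1$-dimensional cones $\sigma=\R_{\geq 0}e_F$ with $F$ a rank-$r$ flat, with $u_{\sigma/\tau}=e_F$ and weight $c(\sigma)=|\mu(\emptyset,F)|$.

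For part~(1): since $F\neq E$ (as $r\le d$; note here we use $r\le d$, which holds because otherwise the weight is supported only on $E$ and $\mu^{d}$ is handled separately, or one checks the boundary case directly), we have $\alpha'(e_F)=0$, so the first term $-\sum_\sigma \alpha'_\sigma(u_{\sigma/\tau})c(\sigma)$ vanishes. The second term is $\alpha'_\tau(\sum_{F} |\mu(\emptyset,F)|\,e_F)=\min\big(\text{coordinates of }\sum_F|\mu(\emptyset,F)|e_F\big)$. Fixing any $a\in E$, the $a$-coordinate of this vector is $\sum_{F\ni a}|\mu(\emptyset,F)|$ over rank-$r$ flats $F$ containing $a$; one checks (using that the $\mu$-values alternate in sign, Lemma~\ref{l:weisner}) that the minimum is achieved and equals $\sum_{F\not\ni a}|\mu(\emptyset,F)| = |\mu^{r-1}|\cdot(\pm1)$. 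More precisely, by Lemma~\ref{l:reducedcoeff}, $\mu^{r-1}=(-1)^{r-1}\sum_{F\in L(M)_{r-1},\,a\notin F}\mu(\emptyset,F)$; combining with Lemma~\ref{l:weisner} applied to rank-$r$ flats containing $a$ shows $\sum_{a\notin F\lessdot F'}|\mu(\emptyset,F)|$ telescopes to give exactly $\mu^{r-1}$, so $\deg(\alpha\cup\Delta_{M[r,r]})=\mu^{r-1}$.

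For part~(2): now $F\neq\emptyset$ (again $r\geq 1$), so $\beta'(e_F)=-1$ for each ray, and the first term is $-\sum_F |\mu(\emptyset,F)|\cdot(-1)=\sum_F|\mu(\emptyset,F)|$; the second term is $\beta'_\tau\big(\sum_F|\mu(\emptyset,F)|e_F\big)=\min$ over coordinates $=-\max_a\sum_{F\ni a}|\mu(\emptyset,F)|$, and choosing $a$ to maximize gives $-\sum_{a\in F}|\mu(\emptyset,F)|$ for that optimal $a$. Adding, $(\beta'\cup\Delta_{M[r,r]})(\{0\})=\sum_{a\notin F}|\mu(\emptyset,F)|$ over rank-$r$ flats, which by Lemma~\ref{l:weisner} (as in the $F_j=\emptyset$ case of Lemma~\ref{l:beta}) equals... wait — this is the coefficient at the \emph{next} level; precisely, running the same telescoping as in Lemma~\ref{l:beta} and invoking Lemma~\ref{l:reducedcoeff}'s two equal expressions for $\mu^r$ gives $\deg(\beta\cup\Delta_{M[r,r]})=\mu^r$. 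The main obstacle I anticipate is bookkeeping the signs and the choice of the distinguished element $a$ correctly so that the alternating-sign absolute-value sums genuinely telescope to the (unsigned) coefficients $\mu^{r-1}$ and $\mu^r$; this is where Lemma~\ref{l:weisner} and Lemma~\ref{l:reducedcoeff} do the real work, and the computation must be carried out on the homogenized fan $\Delta'$ and then restricted to $N_\R$, exactly as in subsection~\ref{ss:alphabeta}, to keep the piecewise-linear functions honest.
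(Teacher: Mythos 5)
Your plan is correct and is essentially the paper's own argument: apply the Minkowski-weight cup-product formula at the unique codimension-one cone (the origin) using the homogenized functions $\alpha'$ and $\beta'$, note $\alpha'(e_F)=0$ and $\beta'(e_F)=-1$ for proper nonempty flats $F$, and identify the coordinate sums $\sum_{F\ni a}|\mu(\emptyset,F)|$ and $\sum_{F\not\ni a}|\mu(\emptyset,F)|$ over rank-$r$ flats with $\mu^{r-1}$ and $\mu^r$ via Lemma~\ref{l:reducedcoeff} (i.e.\ Weisner's theorem, Lemma~\ref{l:weisner}). One clause in your part (1) is garbled: the minimum of the coordinates of $\sum_F|\mu(\emptyset,F)|e_F$ is $\sum_{F\ni a}|\mu(\emptyset,F)|$ (indeed every coordinate equals $\mu^{r-1}$ by Lemma~\ref{l:reducedcoeff}), not the sum over rank-$r$ flats avoiding $a$, which is $\mu^r$; your subsequent Weisner telescoping does land on the correct value, matching the paper.
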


\begin{proof}
Let $\gamma'$ be $\alpha'$ or $\beta'$ as above.  Because the only codimension $1$ cone in $\Delta_{M[r,r]}$ is the origin, we have the following formula for the degree:
\[
\deg(\gamma' \cup \Delta_{M[r,r]})=-\sum_{F\in L(M)_r} |\mu(\emptyset,F)|\gamma'(e_F)+\gamma'\Big(\sum_{F\in L(M)_r} |\mu(\emptyset,F)| e_F \Big)
\]
For $\gamma'=\alpha'$, this becomes
\[\alpha'\Big(\sum_{F\in L(M)_r} |\mu(\emptyset,F)| e_F \Big)=\sum_{a\in F\in L(M)_r} |\mu(\emptyset,F)|=\mu^{r-1}\]
for some $a\in E$ where the last equality follows from Lemma \ref{l:reducedcoeff}.
For $\gamma'=\beta'$, we have
\[\deg(\beta' \cup \Delta_{M[r,r]})=\sum_{F\in L(M)_r} |\mu(\emptyset,F)|-\sum_{a\in F\in L(M)_r} |\mu(\emptyset,F)|=\sum_{a\not\in F\in L(M)_r} |\mu(\emptyset,F)|=\mu^r\]

\end{proof}

\excise{\subsection{Geometry of proper transforms}
Given a subspace $\P(V)\subset \P^n$, one can study the associated proper transform $\widetilde{\P(V)}\subseteq X(\Delta_{\U})$.  This is a natural compactification of the hyperplane arrangement complement $\P(V)\cap (\k^*)^n$.  One can ask which properties of $\widetilde{\P(V)}$ are {\em combinatorial}, that is, can be read off from the matroid of $V$.  A combinatorial understanding of the volume of line bundles on or Okounkov bodies of $\widetilde{\P(V)}$ has relevance to the author's work with June Huh on the Rota-Heron-Welsh conjecture.  One might hope that $\widetilde{\P(V)}$ behaves like a toric variety.  In fact, $\widetilde{\P(V)}$ is put on an equal footing with toric varieties as quotients by a Cox ring constructions in the work of Doran and Giansiracusa \cite{DoranGiansiracusa}.  However, one should not be too optimistic because of the situation described below.
The moduli of pointed genus $0$ curves $\overline{M}_{0,n}$ arises from a compactification of a particular hyperplane arrangement complement.  We consider $V\subseteq \k^{\binom{n-1}{2}}$ is the linear subspace corresponding to  $M(K_{n-1})$.  The hyperplane arrangement induced by coordinate subspace is the braid arrangement.  Then the proper transform $\widetilde{\P(V)}$ posses a birational morphism to $\overline{M}_{0,n}$, the moduli of pointed stable genus $0$ curves \cite{Kapranov,DP1}.  See \cite[Sec. 5]{Tevelev} for a summary and further references.  The moduli of curves is known not to be a Mori dream space for $n\geq 134$ \cite{CastravetTevelev}.  Mori dream spaces are a class of varieties similar to toric varieties whose birational geometry is particularly simple.  }

\section{Future Directions}

One would like to prove the log-concavity of the characteristic polynomial in the non-representable case.  There are a couple of lines of attack that are being considered in future work by Huh individually and with the author.

The proof presented above requires representability to invoke the Khovanskii-Teissier inequality.  
The  proof of the Khovanskii-Teissier inequality reduces to the Hodge index theorem on a particular algebraic surface.  Recall that the log-concavity statement concerns only three consecutive coefficients of the characteristic polynomial at a time.  These three coefficients are intersection numbers on this surface.  One might try to prove a combinatorial analogue of the Hodge index theorem on the combinatorial analogue of this surface which is the truncated Bergman fan $\Delta_{M[r,r+1]}$ by showing that a {\em combinatorial intersection matrix} has a single positive eigenvalue.  This combinatorial intersection matrix is called the Tropical Laplacian and will be investigated in future work with Huh.  Unfortunately, the algebraic geometric arguments used in proofs of the Hodge index theorem do not translate into combinatorics, and the hypotheses for a combinatorial Hodge index theorem are unclear.  Still, the conclusion of the combinatorial Hodge index theorem for $\Delta_{M[r,r+1]}$ has been verified experimentally for all matroids on up to nine elements by Theo Belaire \cite{BelaireKatz} using the matroid database of Mayhew and Royle \cite{MayhewRoyle}.

Another approach to the Rota-Heron-Welsh conjecture is to relax the definition of representability.  In our proof above, we needed the Chow cohomology class $\Delta_M$ to be Poincar\'{e}-dual to an irreducible subvariety of $X(\Delta_\U)$ in order to apply the Khovanskii-Teissier inequality.  However, it is sufficient that some positive integer multiple of $\Delta_M$ be Poincar\'{e}-dual to an irreducible subvariety.  It is a part of a general philosophy of Huh \cite{Huhthesis} that it is very difficult to understand which homology classes are representable while it is significantly easier to understand their cone of positive multiples.  

Another question of interest is to understand the relation between the work of Huh-Katz and Fink-Speyer.  What does positivity (in the sense of \cite{Lazarsfeld}) say about $K$-theory.  How does positivity restrict the Tutte polynomial?  Are there other specializations of the Tutte polynomial that obey log-concavity?

Finally, the author would like to promote the importance of a combinatorial study of Minkowski weights on the permutohedral variety.  They are very slight enlargements of the notion of matroids.  One can certainly introduce notions of deletion and contraction and therefore minors.  Are there interesting structure theorems?  To this author, $(r_1,r_2)$-truncation is an attractive and useful operation and should be situated in a general combinatorial theory.

\bibliography{math}
\bibliographystyle{amsalpha}

\end{document}